\setlist[enumerate,1]{label = (\roman*)}
\theoremstyle{definition}
\newtheorem{definition}{Definition}[section]
\newtheorem{theorem}[definition]{Theorem}
\newtheorem*{theorem*}{Theorem}
\newtheorem{lemma}[definition]{Lemma}
\newtheorem{corollary}[definition]{Corollary}
\newtheorem{question}[definition]{Question}
\newtheorem*{question*}{Question}
\theoremstyle{remark}
\newtheorem{remark}[definition]{Remark}
\DeclareMathOperator{\dom}{\textrm{dom}}
\DeclareMathOperator{\ran}{\textrm{ran}}
\newcommand{\On}{\textrm{On}}
\newcommand{\R}{\mathbb{R}}
\newcommand{\Q}{\mathbb{Q}}
\newcommand{\concat}[2]{{#1}^\frown{#2}}
\newcommand{\wins}{\mathbin{\raisebox{0.8\depth}{$\uparrow$}}}
\newcommand{\doesnotwin}{\mathbin{{\hspace*{-0.125em}\not\hspace*{0.125em}\uparrow}}}
\newcommand{\playerone}{\mathrm{\mathbf{I}}}
\newcommand{\playertwo}{\mathrm{\mathbf{II}}}
\newcommand{\watershed}[3]{\operatorname{SW}_{#1}^{#2}\left( #3 \right)}
\DeclareMathOperator{\scottrank}{F\text{-}SR}
\newcommand{\EFR}[3]{\mathrm{R\text{-}EF}_{#1}^{#2}\left(#3\right)}
\newcommand{\EFDR}[3]{\mathrm{R\text{-}EFD}_{#1}^{#2}\left(#3\right)}
\newcommand{\EF}[3]{\mathrm{F\text{-}EF}_{#1}^{#2}\left(#3\right)}
\newcommand{\EFD}[3]{\mathrm{F\text{-}EFD}_{#1}^{#2}\left(#3\right)}
\newcommand{\watershedr}[2]{\operatorname{R\text{-}SW}_{#1}\left( #2 \right)}
\DeclareMathOperator{\scottrankr}{R\text{-}SR}
\newcommand{\model}[1]{\mathfrak{#1}}
\newcommand{\dense}[1]{\mathcal{D}_{#1}}
\newcommand{\A}{\model{A}}
\newcommand{\B}{\model{B}}
\DeclareMathOperator{\density}{\mathrm{density}}
\newcommand{\abs}[1]{\left| #1 \right|}
\newcommand{\open}[1]{\left( #1 \right)}
\newcommand{\norm}[1]{\left\| #1 \right\|}
\newcommand{\ceil}[1]{\left\lceil #1 \right\rceil}
\newcommand{\floor}[1]{\left\lfloor #1 \right\rfloor}
\newcommand{\Logic}[1]{\mathcal{L}_{#1 \omega}}
\newcommand{\gameformula}[3]{{}^{#1}\sigma_{#2}^{#3}}
\newcommand{\gameformular}[3]{{}^{#1}\varsigma_{#2}^{#3}}
\newcommand{\weakneg}{\mathrm{neg}}
\DeclareMathOperator{\dr}{d}
\DeclareMathOperator{\Dr}{D}
\newcommand{\good}{\mathop{\textsc{Good}}}
\newcommand{\appr}{\mathop{\textsc{Appr}}}
\newcommand{\class}{\mathcal{K}}
\newcommand{\isom}{\mathcal{I}}
\newcommand{\dbm}{\mathop{d_{\mathrm{BM}}}}
\newcommand{\dgh}{\mathop{d_{\mathrm{GH}}}}
\newcommand{\dl}{\mathop{d_{\mathrm{L}}}}
\newcommand{\dk}{\mathop{d_{\mathrm{K}}}}
\newcommand{\dH}[1]{\mathop{d_{\mathrm{H}}^{#1}}}
\newcommand{\corr}{\mathcal{R}}
\DeclareMathOperator{\innerscottrank}{F\text{-}SR_{0^+}}
\DeclareMathOperator{\innerscottrankr}{R\text{-}SR_{0^+}}
\newcommand{\LM}{L_{\mathrm{M}}}
\newcommand{\LB}{L_{\mathrm{B}}}
\newcommand{\LBb}{L^{\mathrm{b}}_{\mathrm{B}}}
\newcommand{\BS}{\mathrm{BS}}
\newcommand{\BSb}{\mathrm{BS}^{\mathrm{b}}}
\newcommand{\MS}{\mathrm{MS}}
\newcommand{\dotminus}{\mathbin{\text{\@dotminus}}}
\newcommand{\@dotminus}{%
    \ooalign{\hidewidth\raise1ex\hbox{.}\hidewidth\cr$\m@th-$\cr}%
}
\begin{document}

    \title[Games and Scott sentences for metric structures]{Games and Scott sentences for positive distances between metric structures}
    \date{\today}
    \author{Åsa Hirvonen}
    \email{asa.hirvonen@helsinki.fi}
    \author{Joni Puljujärvi}
    \email{joni.puljujarvi@helsinki.fi}
    \address{Department of Mathematics and Statistics, University of Helsinki, P.O. Box 68 (Pietari Kalmin katu 5), 00014 Helsinki, Finland}
    \thanks{J.~Puljujärvi was partially supported by the Academy of Finland, grant 322795.}
    \keywords{Banach--Mazur distance, Banach spaces, Ehrenfeucht--Fraïssé games, Gromov--Hausdorff distance, infinitary logic, Kadets distance, linear isomorphism, metric model theory, metric spaces, Scott rank, Scott sentence}
    \subjclass[2020]{03C66, 03C75}

    \begin{abstract}
        We develop various Ehrenfeucht--Fraïssé games for distances between metric structures. We study two forms of distances: pseudometrics stemming from mapping spaces onto each other with some form of approximate isomorphism, and metrics stemming from measuring the distances between two spaces isometrically embedded into a third space. Using an infinitary version of Henson's positive bounded logic with approximations, we form Scott sentences capturing fixed distances to a given space. The Scott sentences of separable spaces are in $\Logic{\omega_1}$ for 0-distances and in $\Logic{\omega_2}$ for positive distances.
    \end{abstract}

    \maketitle

    \section{Introduction}

    This paper started as an investigation into Ehrenfeucht--Fraïssé games and Scott sentences for linear isomorphism of Banach spaces, inspired by the Scott sentences for (isometries of) metric spaces developed in~\cite{BYDNT17} and the approximate language developed by Henson in~\cite{Henson76}. As linear isomorphisms are in many cases the main isomorphism notion used in functional analysis, we wanted to study how to express it with metric infinitary logic.

    Various distances between metric spaces have been studied actively. Already Ben Yaacov et al.~\cite{BYDNT17} show that the distance notion they study capture Gromov--Hausdorff and Kadets distances. However, their Scott sentences only work for distance 0. Also, a referee to the first version of this paper pointed out to us that approximate notions of isomorphism and similar game methods had been studied also in~\cite{CDK20} and~\cite{H20}. Also in these papers the methods work for distance 0 (almost isometry), but they inspired us to generalise our setting to more general metric structures, where we still can capture also positive distances.

    We study two notions of distances between spaces: pseudometrics arising from mapping spaces onto each other by some sort of approximate isomorphisms, and metrics arising from isometrically embedding spaces into a third space and minimizing the Hausdorff distance of the images. The former ones are captured by a collection of approximate isomorphisms, the latter by relations which we, following~\cite{CDK18}, call correspondences. These two notions give rise to two different games. The isomorphism distances use a game resembling the techniques used by Ben Yaacov et al.~\cite{BYDNT17} where the second player has to play better and better moves so that the limits of moves form an $\varepsilon$-mapping in the end. In the correspondence games the second player can play ``correct'' answers at once and the moves played generate a correspondence.

    For both the isomorphism and correspondence notions, we study two forms of Ehrenfeucht--Fraïssé games, infinite games of length $\omega$ and finite but dynamical games where an ordinal clock is running downwards. In our game development we follow~\cite{Vmodels} where classical Scott sentences are presented via games. The distance is a parameter in our games, and only the 0-distance games are strictly symmetric and transitive. However, for the positive distances the games have weak symmetry and transitivity properties, and when studying almost isometries, one can use these properties for similar constructions as in the classical EF-games. Thus Scott sentences expressing almost isometry can be found in $\Logic{\omega_1}$. Due to the lack of transitivity, we were not able to prove that separable spaces would have countable Scott rank when positive distances are concerned, calling for the question:
    \begin{question*}
        Are there separable spaces (in particular Banach spaces) with Scott rank $\omega_1$ (for positive distances)?
    \end{question*}
    We can, however show that the Scott rank is at most $\omega_1$ and thus positive distances give Scott sentences in $\Logic{\omega_2}$.

    In section~\ref{Section: logic} we describe the language, approximate isomorphisms and correspondences used. Section~\ref{Section: Examples} presents our main motivating examples: linear isomorphism, bi-Lipschitz mappings, Gromov--Hausdorff and Kadets distances. Here the notions of approximation varies depending on whether we consider $\varepsilon$-isomorphisms or $\varepsilon$-correspondences. The isomorphisms work best with a multiplicative approximation, whereas correspondences use an additive approximation. In section~\ref{Section: Games} we present infinite and dynamic EF-games both for approximate isomorphisms and graded correspondence notions, and prove weak symmetry and transitivity. In section~\ref{Section: Scott Rank} we define two Scott ranks: the general for arbitrary distances, and another for almost isometry. We show that for separable structures the first is at most $\omega_1$ and the latter is countable. Finally in section \ref{Section: Scott sentences} we build Scott sentences, both for positive and 0-distances.

    \section{Logics with Approximations}\label{Section: logic}
    
    For a regular cardinal $\kappa$ and some vocabulary $L$, we define the syntax of the logic $\Logic\kappa[L]$ as follows. If $R\in L$ is an $n$-ary relation symbol and $\bar{t}$ is an $n$-tuple of first-order $L$-terms, then $R(\bar{t})$ is an atomic $L$-formula. If $\varphi$ is an $L$-formula and $v$ is a variable, then $\exists v\, \varphi$ and $\forall v\, \varphi$ are $L$-formulae. Whenever $\Phi$ is a set of $L$-formulae such that $\abs{\Phi}<\kappa$ and there is a finite set $V$ of variables such that whenever $v$ is free in some $\varphi\in\Phi$, then $v\in V$, then $\bigwedge\Phi$ and $\bigvee\Phi$ are $L$-formulae. In other words, we have the standard syntax for $\Logic{\kappa}$ but without negation or equality.

    Given $\varphi(\bar{v})$ and an $L$-structure $\A$ and $\bar{a}\in\A^{\abs{\bar{v}}}$, we define $\A\models\varphi(\bar{a})$ in the usual way.

    \begin{definition}
        \label{Definition: Vocabulary with approximations}
        Let $L$ be a vocabulary and $\class$ a class of $L$-structures, and denote by $\Phi$ the set of atomic $L$-formulae. We say that $L$ has approximations for the class $\class$ if there exist functions $\appr\colon\Phi\times D\to\Phi$ and $\weakneg\colon\Phi\to\Phi$, where $D\subseteq[0,\infty)$ is dense, contains $0$ and is closed under addition, so that the following hold.
        \begin{enumerate}[label = (\Roman*)]

            \item\label{item: Approximation}
            \begin{enumerate}[label = (\roman*)]
                \item For all $\varphi\in\Phi$ and $\varepsilon\in D$, the formulae $\varphi$ and $\appr(\varphi,\varepsilon)$ have the same variables.

                \item $\appr(\varphi,0) \equiv \varphi$.

                \item\label{item: Error in approximation is additive}
                $\appr(\appr(\varphi,\varepsilon),\delta) \equiv \appr(\varphi,\varepsilon+\delta)$ for all $\varphi\in\Phi$ and $\varepsilon,\delta\in D$.

                \item\label{item: Better approximation of a formula implies worse approximation of the formula}
                For any $\varphi\in\Phi$ and $\varepsilon\in D$,
                \[
                    \varphi \models \appr(\varphi,\varepsilon).
                \]

                \item\label{item: An atomic formula is satisfied iff each of its approximations is satisfied}
                For any $\varphi(\bar{v})\in\Phi$, $\A\in\class$ and $\bar{a}\in\A^{\abs{\bar{v}}}$, we have
                \[
                    \A\models\varphi(\bar{a}) \iff \A\models\appr(\varphi,\varepsilon)(\bar{a}) \quad\text{for all $\varepsilon\in D\setminus\{0\}$}.
                \]
            \end{enumerate}

            \item\label{item: Weak negation}
            \begin{enumerate}[label = (\roman*)]
                \item For all $\varphi\in\Phi$, the formulae $\varphi$ and $\weakneg(\varphi)$ have the same variables.

                \item $\weakneg(\weakneg(\varphi)) = \varphi$.

                \item\label{item: Weak negation property}
                For any $\varphi(\bar{v})\in\Phi$, $\A\in\class$ and $\bar{a}\in\A^{\abs{\bar{v}}}$,
                \[
                    \A\not\models\varphi(\bar{a}) \iff \A\models\weakneg(\appr(\varphi,\varepsilon))(\bar{a}) \quad\text{for some $\varepsilon\in D\setminus\{0\}$}.\hspace*{-2.5em}
                \]

                \item\label{item: Approximation of weak negation}
                For $\varphi(\bar{v})\in\Phi$, $\varepsilon,\delta\in D$, $\A\in\class$ and $\bar{a}\in\A^{\abs{\bar{v}}}$, if $\varepsilon\geq\delta>0$, then
                \[
                    \appr(\weakneg(\appr(\varphi,\varepsilon)),\delta) \equiv \weakneg(\appr(\varphi,\varepsilon-\delta)).
                \]
            \end{enumerate}

            \item\label{item: Definable metric}
            For each $\A\in\class$, there is a complete metric $d_\A$ on $\A$ such that for each $r\in D$, the set $\{(a,a')\in\A^2 \mid d(a,a')\leq r\}$ is definable by some $\varphi\in\Phi$.

        \end{enumerate}
        We also call $L$ together with the functions $\appr$ and $\weakneg$ a vocabulary with approximations with respect to $\class$. If the class $\class$ is implicitly known, we simply say that $L$ is a vocabulary with approximations.

        We call $\appr(\varphi,\varepsilon)$ the $\varepsilon$-approximation of $\varphi$ and $\weakneg(\varphi)$ the weak negation of $\varphi$. When the set $D$ is clear from the context, we drop it out from the notation for convenience, i.e. we write $\delta > 0$ instead of $\delta \in D\cap[0,\infty)$.
    \end{definition}

    \begin{definition}
        Let $L$ be a vocabulary with approximations. We extend the notion of $\varepsilon$-approximation for all $L$-formulae as follows.
        \begin{itemize}
            \item $\appr\left( \bigwedge_{i\in I}\psi, \varepsilon \right) = \bigwedge_{i\in I}\appr(\psi,\varepsilon)$,
            \item $\appr\left( \bigvee_{i\in I}\psi, \varepsilon \right) = \bigvee_{i\in I}\appr(\psi,\varepsilon)$,
            \item $\appr\left( \forall v\, \psi, \varepsilon \right) = \forall v\appr(\psi,\varepsilon)$, and
            \item $\appr\left( \exists v\, \psi, \varepsilon \right) = \exists v\appr(\psi,\varepsilon)$.
        \end{itemize}
    \end{definition}

    \begin{definition}
        Let $L$ be a vocabulary with approximations. We extend the notion of weak negation for all $L$-formulae as follows.
        \begin{itemize}
            \item $\weakneg\left( \bigwedge_{i\in I}\psi \right) = \bigvee_{i\in I}\weakneg(\psi)$,
            \item $\weakneg\left( \bigvee_{i\in I}\psi \right) = \bigwedge_{i\in I}\weakneg(\psi)$,
            \item $\weakneg\left( \forall v\, \psi \right) = \exists v\, \weakneg(\psi)$, and
            \item $\weakneg\left( \exists v\, \psi \right) = \forall v\, \weakneg(\psi)$.
        \end{itemize}
    \end{definition}

    \begin{remark}
        \label{Remark: Strong negation}
        Even though our logic $\Logic{\kappa}$ does not have negation, when $\kappa>\omega$, negation is definable in any class $\class$, such that $L$ has approximations for $\class$, via weak negation as follows: for atomic $\varphi$, by Definition~\ref{Definition: Vocabulary with approximations}~\ref{item: Weak negation}~\ref{item: Weak negation property}, $\A\not\models\varphi$ if and only if $\A\models\weakneg(\appr(\varphi,\varepsilon))$ for some $\varepsilon>0$, so $\neg\varphi \equiv \bigvee_{\varepsilon\in D\setminus\{0\}}\weakneg(\appr(\varphi,\varepsilon))$. Then the negations of more complicated formulae can be defined in negation normal form.
    \end{remark}

    \subsection{Isomorphism notions}\label{Subsection: Isomorphism notions}

    We study an abstract framework of $\varepsilon$-isomorphisms, for $\varepsilon\geq 0$, on a class $\class$ of metric structures, partially motivated by a pseudometric on $\class$ that arises from this notion of $\varepsilon$-isomorphism. In our examples (see Section~\ref{Section: Examples}), this pseudometric corresponds to well-known distances found in literature.

    \begin{definition}
        \label{Definition: Isomorphism notion}
        Let $L$ be a vocabulary with approximations with respect to a class $\class$ of $L$-structures, and, for any $\varepsilon\geq 0$, let $\isom_\varepsilon$ be a class of bijections $f\colon\A\to\B$, for $\A,\B\in\class$, such that the following hold.
        \begin{enumerate}

            \item If $f\colon\A\to\B$ is an $L$-isomorphism between $\A$ and $\B$, for $\A,\B\in\class$, then $f\in\isom_0$.

            \item If $f\in\isom_\varepsilon$, then $f^{-1}\in\isom_\varepsilon$.

            \item If $f\in\isom_\varepsilon$ and $g\in\isom_{\varepsilon'}$ and $\ran(f) = \dom(g)$, then $g\circ f\in\isom_{\varepsilon+\varepsilon'}$.

            \item If $\varepsilon\leq\varepsilon'$, then $\isom_{\varepsilon}\subseteq\isom_{\varepsilon'}$.

            \item\label{item: Preserving atomic formulae suffices for epsilon-isomorphism}
            For $\A,\B\in\class$, $\varepsilon\geq 0$ and a surjection $f\colon\A\to\B$, we have $f\in\isom_\varepsilon$ if and only if for all atomic $\varphi(\bar{v})$ and $\bar{a}\in\A^{\abs{\bar{v}}}$,
            \[
                \A\models\varphi(\bar{a}) \implies \B\models\appr(\varphi,\varepsilon)(f(\bar{a})).
            \]

        \end{enumerate}
        Then we call $\isom=(\isom_\varepsilon)_{\varepsilon\geq 0}$ an isomorphism notion of $\class$ and elements of $\isom_\varepsilon$ $\varepsilon$-iso\-morph\-isms.
    \end{definition}

    \begin{lemma}
        If $f\colon\A\to\B$ is an $\varepsilon$-isomorphism, then for all $\varphi(\bar{v})\in\Logic{\kappa}[L]$ and $\bar{a}\in\A^{\abs{\bar{v}}}$, we have
        \[
            \A\models\varphi(\bar{a}) \implies \B\models\appr(\varphi,\varepsilon)(f(\bar{a}))
        \]
        and
        \[
            \B\models\varphi(f(\bar{a})) \implies \A\models\appr(\varphi,\varepsilon)(\bar{a}).
        \]
    \end{lemma}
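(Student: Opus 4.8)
The plan is to establish the first implication,
\[
    \A\models\varphi(\bar{a}) \implies \B\models\appr(\varphi,\varepsilon)(f(\bar{a})),
\]
for every $\varphi\in\Logic{\kappa}[L]$ by induction on the syntactic complexity of $\varphi$, and then to deduce the second implication at no extra cost. Indeed, since the isomorphism notion is closed under taking inverses, $f^{-1}\colon\B\to\A$ is again an $\varepsilon$-isomorphism; as the induction proves the first implication uniformly for \emph{every} $\varepsilon$-isomorphism, applying it to $f^{-1}$ and to the tuple $f(\bar{a})\in\B^{\abs{\bar{v}}}$ yields $\B\models\varphi(f(\bar{a}))\implies\A\models\appr(\varphi,\varepsilon)(f^{-1}(f(\bar{a})))$, and $f^{-1}(f(\bar{a}))=\bar{a}$ gives exactly the second implication. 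Throughout, note that $\appr(\varphi,\varepsilon)$ has the same free variables as $\varphi$, so that the displayed satisfactions are meaningful for the tuples indicated.

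For the base case of atomic $\varphi$, the first implication is precisely the forward direction guaranteed by clause~\ref{item: Preserving atomic formulae suffices for epsilon-isomorphism} of Definition~\ref{Definition: Isomorphism notion}, which holds since $f\in\isom_\varepsilon$. The inductive steps for $\bigwedge$, $\bigvee$ and $\exists$ are routine, each using the extended definition of $\appr$ together with the induction hypothesis: if $\A\models\bigwedge_{i\in I}\psi_i(\bar{a})$, then each conjunct transfers to $\appr(\psi_i,\varepsilon)$ at $f(\bar{a})$, and $\appr(\bigwedge_{i\in I}\psi_i,\varepsilon)=\bigwedge_{i\in I}\appr(\psi_i,\varepsilon)$; the disjunctive case transfers a single witnessing index $i$; and for $\exists v\,\psi$ one sends a witness $a\in\A$ to $f(a)\in\B$, invoking $\appr(\exists v\,\psi,\varepsilon)=\exists v\,\appr(\psi,\varepsilon)$. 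None of these cases uses anything about $f$ beyond the induction hypothesis.

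The one case that genuinely uses that $f$ is a bijection is the universal quantifier, and this is where I expect the only real care to be needed. Suppose $\A\models\forall v\,\psi(\bar{a})$ and let $b\in\B$ be arbitrary. Because $f$ is surjective there is $a\in\A$ with $f(a)=b$; then $\A\models\psi(a,\bar{a})$, so the induction hypothesis gives $\B\models\appr(\psi,\varepsilon)(f(a),f(\bar{a}))$, that is, $\B\models\appr(\psi,\varepsilon)(b,f(\bar{a}))$. As $b$ ranged over all of $\B$, we conclude $\B\models\forall v\,\appr(\psi,\varepsilon)(f(\bar{a}))$, which is $\B\models\appr(\forall v\,\psi,\varepsilon)(f(\bar{a}))$ by definition of $\appr$. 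This completes the induction, and with it the derivation of the second implication via $f^{-1}$. It is worth remarking that the absence of negation and equality from $\Logic{\kappa}[L]$ is exactly what allows this one-directional induction to close, since there is no formula-forming operation that would reverse the direction of the implication.
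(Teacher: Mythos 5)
Your proof is correct and is essentially the argument the paper has in mind: the paper dismisses the lemma with ``follows immediately from the definitions,'' and the intended content is exactly your induction on formula complexity (with the atomic case given by Definition~\ref{Definition: Isomorphism notion}~\ref{item: Preserving atomic formulae suffices for epsilon-isomorphism}, surjectivity handling $\forall$, and closure of $\isom_\varepsilon$ under inverses yielding the second implication).
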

    \begin{proof}
        Follows immediately from the definitions.
    \end{proof}

    \begin{definition}
        Given a class $\class$ and an isomorphism notion $\isom=(\isom_\varepsilon)_{\varepsilon\geq 0}$, we can define a pseudometric $d_{\isom}$ on $\class$ by setting
        \[
            d_{\isom}(\A,\B) \coloneqq \inf\{\varepsilon\geq 0 \mid \text{there exists an $\varepsilon$-isomorphism $\A\to\B$}\}.
        \]
    \end{definition}

    \begin{definition}
        \label{Definition: Good vocabulary (isomorphism)}
        Let $L$ be a vocabulary with approximations with respect to a class $\class$ of $L$-structures and $\isom=(\isom_\varepsilon)_{\varepsilon\geq 0}$ an isomorphism notion of $\class$. Denote by $\Phi$ the set of atomic $L$-formulae.
        If there are
        sets $\good(k)\subseteq\Phi$, $k<\omega$,
        such that the following conditions hold, then we say that $L$ \emph{is good for} $(\class,\isom)$.
        \begin{enumerate}

            \item\label{item: Every atomic formula is k-good for some k}
            For any $\varphi\in\Phi$ there are $k<\omega$ and $\psi\in\good(k)$ such that $\varphi\equiv\psi$.

            \item\label{item: k-good formulae increase when k increases}
            $\good(k)\subseteq\good(k+1)$ for all $k<\omega$.

            \item\label{item: An approximating of a k-good formula is still k-good for a little big bigger k}
            For $k<\omega$ and $\varepsilon\geq 0$, if $\varphi\in\good(k)$, then $\appr(\varphi,\varepsilon)\in\good(\ceil{ke^{\varepsilon}})$ and $\weakneg(\appr(\varphi,\varepsilon))\in\good(\ceil{ke^{\varepsilon}})$.

            \item\label{item: Perturbation distance}
            For any $\A\in\class$, $k<\omega$ and $\varepsilon>0$ there exists $\delta>0$ such that for any $\varphi(\bar{v})\in\good(k)$ and $\bar{a},\bar{b}\in\A^{\abs{\bar{v}}}$, if $d(\bar{a},\bar{b})<\delta$, then
            \[
                \A\models\varphi(\bar{a}) \implies \A\models\appr(\varphi,\varepsilon)(\bar{b}).
            \]

            \item\label{item: Simultaneous Cauchy}
            Let $\A,\B\in\class$ and $\varepsilon\geq 0$. Let $(a_n)_{n<\omega}\in\A^\omega$ and $(b_n)_{n<\omega}\in\B^\omega$, and suppose that for some sequence $(\varepsilon_n)_{n<\omega}\in(\varepsilon,\infty)^\omega$ converging down to $\varepsilon$, for all $k<\omega$ and for all $\varphi(v_0,\dots,v_{m-1})\in\good(k)$ we have
            \[
                \A\models\varphi(a_{i_0},\dots,a_{i_{m-1}}) \implies \B\models\appr(\varphi,\varepsilon_k)(b_{i_0},\dots,b_{i_{m-1}})
            \]
            whenever $i_0,\dots,i_{m-1}\geq k$. Then, for any increasing sequence $(i_n)_{n<\omega}$ of indices, $(a_{i_n})_{n<\omega}$ is Cauchy\footnote{Cauchy being in the sense of the definable metric of Definition~\ref{Definition: Vocabulary with approximations}~\ref{item: Definable metric}.} if and only if $(b_{i_n})_{n<\omega}$ is Cauchy.

        \end{enumerate}
        We call elements of $\good(k)$ \emph{$k$-good formulae}.
        We call the number $\delta$ in item~\ref{item: Perturbation distance} a \emph{perturbation distance} for $k$ and $\varepsilon$ in $\A$.
    \end{definition}

    \begin{lemma}
        \label{Lemma: Limits satisfy k-good formulae}
        For sequences $(a_n)_{n<\omega}\in\A^\omega$ and $(b_n)_{n<\omega}\in\B^\omega$, and a sequence $(\varepsilon_n)_{n<\omega}\in(\varepsilon,\infty)^\omega$ converging down to $\varepsilon>0$, if for all $k<\omega$ and for all $\varphi(v_0,\dots,v_{m-1})\in\good(k)$ we have
        \[
            \A\models\varphi(a_{i_0},\dots,a_{i_{m-1}}) \implies \B\models\appr(\varphi,{\varepsilon_k})(b_{i_0},\dots,b_{i_{m-1}})
        \]
        whenever $i_0,\dots,i_{m-1}\geq k$, then the following holds: for any atomic $\varphi(v_0,\dots,v_{m-1})$ and increasing sequences $(i^j_n)_{n<\omega}\in\omega^\omega$, $j<m$, $(a_{i^j_n})_{n<\omega}$ converges if and only if $(b_{i^j_n})_{n<\omega}$ converges for all $j<m$, and when they do, we have
        \[
            \A\models\varphi(a^0,\dots,a^{m-1}) \implies \B\models\appr(\varphi,\varepsilon)(b^0,\dots,b^{m-1}),
        \]
        where $a^j = \lim_{n\to\infty}a^j_{i_n}$ and $b^j = \lim_{n\to\infty}b^j_{i_n}$, $j<m$.
    \end{lemma}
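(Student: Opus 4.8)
The plan is to treat the two assertions separately. The convergence equivalence is essentially immediate: applying the Simultaneous Cauchy condition (Definition~\ref{Definition: Good vocabulary (isomorphism)}~\ref{item: Simultaneous Cauchy}) to each coordinate sequence $(i^j_n)_{n<\omega}$ shows that $(a_{i^j_n})_n$ is Cauchy if and only if $(b_{i^j_n})_n$ is, and since the definable metrics $d_\A$ and $d_\B$ are complete (Definition~\ref{Definition: Vocabulary with approximations}~\ref{item: Definable metric}), Cauchy coincides with convergent; hence the limits $a^j$ and $b^j$ exist simultaneously. The hypothesis of this lemma is exactly the one required by the Simultaneous Cauchy condition, so nothing further is needed here.

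For the preservation statement, assume $\A\models\varphi(a^0,\dots,a^{m-1})$. Since $\varphi$ is atomic, Definition~\ref{Definition: Good vocabulary (isomorphism)}~\ref{item: Every atomic formula is k-good for some k} lets me assume $\varphi\in\good(k_0)$ for some $k_0<\omega$. The idea is to push satisfaction along the chain ``limit in $\A$ $\to$ finite stage in $\A$ $\to$ finite stage in $\B$ $\to$ limit in $\B$'', where the two outer passages are handled by perturbation and the inner one by the hypothesis. Concretely: given a small $\alpha>0$, the perturbation property (Definition~\ref{Definition: Good vocabulary (isomorphism)}~\ref{item: Perturbation distance}) in $\A$ for $k_0$ and $\alpha$ yields a $\delta_\A>0$, so that for all large $n$ (where $d(a^j,a_{i^j_n})<\delta_\A$ for every $j$) we get $\A\models\appr(\varphi,\alpha)(a_{i^0_n},\dots,a_{i^{m-1}_n})$. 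The perturbed formula $\appr(\varphi,\alpha)$ lies in $\good(\ceil{k_0 e^{\alpha}})$ by Definition~\ref{Definition: Good vocabulary (isomorphism)}~\ref{item: An approximating of a k-good formula is still k-good for a little big bigger k}, hence in $\good(k)$ for every $k\geq\ceil{k_0 e^{\alpha}}$ by monotonicity (Definition~\ref{Definition: Good vocabulary (isomorphism)}~\ref{item: k-good formulae increase when k increases}).

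Now I choose such a $k$ with $\varepsilon_k$ close to $\varepsilon$ and take $n$ large enough that also $i^j_n\geq k$ for all $j$; the hypothesis, applied to the $k$-good formula $\appr(\varphi,\alpha)$, then gives $\B\models\appr(\appr(\varphi,\alpha),\varepsilon_k)(b_{i^0_n},\dots,b_{i^{m-1}_n})$, which equals $\B\models\appr(\varphi,\alpha+\varepsilon_k)(b_{i^0_n},\dots)$ by additivity of approximations (Definition~\ref{Definition: Vocabulary with approximations}~\ref{item: Error in approximation is additive}). A second application of the perturbation property, this time in $\B$ for the (still good) formula $\appr(\varphi,\alpha+\varepsilon_k)$ and a small error $\beta>0$, transfers this from $b_{i^j_n}$ to the limit, yielding $\B\models\appr(\varphi,\alpha+\varepsilon_k+\beta)(b^0,\dots,b^{m-1})$ for all sufficiently large $n$.

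At this point I have $\B\models\appr(\varphi,\alpha+\varepsilon_k+\beta)(b^0,\dots,b^{m-1})$ for every choice of small $\alpha,\beta>0$ and every large $k$. Since $\varepsilon_k\downarrow\varepsilon$ and $D$ is dense, I can calibrate the three contributions $\alpha$, $\varepsilon_k$, $\beta$ within $D$ so that their sum realizes $\varepsilon+\gamma$ for an arbitrary $\gamma\in D$ with $\gamma>0$; the characterization that an atomic formula is satisfied exactly when all of its approximations are (Definition~\ref{Definition: Vocabulary with approximations}~\ref{item: An atomic formula is satisfied iff each of its approximations is satisfied}, together with additivity) then upgrades the family $\{\B\models\appr(\varphi,\varepsilon+\gamma)(\bar b)\}_{\gamma>0}$ to the desired $\B\models\appr(\varphi,\varepsilon)(b^0,\dots,b^{m-1})$. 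I expect the main obstacle to be precisely this error bookkeeping: one must simultaneously keep the perturbed formula good across both perturbations (which is where the growth $\ceil{ke^{\varepsilon}}$ in Definition~\ref{Definition: Good vocabulary (isomorphism)}~\ref{item: An approximating of a k-good formula is still k-good for a little big bigger k} is used) and arrange the approximation parameters, all of which live in $D$, to combine to the target value, using density and the arithmetic of $D$ to make the totals land where needed.
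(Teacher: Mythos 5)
Your proposal is correct and follows essentially the same route as the paper's proof: the convergence claim via the Simultaneous Cauchy condition plus completeness, and the preservation claim via the chain limit-in-$\A$ $\to$ finite stage in $\A$ (perturbation) $\to$ finite stage in $\B$ (hypothesis applied to the still-good perturbed formula) $\to$ limit in $\B$ (perturbation), followed by letting the accumulated error tend to $\varepsilon$ and invoking the ``satisfied iff all approximations are satisfied'' property. The only cosmetic difference is that the paper ties both perturbation errors to the single parameter $1/k$ rather than keeping separate $\alpha$ and $\beta$.
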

    \begin{proof}
        Let $\varphi(v_0,\dots,v_{m-1})$ be an atomic formula, and let$(i^j_n)_{n<\omega}$, $j<m$, be increasing sequences. By Definition~\ref{Definition: Good vocabulary (isomorphism)}~\ref{item: Simultaneous Cauchy}, $(a_{i^j_n})_{n<\omega}$ converges if and only if $(b_{i^j_n})_{n<\omega}$ converges. So suppose that they do and denote by $a^{j}$ and $b^{j}$ the limits, for $j<m$. Suppose that
        \begin{equation}
            \A\models\varphi(a^0,\dots,a^{m-1}).
            \label{eq: Convergence lemma eq 1}
        \end{equation}
        Let $k<\omega$ be large enough that $\appr(\varphi,{1/k+\varepsilon_k})\in\good(k)$. Let $\delta$ be a perturbation distance for $k$ and $1/k$ in both $\A$ and $\B$. Let $n_k\geq k$ be so large that for $n>n_k$, we have $d(a_{i^j_n},a^j),d(b_{i^j_n},b^j)<\delta$ for all $j<m$.

        Fix $n>n_k$. Then, as $\varphi\in\good(k)$ and $d(a_{i^j_n},a^j)<\delta$ for $j<m$, from~\eqref{eq: Convergence lemma eq 1} and Definition~\ref{Definition: Good vocabulary (isomorphism)}~\ref{item: Perturbation distance}, it follows that
        \[
            \A\models\appr(\varphi,{1/k})(a_{i^0_n},\dots,a_{i^{m-1}_n}).
        \]
        As $i^j_n>n_k\geq k$ for $j<m$ and $\appr(\varphi,{1/k})\in\good(k)$, by the initial assumption we get
        \begin{equation}
            \B\models\appr((\varphi,{1/k}),{\varepsilon_k})(b_{i^0_n},\dots,b_{i^{m-1}_n}).
            \label{eq: Convergence lemma eq 2}
        \end{equation}
        Now $\appr((\varphi,{1/k}),{\varepsilon_k}) = \appr(\varphi,{1/k+\varepsilon_k})$. As $\appr(\varphi,{1/k+\varepsilon_k})\in\good(k)$ and $d(b_{i^j_n},b^j)<\delta$ for $j<m$, we now have, by~\eqref{eq: Convergence lemma eq 2} and Definition~\ref{Definition: Good vocabulary (isomorphism)}~\ref{item: Perturbation distance},
        \[
            \B\models\appr((\varphi,{1/k+\varepsilon_k}),{1/k})(b^0,\dots,b^{m-1}).
        \]

        Now $\appr(\appr(\varphi,{1/k+\varepsilon_k}),{1/k}) = \appr(\varphi,{2/k+\varepsilon_k})$, so we have shown that for arbitrarily large $k<\omega$,
        \[
            \B\models\appr(\varphi,{2/k+\varepsilon_k})(b^0,\dots,b^{m-1}).
        \]
        As $2/k+\varepsilon_k\to\varepsilon$ when $k\to\infty$, this means that $\B\models\appr(\appr(\varphi,\varepsilon),\zeta)(b^0,\dots,b^{m-1})$ for all $\zeta>0$, whence, by Definition~\ref{Definition: Vocabulary with approximations}~\ref{item: Approximation}~\ref{item: An atomic formula is satisfied iff each of its approximations is satisfied},
        \[
            \B\models\appr(\varphi,\varepsilon)(b^0,\dots,b^{m-1}). \qedhere
        \]
    \end{proof}

    \begin{lemma}
        \label{Lemma: Perturbation distance function}
        Let $L$ be good for $(\class,\isom)$. Then for any $\A\in\class$, there is a function $\delta\colon(0,\infty)\times\omega\to(0,\infty)$ that is increasing in the first argument and decreasing in the second argument, such that for any $\varepsilon>0$ and $k<\omega$, the number $\delta(\varepsilon,k)$ is a perturbation distance for $\varepsilon$ and $k$ in $\A$.
    \end{lemma}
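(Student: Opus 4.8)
The plan is to choose, for each pair $(\varepsilon,k)$, a perturbation distance that is as large as the definition permits, and then cap and halve it so that the resulting function is finite, positive and monotone. First I would fix $\A\in\class$ and, for each $\varepsilon>0$ and $k<\omega$, consider
\[
    P(\varepsilon,k) = \{\delta>0 \mid \delta \text{ is a perturbation distance for } \varepsilon \text{ and } k \text{ in } \A\}.
\]
By Definition~\ref{Definition: Good vocabulary (isomorphism)}~\ref{item: Perturbation distance} each $P(\varepsilon,k)$ is nonempty, and it is downward closed in $(0,\infty)$: if $\delta\in P(\varepsilon,k)$ and $0<\delta'<\delta$, then any $\bar a,\bar b$ with $d(\bar a,\bar b)<\delta'$ also satisfy $d(\bar a,\bar b)<\delta$, so the defining implication still holds and $\delta'\in P(\varepsilon,k)$. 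Hence $P(\varepsilon,k)\supseteq(0,s(\varepsilon,k))$, where $s(\varepsilon,k)\coloneqq\sup P(\varepsilon,k)\in(0,\infty]$, and every positive number strictly below $s(\varepsilon,k)$ is itself a perturbation distance.

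Next I would check that $s$ is already monotone in the right directions. For the second argument, Definition~\ref{Definition: Good vocabulary (isomorphism)}~\ref{item: k-good formulae increase when k increases} gives $\good(k)\subseteq\good(k+1)$, so the condition defining a perturbation distance for $\varepsilon$ and $k+1$ quantifies over more formulae than that for $\varepsilon$ and $k$; thus $P(\varepsilon,k+1)\subseteq P(\varepsilon,k)$ and $s(\varepsilon,k)$ is non-increasing in $k$. For the first argument the key fact is that larger approximations are weaker, i.e. $\appr(\varphi,\varepsilon)\models\appr(\varphi,\varepsilon')$ whenever $\varepsilon\leq\varepsilon'$: applying Definition~\ref{Definition: Vocabulary with approximations}~\ref{item: Approximation}~\ref{item: Better approximation of a formula implies worse approximation of the formula} to the atomic formula $\appr(\varphi,\varepsilon)$ with increment $\varepsilon'-\varepsilon\in D$ gives $\appr(\varphi,\varepsilon)\models\appr(\appr(\varphi,\varepsilon),\varepsilon'-\varepsilon)$, which by item~\ref{item: Error in approximation is additive} equals $\appr(\varphi,\varepsilon')$. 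Composing the implication defining a perturbation distance for $\varepsilon$ with this shows $P(\varepsilon,k)\subseteq P(\varepsilon',k)$, so $s(\varepsilon,k)$ is non-decreasing in $\varepsilon$.

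Finally I would set
\[
    \delta(\varepsilon,k) = \tfrac{1}{2}\min\bigl(\varepsilon,\,s(\varepsilon,k)\bigr),
\]
reading the minimum as $\varepsilon/2$ when $s(\varepsilon,k)=\infty$. This is positive and finite, and since $\delta(\varepsilon,k)<s(\varepsilon,k)$ it lies in $P(\varepsilon,k)$ by the downward closure, hence is a genuine perturbation distance for $\varepsilon$ and $k$. Monotonicity is inherited from $s$: the map $\varepsilon\mapsto\min(\varepsilon,s(\varepsilon,k))$ is a minimum of two non-decreasing functions and is therefore non-decreasing, while $k\mapsto\min(\varepsilon,s(\varepsilon,k))$ is non-increasing. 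If strict monotonicity is required, one multiplies by a bounded strictly increasing factor in $\varepsilon$ and a strictly decreasing factor in $k$ (for instance $(1-e^{-\varepsilon})/(k+1)$) after capping $s$ at $1$; decreasing the chosen value never leaves $P(\varepsilon,k)$, so the perturbation-distance property survives.

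I expect the only genuine subtlety to be the monotonicity in the first argument, namely the implication $\appr(\varphi,\varepsilon)\models\appr(\varphi,\varepsilon')$ for $\varepsilon\leq\varepsilon'$. This is immediate from items~\ref{item: Better approximation of a formula implies worse approximation of the formula} and~\ref{item: Error in approximation is additive} once $\varepsilon'-\varepsilon\in D$, which the axioms for $\weakneg$ effectively require of $D$ (the term $\appr(\varphi,\varepsilon-\delta)$ in Definition~\ref{Definition: Vocabulary with approximations}~\ref{item: Weak negation}~\ref{item: Approximation of weak negation} is only meaningful when such differences lie in $D$); for $\varepsilon\in(0,\infty)\setminus D$, where $\appr(\varphi,\varepsilon)$ is undefined, I would first build $\delta$ on $D\cap(0,\infty)$ and then extend it monotonically to all of $(0,\infty)$, e.g. by $\delta(\varepsilon,k)=\sup\{\delta(\varepsilon',k)\mid \varepsilon'\in D,\ \varepsilon'\leq\varepsilon\}$. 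The remaining bookkeeping—finiteness, positivity, and staying strictly below $s(\varepsilon,k)$—is handled uniformly by the cap at $\varepsilon$ and the factor $\tfrac{1}{2}$.
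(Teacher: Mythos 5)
Your proposal is correct and follows essentially the same route as the paper: take the supremum of the perturbation distances for each pair $(\varepsilon,k)$, observe downward closure and monotonicity, and adjust to get a single monotone function (the paper forces monotonicity in $k$ by taking $\min_{i\leq k}$ of the suprema, while you derive it directly from $\good(k)\subseteq\good(k+1)$). Your version is in fact slightly more careful on two points the paper glosses over — the possibility that the supremum is infinite, and the justification that the suprema are non-decreasing in $\varepsilon$ via $\appr(\varphi,\varepsilon)\models\appr(\varphi,\varepsilon')$.
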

    \begin{proof}
        First, for $k<\omega$, let $\delta_k\colon(0,\infty)\to(0,\infty)$ be the function defined by
        \[
            \delta_k(\varepsilon) = \sup\{r \mid \text{r is a perturbation distance for $\varepsilon$ and $k$}\}.
        \]
        Then $\delta_k(\varepsilon)$ is a perturbation distance for $k$ and $\varepsilon$ and increasing in $\varepsilon$.

        Now let $\delta(\varepsilon,k)=\min_{i\leq k}\delta_i(\varepsilon)$. Then clearly $\delta$ is decreasing in the second argument, and as each $\delta_i$ is increasing, $\delta$ is increasing in the first argument. Clearly $\delta(\varepsilon,k)$ is also a perturbation distance for $k$ and $\varepsilon$.
    \end{proof}

    \subsection{Correspondence notions}\label{Subsection: Correspondence notions}

    \begin{definition}
        By a \emph{(set) correspondence} between two sets $A$ and $B$, we mean simply a binary relation $R$ such that $\dom(R)=A$ and $\ran(R)=B$.
    \end{definition}

    \begin{definition}
        \label{Definition: Correspondence notion}
        Let $L$ be a vocabulary with approximations with respect to a class $\class$ of $L$-structures, and, for any $\varepsilon\geq 0$, let $\corr_\varepsilon$ be set of relations $R\subseteq\A\times\B$, for $\A,\B\in\class$, such that the following hold.
        \begin{enumerate}

            \item If $R\in\corr_\varepsilon$, then there are $\A,\B\in\class$ such that $R$ is a set correspondence between a dense\footnote{Dense being in the sense of the definable metric of Definition~\ref{Definition: Vocabulary with approximations}~\ref{item: Definable metric}.} subset of $\A$ and a dense subset of $\B$.

            \item If $f\colon\A\to\B$ is an $L$-isomorphism, for $\A,\B\in\class$, then $f\in\corr_0$.

            \item $R\in\corr_\varepsilon$ if and only if $R^{-1}\in\corr_\varepsilon$.

            \item If $\A,\B,\model C\in\class$, $R\in\corr_\varepsilon$ and $R'\in\corr_{\varepsilon'}$, $\dom(R)$ is dense in $\A$, $\ran(R)$ and $\dom(R')$ are dense in $\B$ and $\ran(R')$ is dense in $\model C$, then there is $R''\in \corr_{\varepsilon+\varepsilon'}$ such that $\dom(R'')$ is dense in $\A$ and $\ran(R'')$ is dense in $\model C$.

            \item\label{item: Preserving atomic formulae suffices for epsilon-correspondence}
            For $\A,\B\in\class$, $\varepsilon>0$ and any dense sets $\dense\A\subseteq\A$ and $\dense\B\subseteq\B$, the following are equivalent.
            \begin{enumerate}
                \item There is $\varepsilon'\in(0,\varepsilon)$ and $R\in\corr_{\varepsilon'}$ such that $\dom(R)=\dense\A$ and $\ran(R)=\dense\B$.
                \item There is $\varepsilon'\in(0,\varepsilon)$ and a set correspondence $R$ between $\dense\A$ and $\dense\B$ such that for all atomic $\varphi(v_0,\dots,v_{m-1})$ and $(a_i,b_i)\in R$, $i<m$, we have
                \[
                    \A\models\varphi(\bar{a}) \implies \B\models\appr(\varphi,\varepsilon')(\bar{b}).
                \]
            \end{enumerate}

        \end{enumerate}
        Then we call the sequence $\corr = (\corr_\varepsilon)_{\varepsilon\geq 0}$ a correspondence notion of $\class$ and elements of $\corr_\varepsilon$ $\varepsilon$-correspondences. If $R\in\corr_\varepsilon$, $\dom(R)$ is dense in $\A$ and $\ran(\B)$ is dense in $\B$, we then say that $R$ is an $\varepsilon$-correspondence \emph{between} $\A$ and $\B$.
    \end{definition}

    \begin{definition}
        Given a class $\class$ and a correspondence notion $\corr=(\corr_\varepsilon)_{\varepsilon\geq 0}$, we can define a pseudometric $d_{\corr}$ on $\class$ by setting
        \[
            d_{\corr}(\A,\B) \coloneqq \inf\{\varepsilon\geq 0 \mid \text{there exists an $\varepsilon$-correspondence between $\A$ and $\B$}\}.
        \]
    \end{definition}

    \section{Examples}\label{Section: Examples}

    We give examples of classes $\class$, isomorphism notions $\isom=(\isom_\varepsilon)_{\varepsilon\geq 0}$ and correspondence notions $\corr=(\corr_\varepsilon)_{\varepsilon\geq 0}$, and vocabularies $L$ such that $L$ is good for $(\class,\isom)$.

    We consider three classes of structures:
    \begin{itemize}
        \item the class $\MS$ of all complete metric spaces,
        \item the class $\BS$ of all real\footnote{For simplicity of notation we only consider real Banach spaces, but conceptually complex Banach spaces are no different and generalizing the theory to cover complex spaces is trivial.} Banach spaces and
        \item the class $\BSb$ of the closed unit balls of all real Banach spaces.
    \end{itemize}
    We denote
    \begin{itemize}
        \item $\LM = \{\dr_r,\Dr_r \mid r\in D\cap[0,\infty) \}$,
        \item $\LB = \{+,f_\lambda, P, Q \mid \lambda\in D\}$ and
        \item $\LBb = \{P_{r}, Q_{r}, g_{\lambda_0,\dots,\lambda_{n-1}} \mid r\in D\cap[0,1] , n < \omega, \lambda_i\in D, \sum_{i<n}\abs{\lambda_i}=1 \}$,
    \end{itemize}
    where $\dr_r$ and $\Dr_r$ are binary relation symbols, $+$ is a binary function symbol, $f_\lambda$ is a unary function symbol, $P$, $Q$, $P_r$ and $Q_r$ are unary relation symbols, $g_{\lambda_0,\dots,\lambda_{n-1}}$ is an $n$-ary function symbol and $D$ is the closure of $\Q$ under addition, multiplication, division by non-zero elements and the exponential function $e^x$.

    We consider $\MS$ as a class of $\LM$-structures by interpreting the atomic formulae as follows:
    \[
        \A\models\dr_r(a,b) \iff d_\A(a,b)\leq r,
    \]
    and
    \[
        \A\models\Dr_r(a,b) \iff d_\A(a,b)\geq r.
    \]
    We consider $\BS$ as a class of $\LB$-structures by letting $+^{\A}$ be the addition of the Banach space $\A$, $f_\lambda^{\A}$ the scalar multiplication by the real $\lambda$ and
    \[
        \A\models P(t(\bar{a})) \iff \norm{t^\A(\bar{a})} \leq 1
    \]
    and
    \[
        \A\models Q(t(\bar{a})) \iff \norm{t^\A(\bar{a})} \geq 1
    \]
    for all $\LB$-terms $t(\bar{v})$. We usually denote the term $f_\lambda(t)$ just by $\lambda t$. Similarly, we consider $\BSb$ as a class of $\LBb$-structures by letting $g_{\lambda_0,\dots,\lambda_{n-1}}^\A(a_0,\dots,a_{n-1}) = \sum_{i<n}\lambda_i a_i$ and
    \[
        \A\models P_r(t(\bar{a}) \iff \norm{t^\A(\bar{a})} \leq r
    \]
    and
    \[
        \A\models Q_r(t(\bar{a}) \iff \norm{t^\A(\bar{a})} \geq r
    \]
    for all $\LBb$-terms $t(\bar{v})$. We usually denote the term $g_{\lambda_0,\dots,\lambda_{n-1}}(t_0,\dots,t_{n-1})$ simply by $\sum_{i<n}\lambda_i t_i$.

    \subsection{Metric spaces and an isomorphism notion}

    Define an approximation $\appr$ of $\LM$ by setting $\appr(\dr_r(u,v),\varepsilon) = \dr_{e^\varepsilon r}(u,v)$ and $\appr(\Dr_r(u,v),\varepsilon) = \Dr_{e^{-\varepsilon}r}(u,v)$. Define $\weakneg$ by setting $\weakneg(\dr_r(u,v)) = \Dr_r(u,v)$ and $\weakneg(\Dr_r(u,v)) = \dr_r(u,v)$. It is easy to check that $\LM$ together with $\appr$ and $\weakneg$ is a vocabulary with approximations with respect to $\MS$.

    Let $\isom_\varepsilon$ be the class of all $e^\varepsilon$-bi-Lipschitz homeomorphisms $\A\to\B$ for $\A,\B\in\MS$. Then $\isom = (\isom_\varepsilon)_{\varepsilon\geq 0}$ is an isomorphism notion of $\MS$. We check condition~\ref{item: Preserving atomic formulae suffices for epsilon-isomorphism} of Definition~\ref{Definition: Isomorphism notion} in Lemmata~\ref{Lemma: Lipschitz isomorphism preserves formulae} and~\ref{Lemma: For Lipschitz isomorphism suffices to preserve k-good formulae}, other conditions are clear.

    Given $k<\omega$, let $\good(k)$ be the set of $\dr_r(u,v)$ and $\Dr_r(u,v)$ such that $r\geq 1/k$. We show that with these definitions, $\LM$ is good for $(\MS,\isom)$. In Lemmata~\ref{Lemma: Lipschitz perturbation distance} and~\ref{Lemma: Lipschitz tail-dense sequences behave nicely}, we check the conditions~\ref{item: Perturbation distance} and~\ref{item: Simultaneous Cauchy} of Definition~\ref{Definition: Good vocabulary (isomorphism)}; the rest are clear.

    \begin{lemma}
        \label{Lemma: Lipschitz vice versa follows if satisfaction is approximate}
        Let $k<\omega$, $\varepsilon\geq 0$, $\A,\B\in\MS$, $a,a'\in\A$ and $b,b'\in\B$. Suppose that for all $\varphi(v,u)\in\good(k)$,
        \[
            \A\models\varphi(a,a') \implies \B\models\appr(\varphi,\varepsilon)(b,b').
        \]
        Then for all $r\geq 1/k$, also
        \[
            \B\models \dr_r(b,b') \implies \A\models \appr(\dr_r(u,v),\varepsilon)(a,a').
        \]
    \end{lemma}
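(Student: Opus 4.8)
The plan is to reduce everything to two real numbers, $s := d_\A(a,a')$ and $t := d_\B(b,b')$, and to track how the hypothesis constrains their relationship. Unwinding the interpretations of $\dr_r$ and $\Dr_r$ together with the approximation $\appr(\Dr_r(u,v),\varepsilon) = \Dr_{e^{-\varepsilon}r}(u,v)$, the hypothesis specialized to the formulae $\Dr_{r'}(u,v)\in\good(k)$ (those with $r'\geq 1/k$) says exactly that $s\geq r' \implies t\geq e^{-\varepsilon}r'$ for every $r'\geq 1/k$. The desired conclusion, after unwinding $\appr(\dr_r(u,v),\varepsilon) = \dr_{e^\varepsilon r}(u,v)$, is that $t\leq r \implies s\leq e^\varepsilon r$ for every $r\geq 1/k$. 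So the whole content is that a lower-bound transfer from $\A$ to $\B$ (the $\Dr$ clauses) forces the dual upper-bound transfer from $\B$ to $\A$.

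I would prove this by contradiction. Fix $r\geq 1/k$ with $t\leq r$ and suppose $s> e^\varepsilon r$. I would then invoke density of $D$ to select a parameter $r'\in D$ with $e^\varepsilon r < r' < s$; since $e^\varepsilon\geq 1$ and $r\geq 1/k$, this $r'$ automatically satisfies $r'\geq 1/k$, so $\Dr_{r'}(u,v)$ is a legitimate $k$-good formula to feed into the hypothesis. Because $s> r'$ we have $\A\models\Dr_{r'}(a,a')$, and the hypothesis then yields $t\geq e^{-\varepsilon}r' > e^{-\varepsilon}e^\varepsilon r = r$, contradicting $t\leq r$. Hence $s\leq e^\varepsilon r$, which is precisely $\A\models\appr(\dr_r(u,v),\varepsilon)(a,a')$.

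I expect the only subtle point to be the density step: the true distance $s$ need not itself lie in $D$, so one cannot simply plug $r'=s$ into the hypothesis. Choosing $r'$ strictly between $e^\varepsilon r$ and $s$ from the dense set $D$ sidesteps this, and the gap $s> e^\varepsilon r$ is exactly what provides room for such a choice. The other thing worth flagging --- more a conceptual observation than an obstacle --- is that the $\dr$ clauses of the hypothesis are never used; the ``vice versa'' direction for $\dr$ is powered entirely by the $\Dr$ clauses, which is the structural reason the lemma holds and presumably why it is isolated as a separate step toward verifying condition~\ref{item: Preserving atomic formulae suffices for epsilon-isomorphism} of Definition~\ref{Definition: Isomorphism notion}.
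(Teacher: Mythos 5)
Your proof is correct and is essentially the paper's own argument: the paper also proves the (contrapositive of the) claim by assuming $d(a,a')>e^\varepsilon r$, picking a slightly larger parameter (there written multiplicatively as $e^{\varepsilon'+\varepsilon}r$ for a small $\varepsilon'>0$, rather than your $r'\in D$ chosen by density) so that the corresponding $\Dr$-formula is still $k$-good, and then applying the hypothesis to conclude $d(b,b')>r$. Your observation that only the $\Dr$ clauses of the hypothesis are used matches the paper's proof as well.
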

    \begin{proof}
        We show the contrapositive of the claim. Suppose that $\A\not\models \appr(\dr_r(u,v),\varepsilon)(a,a')$, i.e. $d(a,a')>e^\varepsilon r$. Then there is $\varepsilon'>0$ such that $d(a,a')\geq e^{\varepsilon'}e^\varepsilon r$. But then $\A\models \Dr_{e^{\varepsilon'+\varepsilon}r}(a,a')$, and $\Dr_{e^{\varepsilon'+\varepsilon}r}(u,v)$ is still in $\good(k)$, so by the initial assumption $\B\models \appr(\Dr_{e^{\varepsilon'+\varepsilon}r}(u,v),\varepsilon)(b,b')$, i.e. $d(b,b')\geq e^{\varepsilon'}r>r$. Thus $\B\not\models \dr_r(b,b')$.
    \end{proof}

    \begin{lemma}
        \label{Lemma: Lipschitz isomorphism preserves formulae}
        Let $f\colon\A\to\B$ be a surjection. If $f\in\isom_\varepsilon$, then for all atomic $\varphi(u,v)$ and $a,a'\in\A$,
        \[
            \A\models\varphi(a,a') \implies \B\models\appr(\varphi,{\varepsilon})(f(a),f(a')).
        \]
    \end{lemma}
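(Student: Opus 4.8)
The plan is to reduce the statement to a short case analysis on the shape of the atomic formula. First I would recall that the vocabulary $\LM$ contains no function symbols, so the only atomic formulae with free variables among $u,v$ are $\dr_r(u,v)$ and $\Dr_r(u,v)$ (together with their trivial repeated-variable instances such as $\dr_r(u,u)$, and, up to the symmetry of the metric, the variants with $u,v$ swapped). Next I would unpack membership in $\isom_\varepsilon$: since $\isom_\varepsilon$ is by definition the class of $e^\varepsilon$-bi-Lipschitz homeomorphisms, $f\in\isom_\varepsilon$ says precisely that
\[
    e^{-\varepsilon}\,d_\A(a,a') \leq d_\B(f(a),f(a')) \leq e^{\varepsilon}\,d_\A(a,a')
\]
for all $a,a'\in\A$. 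Both inequalities, including the case $a'=a$, will be used.

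With these two inequalities in hand, the two cases match the two approximation rules exactly. For $\varphi=\dr_r(u,v)$ I would use the upper Lipschitz bound: $\A\models\dr_r(a,a')$ means $d_\A(a,a')\leq r$, whence $d_\B(f(a),f(a'))\leq e^{\varepsilon}d_\A(a,a')\leq e^{\varepsilon}r$, which is exactly $\B\models\dr_{e^\varepsilon r}(f(a),f(a'))$, i.e.\ $\B\models\appr(\varphi,\varepsilon)(f(a),f(a'))$. For $\varphi=\Dr_r(u,v)$ I would use the lower Lipschitz bound: $\A\models\Dr_r(a,a')$ means $d_\A(a,a')\geq r$, whence $d_\B(f(a),f(a'))\geq e^{-\varepsilon}d_\A(a,a')\geq e^{-\varepsilon}r$, which is $\B\models\Dr_{e^{-\varepsilon}r}(f(a),f(a'))=\appr(\varphi,\varepsilon)(f(a),f(a'))$. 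The repeated-variable instances require no separate treatment, as they fall under the same computation (with $a'=a$), any false premise being handled vacuously.

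There is no genuine obstacle here: the whole content is the observation that the two directions of the bi-Lipschitz inequality are exactly calibrated to the two approximation schemes $\dr_r\mapsto\dr_{e^\varepsilon r}$ and $\Dr_r\mapsto\Dr_{e^{-\varepsilon}r}$, the dilation factor $e^{\varepsilon}$ having been chosen precisely so that the $e^\varepsilon$-bi-Lipschitz maps constituting $\isom_\varepsilon$ line up with the $\varepsilon$-approximation. The only point demanding a moment's care is keeping track of which inequality governs which atomic predicate, i.e.\ that the \emph{upper} bound controls the upper-distance predicate $\dr_r$ while the \emph{lower} bound controls $\Dr_r$.
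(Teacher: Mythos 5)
Your proof is correct and is exactly the argument the paper has in mind: the paper's own proof of this lemma is simply ``Clear,'' and your case analysis on $\dr_r$ versus $\Dr_r$, using the two directions of the $e^\varepsilon$-bi-Lipschitz inequality against the two approximation rules $\dr_r\mapsto\dr_{e^\varepsilon r}$ and $\Dr_r\mapsto\Dr_{e^{-\varepsilon}r}$, is the intended unpacking.
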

    \begin{proof}
        Clear.
    \end{proof}

    \begin{lemma}
        \label{Lemma: For Lipschitz isomorphism suffices to preserve k-good formulae}
        Let $f\colon\A\to\B$ be a surjection. If for all atomic $\varphi(u,v)$ and $a,a'\in\A$,
        \[
            \A\models\varphi(a,a') \implies \B\models\appr(\varphi,{\varepsilon})(f(a),f(a')),
        \]
        then $f\in\isom_\varepsilon$.
    \end{lemma}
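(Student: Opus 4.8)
The plan is to unwind the two atomic predicates of $\LM$ and convert the formula-preservation hypothesis directly into the two bi-Lipschitz inequalities, using that $D$ is dense in $[0,\infty)$.

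First I would feed the hypothesis the atomic formula $\dr_r(u,v)$. Recalling that $\appr(\dr_r(u,v),\varepsilon) = \dr_{e^\varepsilon r}(u,v)$, the assumption becomes: for every $r\in D$ and all $a,a'\in\A$, if $d_\A(a,a')\leq r$ then $d_\B(f(a),f(a'))\leq e^\varepsilon r$. Fixing $a,a'$ and letting $r$ range over the elements of $D$ with $r\geq d_\A(a,a')$, the density of $D$ lets me take $r\downarrow d_\A(a,a')$, which gives the upper estimate $d_\B(f(a),f(a'))\leq e^\varepsilon d_\A(a,a')$.

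Next I would feed the hypothesis the formula $\Dr_r(u,v)$. Since $\appr(\Dr_r(u,v),\varepsilon)=\Dr_{e^{-\varepsilon}r}(u,v)$, this reads: if $d_\A(a,a')\geq r$ then $d_\B(f(a),f(a'))\geq e^{-\varepsilon}r$. Now I let $r$ range over the elements of $D$ with $r\leq d_\A(a,a')$ and pass to $r\uparrow d_\A(a,a')$, obtaining the lower estimate $d_\B(f(a),f(a'))\geq e^{-\varepsilon}d_\A(a,a')$; the boundary case $d_\A(a,a')=0$ is trivial.

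These two inequalities together say precisely that $f$ is $e^\varepsilon$-bi-Lipschitz. Finally I would observe that the lower estimate forces $f$ to be injective—distinct points of $\A$ have positive distance, hence positive image distance—so that together with the assumed surjectivity $f$ is a bijection, and a bi-Lipschitz bijection between metric spaces is automatically a homeomorphism whose inverse is again bi-Lipschitz. Hence $f\in\isom_\varepsilon$. The only point demanding any care is that $d_\A(a,a')$ need not itself lie in $D$, so one cannot substitute $r=d_\A(a,a')$ outright; the argument instead approximates this value from above for $\dr_r$ and from below for $\Dr_r$, which is exactly why the density of $D$ is invoked, and this is the sole (mild) obstacle.
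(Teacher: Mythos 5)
Your proof is correct and follows essentially the same route as the paper: the upper estimate is obtained exactly as there, by applying the hypothesis to $\dr_r$ and letting $r$ decrease to $d_\A(a,a')$ through $D$. For the lower estimate the paper detours through Lemma~\ref{Lemma: Lipschitz vice versa follows if satisfaction is approximate} (whose proof itself rests on the $\Dr_r$ predicates), whereas you apply the hypothesis to $\Dr_r$ directly; since in this lemma \emph{all} atomic formulae are assumed preserved, that shortcut is legitimate and, if anything, slightly more direct.
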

    \begin{proof}
        Let $a,a'\in\A$. Then for any $r>0$,
        \begin{align*}
            d(a,a') < r &\implies \A\models\dr_r(a,a') \\
            &\implies \B\models\dr_{e^\varepsilon r}(f(a),f(a')) \\
            &\implies e^{-\varepsilon}d(f(a),f(a'))\leq r,
        \end{align*}
        whence $e^{-\varepsilon}d(f(a),f(a'))\leq d(a,a')$, proving that $f$ is $e^\varepsilon$-Lipschitz. By Lemma~\ref{Lemma: Lipschitz vice versa follows if satisfaction is approximate}, noticing that $\dr_r$ is $k$-good for any $k\geq 1/r$, we can use a symmetric argument to show that $e^{-\varepsilon}d(a,a')\leq d(f(a),f(a'))$. Hence $f$ is $e^\varepsilon$-bi-Lipschitz.
    \end{proof}

    \begin{lemma}
        \label{Lemma: Lipschitz perturbation distance}
        For any $\A\in\class$, $k<\omega$ and $\varepsilon>0$, there exists $\delta>0$ such that for any $\varphi(u,v)\in\good(k)$ and $a,a',b,b'\in\A$, if $d(a,b),d(a',b')<\delta$, then
        \[
            \A\models\varphi(a,a')\implies\A\models\appr(\varphi,\varepsilon)(b,b').
        \]
    \end{lemma}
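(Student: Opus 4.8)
The plan is to exhibit a single $\delta$, depending only on $k$ and $\varepsilon$ (and in fact not on $\A$ at all), that works simultaneously for every $k$-good formula. The starting point is that, with the present definitions, $\good(k)$ consists \emph{precisely} of the formulae $\dr_r(u,v)$ and $\Dr_r(u,v)$ with $r\geq 1/k$. So there are only two shapes of $\varphi$ to consider, and in both the crucial leverage is that the radius $r$ is bounded below by $1/k$: this is what lets a uniform $\delta$ suffice even though $r$ itself ranges over an unbounded set. Concretely I would set
\[
    \delta = \frac{1-e^{-\varepsilon}}{2k},
\]
which is strictly positive because $\varepsilon>0$, and then verify the two cases by the (ordinary and reverse) triangle inequality for $d_\A$.

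For a formula of the form $\dr_r(u,v)$ with $r\geq 1/k$, the hypothesis $\A\models\dr_r(a,a')$ reads $d(a,a')\leq r$, and the triangle inequality together with $d(a,b),d(a',b')<\delta$ gives $d(b,b')\leq d(b,a)+d(a,a')+d(a',b')<r+2\delta$. It then remains to check $r+2\delta\leq e^\varepsilon r$, i.e. $2\delta\leq(e^\varepsilon-1)r$; since $2\delta=(1-e^{-\varepsilon})/k\leq(e^\varepsilon-1)/k\leq(e^\varepsilon-1)r$ (using $r\geq 1/k$ and $1-e^{-\varepsilon}\leq e^\varepsilon-1$, equivalently $e^\varepsilon+e^{-\varepsilon}\geq 2$), we conclude $d(b,b')\leq e^\varepsilon r$, which is exactly $\A\models\appr(\dr_r(u,v),\varepsilon)(b,b')$.

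For a formula of the form $\Dr_r(u,v)$ with $r\geq 1/k$, the hypothesis $\A\models\Dr_r(a,a')$ reads $d(a,a')\geq r$, and the reverse triangle inequality gives $d(b,b')\geq d(a,a')-d(a,b)-d(a',b')>r-2\delta$. Here I would check $r-2\delta\geq e^{-\varepsilon}r$, i.e. $2\delta\leq(1-e^{-\varepsilon})r$, which holds since $2\delta=(1-e^{-\varepsilon})/k\leq(1-e^{-\varepsilon})r$. Hence $d(b,b')\geq e^{-\varepsilon}r$, i.e. $\A\models\appr(\Dr_r(u,v),\varepsilon)(b,b')$.

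There is essentially no serious obstacle here; the computation is elementary. The only point requiring a moment of care is the \emph{uniformity} of the choice: one must pick $\delta$ so that the required slack $(e^\varepsilon-1)r$ (resp. $(1-e^{-\varepsilon})r$) is dominated by $2\delta$ for \emph{all} admissible $r$ at once. Since these slacks are increasing in $r$ and $r\geq 1/k$, it is the smallest radius $r=1/k$ that is the binding constraint, and the displayed $\delta$ is built from exactly that worst case; taking the common lower bound $1-e^{-\varepsilon}\leq e^\varepsilon-1$ lets the same $\delta$ serve both the $\dr$ and $\Dr$ cases.
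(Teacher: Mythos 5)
Your proof is correct and follows essentially the same route as the paper's: the paper sets $\delta_{\dr}=(e^\varepsilon-1)/2k$ and $\delta_{\Dr}=(1-e^{-\varepsilon})/2k$ and takes their minimum, which is exactly your single $\delta=(1-e^{-\varepsilon})/2k$ since $1-e^{-\varepsilon}\leq e^\varepsilon-1$. Your write-up just makes explicit the triangle-inequality verifications that the paper leaves as ``easy to see.''
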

    \begin{proof}
        Let $k<\omega$ and $\varepsilon>0$ be arbitrary and let $\delta_{\dr} = (e^\varepsilon-1)/2k$ and $\delta_{\Dr} = (1-e^{-\varepsilon})/2k$. Let $r\geq 1/k$ and $a,a',b,b'\in\A^k$. Now it is easy to see that whenever $d(a,b),d(a',b')<\delta_{\dr}$, we have
        \[
            \A\models\dr_r(a,a') \implies \A\models\dr_{e^\varepsilon r}(b,b'),
        \]
        and whenever $d(a,b),d(a',b')<\delta_{\Dr}$, we have
        \[
            \A\not\models\Dr_{e^{-\varepsilon}r}(b,b') \implies \A\not\models\Dr_r(a,a').
        \]
        Thus $\delta \coloneqq \min\{\delta_{\dr},\delta_{\Dr}\}$ suffices.
    \end{proof}

    \begin{lemma}
        \label{Lemma: Lipschitz tail-dense sequences behave nicely}
        For sequences $(a_n)_{n<\omega}\in\A^\omega$ and $(b_n)_{n<\omega}\in\B^\omega$ and a sequence $(\varepsilon_n)_{n<\omega}$ of positive numbers converging down to $\varepsilon>0$, if for all $k<\omega$ and $\varphi(u,v)\in\good(k)$ we have
        \[
            \A\models\varphi(a_i, a_j) \implies \B\models\appr(\varphi,{\varepsilon_k})(b_i, b_j)
        \]
        for all  $i,j\geq k$, then, for any increasing sequence $(i_n)_{n<\omega}$ of indices, $(a_{i_n})_{n<\omega}$ is Cauchy if and only if $(b_{i_n})_{n<\omega}$ is Cauchy.
    \end{lemma}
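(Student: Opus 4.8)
The plan is to unwind the hypothesis into two quantitative comparisons between $d_\A$ and $d_\B$ along the tails of the two sequences, and then run a standard $\eta$-argument for each direction of the Cauchy equivalence separately, since the two directions rely on different atomic formulae. Concretely, since $\good(k)$ consists exactly of the $\dr_r$ and $\Dr_r$ with $r\geq 1/k$, and since $\appr(\dr_r,\varepsilon_k)=\dr_{e^{\varepsilon_k}r}$ while $\appr(\Dr_r,\varepsilon_k)=\Dr_{e^{-\varepsilon_k}r}$, the assumption unpacks, for all $i,j\geq k$ and all $r\geq 1/k$, into
\[
    d_\A(a_i,a_j)\leq r \implies d_\B(b_i,b_j)\leq e^{\varepsilon_k}r
\]
and
\[
    d_\A(a_i,a_j)\geq r \implies d_\B(b_i,b_j)\geq e^{-\varepsilon_k}r.
\]
Taking $r=\max\{d_\A(a_i,a_j),1/k\}$ in the first line yields, for all $i,j\geq k$, the upper bound $d_\B(b_i,b_j)\leq e^{\varepsilon_k}\max\{d_\A(a_i,a_j),1/k\}$, and taking $r=d_\A(a_i,a_j)$ in the second line (valid once $d_\A(a_i,a_j)\geq 1/k$) yields the lower bound $d_\B(b_i,b_j)\geq e^{-\varepsilon_k}d_\A(a_i,a_j)$.

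For the direction ``$(a_{i_n})_{n<\omega}$ Cauchy implies $(b_{i_n})_{n<\omega}$ Cauchy'', I would fix $\eta>0$. Since $(\varepsilon_n)_{n<\omega}$ converges, the factors $e^{\varepsilon_k}$ are bounded, so I can pick $k$ so large that $e^{\varepsilon_k}/k<\eta/2$; then, using that $(a_{i_n})_{n<\omega}$ is Cauchy, I pick $N$ with $i_N\geq k$ and $d_\A(a_{i_m},a_{i_n})<\eta/(2e^{\varepsilon_k})$ for all $m,n\geq N$. Because $(i_n)_{n<\omega}$ is increasing we have $i_m,i_n\geq k$, so the upper bound gives $d_\B(b_{i_m},b_{i_n})\leq\max\{e^{\varepsilon_k}d_\A(a_{i_m},a_{i_n}),\,e^{\varepsilon_k}/k\}<\eta$, which is exactly the Cauchy condition for $(b_{i_n})_{n<\omega}$.

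For the converse I would argue by contraposition. If $(a_{i_n})_{n<\omega}$ is not Cauchy, there is $\eta>0$ such that for every $N$ there are $m,n\geq N$ with $d_\A(a_{i_m},a_{i_n})\geq\eta$. Fix a single $k$ with $1/k\leq\eta$ and an index $N_0$ with $i_{N_0}\geq k$; then for each such witnessing pair with $m,n\geq N_0$ we have $i_m,i_n\geq k$ and $d_\A(a_{i_m},a_{i_n})\geq\eta\geq 1/k$, so the lower bound yields $d_\B(b_{i_m},b_{i_n})\geq e^{-\varepsilon_k}\eta$, a fixed positive gap. Hence $(b_{i_n})_{n<\omega}$ is not Cauchy either.

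The only real subtlety, and the step I would be most careful about, is the scale floor $1/k$ built into the definition of $\good(k)$: the hypothesis gives no control on distances below $1/k$, and the multiplicative distortion $e^{\varepsilon_k}$ tends to $e^\varepsilon>1$ rather than to $1$. The point is that for Cauchyness one only needs distances to drop below a prescribed $\eta$, so multiplying a small distance by the bounded factor $e^{\varepsilon_k}$ keeps it small, while the freedom to take $k$ arbitrarily large pushes the uncontrolled scale $1/k$ below $\eta$. Coordinating these two choices in the right order, first choosing $k$ large and only then the tail index $N$, is what makes both directions go through.
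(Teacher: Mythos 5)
Your proof is correct and follows essentially the same route as the paper's: exploit that the scale floor $1/k$ and the distortion factor $e^{\varepsilon_k}$ are both controlled by taking $k$ large before choosing the tail index. The only organizational difference is in the converse direction, where the paper invokes its ``vice versa'' lemma (Lemma~\ref{Lemma: Lipschitz vice versa follows if satisfaction is approximate}) and a symmetric argument, while you contrapose directly via the $\Dr_r$ formulae --- but that lemma is itself proved by exactly this contraposition, so the content is the same.
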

    \begin{proof}
        Let $(i_n)_{n<\omega}$ be an increasing sequence of indices and suppose $(a_{i_n})_{n<\omega}$ is Cauchy. Let $r>0$. As $(a_{i_n})_{n<\omega}$ is Cauchy, there is $k<\omega$ such that $n,m\geq k$ implies $d(a_n,a_m)\leq e^{-\varepsilon-1}r$. One can always choose $k$ to be large enough that $e^{-\varepsilon-1}r\geq 1/k$ and $\varepsilon_k-\varepsilon\leq 1$. Then $\dr_{e^{-\varepsilon-1}r}(u,v)\in\good(k)$ and $\A\models\dr_{e^{-\varepsilon-1}r}(a_n,a_m)$ for any $n,m\geq k$, so $\B\models\appr(\dr_{e^{-\varepsilon-1}r}(u,v),{\varepsilon_k})(b_n,b_m)$, i.e.
        \[
            d(b_n,b_m)\leq e^{\varepsilon_{k}}e^{-\varepsilon-1}r = e^{\varepsilon_k-\varepsilon-1}r\leq e^0 r \leq r.
        \]
        Thus $(b_{i_n})_{n<\omega}$ is Cauchy. By Lemma~\ref{Lemma: Lipschitz vice versa follows if satisfaction is approximate}, the other direction can be proved by using a symmetric argument.
    \end{proof}

    \begin{definition}
        The \emph{Lipschitz distance} $\dl(X,Y)$ of two metric spaces $X$ and $Y$ is the infimum of all $\varepsilon\geq 0$ such that there exists an $e^\varepsilon$-bi-Lipschitz homeomorphism $f\colon X\to Y$.
    \end{definition}

    \begin{lemma}
        For $\A,\B\in\MS$, we have $\dl(\A,\B)=d_{\isom}(\A,\B)$.
    \end{lemma}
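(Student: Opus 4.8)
The statement is essentially definitional, so the plan is simply to unwind the two definitions and observe that they describe the same set of admissible values of $\varepsilon$. Recall that in this subsection we set $\isom_\varepsilon$ to be the class of \emph{all} $e^\varepsilon$-bi-Lipschitz homeomorphisms $\A\to\B$. Consequently, for fixed $\A,\B\in\MS$, the statement ``there exists an $\varepsilon$-isomorphism $\A\to\B$'' and the statement ``there exists an $e^\varepsilon$-bi-Lipschitz homeomorphism $\A\to\B$'' are literally the same statement.

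\textbf{Main step.} I would argue that the two index sets over which the infima are taken coincide:
\[
    \{\varepsilon\geq 0 \mid \text{there exists an $\varepsilon$-isomorphism $\A\to\B$}\}
    =
    \{\varepsilon\geq 0 \mid \text{there exists an $e^\varepsilon$-bi-Lipschitz homeomorphism $\A\to\B$}\}.
\]
This equality is immediate from the definition of $\isom_\varepsilon$ just given. Taking the infimum of each side yields $d_{\isom}(\A,\B)=\dl(\A,\B)$ directly from the definitions of the two pseudometrics. No estimate or approximation argument is needed at this stage.

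\textbf{What has to be in place.} The only nontrivial point, which is not part of the equality itself but is a prerequisite for the quantity $d_{\isom}(\A,\B)$ to be well defined, is that $\isom=(\isom_\varepsilon)_{\varepsilon\geq 0}$ really is an isomorphism notion of $\MS$ in the sense of Definition~\ref{Definition: Isomorphism notion}. The conditions of that definition other than~\ref{item: Preserving atomic formulae suffices for epsilon-isomorphism} are routine (composition of $e^\varepsilon$- and $e^{\varepsilon'}$-bi-Lipschitz maps is $e^{\varepsilon+\varepsilon'}$-bi-Lipschitz, inverses preserve the bound, the families increase in $\varepsilon$, and isometries lie in $\isom_0$), and condition~\ref{item: Preserving atomic formulae suffices for epsilon-isomorphism} is exactly what Lemmata~\ref{Lemma: Lipschitz isomorphism preserves formulae} and~\ref{Lemma: For Lipschitz isomorphism suffices to preserve k-good formulae} establish. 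Thus there is no genuine obstacle to overcome here: once $\isom$ is recognized as an isomorphism notion, the asserted equality of distances holds by definition.
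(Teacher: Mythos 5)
Your proposal is correct and matches the paper's approach exactly: the paper's proof is simply ``Check the definitions,'' and your unwinding of the two infima over the same index set is precisely that check. The remark about the well-definedness of $d_{\isom}$ resting on $\isom$ being an isomorphism notion (verified in the surrounding lemmata) is accurate but not needed for the equality itself.
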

    \begin{proof}
        Check the definitions.
    \end{proof}

    \subsection{Metric spaces and a correspondence notion}

    Define another approximation $\appr$ of $\LM$ by setting $\appr(\dr_r(u,v),\varepsilon) = \dr_{r+2\varepsilon}(u,v)$ and $\appr(\Dr_r(u,v),\varepsilon) = \Dr_{r\dotminus 2\varepsilon}(u,v)$.\footnote{By $x\dotminus y$ we denote $\max\{x-y, 0\}$.} The function $\weakneg$ is defined as in the isomorphism notion case. Then $\LM$ with $\appr$ and $\weakneg$ is a vocabulary with approximations with respect to $\MS$.

    We let $\corr_\varepsilon$ be the class of all set correspondences $R$ between a dense set of $\A$ and dense set of $\B$, for $\A,\B\in\MS$, such that there exist isometric embeddings $\iota_{\A}\colon\dom(R)\to\model{C}$ and $\iota_{\B}\colon\ran(R)\to\model{C}$ into a third space $\model{C}\in\MS$ and for all $(a,b)\in R$, $d(\iota_{\A}(a),\iota_{\B}(b)) \leq \varepsilon$. Then it is not difficult to check that $\corr=(\corr_\varepsilon)_{\varepsilon\geq 0}$ is a correspondence notion of $\MS$. Only one item of Definition~\ref{Definition: Correspondence notion}, that is~\ref{item: Preserving atomic formulae suffices for epsilon-correspondence}, is nontrivial and is proven in Corollary~\ref{Corollary: Gromov-hausdorff formulae vs correspondences}.

    \begin{lemma}
        \label{Lemma: Gromov-Hausdorff Correspondence preserves truth}
        Let $\A,\B\in\MS$, and let $R\subseteq\A\times\B$. If $R\in\corr_\varepsilon$, then, whenever $(a,b),(a',b')\in R$ and $\varphi(u,v)$ is an atomic formula, we have
        \[
            \A\models\varphi(a,a') \implies \B\models\appr(\varphi,{\varepsilon})(b,b').
        \]
    \end{lemma}
    \begin{proof}
        Clear.
    \end{proof}

    \begin{lemma}
        \label{Lemma: Gromov-Hausdorff Preserving truth gives correspondence}
        Let $\A,\B\in\MS$. Let $R\subseteq\A\times\B$ be a set correspondence between a dense set of $\A$ and a dense set of $\B$, and suppose that whenever $(a,b),(a',b')\in R$, we have
        \[
            \A\models\varphi(a,a') \implies \B\models\appr(\varphi,{\varepsilon})(b,b')
        \]
        for all atomic formulae $\varphi(u,v)$. Then $R\in\corr_\varepsilon$.
    \end{lemma}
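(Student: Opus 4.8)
The strategy is to realize $R$ geometrically by amalgamating $\dom(R)$ and $\ran(R)$ into a single complete metric space in which each related pair sits at distance at most $\varepsilon$; this is the gluing underlying the Gromov--Hausdorff picture. The hypothesis must first be converted into a two-sided metric distortion estimate. Write $A_0 = \dom(R)$ and $B_0 = \ran(R)$ and fix $(a,b),(a',b') \in R$. Applying the assumption to the formula $\dr_r(u,v)$ for each $r \in D$ with $r \geq d_\A(a,a')$ gives $d_\B(b,b') \leq r + 2\varepsilon$; since $D$ is dense, letting $r$ decrease to $d_\A(a,a')$ yields $d_\B(b,b') \leq d_\A(a,a') + 2\varepsilon$. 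Applying it instead to $\Dr_r(u,v)$ for $r \leq d_\A(a,a')$ and letting $r$ increase to $d_\A(a,a')$ gives $d_\B(b,b') \geq d_\A(a,a') \dotminus 2\varepsilon$. Together (and noting that when $d_\A(a,a') < 2\varepsilon$ the bound $d_\A(a,a') - d_\B(b,b') \le d_\A(a,a') < 2\varepsilon$ holds trivially since $d_\B \ge 0$) these give
\[
    \abs{d_\A(a,a') - d_\B(b,b')} \leq 2\varepsilon \quad\text{for all } (a,b),(a',b') \in R.
\]

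Next I would put a pseudometric $\rho$ on the disjoint union $X = A_0 \sqcup B_0$ by letting $\rho$ agree with $d_\A$ on $A_0$, with $d_\B$ on $B_0$, and, for $a \in A_0$ and $b \in B_0$,
\[
    \rho(a,b) = \inf\{ d_\A(a,a') + \varepsilon + d_\B(b',b) \mid (a',b') \in R \}.
\]
Symmetry and $\rho(x,x)=0$ are immediate, and choosing $(a',b')=(a,b)$ in the infimum whenever $(a,b)\in R$ shows $\rho(a,b)\leq\varepsilon$ on $R$, which is exactly the distance bound required of an $\varepsilon$-correspondence in this example.

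The crux is the triangle inequality, whose only genuine cases mix the two sides. If $a_1,a_2\in A_0$ and $b\in B_0$, the inequality $\rho(a_1,b)\leq d_\A(a_1,a_2)+\rho(a_2,b)$ follows by pushing $d_\A(a_1,a')\leq d_\A(a_1,a_2)+d_\A(a_2,a')$ through the defining infimum. The inequality $d_\A(a_1,a_2)\leq\rho(a_1,b)+\rho(a_2,b)$ is where the distortion estimate is consumed: for any $(a',b'),(a'',b'')\in R$,
\begin{align*}
    d_\A(a_1,a_2) &\leq d_\A(a_1,a') + d_\A(a',a'') + d_\A(a'',a_2) \\
    &\leq d_\A(a_1,a') + d_\B(b',b) + d_\B(b,b'') + 2\varepsilon + d_\A(a'',a_2),
\end{align*}
using $d_\A(a',a'')\leq d_\B(b',b'')+2\varepsilon$ together with the triangle inequality in $\B$; regrouping the right-hand side as $(d_\A(a_1,a') + \varepsilon + d_\B(b',b)) + (d_\A(a_2,a'') + \varepsilon + d_\B(b'',b))$ and taking infima over both pairs yields the claim. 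The case of one point in $A_0$ and two in $B_0$ is symmetric.

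Finally I would pass to the metric quotient $X/\{\rho=0\}$ and let $\model{C}$ be its completion, so that $\model{C}\in\MS$. Since $\rho$ restricts to $d_\A$ on $A_0$ and to $d_\B$ on $B_0$, the canonical maps $\iota_\A\colon A_0\to\model{C}$ and $\iota_\B\colon B_0\to\model{C}$ are isometric embeddings, and $d_{\model{C}}(\iota_\A(a),\iota_\B(b)) = \rho(a,b)\leq\varepsilon$ for every $(a,b)\in R$; hence $R\in\corr_\varepsilon$. I expect the mixed-side triangle inequality to be the only real obstacle, as it is the single step that is not a formal consequence of the infimum definition and genuinely uses the two-sided distortion bound extracted in the first step.
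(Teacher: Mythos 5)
Your proof is correct and follows essentially the same route as the paper: the same gluing pseudometric $\rho(a,b)=\inf\{d_\A(a,a')+\varepsilon+d_\B(b',b)\mid (a',b')\in R\}$ on the disjoint union, with the mixed-case triangle inequality as the only substantive step. The one organizational difference is that you first extract the two-sided distortion bound $\abs{d_\A(a,a')-d_\B(b,b')}\leq 2\varepsilon$ from the $\dr_r$ and $\Dr_r$ formulae and then argue purely metrically, whereas the paper consumes the hypothesis inline via the contrapositive for $\Dr_r$ inside the triangle-inequality computation; your version is arguably cleaner, and your explicit quotient-and-completion step and restriction to $\dom(R)\sqcup\ran(R)$ tidy up details the paper leaves implicit.
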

    \begin{proof}
        The main idea of this proof is the same as in e.g. the proof of Theorem~7.3.25 in~\cite{BBI01}.
    
        Let $\model{C}$ be the disjoint union of $\A$ and $\B$, and define a function $d_{\model C}\colon\model{C}\to[0,\infty)$ by setting $d_{\model C}(a,a') = d_{\A}(a,a')$ for $a,a'\in\A$, $d_{\model C}(b,b')=d_{\B}(b,b')$ for $b,b'\in\B$ and
        \[
            d_{\model C}(a,b) \coloneqq \inf\{d(a,a') + d(b,b') + \varepsilon \mid (a',b')\in R \}
        \]
        for $a\in\A$ and $b\in\B$ (and $d_{\model C}(b,a)$ similarly). We show that $d_{\model C}$ is a pseudometric. The only nontrivial requirement is the triangle inequality. For that, suffices to show that for all $a,a'\in\A$ and $b,b'\in\B$ we have
        \begin{align*}
            d_{\model C}(a,a') &\leq d_{\model C}(a,b)  + d_{\model C}(b,a'), \\
            d_{\model C}(b,b') &\leq d_{\model C}(b,a)  + d_{\model C}(a,b'), \\
            d_{\model C}(a,b)  &\leq d_{\model C}(a,a') + d_{\model C}(a',b) \text{ and} \\
            d_{\model C}(a,b)  &\leq d_{\model C}(a,b') + d_{\model C}(b',b).
        \end{align*}
        We only show that $d_{\model C}(a,a') \leq d_{\model C}(a,b) + d_{\model C}(b,a')$, the others are similar.

        Let $r>0$ and suppose that $d_{\model C}(a,b)  + d_{\model C}(b,a')<r$. Then there are $(a'',b''),(a''',b''')\in R$ with
        \[
            d(a,a'')+d(b,b'')+\varepsilon + d(a',a''')+d(b,b''')+\varepsilon < r.
        \]
        Noting that $d(b'',b''')\leq d(b,b'')+d(b,b''')$, we get that
        \[
            d(b'',b''')<(r - d(a,a'') - d(a',a'''))-2\varepsilon,
        \]
        i.e. $\B\not\models\appr(\Dr_{r - d(a,a'') - d(a',a''')}(u,v),\varepsilon)(b'',b''')$. Thus $\A\not\models\Dr_{r - d(a,a'') - d(a',a''')}(a'',a''')$, i.e. $d(a,a'') + d(a'',a''') + d(a''',a') < r$. But then
        \[
            d_{\model C}(a,a') \leq d(a,a'') + d(a'',a''') + d(a''',a') < r.
        \]
        As $r$ was arbitrary, we obtain $d_{\model C}(a,a') \leq d_{\model C}(a,b) + d_{\model C}(b,a')$.

        Now the inclusion mappings $\A\to\model C$ and $\B\to\model C$ are isometric embeddings. Left is to show that for any $(a,b)\in R$ we have $d_{\model C}(a,b)\leq\varepsilon$. Suppose that $(a,b)\in R$. Then by definition,
        \[
            d_{\model C}(a,b) \leq d(a,a) + d(b,b) + \varepsilon = \varepsilon.
        \]
        This verifies that $R\in\corr_\varepsilon$.
    \end{proof}

    \begin{corollary}
        \label{Corollary: Gromov-hausdorff formulae vs correspondences}
        Let $\A,\B\in\MS$ and let $\dense\A\subseteq\A$ and $\dense\B\subseteq\B$ be dense sets of the respective spaces. Then for any $\varepsilon>0$ the following are equivalent.
        \begin{enumerate}
            \item There is $\varepsilon'\in(0,\varepsilon)$ and $R\in\corr_{\varepsilon'}$ such that $\dom(R)=\dense\A$ and $\ran(R)=\dense\B$.
            \item There is $\varepsilon'\in(0,\varepsilon)$ and a set correspondence $R$ between $\dense\A$ and $\dense\B$ such that whenever $(a,b),(a',b')\in R$, $i<n$, and $\varphi(v,u)$ is an atomic formula, we have
            \[
                \A\models\varphi(a,a') \implies \B\models\appr(\varphi,\varepsilon')(b,b').
            \]
        \end{enumerate}
    \end{corollary}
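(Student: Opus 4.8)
The plan is to read this equivalence off directly from the two preceding lemmas, since each implication is an immediate citation and no genuinely new argument is needed.

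For the implication (i) $\Rightarrow$ (ii), I would take the witnessing $\varepsilon' \in (0,\varepsilon)$ together with the $\varepsilon'$-correspondence $R \in \corr_{\varepsilon'}$ satisfying $\dom(R) = \dense\A$ and $\ran(R) = \dense\B$, and apply Lemma~\ref{Lemma: Gromov-Hausdorff Correspondence preserves truth} to this same $R$. Since $R$ is by hypothesis a set correspondence between $\dense\A$ and $\dense\B$, that lemma yields exactly the truth-preservation clause of (ii), with the very same $\varepsilon'$ and $R$.

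For the converse (ii) $\Rightarrow$ (i), I would take the witnessing $\varepsilon'$ and the set correspondence $R$ between $\dense\A$ and $\dense\B$ satisfying the preservation property, and invoke Lemma~\ref{Lemma: Gromov-Hausdorff Preserving truth gives correspondence} to conclude $R \in \corr_{\varepsilon'}$. The conditions $\dom(R) = \dense\A$ and $\ran(R) = \dense\B$ do not require separate proof: they are built into the notion of a set correspondence between $\dense\A$ and $\dense\B$. Hence (i) holds with the same $\varepsilon'$ and $R$.

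Since each direction is a one-line appeal, there is no real obstacle at this stage; all the substance---in particular the gluing construction producing the auxiliary space $\model C$ and the verification of its triangle inequality---already lives inside Lemma~\ref{Lemma: Gromov-Hausdorff Preserving truth gives correspondence}. The corollary simply repackages the two lemmas into the equivalence demanded by Definition~\ref{Definition: Correspondence notion}~\ref{item: Preserving atomic formulae suffices for epsilon-correspondence}, thereby discharging the one nontrivial clause of that definition for the present correspondence notion.
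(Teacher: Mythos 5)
Your argument is correct and is essentially the paper's own proof: the paper likewise just combines Lemma~\ref{Lemma: Gromov-Hausdorff Correspondence preserves truth} and Lemma~\ref{Lemma: Gromov-Hausdorff Preserving truth gives correspondence}, observing that together they give the stronger levelwise statement that a set correspondence $R$ between $\dense\A$ and $\dense\B$ lies in $\corr_{\varepsilon'}$ if and only if it $\varepsilon'$-preserves atomic formulae, from which the corollary follows with the same $\varepsilon'$ and $R$ in each direction.
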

    \begin{proof}
        Putting together Lemmata~\ref{Lemma: Gromov-Hausdorff Correspondence preserves truth} and~\ref{Lemma: Gromov-Hausdorff Preserving truth gives correspondence}, we obtain the following: for any set correspondence $R$ between a dense set of $\A$ and a dense set of $\B$, $R\in\corr_\varepsilon$ if and only if
        \[
            \A\models\varphi(a,a') \implies \B\models\appr(\varphi,\varepsilon')(b,b')
        \]
        holds for all $(a,b),(a',b')\in R$ and atomic formulae $\varphi(v,u)$. This stronger than our claim.
    \end{proof}

    \begin{definition}
        The \emph{Hausdorff distance} $\dH{X}(A,B)$ of two sets $A$ and $B$ in a metric space $X$ is the maximum of the two numbers
        \[
            \sup_{a\in A}d(a,B) \quad\text{and}\quad \sup_{b\in B}d(b,A).
        \]
        The \emph{Gromov--Hausdorff distance} $\dgh(X,Y)$ of two metric spaces $X$ and $Y$ is the infimum of
        \[
            \dH{Z}(\iota_{X}(X),\iota_{Y}(Y))
        \]
        over all isometric embeddings $\iota_{X}\colon X\to Z$ and $\iota_{Y}\colon Y\to Z$ into a third metric space $Z$.
    \end{definition}

    \begin{lemma}
        \label{Lemma: GH-distance can be evaluated in dense sets}
        Let $\A$ and $\B$ be metric spaces and $\dense\A$ and $\dense\B$ dense sets of the respective spaces. Then $\dgh(\dense\A,\dense\B) = \dgh(\A,\B)$.
    \end{lemma}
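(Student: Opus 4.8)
The plan is to reduce everything to a single observation about Hausdorff distances in a \emph{fixed} ambient space: if $A_0\subseteq A$ and $B_0\subseteq B$ are dense, then $\dH{Z}(A_0,B_0)=\dH{Z}(A,B)$. Granting this, both inequalities defining the Gromov--Hausdorff distance fall out by comparing the same families of isometric embeddings, with one extra step (passage to a completion) needed in one direction.

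First I would prove the observation. The two ingredients are: (a) for any point $x$ and any dense $B_0\subseteq B$ one has $d(x,B_0)=d(x,B)$, since every point of $B$ is approximated arbitrarily well by points of $B_0$; and (b) the map $x\mapsto d(x,B)$ is $1$-Lipschitz, hence continuous, so its supremum over a dense subset $A_0$ of $A$ equals its supremum over $A$. Combining these, $\sup_{a\in A_0}d(a,B_0)=\sup_{a\in A_0}d(a,B)=\sup_{a\in A}d(a,B)$, and symmetrically for the other term in the maximum defining $\dH{Z}$; thus $\dH{Z}(A_0,B_0)=\dH{Z}(A,B)$.

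For the inequality $\dgh(\dense\A,\dense\B)\leq\dgh(\A,\B)$, I would take arbitrary isometric embeddings $\iota_\A\colon\A\to Z$ and $\iota_\B\colon\B\to Z$ into a common space, restrict them to $\dense\A$ and $\dense\B$, and apply the observation with $A=\iota_\A(\A)$, $A_0=\iota_\A(\dense\A)$, and likewise for $\B$; this shows the Hausdorff distance of the restricted images equals that of the full images, and taking the infimum over all embeddings gives the inequality. For the reverse inequality $\dgh(\A,\B)\leq\dgh(\dense\A,\dense\B)$, I would start from isometric embeddings $\iota\colon\dense\A\to Z$ and $\jmath\colon\dense\B\to Z$ and pass to the completion $\hat Z$ of $Z$. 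Since the completion of $\dense\A$ coincides with that of $\A$ (because $\dense\A$ is dense in $\A$), the embedding $\iota$ extends to an isometric embedding of $\A$ into $\hat Z$, and likewise $\jmath$ extends to $\B$; moreover the images of $\dense\A$ and $\dense\B$ remain dense in the images of $\A$ and $\B$. Applying the observation once more, now inside $\hat Z$, shows the Hausdorff distance is unchanged, and the infimum yields the inequality.

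The main obstacle is essentially just the observation in the first step; once it is in place, everything else is bookkeeping with embeddings. The one point genuinely requiring care is the extension step in the reverse direction: one must move to the completion $\hat Z$ to have room for the (possibly non-approximated) limit points of $\A$ and $\B$, and then check that this extension does not create new ``far'' points. That this cannot happen is exactly the density-invariance of the Hausdorff distance established in the first step, so the two halves of the argument are really the same fact used twice.
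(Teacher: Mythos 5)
Your proof is correct. The paper dismisses this lemma as ``an easy exercise in metric topology'' and gives no argument, so there is nothing to compare against; your write-up is a complete and standard way to fill the gap. The key observation that $\dH{Z}(A_0,B_0)=\dH{Z}(A,B)$ for dense $A_0\subseteq A$, $B_0\subseteq B$ is right (via $d(x,B_0)=d(x,B)$ and the $1$-Lipschitz continuity of $x\mapsto d(x,B)$), and you correctly identify and handle the one delicate point in the reverse inequality: extending the embeddings of the dense sets to the full spaces requires passing to the completion $\hat Z$, after which the images of $\dense\A$ and $\dense\B$ remain dense in the images of $\A$ and $\B$ and the Hausdorff distance is unchanged (noting also that $\dH{}$ of two fixed sets is the same whether computed in $Z$ or in $\hat Z$).
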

    \begin{proof}
        An easy exercise in metric topology.
    \end{proof}

    \begin{lemma}
        \label{Lemma: Gromov-Hausdorff distance is captured by correspondence notion}
        For $\A,\B\in\MS$, we have $\dgh(\A,\B) = d_\corr(\A,\B)$.
    \end{lemma}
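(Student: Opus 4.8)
The plan is to prove the two inequalities $d_\corr(\A,\B)\leq\dgh(\A,\B)$ and $\dgh(\A,\B)\leq d_\corr(\A,\B)$ separately, exploiting that both the correspondence notion of this subsection and the Gromov--Hausdorff distance are defined by embedding the two spaces isometrically into a common ambient space. The one external ingredient I will lean on is Lemma~\ref{Lemma: GH-distance can be evaluated in dense sets}, which lets me pass freely between a space and a dense subset when computing $\dgh$.

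For $d_\corr(\A,\B)\leq\dgh(\A,\B)$, I would fix any $\varepsilon>\dgh(\A,\B)$ and choose isometric embeddings $\iota_\A\colon\A\to\model C$ and $\iota_\B\colon\B\to\model C$ into some $\model C\in\MS$ with $\dH{\model C}(\iota_\A(\A),\iota_\B(\B))<\varepsilon$. I then set $R=\{(a,b)\in\A\times\B\mid d(\iota_\A(a),\iota_\B(b))\leq\varepsilon\}$. Because the Hausdorff distance lies below $\varepsilon$, each $a\in\A$ admits some $b$ with $d(\iota_\A(a),\iota_\B(b))<\varepsilon$, and symmetrically each $b\in\B$ is related to some $a$; hence $\dom(R)=\A$ and $\ran(R)=\B$, which are trivially dense. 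The chosen embeddings then witness, directly from the definition of $\corr_\varepsilon$, that $R\in\corr_\varepsilon$, so $d_\corr(\A,\B)\leq\varepsilon$. Letting $\varepsilon$ decrease to $\dgh(\A,\B)$ gives the inequality.

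For the reverse inequality $\dgh(\A,\B)\leq d_\corr(\A,\B)$, I would take any $\varepsilon$-correspondence $R$ between $\A$ and $\B$ together with witnessing isometric embeddings $\iota_\A\colon\dom(R)\to\model C$ and $\iota_\B\colon\ran(R)\to\model C$ satisfying $d(\iota_\A(a),\iota_\B(b))\leq\varepsilon$ for all $(a,b)\in R$. Since $\dom(R)$ and $\ran(R)$ are exactly the two projections of $R$, every point of $\iota_\A(\dom(R))$ lies within $\varepsilon$ of $\iota_\B(\ran(R))$ and conversely, so $\dH{\model C}(\iota_\A(\dom(R)),\iota_\B(\ran(R)))\leq\varepsilon$, whence $\dgh(\dom(R),\ran(R))\leq\varepsilon$. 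As $\dom(R)$ is dense in $\A$ and $\ran(R)$ is dense in $\B$, Lemma~\ref{Lemma: GH-distance can be evaluated in dense sets} gives $\dgh(\A,\B)=\dgh(\dom(R),\ran(R))\leq\varepsilon$, and taking the infimum over all admissible $\varepsilon$ finishes the argument.

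I do not anticipate a serious obstacle, since once the definitions are unwound both directions reduce to the observation that a small common-space displacement and a small Hausdorff distance are two faces of the same construction. The only point demanding a little care is that correspondences are required merely to have dense domain and range rather than to be total, so the reduction to full spaces must be routed through Lemma~\ref{Lemma: GH-distance can be evaluated in dense sets}; this is precisely why that lemma was isolated beforehand.
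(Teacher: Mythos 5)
Your proposal is correct and follows essentially the same route as the paper: both directions construct or read off the correspondence from a common ambient embedding, and the reverse inequality is routed through Lemma~\ref{Lemma: GH-distance can be evaluated in dense sets} exactly as the paper does. The only cosmetic difference is that the paper phrases the two halves as strict-inequality implications ($\dgh<\varepsilon\Rightarrow d_\corr\leq\varepsilon$ and vice versa) rather than taking infima at the end, which amounts to the same thing.
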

    \begin{proof}
        By Lemma~\ref{Lemma: GH-distance can be evaluated in dense sets}, it is enough to show that for  for any $\varepsilon>0$,
        \begin{enumerate}
            \item if $\dgh(\A,\B)<\varepsilon$, then $d_\corr(\A,\B)\leq\varepsilon$, and
            \item if $d_\corr(\A,\B)<\varepsilon$, then $\dgh(\dense\A,\dense\B)\leq\varepsilon$ for some dense $\dense\A\subseteq\A$ and $\dense\B\subseteq\B$.
        \end{enumerate}

        First suppose that $\dgh(\A,\B)<\varepsilon$. Then by definition, there are isometric embeddings $\iota_{\A}\colon\A\to\model{C}$ and $\iota_{\B}\colon\B\to\model{C}$ into a third space $\model{C}$ such that
        \[
            \dH{\model C}(\iota_{\A}(\A),\iota_{\B}(\B)) < \varepsilon.
        \]
        Let $R$ be the set of all $(a,b)\in\A\times\B$ such that $d(\iota_{\A}(a),\iota_{\B}(b))\leq\varepsilon$. Now if $\dom(R)=\A$ and $\ran(R)=\B$, then $R$ is clearly an $\varepsilon$-correspondence. So let $a\in\A$. Then
        \begin{align*}
            d(\iota_{\A}(a),\iota_\B(\B)) &\leq \sup_{a'\in\A}d(\iota_{\A}(a'),\iota_\B(\B)) \leq \dH{\model C}(\iota_\A(\A),\iota_\B(\B)) < \varepsilon.
        \end{align*}
        As $d(\iota_{\A}(a),\iota_\B(\B))=\inf_{b\in\B}d(\iota_{\A}(a),\iota_{\B}(b))$, this means that there is some $b\in\B$ with $d(\iota_{\A}(a),\iota_{\B}(b))<\varepsilon$, whence $(a,b)\in R$. Thus $\dom(R)=\A$. Similarly we get that $\ran(R)=\B$. As $R$ is an $\varepsilon$-correspondence, it follows that $d_\corr(\A,\B)\leq\varepsilon$.

        Then suppose that $d_\corr(\A,\B)<\varepsilon$. Then there is an $\varepsilon$-correspondence $R$ between $\A$ and $\B$. This means that there are isometric embeddings $\iota_{\A}\colon\dom(R)\to\model{C}$ and $\iota_{\B}\colon\ran(R)\to\model{C}$ such that for all $(a,b)\in R$ we have $d(\iota_{\A}(a),\iota_{\B}(b)) \leq \varepsilon$. Now let $a\in\A$. Let $b\in\ran(b)$ be such that $(a,b)\in R$. Now
        \[
             d(a,\ran(R)) \leq d(a,b) \leq \varepsilon,
        \]
        whence $\sup_{a\in\dom(R)}d(a,\ran(R))\leq\varepsilon$. Similarly $\sup_{b\in\ran(R)}d(\dom(R),b)\leq$, so $\dH{\model C}(\dom(R),\ran(R))\leq\varepsilon$. Thus we have $\dgh(\dom(R),\ran(R))\leq\varepsilon$. As $R$ is an $\varepsilon$-correspondence, $\dom(R)$ is dense in $\A$ and $\ran(R)$ dense in $\B$, so we are done.
    \end{proof}

    \subsection{Banach spaces and an isomorphism notion}

    Define an approximation $\appr$ of $\LB$ by setting $\appr(P(t),\varepsilon)=P(e^{-\varepsilon}t)$ and $\appr(Q(t),\varepsilon)=Q(e^\varepsilon t)$. Define $\weakneg$ by setting $\weakneg(P(t)) = Q(t)$ and $\weakneg(Q(t)) = P(t)$. It is easy to check that $\LB$ together with $\appr$ and $\weakneg$ is a vocabulary with approximations with respect to $\BS$.

    Let $\isom_\varepsilon$ be the class of all linear $e^\varepsilon$-bi-Lipschitz homeomorphisms $\A\to\B$ for $\A,\B\in\BS$. Then $\isom = (\isom_\varepsilon)_{\varepsilon\geq 0}$ is an isomorphism notion of $\class$. We check condition~\ref{item: Preserving atomic formulae suffices for epsilon-isomorphism} of Definition~\ref{Definition: Isomorphism notion} in Lemmata~\ref{Lemma: Banach-Mazur isomorphism preserves formulae} and~\ref{Lemma: For Banach-Mazur isomorphism suffices to preserve k-good formulae}.

    Given $k<\omega$, let $\good(k)$ be the set of $P(t)$ and $Q(t)$ such that $t$ is a term of the form $\sum_{i=0}^{k-1} c_i v_i$, where $\abs{c_i} \leq k$ and each $v_i$ is a distinct variable. We call such terms $k$-good.\footnote{If if $0\leq m<k$ and $\abs{c_i}\leq k$ for $i<m$, then by $\sum_{i=0}^{m-1}c_i v_i$ we mean the $k$-good term $\sum_{i=0}^{k-1}c'_i v_i$, where $c'_i=c_i$ for $i<m$ and $c'_i=0$ for $m\leq i<k$. If $r\in [0,1]$ and $t=\sum_{i=0}^{k-1}c_i v_i$ is a $k$-good term, then by $rt$ we mean the $k$-good term $\sum_{i=0}rc_i v_i$.} We show that with these definitions, $\LB$ is good for $(\BS,\isom)$. In Lemmata~\ref{Lemma: Banach-Mazur perturbation distance} and~\ref{Lemma: Banach-Mazur tail-dense sequences behave nicely}, we check conditions~\ref{item: Perturbation distance} and~\ref{item: Simultaneous Cauchy} of Definition~\ref{Definition: Good vocabulary (isomorphism)}; the rest are clear.

    \begin{lemma}
        \label{Lemma: Banach-Mazur vice versa follows if satisfaction is approximate}
        Let $k<\omega$, $\varepsilon\geq 0$, $\A,\B\in\BS$, $\bar{a}\in\A^k$ and $\bar{b}\in\B^k$. Suppose that for all $k$-good term $t(\bar{v})$,
        \[
            \A\models Q(t(\bar{a})) \implies \B\models\appr(Q(t(\bar{v})),\varepsilon)(\bar{b}).
        \]
        Then, for all $k$-good terms $t(\bar{v})$,
        \[
            \B\models P(t(\bar{b})) \implies \A\models\appr(P(t(\bar{v})),\varepsilon)(\bar{a}).
        \]
    \end{lemma}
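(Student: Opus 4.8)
The plan is to prove the contrapositive of the conclusion, in direct analogy with Lemma~\ref{Lemma: Lipschitz vice versa follows if satisfaction is approximate}, but now with the roles of $\dr,\Dr$ played by $P,Q$ and with the multiplicative approximation of $\LB$. First I would unfold the semantics for a $k$-good term $t$: since $\appr(P(t),\varepsilon)=P(e^{-\varepsilon}t)$ and $\appr(Q(t),\varepsilon)=Q(e^{\varepsilon}t)$, one has $\A\models\appr(P(t),\varepsilon)(\bar a)$ iff $\norm{t^\A(\bar a)}\leq e^{\varepsilon}$, and $\A\models\appr(Q(t),\varepsilon)(\bar a)$ iff $\norm{t^\A(\bar a)}\geq e^{-\varepsilon}$. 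Thus the hypothesis reads ``for every $k$-good $t$, $\norm{t^\A(\bar a)}\geq 1$ implies $\norm{t^\B(\bar b)}\geq e^{-\varepsilon}$'', and the goal becomes ``$\norm{t^\B(\bar b)}\leq 1$ implies $\norm{t^\A(\bar a)}\leq e^{\varepsilon}$''.

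For the contrapositive, I would fix a $k$-good $t$ and assume $\A\not\models\appr(P(t),\varepsilon)(\bar a)$, i.e.\ $\norm{t^\A(\bar a)}>e^{\varepsilon}$. Using density of $D$, I pick $\varepsilon'\in D$ with $0<\varepsilon'\leq\ln\norm{t^\A(\bar a)}-\varepsilon$, so that $\norm{t^\A(\bar a)}\geq e^{\varepsilon'+\varepsilon}$. Setting $r=e^{-\varepsilon'-\varepsilon}$, I note $r\in D\cap[0,1]$ (because $D$ is closed under negation, addition and the exponential), and therefore the scaled term $rt$ is again $k$-good, by the convention that multiplying a $k$-good term by a scalar in $D\cap[0,1]$ yields a $k$-good term. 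The key computation is then $\norm{(rt)^\A(\bar a)}=r\norm{t^\A(\bar a)}\geq e^{-\varepsilon'-\varepsilon}e^{\varepsilon'+\varepsilon}=1$, so $\A\models Q(rt(\bar a))$.

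Applying the hypothesis to the $k$-good term $rt$ gives $\B\models\appr(Q(rt),\varepsilon)(\bar b)$, i.e.\ $\norm{(rt)^\B(\bar b)}\geq e^{-\varepsilon}$. Unwinding the scalar, $r\norm{t^\B(\bar b)}\geq e^{-\varepsilon}$, hence $\norm{t^\B(\bar b)}\geq e^{-\varepsilon}/r=e^{\varepsilon'}>1$, which is exactly $\B\not\models P(t(\bar b))$. This establishes the contrapositive and hence the lemma.

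The only delicate point, which I expect to be the main obstacle to state cleanly rather than to prove, is the legitimacy of the rescaled term $rt$: I must check that $r$ lands in $D\cap[0,1]$ and that shrinking by $r\leq 1$ keeps every coefficient within the bound $k$, so that $rt$ is a bona fide $k$-good term to which the hypothesis applies. Both hold because $r\leq 1$ and $D$ is closed under the operations producing $r$; once this is in place, the rest is the same bookkeeping with the multiplicative approximation as in the metric-space case.
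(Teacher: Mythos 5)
Your proof is correct and follows essentially the same route as the paper's: both argue the contrapositive, rescale $t$ by a factor in $D\cap[0,1]$ (your $e^{-\varepsilon'-\varepsilon}$ is the paper's $r^{-1}e^{-\varepsilon}$ with $r=e^{\varepsilon'}$) to turn the strict failure of $\appr(P(t),\varepsilon)$ into a $Q$-statement about a still-$k$-good term, apply the hypothesis, and unwind. The point you flag about $rt$ remaining $k$-good is handled by the paper's footnote convention exactly as you describe.
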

    \begin{proof}
        Let $t(\bar{v})$ be $k$-good and suppose that $\A\not\models\appr(P(t(\bar{v})),\varepsilon)(\bar{a})$, i.e. $\norm{e^{-\varepsilon} t^\A(\bar{a})}>1$. Then there is $r>1$ such that $\norm{e^{-\varepsilon}t^\A(\bar{a})}\geq r$. But then $\A\models Q(r^{-1}e^{-\varepsilon}t(\bar{a}))$, and $r^{-1}e^{-\varepsilon}t$ is still a $k$-good term, so by the initial assumption $\B\models \appr(Q(r^{-1}e^{-\varepsilon}t(\bar{v})),\varepsilon)(\bar{b})$, i.e. $\norm{t^\B(\bar{b})} \geq r > 1$. Thus $\B\not\models P(t(\bar{b}))$.
    \end{proof}

    \begin{lemma}
        \label{Lemma: Banach-Mazur isomorphism preserves formulae}
        Let $f\colon\A\to\B$ be a surjection. If $f\in\isom_\varepsilon$, then for all atomic $\varphi(\bar{v})$ and $\bar{a}\in\A^k$,
        \[
            \A\models\varphi(\bar{a}) \implies \B\models\appr(\varphi,{\varepsilon})(f(\bar{a}))
        \]
        and
        \[
            \B\models\varphi(f(\bar{a})) \implies \A\models\appr(\varphi,{\varepsilon})(\bar{a}).
        \]
    \end{lemma}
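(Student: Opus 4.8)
The plan is to exploit that every atomic $\LB$-formula has the form $P(t(\bar v))$ or $Q(t(\bar v))$ for an $\LB$-term $t$, together with the fact that $f$, being linear, commutes with the interpretation of such terms. Concretely, since $f(a+a')=f(a)+f(a')$ and $f(\lambda a)=\lambda f(a)$, an easy induction on the structure of $t$ gives $t^{\B}(f(\bar a))=f(t^{\A}(\bar a))$ for every $\LB$-term $t$ and every $\bar a\in\A^{k}$. This single identity is what reduces the statement to a bi-Lipschitz estimate on one norm, and I expect it to be the only conceptual point; everything else is bookkeeping.

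Next I would verify the first implication separately for the two shapes of atomic formula, using that $f$ is $e^{\varepsilon}$-bi-Lipschitz, i.e. $e^{-\varepsilon}\norm{z}\le\norm{f(z)}\le e^{\varepsilon}\norm{z}$ for all $z\in\A$. For $\varphi=P(t)$: if $\A\models P(t(\bar a))$ then $\norm{t^{\A}(\bar a)}\le 1$, so $\norm{t^{\B}(f(\bar a))}=\norm{f(t^{\A}(\bar a))}\le e^{\varepsilon}\norm{t^{\A}(\bar a)}\le e^{\varepsilon}$, whence $\norm{e^{-\varepsilon}t^{\B}(f(\bar a))}\le 1$, which is exactly $\B\models\appr(P(t),\varepsilon)(f(\bar a))$ since $\appr(P(t),\varepsilon)=P(e^{-\varepsilon}t)$. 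For $\varphi=Q(t)$: if $\A\models Q(t(\bar a))$ then $\norm{t^{\A}(\bar a)}\ge 1$, so $\norm{t^{\B}(f(\bar a))}\ge e^{-\varepsilon}\norm{t^{\A}(\bar a)}\ge e^{-\varepsilon}$, giving $\norm{e^{\varepsilon}t^{\B}(f(\bar a))}\ge 1$, i.e. $\B\models\appr(Q(t),\varepsilon)(f(\bar a))$ since $\appr(Q(t),\varepsilon)=Q(e^{\varepsilon}t)$.

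Finally, for the second implication I would invoke symmetry rather than redo the computation. As $f$ is a linear bi-Lipschitz homeomorphism it is a bijection, so $f^{-1}\colon\B\to\A$ is again a surjection and, because isomorphism notions are closed under inverses (clause~(ii) of Definition~\ref{Definition: Isomorphism notion}), $f^{-1}\in\isom_{\varepsilon}$. Applying the first implication, already established, to $f^{-1}$ with the tuple $f(\bar a)\in\B^{k}$ yields $\B\models\varphi(f(\bar a))\implies\A\models\appr(\varphi,\varepsilon)(f^{-1}(f(\bar a)))=\appr(\varphi,\varepsilon)(\bar a)$, as required. The only thing to watch is that $f^{-1}$ really is an $e^{\varepsilon}$-bi-Lipschitz surjection, which is immediate from the defining inequalities; beyond that there is no obstacle, and the lemma could reasonably be dispatched with ``Clear'' as in the metric-space analogue.
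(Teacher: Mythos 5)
Your proof is correct and follows essentially the same route as the paper's: reduce atomic formulae to $P(t)$, $Q(t)$, use linearity of $f$ to commute $f$ past the term, and apply the bi-Lipschitz bounds. The only cosmetic difference is that you dispatch the second implication by applying the first to $f^{-1}\in\isom_\varepsilon$, where the paper simply notes the argument is symmetric; both are fine.
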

    \begin{proof}
        Let $\bar{a}\in\A^k$. We show that
        \[
            \A\models\varphi(\bar{a}) \implies \B\models\appr(\varphi,{\varepsilon})(f(\bar{a})),
        \]
        proof for the other claim is similar.

        First suppose that $\varphi = P(\sum_{i=0}^{k-1}c_iv_i)$. Now if $\A\models\varphi(\bar{a})$, $\norm{\sum_{i=0}^{k-1}c_ia_i}\leq 1$, so
        \[
            \norm{\sum_{i=0}^{k-1}c_if(a_i)} = \norm{f(\sum_{i=0}^{k-1}c_ia_i)} \leq e^\varepsilon\norm{\sum_{i=0}^{k-1}c_ia_i}\leq e^\varepsilon.
        \]
        Hence $\B\models P(e^{-\varepsilon}\sum_{i=0}^{k-1}c_if(a_i))$, so $\B\models\appr(\varphi,\varepsilon)(f(\bar{a}))$.

        The case for $\varphi=Q(\sum_{i=0}^kc_iv_i)$ is similar.
    \end{proof}

    \begin{lemma}
        \label{Lemma: For Banach-Mazur isomorphism suffices to preserve k-good formulae}
        Let $f\colon\A\to\B$ be a surjection. If for all atomic $\varphi(\bar{v})$ and $\bar{a}\in\A^k$,
        \[
            \A\models\varphi(\bar{a}) \implies \B\models\appr(\varphi,{\varepsilon})(f(\bar{a})),
        \]
        then $f\in\isom_\varepsilon$
    \end{lemma}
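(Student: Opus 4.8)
The plan is to show that the surjection $f$ is a linear $e^\varepsilon$-bi-Lipschitz homeomorphism $\A\to\B$, which by Definition~\ref{Definition: Isomorphism notion} (as instantiated for $\BS$) is precisely the assertion $f\in\isom_\varepsilon$. The hypothesis gives control over the norms of linear combinations $\sum c_i f(a_i)$ formed in $\B$, but a priori says nothing about $f$ respecting the linear structure; recovering the linearity of $f$ will be the crux.

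First I would extract pointwise norm bounds. Applying the hypothesis to the one-variable terms $(1/r)v_0$ with $r\in D$: if $\norm{a}\leq r$ then $\A\models P((1/r)v_0)(a)$, so $\B\models\appr(P((1/r)v_0),\varepsilon)(f(a))$, i.e. $\norm{e^{-\varepsilon}(1/r)f(a)}\leq 1$, giving $\norm{f(a)}\leq e^\varepsilon r$; taking the infimum over such $r$ and using density of $D$ yields $\norm{f(a)}\leq e^\varepsilon\norm{a}$. Symmetrically, applying the hypothesis to the terms $Q((1/r)v_0)$ (or, if one prefers the argument of the metric case, invoking Lemma~\ref{Lemma: Banach-Mazur vice versa follows if satisfaction is approximate} and a symmetric computation) gives $\norm{f(a)}\geq e^{-\varepsilon}\norm{a}$. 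Thus $e^{-\varepsilon}\norm{a}\leq\norm{f(a)}\leq e^\varepsilon\norm{a}$ for every $a\in\A$.

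The main step is proving linearity, and here the idea is to feed the hypothesis terms that vanish on a linearly dependent tuple and then exploit the freedom to rescale. For additivity, fix $a,a'\in\A$ and set $a''=a+a'$ (addition in $\A$). For every $n\in\mathbb{N}$ the term $nv_0+nv_1-nv_2$ evaluates to $0$ on $(a,a',a'')$, so $\A\models P(nv_0+nv_1-nv_2)(a,a',a'')$; the hypothesis then forces $\norm{f(a)+f(a')-f(a'')}\leq e^\varepsilon/n$, and letting $n\to\infty$ gives $f(a+a')=f(a)+f(a')$. The same device applied to the terms $n\mu v_0-nv_1$ and the tuple $(a,\mu a)$ yields $f(\mu a)=\mu f(a)$ for every scalar $\mu\in D$. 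Additivity together with the pointwise bound shows $\norm{f(a)-f(a')}=\norm{f(a-a')}\leq e^\varepsilon\norm{a-a'}$, so $f$ is Lipschitz and hence continuous; combining continuity, additivity and $D$-homogeneity with the density of $D$ in $\R$ then upgrades $D$-homogeneity to $\R$-homogeneity, so $f$ is $\R$-linear.

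Finally I would assemble the pieces. Since $f$ is linear, $f(a)-f(a')=f(a-a')$, so the pointwise norm bounds become $e^{-\varepsilon}\norm{a-a'}\leq\norm{f(a)-f(a')}\leq e^\varepsilon\norm{a-a'}$; that is, $f$ is $e^\varepsilon$-bi-Lipschitz. The lower bound makes $f$ injective, while $f$ is surjective by hypothesis, so $f$ is a bijective $e^\varepsilon$-bi-Lipschitz map, hence a homeomorphism, whence $f\in\isom_\varepsilon$. The expected obstacle is exactly the linearity argument: unlike the metric-space analogue in Lemma~\ref{Lemma: For Lipschitz isomorphism suffices to preserve k-good formulae}, the hypothesis never directly compares $f$ of a combination with the combination of the $f(a_i)$, and it is the scaling trick on degenerate tuples that bridges this gap.
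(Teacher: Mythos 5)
Your proposal is correct and follows essentially the same route as the paper's proof: the norm estimates come from rescaled $P$/$Q$ atomic formulae, and linearity comes from the $n\to\infty$ trick on terms that vanish on linearly dependent tuples, followed by the continuity-plus-density upgrade from $D$-linearity to $\R$-linearity. The only cosmetic difference is ordering---the paper establishes the bi-Lipschitz estimate directly on differences via the two-variable terms $P(v_0/r - v_1/r)$ before proving linearity, whereas you derive it from pointwise bounds after linearity---but this changes nothing of substance.
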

    \begin{proof}
        Let $a,a'\in\A$. Now we have for all $r>0$
        \begin{align*}
            \norm{a-a'} < r &\implies \A\models P(a/r - a'/r) \\
            &\implies \B\models P(e^{-\varepsilon}(f(a)/r-f(a')/r)) \\
            &\implies e^{-\varepsilon}\norm{f(a)-f(a')}\leq r,
        \end{align*}
        which shows that $e^{-\varepsilon}\norm{f(a)-f(a')}\leq\norm{a-a'}$, proving that $f$ is $e^\varepsilon$-Lipschitz. By Lemma~\ref{Lemma: Banach-Mazur vice versa follows if satisfaction is approximate}, we can use a symmetric argument to show that $e^{-\varepsilon}\norm{a-a'}\leq\norm{f(a)-f(a')}$. Hence $f$ is $e^\varepsilon$-bi-Lipschitz.

        Left is to show linearity. Let $a_0,a_1\in\A$ and $\lambda_0,\lambda_1\in D$. Denote by $a_2$ the element $\lambda_0 a_0 + \lambda_1 a_1$. Let $b_i=f(a_i)$ for $i=0,1,2$. Now, as $a_2 = \lambda_0 a_0 + \lambda_1 a_1$, we have $\norm{\lambda_0 a_0 + \lambda_1 a_1 - a_2}\leq 1/n$ for all $n<\omega$, so $\A\models P(n\lambda_0 a_0 + n\lambda_1 a_1 - na_2)$ and therefore also $\B\models P(e^{-\varepsilon}(n\lambda_0 b_0 + n\lambda_1 b_1 - nb_2))$, i.e. $\norm{\lambda_0 b_0 + \lambda_1 b_1 - b_2}\leq e^\varepsilon/n$ for all $n<\omega$. Thus $b_2 = \lambda_0 b_0 + \lambda_1 b_1$. As $f$ is continuous and $D$-linear, it is also $\R$-linear.
    \end{proof}

    \begin{lemma}
        \label{Lemma: Banach-Mazur perturbation distance}
        For any $\A\in\BS$, $k<\omega$ and $\varepsilon\geq 0$, there exists $\delta>0$ such that for any $\varphi(\bar{v})\in\good(k)$ and $\bar{a},\bar{b}\in\A^k$, if $d(\bar{a},\bar{b})<\delta$, then
        \[
            \A\models\varphi(\bar{a}) \implies \A\models\appr(\varphi,\varepsilon)(\bar{b}).
        \]
    \end{lemma}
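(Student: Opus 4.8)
The plan is to unwind the two kinds of good formulae and reduce everything to a single triangle-inequality estimate. Recall that a $k$-good term has the form $t=\sum_{i<k}c_iv_i$ with $\abs{c_i}\leq k$, and that $\good(k)$ consists of the formulae $P(t)$ and $Q(t)$ for such $t$. Reading off the interpretations, $\A\models P(t(\bar a))$ means $\norm{\sum_{i<k}c_ia_i}\leq 1$ and $\A\models Q(t(\bar a))$ means $\norm{\sum_{i<k}c_ia_i}\geq 1$, while by the definition of $\appr$ the approximations $\appr(P(t),\varepsilon)=P(e^{-\varepsilon}t)$ and $\appr(Q(t),\varepsilon)=Q(e^\varepsilon t)$ amount to the relaxed bounds $\norm{\sum_{i<k}c_ib_i}\leq e^\varepsilon$ and $\norm{\sum_{i<k}c_ib_i}\geq e^{-\varepsilon}$, respectively. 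The only relevant case is $\varepsilon>0$, in line with the hypothesis in the definition of a perturbation distance; no positive $\delta$ can exist when $\varepsilon=0$.

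The core estimate is the following. Assuming $d(\bar a,\bar b)<\delta$, so that $\norm{a_i-b_i}<\delta$ for each $i<k$, the triangle inequality together with $\abs{c_i}\leq k$ and the presence of at most $k$ summands gives
\[
    \norm{\sum_{i<k}c_ia_i - \sum_{i<k}c_ib_i} \leq \sum_{i<k}\abs{c_i}\,\norm{a_i-b_i} < k^2\delta.
\]
Thus the two relevant linear combinations differ in norm by less than $k^2\delta$, uniformly over all $k$-good terms $t$.

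With this in hand I would choose $\delta$ to absorb the discrepancy in both cases. For a formula $P(t)$: if $\norm{\sum_{i<k}c_ia_i}\leq 1$ then $\norm{\sum_{i<k}c_ib_i}<1+k^2\delta$, and requiring $1+k^2\delta\leq e^\varepsilon$ suffices, i.e. $\delta\leq(e^\varepsilon-1)/k^2$. For a formula $Q(t)$: if $\norm{\sum_{i<k}c_ia_i}\geq 1$ then $\norm{\sum_{i<k}c_ib_i}>1-k^2\delta$, and requiring $1-k^2\delta\geq e^{-\varepsilon}$ suffices, i.e. $\delta\leq(1-e^{-\varepsilon})/k^2$. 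Taking
\[
    \delta = \frac{1}{k^2}\min\{e^\varepsilon-1,\ 1-e^{-\varepsilon}\} = \frac{1-e^{-\varepsilon}}{k^2}
\]
(the $Q$-constraint being the binding one, since $1-e^{-\varepsilon}\leq e^\varepsilon-1$) yields a single positive $\delta$ that works for every $\varphi\in\good(k)$.

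I do not expect any real obstacle: the argument is a direct norm estimate and uses no deeper structure of $\A$. The only points demanding a little care are extracting the factor $k^2$ correctly from the two constraints $\abs{c_i}\leq k$ and ``at most $k$ summands'', and handling the $P$- and $Q$-formulae simultaneously with one $\delta$, for which one takes the minimum of the two thresholds and notes that $\varepsilon>0$ keeps it strictly positive.
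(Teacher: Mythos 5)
Your argument is correct and is essentially identical to the paper's own proof: the same triangle-inequality estimate $\norm{\sum_{i<k}c_ia_i-\sum_{i<k}c_ib_i}<k^2\delta$, the same two thresholds $\delta_P=(e^\varepsilon-1)/k^2$ and $\delta_Q=(1-e^{-\varepsilon})/k^2$, and the same conclusion by taking their minimum. Your remark that only $\varepsilon>0$ is relevant matches the paper, whose proof also silently restricts to $\varepsilon>0$ despite the statement's ``$\varepsilon\geq 0$''.
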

    \begin{proof}
        Let $k<\omega$ and $\varepsilon>0$ be arbitrary and let $\delta_P = (e^\varepsilon-1)/k^2$ and $\delta_Q = (1-e^{-\varepsilon})/k^2$. Suppose that $\abs{c_i}\leq k$, and let $\bar{a},\bar{b}\in\A^k$. Now it is easy to see that whenever $d(\bar{a},\bar{b})<\delta_P$, i.e. $\norm{a_i-b_i}<\delta_P$ for $i<k$, we have
        \[
            \A\models P\left(\sum_{i=0}^{k-1}c_i a_i\right) \implies \A\models P\left(e^{-\varepsilon}\sum_{i=0}^{k-1}c_ib_i\right),
        \]
        and whenever $d(\bar{a},\bar{b})<\delta_Q$, i.e. $\norm{a_i-b_i}<\delta_Q$ for $i<k$, we have
        \[
            \A\not\models Q\left(e^{\varepsilon}\sum_{i=0}^{k-1}c_ib_i\right) \implies \A\not\models Q\left(\sum_{i=0}^{k-1}c_i a_i\right).
        \]
        Thus $\delta=\min\{\delta_P,\delta_Q\}$ suffices.
    \end{proof}

    \begin{lemma}
        \label{Lemma: Banach-Mazur tail-dense sequences behave nicely}
        For sequences $(a_n)_{n<\omega}\in\A^\omega$ and $(b_n)_{n<\omega}\in\B^\omega$ and a sequence $(\varepsilon_n)_{n<\omega}$ of positive numbers converging to $\varepsilon>0$, if for all $k<\omega$ and $\varphi(v_0,\dots,v_{k-1})\in\good(k)$ we have
        \[
            \A\models\varphi(a_{i_0},\dots,a_{i_{k-1}}) \implies \B\models\appr(\varphi,{\varepsilon_k})(b_{i_0},\dots,b_{i_{k-1}})
        \]
        for all  $i_0,\dots,i_{k-1}\geq k$, then for any increasing sequence $(i_n)_{n<\omega}$ of indices, $(a_{i_n})_{n<\omega}$ is Cauchy if and only if $(b_{i_n})_{n<\omega}$ is Cauchy.
    \end{lemma}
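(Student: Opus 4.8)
The plan is to follow the proof of Lemma~\ref{Lemma: Lipschitz tail-dense sequences behave nicely} essentially verbatim, replacing the distance relations by the norm predicates $P,Q$ and using that the metric of a Banach space is $\norm{x-y}$. The key encoding is that for $s>0$ the statement $\norm{a_{i_n}-a_{i_m}}\leq s$ is equivalent to $\A\models P(s^{-1}v_0-s^{-1}v_1)(a_{i_n},a_{i_m})$, and that the term $s^{-1}v_0-s^{-1}v_1$, padded with zero coefficients to $k$ variables as permitted by the footnote on $k$-good terms, lies in $\good(k)$ exactly when $s^{-1}\leq k$, i.e.\ $s\geq 1/k$.

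For the forward direction, assume $(a_{i_n})_{n<\omega}$ is Cauchy and fix $r>0$. Pick $k_0$ with $\norm{a_{i_n}-a_{i_m}}\leq e^{-\varepsilon-1}r$ whenever $n,m\geq k_0$, and then, using $\varepsilon_k\to\varepsilon$, enlarge $k\geq k_0$ so that simultaneously $e^{-\varepsilon-1}r\geq 1/k$ and $\varepsilon_k-\varepsilon\leq 1$. Put $s=e^{-\varepsilon-1}r$ and let $t$ be the $k$-good term $s^{-1}v_0-s^{-1}v_1$. For $n,m\geq k$ we have $i_n,i_m\geq k$ and $\A\models P(t)(a_{i_n},a_{i_m})$, so the hypothesis yields $\B\models\appr(P(t),\varepsilon_k)(b_{i_n},b_{i_m})$, that is $\norm{b_{i_n}-b_{i_m}}\leq e^{\varepsilon_k}s=e^{\varepsilon_k-\varepsilon-1}r\leq r$. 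Hence $(b_{i_n})_{n<\omega}$ is Cauchy.

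For the converse I would first feed the $Q$-part of the hypothesis into Lemma~\ref{Lemma: Banach-Mazur vice versa follows if satisfaction is approximate}, applied with $\varepsilon=\varepsilon_k$ to the tuples $(a_{i_0},\dots,a_{i_{k-1}})$ and $(b_{i_0},\dots,b_{i_{k-1}})$; this is exactly the input that lemma requires, and it returns the transferred implication $\B\models P(t(\bar b))\implies\A\models\appr(P(t),\varepsilon_k)(\bar a)$ for every $k$-good term $t$ and all indices $\geq k$. The argument is then symmetric to the forward direction: assuming $(b_{i_n})$ Cauchy and choosing $k$ and $s$ as above, one gets $\B\models P(t)(b_{i_n},b_{i_m})$ and transfers it to $\A\models\appr(P(t),\varepsilon_k)(a_{i_n},a_{i_m})$, so $\norm{a_{i_n}-a_{i_m}}\leq e^{\varepsilon_k}s\leq r$. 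The only point needing care, and the main obstacle such as it is, is the joint choice of $k$: because the encoding term carries the coefficient $s^{-1}$, membership in $\good(k)$ forces $s\geq 1/k$, so one must meet the Cauchy requirement, the $k$-goodness requirement $s\geq 1/k$, and $\varepsilon_k-\varepsilon\leq 1$ at the same time; this is possible since enlarging $k$ relaxes $s\geq 1/k$ without spoiling the other two.
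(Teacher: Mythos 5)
Your proposal is correct and follows essentially the same route as the paper's proof: the same encoding of $\norm{a_{i_n}-a_{i_m}}\leq s$ via the $k$-good term $s^{-1}v_0-s^{-1}v_1$ with $s=e^{-\varepsilon-1}r$, the same simultaneous choice of $k$ satisfying $s\geq 1/k$ and $\varepsilon_k-\varepsilon\leq 1$, and the same appeal to Lemma~\ref{Lemma: Banach-Mazur vice versa follows if satisfaction is approximate} for the converse direction.
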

    \begin{proof}
        Let $(i_n)_{n<\omega}$ be an increasing sequence of indices and suppose $(a_{i_n})_{n<\omega}$ is Cauchy. Let $r>0$, and let $k\geq 2$ be large enough that $e^{\varepsilon+1}/r\leq k$, $\varepsilon_k-\varepsilon\leq 1$ and for all $m$ and $n$ such that $i_m,i_n\geq k$, we have and $\norm{a_{i_m}-a_{i_n}}\leq re^{-(\varepsilon+1)}$. Then $P(\frac{e^{\varepsilon+1}}{r}v_0-\frac{e^{\varepsilon+1}}{r}v_1)\in\good(k)$, so $\A\models P(\frac{e^{\varepsilon+1}}{r}a_{i_m}-\frac{e^{\varepsilon+1}}{r}a_{i_n})$ implies $\B\models P(e^{-\varepsilon_k}(\frac{e^{\varepsilon+1}}{r}b_{i_m}-\frac{e^{\varepsilon+1}}{r}b_{i_n}))$, whence $\norm{b_{i_m}-b_{i_n}}\leq e^{\varepsilon_k - \varepsilon - 1}r \leq e^{0}r = r$ for large enough $m$ and $n$, proving that $(b_{i_n})_{n<\omega}$ is Cauchy. In the other direction one can use exactly the same argument by Lemma~\ref{Lemma: Banach-Mazur vice versa follows if satisfaction is approximate}.
    \end{proof}

    \begin{definition}
        The \emph{Banach--Mazur distance} $\dbm(X,Y)$ of two Banach spaces $X$ and $Y$ is the infimum of all $\varepsilon\geq 0$ such that there exists a linear $e^\varepsilon$-bi-Lipschitz homeomorphism $f\colon X\to Y$.
    \end{definition}

    \begin{lemma}
        For $\A,\B\in\BS$, we have $\dbm(\A,\B)=d_{\isom}(\A,\B)$.
    \end{lemma}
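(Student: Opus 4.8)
The plan is to unwind both definitions and observe that they range over exactly the same set of witnesses, so that the two infima coincide on the nose, with no limiting or perturbation argument required. This mirrors the treatment of $\dl$ versus $d_\isom$ in the metric-space case, whose proof was simply to check the definitions.

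Recall that in this subsection the isomorphism notion $\isom = (\isom_\varepsilon)_{\varepsilon\geq 0}$ on $\BS$ was defined by declaring $\isom_\varepsilon$ to be precisely the class of all linear $e^\varepsilon$-bi-Lipschitz homeomorphisms $\A\to\B$. That this really is an isomorphism notion in the sense of Definition~\ref{Definition: Isomorphism notion}, so that the pseudometric $d_\isom$ is well defined in the first place, is exactly what the preceding lemmata establish; in particular condition~\ref{item: Preserving atomic formulae suffices for epsilon-isomorphism} is verified in Lemmata~\ref{Lemma: Banach-Mazur isomorphism preserves formulae} and~\ref{Lemma: For Banach-Mazur isomorphism suffices to preserve k-good formulae}, and the remaining conditions were noted to be clear. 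Consequently, for each fixed $\varepsilon\geq 0$ the assertion ``there exists an $\varepsilon$-isomorphism $\A\to\B$'' means, by definition, exactly ``there exists a linear $e^\varepsilon$-bi-Lipschitz homeomorphism $\A\to\B$''.

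It follows that the two defining sets $\{\varepsilon\geq 0 \mid \text{there exists an $\varepsilon$-isomorphism $\A\to\B$}\}$ and $\{\varepsilon\geq 0 \mid \text{there exists a linear $e^\varepsilon$-bi-Lipschitz homeomorphism $\A\to\B$}\}$ are literally equal, and hence so are their infima $d_\isom(\A,\B)$ and $\dbm(\A,\B)$. I expect no genuine obstacle here: the equality already holds at the level of witness sets for each single value of $\varepsilon$, so the result is immediate once the definitions are recalled. The only point worth flagging in the write-up is the bookkeeping that $\isom$ is a legitimate isomorphism notion, but this has already been discharged by the lemmata cited above.
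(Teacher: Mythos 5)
Your proposal is correct and matches the paper's proof, which is simply ``Check the definitions'': since $\isom_\varepsilon$ was defined to be exactly the class of linear $e^\varepsilon$-bi-Lipschitz homeomorphisms, the two witness sets coincide and so do their infima.
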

    \begin{proof}
        Check the definitions.
    \end{proof}

    \subsection{Banach spaces and a correspondence notion}

    Define an approximation $\appr$ of $\LBb$ by setting $\appr(P_r(t),\varepsilon) = P_{r\dotplus\varepsilon}(t)$ and $\appr(Q_r(t),\varepsilon) = Q_{r\dotminus\varepsilon}(t)$,\footnote{By $x\dotplus y$ we denote $\min\{x+y, 1\}$.} and $\weakneg$ by setting $\weakneg(P_r(t)) = Q_r(t)$ and $\weakneg(Q_r(t)) = P_r(t)$. Then $\LBb$ with $\appr$ and $\weakneg$ is a vocabulary with approximations with respect to $\BSb$.

    We let $\corr_\varepsilon$ be the class of all set correspondences $R$ between a dense set of $\A$ and a dense set of $\B$, for $\A,\B\in\BSb$, such that there exist linear isometric embeddings $\iota_\A\colon\A\to\model C$ and $\iota_\B\colon\B\to\model C$ into a third space $\model{C}\in\BSb$ and for all $(a,b)\in R$
    \[
        \norm{\iota_\A(a) - \iota_\B(b)} \leq \varepsilon.
    \]
    Then it is not difficult to check that $\corr=(\corr_\varepsilon)_{\varepsilon\geq 0}$ is a correspondence notion of $\BSb$, except for item~\ref{item: Preserving atomic formulae suffices for epsilon-correspondence} of Definition~\ref{Definition: Correspondence notion} which is annoyingly tricky to verify and proven in Corollary~\ref{Corollary: Kadets formulae vs correspondences}.

    For convenience, denote by $\Lambda$ the set $\bigcup_{n<\omega}\{ \bar{\lambda}\in D^n \mid \sum_{i<n}\abs{\lambda_i}=1 \}$.
    \begin{lemma}
        \label{Lemma: Kadets epsilon-correspondence epsilon-preserves formulae}
        Let $\A,\B\in\BSb$, and let $R\subseteq\A\times\B$. If $R\in\corr_\varepsilon$, then, whenever $(a_i,b_i)\in R$, $i<n$, and $\varphi(v_0,\dots,v_{n-1})$ is an atomic formula, we have
        \[
            \A\models\varphi(\bar{a}) \implies \B\models\appr(\varphi,\varepsilon)(\bar{b}).
        \]
    \end{lemma}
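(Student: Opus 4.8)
The plan is to pass to the common space $\model{C}$ witnessing $R\in\corr_\varepsilon$ and reduce the whole statement to a single triangle-inequality estimate, just as in Lemma~\ref{Lemma: Gromov-Hausdorff Correspondence preserves truth}, while paying attention to the one-sided truncations hidden in $\dotplus$ and $\dotminus$.

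First I would record a purely syntactic fact about $\LBb$-terms: in any $\B\in\BSb$, a term $t(v_0,\dots,v_{n-1})$ is interpreted as a bounded linear combination $t^\B(\bar b)=\sum_{i<n}d_i b_i$ with coefficients $d_i\in\R$ satisfying $\sum_{i<n}\abs{d_i}\le 1$. This follows by induction on the term: a bare variable has coefficient $1$, and an application $g_{\bar\lambda}(t_0,\dots,t_{m-1})$ with $\sum_{j<m}\abs{\lambda_j}=1$ sends coefficient systems of $\ell^1$-mass $\le 1$ to the coefficient system with mass $\sum_i\abs{\sum_j\lambda_j c_{j,i}}\le\sum_{j<m}\abs{\lambda_j}=1$. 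In particular $t^\B$ maps the unit ball into itself and, being linear, commutes with the linear embeddings below.

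Next, fix $\model{C}\in\BSb$ and linear isometric embeddings $\iota_\A\colon\A\to\model{C}$, $\iota_\B\colon\B\to\model{C}$ provided by $R\in\corr_\varepsilon$, so that $\norm{\iota_\A(a_i)-\iota_\B(b_i)}\le\varepsilon$ for each $i<n$. Since these maps are linear and isometric, $\norm{t^\A(\bar a)}=\norm{t^{\model{C}}(\iota_\A(\bar a))}$ and $\norm{t^\B(\bar b)}=\norm{t^{\model{C}}(\iota_\B(\bar b))}$, and the heart of the argument is the single estimate
\[
    \norm{t^{\model{C}}(\iota_\A(\bar a))-t^{\model{C}}(\iota_\B(\bar b))} = \norm{\sum_{i<n}d_i\bigl(\iota_\A(a_i)-\iota_\B(b_i)\bigr)} \le \Bigl(\sum_{i<n}\abs{d_i}\Bigr)\varepsilon \le \varepsilon.
\]

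Finally I would split on the shape of the atomic $\varphi$. If $\varphi=P_r(t)$ and $\A\models\varphi(\bar a)$, i.e.\ $\norm{t^{\model{C}}(\iota_\A(\bar a))}\le r$, the estimate gives $\norm{t^\B(\bar b)}\le r+\varepsilon$; since every norm in a unit ball is $\le 1$ we in fact get $\norm{t^\B(\bar b)}\le\min\{r+\varepsilon,1\}=r\dotplus\varepsilon$, which is exactly $\B\models\appr(\varphi,\varepsilon)(\bar b)$. If $\varphi=Q_r(t)$ and $\norm{t^{\model{C}}(\iota_\A(\bar a))}\ge r$, the reverse triangle inequality and the same estimate give $\norm{t^\B(\bar b)}\ge r-\varepsilon$, and as norms are nonnegative $\norm{t^\B(\bar b)}\ge\max\{r-\varepsilon,0\}=r\dotminus\varepsilon$, i.e.\ $\B\models\appr(\varphi,\varepsilon)(\bar b)$. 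I expect the only genuinely delicate point---and the reason this is a touch less immediate than its Gromov--Hausdorff analogue---to be aligning the truncations $\dotplus$ and $\dotminus$ with the $[0,1]$-boundedness of the norm on the unit ball; once the linear-combination normal form for terms is in hand, everything else is the triangle inequality.
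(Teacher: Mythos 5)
Your proposal is correct and follows essentially the same route as the paper's proof: reduce the atomic formula to a linear combination $\sum_{i<n}\lambda_i v_i$ with $\ell^1$-mass at most $1$ and then apply a single triangle-inequality estimate through the linear isometric embeddings into $\model{C}$. The only differences are cosmetic --- you justify the normal form by induction on terms where the paper simply asserts it, and you spell out the $\dotplus$/$\dotminus$ truncations that the paper leaves implicit --- both of which are fine.
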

    \begin{proof}
        As $R$ is an $\varepsilon$-correspondence between $\A$ and $\B$, there are linear isometric embeddings $\iota_\A\colon\A\to\model C$ and $\iota_\B\colon\B\to\model C$ such that for all $(a,b)\in R$
        \[
            \norm{\iota_\A(a) - \iota_\B(b)} \leq \varepsilon.
        \]
        Note that every atomic formula with variables among $v_0,\dots,v_{n-1}$ is equivalent to a formula of the form $P_r(\sum_{i<n}\lambda_i v_i)$ or $Q_r(\sum_{i<n}\lambda_i v_i)$ for some $r>0$ and $(\lambda_0,\dots,\lambda_{n-1})\in\Lambda$. Let $(a_i,b_i)\in R$, $i<n$, and $(\lambda_0,\dots,\lambda_{n-1})\in\Lambda$, and suppose that $\A\models P_r\left(\sum_{i<n}\lambda_i a_i\right)$, i.e. $\norm{\sum_{i<n}\lambda_i a_i}\leq r$. Now
        \begin{align*}
            \norm{\sum_{i<n}\lambda_i b_i} &= \norm{\sum_{i<n}\lambda_i \iota_\A(a_i) + \sum_{i<n}\lambda_i \iota_\B(b_i) - \sum_{i<n}\lambda_i \iota_\A(a_i)} \\
            &\leq \norm{\sum_{i<n}\lambda_i a_i} + \sum_{i<n}\abs{\lambda_i}\norm{\iota_\B(b_i) - \iota_\A(a_i)} \\
            &\leq r + \varepsilon,
        \end{align*}
        whence $\B\models P_{r+\varepsilon}(\sum_{i<n}\lambda_i b_i)$, i.e. $\B\models \appr(P_r(\sum_{i<n}\lambda_i v_i),\varepsilon)(\bar{b})$.

        The case for $Q_r(\sum_{i<n}\lambda_i v_i)$ is similar.
    \end{proof}

    \begin{definition}
        The \emph{Kadets distance} $\dk(X,Y)$ of two Banach spaces $X$ and $Y$ is the infimum of
        \[
            \dH{Z}(\iota_X(B_X),\iota_Y(B_Y))
        \]
        over all linear isometric embeddings $\iota_X\colon X\to Z$ and $\iota_Y\colon Y\to Z$ into a third Banach space $Z$, where $B_X$ and $B_Y$ are the closed unit balls of the respective spaces.
    \end{definition}

    For $\A,\B\in\BSb$, we write simply $\dk(\A,\B)$ for the Kadets distance between the Banach spaces spanned by $\A$ and $\B$.

    \begin{lemma}
        \label{Lemma: Kadets distance is captured by correspondence notion}
        For $\A,\B\in\BSb$, the following are equivalent for all $\varepsilon>0$.
        \begin{enumerate}
            \item $\dk(\A,\B)<\varepsilon$.
            \item There is $\varepsilon'\in(0,\varepsilon)$ such that, for any dense sets $\dense\A\subseteq\A$ and $\dense\B\subseteq\B$, there is $R\in\corr_{\varepsilon'}$ such that $\dom(R)=\dense\A$ and $\ran(R)=\dense\B$.
        \end{enumerate}
    \end{lemma}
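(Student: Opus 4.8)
The plan is to prove the two implications separately, mirroring the proof of Lemma~\ref{Lemma: Gromov-Hausdorff distance is captured by correspondence notion}. The only genuinely new ingredients are the need to pass between the third space $\model{C}\in\BSb$ appearing in the definition of $\corr$ and the ambient Banach space $Z$ appearing in the definition of $\dk$, and the handling of the universal quantifier over dense sets in item~(ii). Throughout, I let $X$ and $Y$ denote the Banach spaces spanned by $\A$ and $\B$, so that $\A$ and $\B$ are their closed unit balls, and I identify a linear isometric embedding of unit balls with the restriction of the unique linear isometric embedding of the spanned Banach spaces; thus an embedding $\iota_\A\colon\A\to\model{C}$ of unit balls and a linear isometric embedding $X\to Z$ of the spanned spaces (with $\model{C}$ the unit ball of $Z$) carry the same information, and Hausdorff distances computed in $\model{C}$ agree with those computed in $Z$ on the relevant unit balls.

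For (i)$\,\Rightarrow\,$(ii), I would assume $\dk(\A,\B)<\varepsilon$ and first fix $\varepsilon''$ with $\dk(\A,\B)<\varepsilon''<\varepsilon$. By definition there are linear isometric embeddings $\iota_X,\iota_Y$ of $X,Y$ into a common Banach space $Z$ with $\dH{Z}(\iota_X(\A),\iota_Y(\B))<\varepsilon''$, and the unit ball of $Z$ serves as the required $\model{C}\in\BSb$. Next I choose $\varepsilon'$ with $\varepsilon''<\varepsilon'<\varepsilon$ and let $\dense\A\subseteq\A$, $\dense\B\subseteq\B$ be arbitrary dense sets. For each $a\in\dense\A$ the Hausdorff bound gives a point of $\iota_Y(\B)$ within $\varepsilon''$ of $\iota_X(a)$, and using density of $\dense\B$ together with continuity of $\iota_Y$ I replace it by some $b\in\dense\B$ with $\norm{\iota_X(a)-\iota_Y(b)}\leq\varepsilon'$, spending the slack $\varepsilon'-\varepsilon''$ on this approximation; symmetrically each $b\in\dense\B$ is matched with some $a\in\dense\A$. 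Taking $R$ to be the union of all these pairs gives a set correspondence with $\dom(R)=\dense\A$ and $\ran(R)=\dense\B$ satisfying $\norm{\iota_X(a)-\iota_Y(b)}\leq\varepsilon'$ for every $(a,b)\in R$, so $R\in\corr_{\varepsilon'}$ and (ii) holds.

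For (ii)$\,\Rightarrow\,$(i), I would apply the hypothesis to any fixed dense sets $\dense\A,\dense\B$ to obtain $\varepsilon'\in(0,\varepsilon)$ and $R\in\corr_{\varepsilon'}$ with $\dom(R)=\dense\A$, $\ran(R)=\dense\B$. Unwinding the definition yields linear isometric embeddings $\iota_\A,\iota_\B$ into some $\model{C}\in\BSb$ with $\norm{\iota_\A(a)-\iota_\B(b)}\leq\varepsilon'$ for all $(a,b)\in R$. Passing to the Banach space $Z$ spanned by $\model{C}$ and extending $\iota_\A,\iota_\B$ to linear isometric embeddings of $X,Y$, I then bound $\dH{Z}(\iota_\A(\A),\iota_\B(\B))$ as follows: for $a\in\dom(R)$ there is $b$ with $(a,b)\in R$, so $d(\iota_\A(a),\iota_\B(\B))\leq\varepsilon'$, and since $\dom(R)$ is dense in $\A$ and $\iota_\A$ is continuous this passes to all of $\A$; symmetrically for $\B$. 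Hence $\dH{Z}(\iota_\A(\A),\iota_\B(\B))\leq\varepsilon'<\varepsilon$, giving $\dk(\A,\B)<\varepsilon$.

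The hard part will be the bookkeeping around the two different ``third spaces'': verifying that a linear isometric embedding of unit balls into $\model{C}\in\BSb$ really extends to a linear isometric embedding of the spanned Banach spaces into a space $Z$ whose unit ball is $\model{C}$, and conversely, so that the Hausdorff distances in $\model{C}$ and in $Z$ coincide on unit balls. Closely tied to this is making the single threshold $\varepsilon'$ work uniformly for all dense sets in the forward direction, which is why I fix $\varepsilon''<\varepsilon'<\varepsilon$ in advance. The remaining density-to-closure extensions of the Hausdorff bound and the matching of points to prescribed dense sets are routine, exactly as in Lemmata~\ref{Lemma: GH-distance can be evaluated in dense sets} and~\ref{Lemma: Gromov-Hausdorff distance is captured by correspondence notion}.
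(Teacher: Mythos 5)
Your proposal is correct and follows essentially the same route as the paper's proof: in both directions one unwinds the definitions of $\dk$ and $\corr_{\varepsilon'}$, defines $R$ as the set of pairs from the dense sets whose images in the common space lie within $\varepsilon'$, and uses density (plus the $1$-Lipschitz continuity of distance-to-a-set) to pass between the dense sets and the full unit balls. The only differences are cosmetic: you introduce an extra intermediate $\varepsilon''$ in the forward direction where the paper exploits the strictness of $\dH{\model C}<\varepsilon'$ directly, and you make explicit the identification between embeddings of unit balls and of the spanned spaces, which the paper leaves implicit.
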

    \begin{proof}
        First suppose that $\dk(\A,\B)<\varepsilon$. Pick any $\varepsilon'>0$ such that $\dk(\A,\B)<\varepsilon'<\varepsilon$. Then there are linear isometric embeddings $\iota_\A\colon\A\to\model C$ and $\iota_\B\colon\B\to\model C$ such that
        \[
            \dH{\model C}(\iota_\A(\A),\iota_\B(\B)) < \varepsilon'.
        \]
        Let $\dense\A\subseteq\A$ and $\dense\B\subseteq\B$ be any dense sets, and let
        \[
            R \coloneqq \{(a,b)\in\dense\A\times\dense\B : \norm{\iota_\A(a) - \iota_\B(b)} \leq \varepsilon' \}.
        \]
        It is easy to check that $\dom(R)=\dense\A$ and $\ran(R)=\dense\B$, and hence $R$ is as desired.

        Then suppose that $R\in\corr_{\varepsilon'}$ for some $\varepsilon'<\varepsilon$ and $\dom(R)=\dense\A$ and $\ran(R)=\dense\B$. Then there are linear isometric embeddings $\iota_\A\colon\A\to\model C$ and $\iota_\B\colon\B\to\model C$ such that
        \[
            \norm{\iota_\A(a)-\iota_\B(b)} \leq \varepsilon'
        \]
        for all $(a,b)\in R$. Denote $\delta = \varepsilon-\varepsilon'$.
        Using denseness of $\dom(R)$ and $\ran(R)$ respectively, we can show that $\sup_{a\in\A}d(\iota_\A(a),\iota_\B(\B))\leq\varepsilon' + \delta/2$ and $\sup_{b\in\B}d(\iota_\A(\A),\iota_\B(b))\leq\varepsilon' + \delta/2$, whence $\dH{\model C}(\iota_\A(\A),\iota_\B(\B))\leq\varepsilon'+\delta/2$. Then $\dk(\A,\B)\leq\varepsilon'+\delta/2 = \varepsilon' + (\varepsilon-\varepsilon')/2 < \varepsilon$.
    \end{proof}

    \begin{corollary}
        For $\A,\B\in\BSb$, we have $\dk(\A,\B)=d_\corr(\A,\B)$.
    \end{corollary}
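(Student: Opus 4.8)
The plan is to convert the $\varepsilon$-wise characterization in Lemma~\ref{Lemma: Kadets distance is captured by correspondence notion} into the equality of the two non-negative reals $\dk(\A,\B)$ and $d_\corr(\A,\B)$. The key observation is that any $x\geq 0$ satisfies $x=\inf\{\varepsilon>0 : x<\varepsilon\}$, so it suffices to prove
\[
    \dk(\A,\B)<\varepsilon \iff d_\corr(\A,\B)<\varepsilon \quad\text{for every } \varepsilon>0,
\]
and then take infima on both sides.

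First I would handle the implication $\dk(\A,\B)<\varepsilon \Rightarrow d_\corr(\A,\B)<\varepsilon$. Assuming $\dk(\A,\B)<\varepsilon$, Lemma~\ref{Lemma: Kadets distance is captured by correspondence notion} supplies $\varepsilon'\in(0,\varepsilon)$ such that for any dense $\dense\A\subseteq\A$ and $\dense\B\subseteq\B$ there is $R\in\corr_{\varepsilon'}$ with $\dom(R)=\dense\A$ and $\ran(R)=\dense\B$. Fixing any such dense sets produces an $\varepsilon'$-correspondence between $\A$ and $\B$, so by the definition of $d_\corr$ we get $d_\corr(\A,\B)\leq\varepsilon'<\varepsilon$.

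For the converse $d_\corr(\A,\B)<\varepsilon \Rightarrow \dk(\A,\B)<\varepsilon$, the definition of $d_\corr$ gives some $\varepsilon''<\varepsilon$ and an $\varepsilon''$-correspondence $R\in\corr_{\varepsilon''}$ with $\dom(R)$ dense in $\A$ and $\ran(R)$ dense in $\B$. I would then invoke the geometric construction from the second half of the proof of Lemma~\ref{Lemma: Kadets distance is captured by correspondence notion}: from this single $R$ one obtains linear isometric embeddings $\iota_\A\colon\A\to\model C$ and $\iota_\B\colon\B\to\model C$ with $\norm{\iota_\A(a)-\iota_\B(b)}\leq\varepsilon''$ for $(a,b)\in R$, and, using the denseness of $\dom(R)$ and $\ran(R)$, bounds $\dH{\model C}(\iota_\A(\A),\iota_\B(\B))\leq\varepsilon''+(\varepsilon-\varepsilon'')/2<\varepsilon$, so that $\dk(\A,\B)<\varepsilon$.

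The one subtle point I would flag is the asymmetry in how dense sets are quantified: Lemma~\ref{Lemma: Kadets distance is captured by correspondence notion} characterizes $\dk(\A,\B)<\varepsilon$ by the existence of correspondences between \emph{all} pairs of dense sets, whereas the definition of $d_\corr$ only guarantees \emph{one} correspondence between \emph{some} dense subsets. This is exactly why the converse direction cannot be read off the biconditional verbatim and must instead appeal to the fact --- established within the lemma's proof --- that a single correspondence already forces the Hausdorff bound. Once both implications are in hand, the displayed equivalence holds for all $\varepsilon>0$, and taking infima yields $\dk(\A,\B)=d_\corr(\A,\B)$.
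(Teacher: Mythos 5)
Your proof is correct and follows the route the paper intends: the corollary is stated without proof precisely because it is the infimum-level restatement of Lemma~\ref{Lemma: Kadets distance is captured by correspondence notion}, obtained exactly as you do by proving $\dk(\A,\B)<\varepsilon \iff d_\corr(\A,\B)<\varepsilon$ for every $\varepsilon>0$. Your flagged subtlety about the quantification over dense sets is a fair and accurate reading, and your resolution --- appealing to the second half of the lemma's proof, which only ever uses a single correspondence between a single pair of dense sets --- is exactly the right fix.
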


    \begin{lemma}
        \label{Lemma: Kadets formulae maximum norm thingamajig}
        Let $\A,\B\in\BSb$. Let $R\subseteq\A\times\B$ be a set correspondence between a dense set of $\A$ and a dense set of $\B$, and suppose that there is $\delta>0$ such that whenever $(a_i,b_i)\in R$, $i<n$, we have
        \[
            \A\models\varphi(\bar{a}) \implies \B\models\appr(\varphi,\varepsilon-\delta)(\bar{b})
        \]
        for all atomic formulae $\varphi(\bar{v})$. Then there are $\delta',\delta''>0$ such that for all $(a_i,b_i)\in R$ with $\max\{\norm{a_i},\norm{b_i}\}>1-\delta''$, $i<n$, and $(\lambda_0,\dots,\lambda_{n-1})\in\Lambda$ we have
        \[
            \abs{\norm{\sum_{i<n}\lambda_i a_i} - \norm{\sum_{i<n}\lambda_i b_i}} \leq (\varepsilon-\delta')\sum_{i<n}\abs{\lambda_i}\max\{\norm{a_i},\norm{b_i}\}.
        \]
    \end{lemma}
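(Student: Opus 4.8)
The plan is to reduce everything to a clean \emph{additive} comparison of norms and then to absorb that additive error into the desired weighted multiplicative estimate using the near-boundary assumption. First I would recall from the proof of Lemma~\ref{Lemma: Kadets epsilon-correspondence epsilon-preserves formulae} that every atomic formula in the variables $v_0,\dots,v_{n-1}$ is equivalent to one of the form $P_r(\sum_{i<n}\lambda_i v_i)$ or $Q_r(\sum_{i<n}\lambda_i v_i)$ with $r\in D\cap[0,1]$ and $(\lambda_0,\dots,\lambda_{n-1})\in\Lambda$, so it suffices to control these.

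Fix $\bar\lambda\in\Lambda$ and pairs $(a_i,b_i)\in R$, and write $s_a=\norm{\sum_{i<n}\lambda_i a_i}$ and $s_b=\norm{\sum_{i<n}\lambda_i b_i}$; since the $a_i,b_i$ lie in unit balls and $\sum_i\abs{\lambda_i}=1$, both $s_a,s_b\in[0,1]$. For each $r\in D\cap[0,1]$ with $r\geq s_a$ we have $\A\models P_r(\sum_i\lambda_i a_i)$, so applying the hypothesis to $\appr(P_r,\varepsilon-\delta)=P_{r\dotplus(\varepsilon-\delta)}$ gives $s_b\leq r+(\varepsilon-\delta)$; letting $r\downarrow s_a$ (possible by density of $D$, using $s_a\leq 1$) yields $s_b\leq s_a+(\varepsilon-\delta)$. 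Symmetrically, using $Q_r$ with $\appr(Q_r,\varepsilon-\delta)=Q_{r\dotminus(\varepsilon-\delta)}$ for $r\leq s_a$ and letting $r\uparrow s_a$ gives $s_b\geq s_a-(\varepsilon-\delta)$. Hence
\[
    \abs{\norm{\sum_{i<n}\lambda_i a_i} - \norm{\sum_{i<n}\lambda_i b_i}} \leq \varepsilon-\delta
\]
for every $\bar\lambda\in\Lambda$ and all matching pairs of $R$, with no restriction on the norms yet.

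For the second step I would choose $\delta',\delta''>0$ small enough that $(\varepsilon-\delta')(1-\delta'')\geq\varepsilon-\delta$ and $0<\delta'<\varepsilon$; such a choice exists by continuity, since the left-hand side tends to $\varepsilon>\varepsilon-\delta$ as $\delta',\delta''\to 0$ (concretely one may take $\delta''<\delta/\varepsilon$ and then $\delta'$ correspondingly small). Assuming now that each $(a_i,b_i)\in R$ satisfies $\max\{\norm{a_i},\norm{b_i}\}>1-\delta''$, the weight $M=\sum_{i<n}\abs{\lambda_i}\max\{\norm{a_i},\norm{b_i}\}$ satisfies $M>(1-\delta'')\sum_i\abs{\lambda_i}=1-\delta''$, so
\[
    (\varepsilon-\delta')M > (\varepsilon-\delta')(1-\delta'') \geq \varepsilon-\delta \geq \abs{\norm{\sum_{i<n}\lambda_i a_i} - \norm{\sum_{i<n}\lambda_i b_i}},
\]
which is exactly the claim.

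I expect the delicate points to be bookkeeping rather than conceptual. In the additive step the truncations $\dotplus$ and $\dotminus$ can render one of the one-sided estimates vacuous when $s_a$ sits at the boundary ($0$ or $1$), but in each such case the trivial bound $s_b\in[0,1]$ supplies the missing inequality, so $\abs{s_a-s_b}\leq\varepsilon-\delta$ survives. The step I regard as the genuine heart of the statement is the near-boundary hypothesis itself: it is truly needed, because without it $M$ could be arbitrarily small while the additive gap remains of order $\varepsilon-\delta$, and since $R$ is merely a set of pairs (which cannot be rescaled to sharpen the additive bound on short vectors) there would be no way to derive the multiplicative estimate. Tracking the relation $\varepsilon>\delta$, so that $\varepsilon-\delta'>0$ and the multiplicative constant is meaningful, is what pins down the admissible range of $\delta'$ and $\delta''$.
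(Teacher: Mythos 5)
Your proof is correct and follows essentially the same route as the paper's: first extract the additive bound $\abs{s_a-s_b}\leq\varepsilon-\delta$ from the $P_r$/$Q_r$ approximations, then absorb it into the weighted multiplicative estimate via the near-boundary hypothesis and the identity $\sum_i\abs{\lambda_i}=1$. The only cosmetic difference is that the paper fixes the explicit constants $\delta'=\delta/2$ and $\delta''=\delta/(2\varepsilon-\delta)$, for which $(\varepsilon-\delta')(1-\delta'')=\varepsilon-\delta$ exactly, whereas you argue their existence by continuity; your extra care in approximating the norm by $r\in D$ (rather than substituting the possibly non-$D$ value $r=\norm{\sum_i\lambda_i a_i}$ directly) is if anything slightly more rigorous than the paper's version.
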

    \begin{proof}
        Let $\delta'=\delta/2$ and $\delta'' = \delta/(2\varepsilon - \delta)$.
        Let $(a_i,b_i)\in R$, $i<n$, be such that $\max\{\norm{a_i},\norm{b_i}\}>1-\delta''$, and let $(\lambda_0,\dots,\lambda_{n-1})\in\Lambda$. Denote $r=\norm{\sum_{i<n}\lambda_i a_i}$. Then
        \[
            \A\models P_{r}\left( \sum_{i<n}\lambda_i a_i \right) \quad\text{and}\quad \A\models Q_{r}\left( \sum_{i<n}\lambda_i a_i \right),
        \]
        whence
        \[
            \B\models P_{r+\varepsilon-\delta}\left( \sum_{i<n}\lambda_i b_i \right) \quad\text{and}\quad \B\models Q_{r-\varepsilon+\delta}\left( \sum_{i<n}\lambda_i b_i \right).
        \]
        Then
        \begin{align*}
            \norm{\sum_{i<n}\lambda_i b_i} &\leq r + \varepsilon - \delta = \norm{\sum_{i<n}\lambda_i a_i} + (\varepsilon-\delta')(1-\delta'') \\
            &= \norm{\sum_{i<n}\lambda_i a_i} + (\varepsilon-\delta')\sum_{i<n}\abs{\lambda_i}(1-\delta'') \\
            &\leq \norm{\sum_{i<n}\lambda_i a_i} + (\varepsilon-\delta')\sum_{i<n}\abs{\lambda_i}\max\{\norm{a_i},\norm{b_i}\}
        \end{align*}
        and similarly $\norm{\sum_{i<n}\lambda_i b_i}\geq \norm{\sum_{i<n}\lambda_i a_i} - (\varepsilon-\delta')\sum_{i<n}\abs{\lambda_i}\max\{\norm{a_i},\norm{b_i}\}$.
        By combining these, we get
        \[
            \abs{\norm{\sum_{i<n}\lambda_i a_i} - \norm{\sum_{i<n}\lambda_i b_i}} \leq (\varepsilon-\delta')\sum_{i<n}\abs{\lambda_i}\max\{\norm{a_i},\norm{b_i}\}. \qedhere
        \]
    \end{proof}

    \begin{lemma}
        \label{Lemma: Kadets (epsilon-delta)-preserving formulae suffices for <epsilon-correspondence}
        Let $\A,\B\in\BSb$. Let $R\subseteq\A\times\B$ be a set correspondence between a dense set of $\A$ and a dense set of $\B$, and suppose that there is $\delta>0$ such that whenever $(a_i,b_i)\in R$, $i<n$, we have
        \[
            \A\models\varphi(\bar{a}) \implies \B\models\appr(\varphi,\varepsilon-\delta)(\bar{b})
        \]
        for all atomic formulae $\varphi(\bar{v})$. Then there is $\varepsilon'\in(0,\varepsilon)$ and $R\in\corr_{\varepsilon'}$ such that $\dom(R')=\dom(R)$ and $\ran(R')=\ran(R)$.
    \end{lemma}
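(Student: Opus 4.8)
The plan is to manufacture a single Banach space into which the two spanning spaces $X=\subspace\A$ and $Y=\subspace\B$ embed isometrically, while the $R$-related points are pushed to within $\varepsilon'$ of each other, and then to read $R'$ off from the resulting geometry. First I would apply Lemma~\ref{Lemma: Kadets formulae maximum norm thingamajig} to the hypothesis to obtain $\delta',\delta''>0$ and the estimate
\[
  \abs{\norm{\textstyle\sum_{i<n}\lambda_i a_i} - \norm{\textstyle\sum_{i<n}\lambda_i b_i}} \le (\varepsilon-\delta')\sum_{i<n}\abs{\lambda_i}\max\{\norm{a_i},\norm{b_i}\}
\]
for all $(\lambda_i)_{i<n}\in\Lambda$ and all $(a_i,b_i)\in R$ with $\max\{\norm{a_i},\norm{b_i}\}>1-\delta''$; I will call such pairs \emph{large} and fix $\varepsilon'=\varepsilon-\delta'/2\in(0,\varepsilon)$. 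By homogeneity (normalizing the coefficients) the same inequality holds for arbitrary scalars $\lambda_i$, not only for $(\lambda_i)\in\Lambda$.

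Next I would glue $X$ and $Y$ along the large pairs. On the algebraic direct sum $X\oplus Y$ set
\[
  \norm{(x,y)} = \inf\Big\{\, \norm{x-\textstyle\sum_i\lambda_i a_i}_X + \norm{y+\sum_i\lambda_i b_i}_Y + (\varepsilon-\delta')\sum_i\abs{\lambda_i}\max\{\norm{a_i},\norm{b_i}\} \,\Big\},
\]
the infimum ranging over finite families of large pairs $(a_i,b_i)\in R$ and scalars $\lambda_i$; this is a seminorm, being an inf-convolution of $\norm\cdot_X$, $\norm\cdot_Y$ and the penalty term, and it is bounded above by $\norm x_X+\norm y_Y$. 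The crux is that the inclusions $x\mapsto(x,0)$ and $y\mapsto(0,y)$ are isometric. For the nontrivial inequality $\norm{(x,0)}\ge\norm x_X$, the triangle inequality in $X$ reduces the claim to
\[
  \norm{\textstyle\sum_i\lambda_i a_i}_X \le \norm{\textstyle\sum_i\lambda_i b_i}_Y + (\varepsilon-\delta')\sum_i\abs{\lambda_i}\max\{\norm{a_i},\norm{b_i}\},
\]
which is exactly the estimate above; the symmetric inequality gives isometry on $Y$. Quotienting by the null space, which meets each factor only in $0$, and completing then produces a real Banach space whose closed unit ball is the required $\model{C}\in\BSb$, together with linear isometric embeddings $\iota_\A,\iota_\B$ of $\A,\B$ into $\model{C}$.

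Finally I would build $R'$. Each large pair $(a,b)\in R$ already satisfies $\norm{\iota_\A(a)-\iota_\B(b)}\le\varepsilon-\delta'\le\varepsilon'$, by feeding that single pair into the penalty. The main obstacle is an element $a\in\dom(R)$ whose only $R$-partners $b$ are small (both $\norm a,\norm b\le 1-\delta''$): such a pair is excluded from the gluing, and it genuinely must be, since inserting a single small pair with the $\max$-weighted penalty would shrink norms and destroy the isometry. To match such an $a$ I would use density together with homogeneity: pick $\tilde a\in\dom(R)$ nearly parallel to $a$ but with $\norm{\tilde a}$ just below $1-\delta''$, so that $\tilde a$ sits in a large pair $(\tilde a,\tilde b)$, and then match $a$ to some $b'\in\ran(R)$ close to $(\norm a/\norm{\tilde a})\,\tilde b$. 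Using the isometry of the embeddings, the large-pair bound for $(\tilde a,\tilde b)$, and $\norm a/\norm{\tilde a}<1$, a short estimate keeps $\norm{\iota_\A(a)-\iota_\B(b')}$ below $\varepsilon'$ once the (freely choosable) density errors are taken small enough. Performing the symmetric construction for the small elements of $\ran(R)$ and adjoining all large pairs defines a relation $R'$ with $\dom(R')=\dom(R)$ and $\ran(R')=\ran(R)$, all of whose pairs are realized within $\varepsilon'$ by $\iota_\A,\iota_\B$; hence $R'\in\corr_{\varepsilon'}$. The delicate bookkeeping in this last step — choosing $\tilde a$, $b'$ and the density tolerances so that every error lands safely inside the $\delta'/2$ of slack — is where I expect the real work to lie.
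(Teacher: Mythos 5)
Your proposal matches the paper's proof in all essentials: the same appeal to Lemma~\ref{Lemma: Kadets formulae maximum norm thingamajig}, the same inf-convolution norm on the direct sum glued along the large pairs with penalty $(\varepsilon-\delta')\sum_i\abs{\lambda_i}\max\{\norm{a_i},\norm{b_i}\}$, and the same isometry verification; the only (cosmetic) divergence is at the end, where the paper shows $\dk(\hat\A,\hat\B)<\varepsilon$ using density of the $\Lambda$-combinations of $\dom(\hat{R})$ and then invokes Lemma~\ref{Lemma: Kadets distance is captured by correspondence notion} to produce $R'$ with the prescribed domain and range, rather than matching small elements by hand via rescaling. One small slip in your matching step: for $\tilde a$ to lie in a large pair you need $\norm{\tilde a}$ just \emph{above} $1-\delta''$, not just below.
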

    \begin{proof}
        The main idea of this proof is the same as in the proof of Lemma 18 in~\cite{CDK20}.
    
        Let $\delta',\delta''>0$ be as in Lemma~\ref{Lemma: Kadets formulae maximum norm thingamajig} and denote by $\hat{R}$ the set of all $(a,b)\in R$ such that $\max\{\norm{a},\norm{b}\}>1-\delta''$. 

        Let $A$ be the linear span of $\dom(\hat{R})$ and $B$ the linear span of $\ran(\hat{R})$. Define a function $\norm{\cdot}_C$ on the vector space $C \coloneqq A \oplus B$ by letting $\norm{(a,b)}_{C}$ be the infimum of
        \[
             \rho(\norm{a_n} + \norm{b_n} + (\varepsilon-\delta')\sum_{i<n}\abs{\lambda_i}\max\{\norm{a_i},\norm{b_i}\})
        \]
        over all $n<\omega$, $a_0,\dots,a_{n-1}\in\dom(\hat{R})$, $a_n\in A$, $b_0,\dots,b_{n-1}\in\ran(\hat{R})$, $b_n\in B$, $\rho>0$ and $(\lambda_0,\dots,\lambda_{n-1})\in\Lambda$ such that
        \begin{itemize}
            \item $\max\{\norm{a},\norm{b}\}\leq\rho$,
            \item $(a_i,b_i)\in\hat{R}$ for $i<n$,
            \item $a = \rho(a_n + \sum_{i<n}\lambda_i a_i)$ and
            \item $b = \rho(b_n - \sum_{i< n}\lambda_i b_i)$.
        \end{itemize}
        It is straightforward (although tedious) to verify that $\norm{\cdot}_{C}$ is a norm.

        Next we claim that $\norm{(a,0)}_C = \norm{a}$ for all $a\in A$ and $\norm{(0,b)}_C = \norm{b}$ for all $b\in B$. We show the first claim; the second claim is proved with an analogous argument. Fix $a\in A$. It is clear that $\norm{(a,0)}_C\leq\norm{a}$, as we can write $a = \rho(a_n + \sum_{i<n}\lambda_i a_i)$ and $0 = \rho(b_n - \sum_{i<n}\lambda_i b_i)$ for $n=0$ and $\rho=\norm{a}$ by choosing $a_0 = a/\rho$ and $b_0 = 0$, whence
        \[
            \norm{(a,0)}_C \leq \rho(\norm{a_0} + \norm{b_0}) = \norm{\rho a_0} = \norm{a}.
        \]
        To show that $\norm{a}\leq\norm{(a,0)}_C$, pick arbitrary $\rho\geq\max\{\norm{a}\}$, $\bar\lambda\in\Lambda$ and $(a_i,b_i)\in\hat{R}$, $i<n$, $a_n\in A$ and $b_n\in B$, such that $a = \rho(a_n + \sum_{i<n}\lambda_i a_i)$ and $0 = \rho(b_n - \sum_{i<n}\lambda_i b_i)$.
        Now by Lemma~\ref{Lemma: Kadets formulae maximum norm thingamajig}, as $(a_i,b_i)\in R$ and $\max\{\norm{a_i},\norm{b_i}\}>1-\delta''$, we have
        \[
            \abs{\norm{\sum_{i<n}\lambda_i a_i} - \norm{\sum_{i<n}\lambda_i b_i}} \leq (\varepsilon-\delta')\sum_{i<n}\abs{\lambda_i}\max\{\norm{a_i},\norm{b_i}\}.
        \]
        Then
        \begin{align*}
            \rho(\norm{a_n} &+ \norm{b_n} + (\varepsilon-\delta')\sum_{i<n}\abs{\lambda_i}\max\{\norm{a_i},\norm{b_i}\}) \\
            &= \rho\left( \norm{a_n} + \norm{\sum_{i<n}\lambda_i b_i} + (\varepsilon-\delta')\sum_{i<n}\abs{\lambda_i}\max\{\norm{a_i},\norm{b_i}\} \right) \\
            &\geq \rho\left( \norm{a_n} + \norm{\sum_{i<n}\lambda_i a_i} \right) \\
            &\geq \norm{a},
        \end{align*}
        whence, as $a_i$, $b_i$ and $\lambda_i$ were arbitrary, we get $\norm{a}\leq\norm{(a,0)}_C$. This shows that the embeddings $a\mapsto (a,0)$ and $b\mapsto (0,b)$ are isometric. Let $\model C$ be the completion of $C$. Then the Banach spaces spanned by $\A$ and $\B$ isometrically embed into subspaces of $\model C$.

        Let $a\in\A$. Note that the set
        \[
            \left\{\sum_{i<n}\lambda_i a_i \ \right|\left.\vphantom{\sum_{i<n}}\, a_0,\dots,a_{n-1}\in\dom(\hat{R}),\bar\lambda\in\Lambda \right\}
        \]
        is dense in $\A$, so we can find $\bar\lambda\in\Lambda$ and $a_i\in\dom(\hat{R})$, $i<n$, such that $\norm{a - \sum_{i<n}\lambda_i a_i} < \delta'/2$. Let $b_i$ be such that $(a_i,b_i)\in\hat{R}$. Let $a'=\sum_{i<n}\lambda_i a_i$ and $b=\sum_{i<n}\lambda_i b_i$. By denoting $a_n = 0$, $b_n = 0$ and $\rho=1$, we can write $a' = \rho(a_n + \sum_{i<n}\lambda_i a_i)$ and $-b = \rho(b_n - \sum_{i<n}\lambda'_i b_i)$. Then
        \begin{align*}
            \norm{(a',-b)}_C &\leq \rho(\norm{a_n} + \norm{b_n} + (\varepsilon-\delta')\sum_{i<n}\abs{\lambda_i}\max\{\norm{a_i},\norm{b_i}\}) \\
            &= 0 + 0 + (\varepsilon-\delta')\sum_{i<n}\abs{\lambda_i}\max\{\norm{a_i},\norm{b_i}\} \\
            &\leq \varepsilon-\delta',
        \end{align*}
        whence
        \begin{align*}
            \norm{(a,-b)}_C &\leq \norm{a - a'} + \norm{(a',-b)}_C \\
            &\leq \delta'/2 + \varepsilon - \delta' \\
            &= \varepsilon - \delta'/2.
        \end{align*}
        Similarly for any $b\in\B$ we can find $a\in\A$ with $\norm{(a,-b)}_C\leq\varepsilon-\delta'/2$. Thus we have established that $\dk(\hat\A,\hat\B)<\varepsilon$. Then the existence of our desired relation in $\corr_{\varepsilon'}$ directly follows from Lemma~\ref{Lemma: Kadets distance is captured by correspondence notion}.
    \end{proof}

    \begin{corollary}
        \label{Corollary: Kadets formulae vs correspondences}
        Let $\A,\B\in\BSb$ and let $\dense\A\subseteq\A$ and $\dense\B\subseteq\B$ be dense sets of the respective spaces. Then for any $\varepsilon>0$ the following are equivalent.
        \begin{enumerate}
            \item There is $\varepsilon'\in(0,\varepsilon)$ and $R\in\corr_{\varepsilon'}$ such that $\dom(R)=\dense\A$ and $\ran(R)=\dense\B$.
            \item There is $\varepsilon'\in(0,\varepsilon)$ and a set correspondence $R$ between $\dense\A$ and $\dense\B$ such that whenever $(a_i,b_i)\in R$, $i<n$, and $\varphi(v_0,\dots,v_{n-1})$ is an atomic formula, we have
            \[
                \A\models\varphi(\bar{a}) \implies \B\models\appr(\varphi,\varepsilon')(\bar{b}).
            \]
        \end{enumerate}
    \end{corollary}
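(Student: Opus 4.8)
The plan is to observe that this corollary packages the two preceding lemmas into a single equivalence, one lemma handling each direction, so that no new construction is needed and the entire argument reduces to matching up the parameters $\varepsilon'$ and $\delta$ correctly.

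For the implication (1)~$\Rightarrow$~(2), I would argue directly from Lemma~\ref{Lemma: Kadets epsilon-correspondence epsilon-preserves formulae}. Assume we are given $\varepsilon'\in(0,\varepsilon)$ and $R\in\corr_{\varepsilon'}$ with $\dom(R)=\dense\A$ and $\ran(R)=\dense\B$. Since $R$ is in particular an $\varepsilon'$-correspondence between $\A$ and $\B$, that lemma immediately yields, for every atomic $\varphi(v_0,\dots,v_{n-1})$ and every choice of $(a_i,b_i)\in R$, $i<n$, the implication $\A\models\varphi(\bar{a})\implies\B\models\appr(\varphi,\varepsilon')(\bar{b})$. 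Thus the same $\varepsilon'$ together with the same relation $R$ (now viewed merely as a set correspondence between $\dense\A$ and $\dense\B$) witnesses~(2). This direction is essentially a restatement and requires no further work.

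For the converse (2)~$\Rightarrow$~(1), I would invoke Lemma~\ref{Lemma: Kadets (epsilon-delta)-preserving formulae suffices for <epsilon-correspondence}, which is exactly the substantive half. Suppose we are given $\varepsilon'\in(0,\varepsilon)$ and a set correspondence $R$ between $\dense\A$ and $\dense\B$ such that $\A\models\varphi(\bar{a})\implies\B\models\appr(\varphi,\varepsilon')(\bar{b})$ holds for all atomic $\varphi$ and all $(a_i,b_i)\in R$. The only real step is translating the hypothesis into the form the lemma expects: setting $\delta\coloneqq\varepsilon-\varepsilon'>0$, the given preservation is precisely preservation at the level $\varepsilon-\delta$, so the lemma applies and produces some $\varepsilon''\in(0,\varepsilon)$ and $R'\in\corr_{\varepsilon''}$ with $\dom(R')=\dom(R)=\dense\A$ and $\ran(R')=\ran(R)=\dense\B$. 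Renaming $\varepsilon''$ as the witness, this is exactly~(1).

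The hard part of the corollary has, by design, already been discharged inside Lemma~\ref{Lemma: Kadets (epsilon-delta)-preserving formulae suffices for <epsilon-correspondence} (the norm construction on $C=A\oplus B$ and the appeal to Lemma~\ref{Lemma: Kadets distance is captured by correspondence notion}), so the only point demanding care here is the bookkeeping on the quantifiers: the lemma is phrased with an existential $\delta>0$ giving preservation at $\varepsilon-\delta$, whereas~(2) is phrased with a single $\varepsilon'<\varepsilon$, and one must note that these two formulations are interchangeable via $\delta=\varepsilon-\varepsilon'$. Once this identification is made, the proof is simply the conjunction of the two lemmas.
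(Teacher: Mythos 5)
Your proposal is correct and takes exactly the paper's route: the paper's proof is the one-line ``Put together Lemmata~\ref{Lemma: Kadets epsilon-correspondence epsilon-preserves formulae} and~\ref{Lemma: Kadets (epsilon-delta)-preserving formulae suffices for <epsilon-correspondence}'', and your parameter bookkeeping via $\delta=\varepsilon-\varepsilon'$ is the right (and only) detail needed to make that combination explicit.
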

    \begin{proof}
        Put together Lemmata~\ref{Lemma: Kadets epsilon-correspondence epsilon-preserves formulae} and~\ref{Lemma: Kadets (epsilon-delta)-preserving formulae suffices for <epsilon-correspondence}.
    \end{proof}

    \section{Games}\label{Section: Games}

    For the rest of this paper, we fix a vocabulary $L$ and a class $\class$ of $L$-structures such that $L$ has approximations for $\class$.

    \subsection{Function Games}

     Let $\isom=(\isom_{\varepsilon})_{\varepsilon\geq 0}$ be an isomorphism notion of $\class$ such that $L$ is good for $(\class,\isom)$. We define two variants of the Ehrenfeucht--Fraïssé game that are used to capture the concept of being $\varepsilon$-isomorphic.

    \begin{definition}
        Let $\A,\B\in\class$, and let $\dense{\A}$ and $\dense{\B}$ be dense subsets of the respective spaces. For $\varepsilon\geq 0$, $\bar{\varepsilon}=(\varepsilon_0,\dots,\varepsilon_{n-1})\in \open{\varepsilon,\infty}^n$ and tuples $(a_0,\dots,a_{n-1})\in\A^n$ and $(b_0,\dots,b_{n-1})\in\B^n$, we denote by
        \[
            \EF{\omega,\varepsilon,\bar{\varepsilon}}{\A,\B}{(\dense{\A},(a_0,\dots,a_{n-1})),(\dense{\B},(b_0,\dots,b_{n-1}))}
        \]
        the two-player game of length $\omega$, defined as follows. Denote the players by $\playerone$ and $\playertwo$. At each round $k<\omega$, first $\playerone$ picks an element $x_{k+n}\in\dense{\A}\cup\dense{\B}$ and in addition a positive real number $\varepsilon_{k+n}>\varepsilon$, and then $\playertwo$ responds with some $y_{k+n}\in\dense{\A}\cup\dense{\B}$ so that if $x_{k+n}$ was in $\dense{\A}$, then $y_{k+n}$ is in $\dense{\B}$, and vice versa. We call this game the Ehrenfeucht--Fraïssé game of length $\omega$ and precision $\varepsilon$, between the sets $\dense{\A}$ and $\dense{\B}$, with starting position $((a_0,\dots,a_{n-1}),(b_0,\dots,b_{n-1}),\bar{\varepsilon})$.

        For $k\geq n$, we denote by $a_k$ the element of the set $\{x_k,y_k\}$ that belongs to $\dense{\A}$ and by $b_k$ the one that belongs to $\dense{\B}$, hence expanding the original tuples $(a_0,\dots,a_{n-1})$ and $(b_0,\dots,b_{n-1})$ into infinite sequences $(a_i)_{i<\omega}$ and $(b_i)_{i<\omega}$.

        Player $\playertwo$ wins a play of the game if for all $k<\omega$ and $\varphi(v_0,\dots,v_{m-1})\in\good(k)$,
        \[
            \A\models\varphi(a_{i_0},\dots,a_{i_{m-1}}) \implies \B\models\appr(\varphi,{\varepsilon_k})(b_{i_0},\dots,b_{i_{m-1}})
        \]
        for all $i_0,\dots,i_{m-1}\geq k$.

        If $n=0$, then we denote the game by $\EF{\omega,\varepsilon}{\A,\B}{\dense{\A},\dense{\B}}$.
    \end{definition}

    \begin{remark}
        The game $\EF{\omega,\varepsilon,\bar{\varepsilon}}{\A,\B}{(\dense{\A},(a_0,\dots,a_{n-1})),(\dense{\B},(b_0,\dots,b_{n-1}))}$ corresponds to a position in the game $\EF{\omega,\varepsilon}{\A,\B}{\dense{\A},\dense{\B}}$ on round $n$, where $a_i$, $b_i$ and $\varepsilon_i$, $i<n$, have been played by the players.
    \end{remark}

    \begin{definition}
        Let $\A,\B\in\class$, and let $\dense{\A}$ and $\dense{\B}$ be dense subsets of the respective spaces. For an ordinal $\alpha$ and a number $\varepsilon\geq 0$, and $\bar{\varepsilon}=(\varepsilon_0,\dots,\varepsilon_{n-1})\in \open{\varepsilon,\infty}^n$ and $\bar{k}=(k_0,\dots,k_{n-1})\in\omega^n$, and tuples $(a_0,\dots,a_{n-1})\in\A^n$ and $(b_0,\dots,b_{n-1})\in\B^n$, we define the game
        \[
            \EFD{\alpha,\varepsilon,\bar{\varepsilon},\bar{k}}{\A,\B}{(\dense{\A},a_0,\dots,a_{n-1}),(\dense{\B},b_0,\dots,b_{n-1})}
        \]
        exactly as $\EF{\omega,\varepsilon,\bar{\varepsilon}}{\A,\B}{(\dense{\A},(a_0,\dots,a_{n-1})),(\dense{\B},(b_0,\dots,b_{n-1}))}$, but on each round $i$, in addition to choosing $x_{i+n}\in\dense\A\cup\dense\B$ and $\varepsilon_{i+n}>\varepsilon$, $\playerone$ also chooses some $k_{i+n}<\omega$ and some ordinal $\alpha_i<\alpha$ so that for all $i<j$, $\alpha_j<\alpha_i$. A play ends after the round $i$ when $\playerone$ chooses the ordinal $\alpha_i = 0$. We call this game the dynamic Ehrenfeucht--Fraïssé game of precision $\varepsilon$ and clock $\alpha$, between the sets $\dense{\A}$ and $\dense{\B}$, with starting position $((a_0,\dots,a_{n-1}),(b_0,\dots,b_{n-1}),\bar{\varepsilon},\bar{k})$.

        Player $\playertwo$ wins a play if for each $i$, for all $\varphi(v_0,\dots,v_{m-1})\in\good(k_i)$ we have
        \[
            \A\models\varphi(a_{j_0},\dots,a_{j_{m-1}}) \implies \B\models\appr(\varphi,{\varepsilon_i})(b_{j_0},\dots,b_{j_{m-1}})
        \]
        for all $j_0,\dots,j_{m-1}\geq i$.

        If $n=0$, we denote the game by $\EFD{\alpha,\varepsilon}{\A,\B}{\dense{\A},\dense{\B}}$.
    \end{definition}

    \begin{remark}
        The game $\EFD{\alpha,\varepsilon,\bar{\varepsilon},\bar{k}}{\A,\B}{(\dense{\A},a_0,\dots,a_{n-1}),(\dense{\B},b_0,\dots,b_{n-1})}$ corresponds to a position in the game $\EFD{\alpha+n,\varepsilon}{\A,\B}{\dense{\A},\dense{\B}}$ on round $n$, where $a_i$, $b_i$, $k_i$ and $\varepsilon_i$, $i<n$, have been played by the players, and on round $0\leq i<n$ $\playerone$ has played $\alpha_i=\alpha+(n-1-i)$.
    \end{remark}

    Winning strategies for each player are defined as usual. If $\playerone$ has a winning strategy in a game $G$, we write $\playerone\wins G$, and if $\playertwo$ has a winning strategy, we write $\playertwo\wins G$. We immediately make the following observations.
    \begin{lemma}
        \begin{enumerate}
            \item Both $\EF{\omega,\varepsilon,\bar{\varepsilon}}{\A,\B}{(\dense{\A},(a_0,\dots,a_{n-1})),(\dense{\B},(b_0,\dots,b_{n-1}))}$ and $\EFD{\alpha,\varepsilon,\bar{\varepsilon},\bar{k}}{\A,\B}{(\dense{\A},a_0,\dots,a_{n-1}),(\dense{\B},b_0,\dots,b_{n-1})}$ are determined.

            \item If
            \[
                \playertwo\wins\EFD{\alpha,\varepsilon,\bar{\varepsilon},\bar{K}}{\A,\B}{(\dense{\A},a_0,\dots,a_{n-1}),(\dense{\B},b_0,\dots,b_{n-1})}
            \]
            and $\beta\leq \alpha$, $\delta\geq\varepsilon$, $\delta_i\geq\varepsilon_i$ and $k_i\leq K_i$, then also
            \[
                \playertwo\wins\EFD{\beta,\delta,\bar{\delta},\bar{k}}{\A,\B}{(\dense{\A},a_0,\dots,a_{n-1}),(\dense{\B},b_0,\dots,b_{n-1})}.
            \]

            \item If $\alpha$ is a limit and
            \[
                \playertwo\wins\EFD{\beta,\varepsilon,\bar{\varepsilon},\bar{k}}{\A,\B}{(\dense{\A},a_0,\dots,a_{n-1}),(\dense{\B},b_0,\dots,b_{n-1})}
            \]
            for all $\beta<\alpha$, then
            \[
                \playertwo\wins\EFD{\alpha,\varepsilon,\bar{\varepsilon},\bar{k}}{\A,\B}{(\dense{\A},a_0,\dots,a_{n-1}),(\dense{\B},b_0,\dots,b_{n-1})}.
            \]
        \end{enumerate}
    \end{lemma}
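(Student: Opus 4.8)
For (i), I would identify the appropriate structural class of each payoff. In the infinite game, consider the tree of positions (finite sequences of legal moves). Player $\playertwo$'s winning condition is the conjunction, over all $k<\omega$, all $\varphi\in\good(k)$ and all index tuples $i_0,\dots,i_{m-1}\geq k$, of the single requirement $\A\models\varphi(a_{i_0},\dots,a_{i_{m-1}})\implies\B\models\appr(\varphi,\varepsilon_k)(b_{i_0},\dots,b_{i_{m-1}})$. Each such requirement depends only on finitely many moves (the elements played at positions $i_0,\dots,i_{m-1}$ together with the precision $\varepsilon_k$), so it determines a clopen set of plays; hence $\playertwo$'s winning set is an intersection of clopen sets and therefore closed. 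The Gale--Stewart theorem, which applies to arbitrary (not necessarily countable) move sets, then yields determinacy. For the dynamic game the relevant fact is instead that $\playerone$ must play a strictly descending sequence of ordinals, so by well-foundedness every play is finite and the game tree is well-founded; determinacy then follows by backward induction on the rank of positions, the winner at each leaf being decided by the finitely checkable winning condition.

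For (ii), I would let $\playertwo$ win the easier game $\EFD{\beta,\delta,\bar{\delta},\bar{k}}{\A,\B}{\cdots}$ by simulating her winning strategy in the harder game $\EFD{\alpha,\varepsilon,\bar{\varepsilon},\bar{K}}{\A,\B}{\cdots}$ through move-copying. Whenever $\playerone$ plays an element $x$, a precision $\delta'>\delta$, a goodness index $k'$ and an ordinal $\gamma<\beta$ in the easier game, I have the imaginary $\playerone$ of the harder game make the identical move: this is legal because $\delta'>\delta\geq\varepsilon$, because $\gamma<\beta\leq\alpha$, and because any $k'<\omega$ is an admissible goodness index. Player $\playertwo$ then copies the response her winning strategy prescribes. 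Since the ordinals are copied verbatim, the two plays terminate simultaneously and produce identical sequences $(a_j)$ and $(b_j)$, and the recorded precision and goodness agree on all rounds $i\geq n$, where the two winning conditions coincide. For the starting rounds $i<n$ I would use that $k_i\leq K_i$ gives $\good(k_i)\subseteq\good(K_i)$ by Definition~\ref{Definition: Good vocabulary (isomorphism)}~\ref{item: k-good formulae increase when k increases}, so every $\varphi\in\good(k_i)$ is already preserved with precision $\varepsilon_i$; and that $\delta_i\geq\varepsilon_i$ only weakens the conclusion, since $\appr(\varphi,\delta_i)\equiv\appr(\appr(\varphi,\varepsilon_i),\delta_i-\varepsilon_i)$ and $\appr(\varphi,\varepsilon_i)\models\appr(\appr(\varphi,\varepsilon_i),\delta_i-\varepsilon_i)$ by Definition~\ref{Definition: Vocabulary with approximations}~\ref{item: Approximation}~\ref{item: Error in approximation is additive} and~\ref{item: Better approximation of a formula implies worse approximation of the formula}.

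For (iii), I would again simulate, this time exploiting that $\alpha$ is a limit. Player $\playertwo$ waits for $\playerone$'s first move in $\EFD{\alpha,\varepsilon,\bar{\varepsilon},\bar{k}}{\A,\B}{\cdots}$, which fixes some ordinal $\gamma_0<\alpha$. Because $\alpha$ is a limit, $\gamma_0+1<\alpha$, so by hypothesis $\playertwo$ has a winning strategy in $\EFD{\gamma_0+1,\varepsilon,\bar{\varepsilon},\bar{k}}{\A,\B}{\cdots}$. The opening move just made is also a legal opening move of that game, as $\gamma_0<\gamma_0+1$, and after this opening the two games impose exactly the same constraints — in both, every later ordinal must lie strictly below $\gamma_0$ — so from round $1$ onward they are literally the same game, with the same per-round winning condition and the same starting tuples. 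Thus $\playertwo$ follows her winning strategy for the $(\gamma_0+1)$-clock game and wins $\EFD{\alpha,\dots}$.

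The genuinely delicate point is in (ii): one must check that the several parameter shifts all move the winning condition in $\playertwo$'s favour \emph{at once}, which is precisely where the two monotonicity facts — in the goodness index and in the approximation parameter — are used. Parts (i) and (iii) are then essentially bookkeeping once the closed/well-founded structure and the observation $\gamma_0+1<\alpha$ are in place.
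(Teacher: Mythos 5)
Your proof is correct, and it fleshes out exactly the standard arguments the paper has in mind: the paper states this lemma without proof as an ``immediate observation,'' and your three arguments (Gale--Stewart for the closed payoff of the length-$\omega$ game and well-foundedness plus backward induction for the clocked game; move-copying with the two monotonicity facts $\good(k_i)\subseteq\good(K_i)$ and $\appr(\varphi,\varepsilon_i)\models\appr(\varphi,\delta_i)$; and the observation $\gamma_0+1<\alpha$ at a limit) are precisely the intended ones. The only point worth flagging is cosmetic: writing $\appr(\varphi,\delta_i)\equiv\appr(\appr(\varphi,\varepsilon_i),\delta_i-\varepsilon_i)$ tacitly assumes $\delta_i-\varepsilon_i\in D$, an imprecision the paper itself commits throughout when using monotonicity of approximations in the precision parameter.
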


    We then proceed to show that each player, when having a winning strategy in a dynamic game between two spaces, can alter their strategy so that they only play in dense subsets.

    \begin{lemma}
        \label{Lemma: II can play in a dense set}
        Suppose that $\playertwo\wins\EFD{\alpha,\varepsilon,\bar{\varepsilon},\bar{k}}{\A,\B}{(\A,a_0,\dots,a_{n-1}),(\B,b_0,\dots,b_{n-1})}$ and $\dense{\A}\subseteq\A$ and $\dense{\B}\subseteq\B$ are dense. Then there is a winning strategy for $\playertwo$ in the game $\EFD{\alpha,\varepsilon,\bar{\varepsilon},\bar{k}}{\A,\B}{(\A,a_0,\dots,a_{n-1}),(\B,b_0,\dots,b_{n-1})}$ whose range is contained in $\dense{\A}\cup\dense{\B}$.
    \end{lemma}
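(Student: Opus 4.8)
The plan is to fix a winning strategy $\tau$ for $\playertwo$ in the game played on all of $\A$ and $\B$ and let $\playertwo$ play \emph{dense approximations} of $\tau$'s moves while privately maintaining a ``shadow'' play in which $\tau$ is followed literally. Concretely, $\playertwo$ keeps two parallel records. Whenever $\playerone$ moves $x_{j}$ (which here may lie anywhere in $\A\cup\B$), the same move is entered into the shadow play; $\tau$ prescribes a shadow answer $y_{j}$ in the opposite space, and $\playertwo$ actually plays some $y_j'\in\dense\A\cup\dense\B$ with $d(y_j,y_j')$ below a threshold $\theta_j>0$ fixed below. The shadow play is a genuine $\tau$-play, hence satisfies the winning condition, and the two plays agree on every coordinate produced by $\playerone$; they differ, and by at most $\theta_j$, only on the coordinates where $\playertwo$ answered. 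Thus on the $\A$-side only the coordinates where $\playerone$ played into $\B$ are perturbed, and dually on the $\B$-side.

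To fix the thresholds I would exploit two independent sources of slack. First, $\playerone$ is forced to choose $\varepsilon_j>\varepsilon$, and for the rounds $j\ge n$ that $\playertwo$ simulates, $\playertwo$ may have the shadow copy of $\playerone$ commit to a strictly smaller tolerance $\varepsilon_j'\in(\varepsilon,\varepsilon_j)$ and a strictly larger index $k_j'$; as $\tau$ wins against every opponent, the shadow play still meets the winning condition with these sharpened parameters, leaving room $\varepsilon_j-\varepsilon_j'$ and an index margin to absorb the perturbation. Second, to verify a clause $\A\models\varphi(\bar a^{\,r})\Rightarrow\B\models\appr(\varphi,\varepsilon_i)(\bar b^{\,r})$ for the real play it suffices, by Definition~\ref{Definition: Vocabulary with approximations}~\ref{item: Approximation}~\ref{item: An atomic formula is satisfied iff each of its approximations is satisfied} applied to the atomic formula $\appr(\varphi,\varepsilon_i)$, to prove $\B\models\appr(\varphi,\varepsilon_i+\delta)(\bar b^{\,r})$ for every $\delta>0$, which grants an arbitrarily small extra budget above $\varepsilon_i$.

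The verification of a single clause then runs in three steps, with thresholds chosen through the monotone perturbation-distance function $\delta(\cdot,\cdot)$ of Lemma~\ref{Lemma: Perturbation distance function} (so that the finitely many constraints active at any stage can be met simultaneously). From $\A\models\varphi(\bar a^{\,r})$ with $\varphi\in\good(k_i)$, I first pull the perturbed $\A$-coordinates back to their shadow values, getting $\A\models\appr(\varphi,\eta)(\bar a^{\,s})$ by Definition~\ref{Definition: Good vocabulary (isomorphism)}~\ref{item: Perturbation distance}, where $\appr(\varphi,\eta)\in\good(\ceil{k_i e^{\eta}})$ by Definition~\ref{Definition: Good vocabulary (isomorphism)}~\ref{item: An approximating of a k-good formula is still k-good for a little big bigger k}. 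Next I apply the shadow winning condition at the sharpened index to reach $\B\models\appr(\varphi,\eta+\varepsilon_i')(\bar b^{\,s})$, and finally I push the perturbed $\B$-coordinates from their shadow values to the ones actually played, acquiring one further increment $\zeta$, to obtain $\B\models\appr(\varphi,\eta+\varepsilon_i'+\zeta)(\bar b^{\,r})$. Choosing $\theta_j$ so that $\eta+\varepsilon_i'+\zeta\le\varepsilon_i+\delta$ closes the clause via the equivalence of the previous paragraph.

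The main obstacle is the bookkeeping at the already-played coordinates $0,\dots,n-1$, whose indices $k_i$ and tolerances $\varepsilon_i$ are frozen by the starting position and \emph{cannot} be sharpened in the shadow play. For clauses at a round $i<n$ the index inflation $\good(k_i)\to\good(\ceil{k_ie^{\eta}})$ incurred in the first step is no longer covered by $\tau$'s guarantee, so the perturbation must be routed through the $\varepsilon_i>\varepsilon$ slack alone, keeping $\playertwo$'s answers close enough that no growth of $k_i$ is forced; here one leans on the reverse-preservation phenomenon of good vocabularies to move the offending perturbation onto the side where the spare budget lives, rather than enlarging the index. Reconciling these finitely many frozen constraints with the perturbations forced at every later round $j\ge n$ — all decided online, before the length of the finite but unbounded play is known — is the delicate part, handled by taking each $\theta_j$ below the perturbation distances attached to all rounds $i\le j$ seen so far, which is enough precisely because only finitely many clauses are active at any moment.
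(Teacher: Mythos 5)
Your proposal follows the paper's proof in all essentials: a shadow $\tau$-play in which the simulated $\playerone$ commits to tolerances strictly between $\varepsilon$ and $\varepsilon_j$ and to inflated indices (the paper takes $\varepsilon+(\varepsilon_j-\varepsilon)/3$ and $\tilde{k}_j=k_j\ceil{e^{\varepsilon_j}}$), real answers drawn from the dense sets within a threshold lying below the perturbation distances attached to \emph{all} rounds seen so far (via the monotone function of Lemma~\ref{Lemma: Perturbation distance function}), and the three-step transfer perturb--apply-$\tau$--perturb, with the budget $\varepsilon_j-\varepsilon$ split among the two perturbations and the shadow guarantee. So the architecture is exactly the paper's.

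Two steps of your sketch do not hold up as written, though. First, the ``second source of slack'' is spurious: Definition~\ref{Definition: Vocabulary with approximations}~\ref{item: Approximation}~\ref{item: An atomic formula is satisfied iff each of its approximations is satisfied} lets you conclude $\B\models\appr(\varphi,\varepsilon_i)(\bar{b})$ only if you establish $\B\models\appr(\varphi,\varepsilon_i+\delta)(\bar{b})$ for \emph{every} $\delta>0$, whereas your thresholds are fixed during play and deliver the bound for one $\delta$ only. This is harmless, since $\varepsilon_i-\varepsilon_i'>0$ already leaves room for $\eta+\zeta$, but the appeal to that clause should be dropped and the inequality closed at $\eta+\varepsilon_i'+\zeta\leq\varepsilon_i$ outright. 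Second, your device for the frozen rounds $i<n$ is not available in the abstract framework: the index inflation cannot be suppressed by shrinking the perturbation, since $\ceil{k_ie^{\eta}}>k_i$ for every $\eta>0$ and $k_i\geq 1$ by Definition~\ref{Definition: Good vocabulary (isomorphism)}~\ref{item: An approximating of a k-good formula is still k-good for a little big bigger k}, and the ``reverse-preservation phenomenon'' you lean on is established only for the concrete examples (e.g.\ Lemma~\ref{Lemma: Lipschitz vice versa follows if satisfaction is approximate}), not as an axiom of good vocabularies. The paper does not treat these rounds by a separate mechanism at all: it applies the same uniform recipe, requiring each answer $y_m$ to lie within $\delta((\varepsilon_i-\varepsilon)/3,\tilde{k}_i)$ for every round $i\leq m$ (frozen ones included, using their slack $\varepsilon_i-\varepsilon>0$) and accepting the inflation to $\tilde{k}_i=k_i\ceil{e^{\varepsilon_i}}$ from the outset. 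If you adopt that uniform treatment and delete the two dubious devices, your argument coincides with the paper's.
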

    \begin{proof}
        Let $\tau$ be a winning strategy for $\playertwo$ and fix for any $\varepsilon'>0$ and $k<\omega$ a number $\delta(\varepsilon',k)$ that is a perturbation distance for $k$ and $\varepsilon'$ in both spaces and in addition increasing with respect to $\varepsilon'$ and decreasing with respect to $k$ (such a function exists by Lemma~\ref{Lemma: Perturbation distance function}). We define a new winning strategy $\tau'$ whose range is contained in the union of the dense sets. If $x_i$, $\varepsilon_i$, $\alpha_i$ and $k_i$ are valid moves of $\playerone$ for $i\leq m$, we let $\tau'((x_0,\varepsilon_0,k_0,\alpha_0),\dots,(x_{m},\varepsilon_{m},k_{m},\alpha_{m}))$ be any such $y_m$ that
        \begin{enumerate}
            \item if $x_m\in\A$, then $y_m\in\dense{\B}$, and if $x_m\in\B$, then $y_m\in\dense{\A}$,
            \item $d(y_m,z)<\delta((\varepsilon_i-\varepsilon)/3, \tilde{k}_i)$ for all $i\leq m$, where
            \[
                z=\tau((x_0,\varepsilon+(\varepsilon_0-\varepsilon)/3,\tilde{k}_0,\alpha_0),\dots,(x_m,\varepsilon + (\varepsilon_m-\varepsilon)/3,\tilde{k}_m,\alpha_m))
            \]
            and $\tilde{k}_i=k_i\ceil{e^{\varepsilon_i}}$.
        \end{enumerate}

        We now prove that this works. Let $((x_i,\varepsilon_i,k_i,\alpha_i),y_i)_{i<l}$ be an arbitrary play where $\playertwo$ has used $\tau'$. Suppose for a contradiction that $\playerone$ has won. Then there are $i<l$ and $\varphi(v_0,\dots,v_{p-1})\in\good(k_i)$ such that
        \[
            \A\models\varphi(a_{j_0},\dots,a_{j_{p-1}}) \text{ but } \B\not\models\appr(\varphi,{\varepsilon_i})(b_{j_0},\dots,b_{j_{p-1}})
        \]
        for some $j_0,\dots,j_{p-1}\geq i$. Now we look at an auxiliary play where $\playertwo$ uses $\tau$ and $\playerone$ plays the moves $(x_m,\varepsilon+(\varepsilon_m-\varepsilon)/3,\tilde{k}_m,\alpha_m)$. Let $z_m$, $m<l$, be the moves of $\playertwo$ according to $\tau$ in the auxiliary game, and denote by $\hat{a}_m$ and $\hat{b}_m$ the elements of $\A$ and $\B$, respectively, played in this play.

        By the definition of $\tau'$, we have $d(y_m,z_m)<\delta((\varepsilon_i-\varepsilon)/3,\tilde{k}_i)$ for all $m\geq i$, and thus $d(a_m,\hat{a}_m)<\delta((\varepsilon_i-\varepsilon)/3,\tilde{k}_i)$ for all applicable $m\geq i$. Since $\tilde{k}_i\geq k_i$, by Definition~\ref{Definition: Good vocabulary (isomorphism)}~\ref{item: k-good formulae increase when k increases}, $\varphi\in\good(\tilde{k}_i)$, and as
        \[
            \A\models\varphi(a_{j_0},\dots,a_{j_{p-1}}),
        \]
        by Definition~\ref{Definition: Good vocabulary (isomorphism)}~\ref{item: Perturbation distance}, we have
        \[
            \A\models\appr(\varphi,{(\varepsilon_i-\varepsilon)/3})(\hat{a}_{j_0},\dots,\hat{a}_{j_{p-1}}).
        \]
        As by Definition~\ref{Definition: Good vocabulary (isomorphism)}~\ref{item: An approximating of a k-good formula is still k-good for a little big bigger k} and~\ref{item: k-good formulae increase when k increases} $\appr(\varphi,{(\varepsilon_i-\varepsilon)/3})\in\good(\tilde{k}_i)$ and $\tau$ is a winning strategy, we have
        \[
            \B\models\appr(\appr(\varphi,{(\varepsilon_i-\varepsilon)/3}),{\varepsilon+(\varepsilon_i-\varepsilon)/3})(\hat{b}_{j_0},\dots,\hat{b}_{j_{p-1}}).
        \]
        As $\appr(\appr(\varphi,{(\varepsilon_i-\varepsilon)/3}),{\varepsilon+(\varepsilon_i-\varepsilon)/3}) = \appr(\varphi,{\varepsilon+2(\varepsilon_i-\varepsilon)/3})$, the formula is still in $\good(\tilde{k}_i)$. Since $d(y_m, z_m)<\delta((\varepsilon_i-\varepsilon)/3,\tilde{k}_i)$ for all $m\geq i$ and thus $d(b_m, \hat{b}_m)<\delta((\varepsilon_i-\varepsilon)/3,\tilde{k}_i)$, we have
        \[
            \B\models\appr(\appr(\varphi,{\varepsilon+2(\varepsilon_i-\varepsilon)/3}),{(\varepsilon_i-\varepsilon)/3})(b_{j_0},\dots,b_{j_{p-1}}).
        \]
        As $\appr(\appr(\varphi,{\varepsilon+2(\varepsilon_i-\varepsilon)/3}),{(\varepsilon_i-\varepsilon)/3}) = \appr(\varphi,{\varepsilon_i})$, this is a contradiction.
    \end{proof}

    \begin{corollary}
        \label{Corollary: II wins in the dense sets if she wins in the models}
        If $\playertwo$ wins $\EFD{\alpha,\varepsilon,\bar{\varepsilon},\bar{k}}{\A,\B}{(\A,a_0,\dots,a_{n-1}),(\B,b_0,\dots,b_{n-1})}$, then whenever $\dense{\A}\subseteq\A$ and $\dense{\B}\subseteq\B$ are dense subsets, $\playertwo$ also wins the game $\EFD{\alpha,\varepsilon,\bar{\varepsilon},\bar{k}}{\A,\B}{(\dense{\A},a_0,\dots,a_{n-1}),(\dense{\B},b_0,\dots,b_{n-1})}$.
    \end{corollary}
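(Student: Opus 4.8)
The plan is to deduce this directly from Lemma~\ref{Lemma: II can play in a dense set}, which provides exactly the tool needed. First I would apply that lemma to obtain a winning strategy $\tau'$ for $\playertwo$ in the unrestricted game $\EFD{\alpha,\varepsilon,\bar{\varepsilon},\bar{k}}{\A,\B}{(\A,a_0,\dots,a_{n-1}),(\B,b_0,\dots,b_{n-1})}$ whose range is contained in $\dense{\A}\cup\dense{\B}$.

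The key observation is that restricting $\playerone$ to the dense sets only shrinks the collection of plays. Since $\dense{\A}\cup\dense{\B}\subseteq\A\cup\B$, every legal move of $\playerone$ in the restricted game $\EFD{\alpha,\varepsilon,\bar{\varepsilon},\bar{k}}{\A,\B}{(\dense{\A},a_0,\dots,a_{n-1}),(\dense{\B},b_0,\dots,b_{n-1})}$ is also a legal move in the unrestricted game, so any position reached in the restricted game is also a position of the unrestricted game on which $\tau'$ is defined. Moreover, $\tau'$ is a legal strategy in the restricted game: by the range condition every response it dictates lies in $\dense{\A}\cup\dense{\B}$, and it respects the side-swapping requirement (responding in $\dense{\B}$ to a move in $\dense{\A}$ and vice versa), since $\tau'$ already does so as a strategy in the unrestricted game.

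Finally, I would argue that $\tau'$ remains winning. Fix any play of the restricted game in which $\playertwo$ follows $\tau'$; by the observation above this is verbatim a play of the unrestricted game in which $\playertwo$ follows $\tau'$. The winning condition depends only on the resulting sequences $(a_i)$ and $(b_i)$ together with the chosen $\varepsilon_i$ and $k_i$, and is identical in the two games, since satisfaction is always evaluated in $\A$ and $\B$ irrespective of which dense sets the moves were drawn from. As $\tau'$ wins the unrestricted play, $\playertwo$ wins this play, so $\tau'$ witnesses $\playertwo\wins\EFD{\alpha,\varepsilon,\bar{\varepsilon},\bar{k}}{\A,\B}{(\dense{\A},a_0,\dots,a_{n-1}),(\dense{\B},b_0,\dots,b_{n-1})}$. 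There is no genuine obstacle here; the only points needing care are that the outputs of $\tau'$ are legal in the restricted game (ensured by the range condition) and that the two games share the same winning condition.
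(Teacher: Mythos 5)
Your proposal is correct and is exactly the argument the paper intends: the corollary is stated without proof precisely because it follows from Lemma~\ref{Lemma: II can play in a dense set} in the way you describe, by taking the strategy with range in $\dense{\A}\cup\dense{\B}$ and noting that restricting $\playerone$ to the dense sets only removes plays while the winning condition is unchanged. No gaps.
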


    \begin{lemma}
        \label{Lemma: I can play in a dense set}
        Suppose that $\playerone\wins\EFD{\alpha,\varepsilon,\bar{\varepsilon},\bar{k}}{\A,\B}{(\A,a_0,\dots,a_{n-1}),(\B,b_0,\dots,b_{n-1})}$ and $\dense{\A}\subseteq\A$ and $\dense{\B}\subseteq\B$ are dense. Then there is a winning strategy for $\playerone$ in the game $\EFD{\alpha,\varepsilon,\bar{\varepsilon},\bar{k}}{\A,\B}{(\A,a_0,\dots,a_{n-1}),(\B,b_0,\dots,b_{n-1})}$ whose range is contained in $\dense{\A}\cup\dense{\B}$.
    \end{lemma}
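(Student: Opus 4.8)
The plan is to dualise the proof of Lemma~\ref{Lemma: II can play in a dense set}. There $\playertwo$ moved her responses into the dense sets and bought the necessary room by feeding a \emph{tighter} precision to her oracle strategy; here $\playerone$ will move his own choices into the dense sets and buy the room by \emph{announcing} a tighter precision in the real play, while still letting the oracle run at its own (larger) precision. First I fix a winning strategy $\sigma$ for $\playerone$ in the game between the full spaces and, using Lemma~\ref{Lemma: Perturbation distance function}, a function $\delta(\varepsilon',k)$ that is a perturbation distance for $\varepsilon'$ and $k$ in both $\A$ and $\B$, increasing in $\varepsilon'$ and decreasing in $k$.

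I then define $\sigma'$ by running a background (\emph{shadow}) copy of $\sigma$: the shadow play is fed $\playertwo$'s real moves verbatim as $\playertwo$'s moves, while $\playerone$'s shadow moves are the ones $\sigma$ dictates. When, at a round $m\ge n$, $\sigma$ instructs $\playerone$ to play $x_m$ with precision $\varepsilon_m$, integer $k_m$ and ordinal $\alpha_m$, the strategy $\sigma'$ has $\playerone$ play instead a point $x_m'$ of the appropriate dense set with $d(x_m',x_m)<\delta((\varepsilon_i-\varepsilon)/3,\tilde k_i)$ for every $i\le m$, where $\tilde k_i=\ceil{k_i e^{\varepsilon_i}}$, and announce the tightened precision $\varepsilon_m'=\varepsilon+(\varepsilon_m-\varepsilon)/3<\varepsilon_m$, the enlarged integer $\tilde k_m$, and the same ordinal $\alpha_m$. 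Such an $x_m'$ exists because there are finitely many constraints, each of positive radius, and the sets are dense. Keeping the ordinals unchanged makes the two plays terminate simultaneously, and since $\sigma$ wins the (finite) shadow play, it ends with a round $i$, a formula $\varphi\in\good(k_i)$ and indices $j_0,\dots,j_{p-1}\ge i$ such that $\A\models\varphi(\hat a_{j_\bullet})$ but $\B\not\models\appr(\varphi,\varepsilon_i)(\hat b_{j_\bullet})$, where $\hat a,\hat b$ denote the shadow sequences.

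The heart of the argument is to turn this into a win in the real play. The real and shadow sequences differ only in the coordinates where $\playerone$ moved, and there by less than the radii chosen above, so Definition~\ref{Definition: Good vocabulary (isomorphism)}~\ref{item: Perturbation distance} upgrades $\A\models\varphi(\hat a_{j_\bullet})$ to $\A\models\appr(\varphi,(\varepsilon_i-\varepsilon)/3)(a_{j_\bullet})$. Put $\psi=\appr(\varphi,(\varepsilon_i-\varepsilon)/3)$; it lies in $\good(\tilde k_i)$ by Definition~\ref{Definition: Good vocabulary (isomorphism)}~\ref{item: An approximating of a k-good formula is still k-good for a little big bigger k} and~\ref{item: k-good formulae increase when k increases}, i.e.\ among the formulae tested against $\varepsilon_i'$ in the real game. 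Were $\B\models\appr(\psi,\varepsilon_i')(b_{j_\bullet})$ to hold, then perturbing the $\B$-coordinates back to their shadow values (item~\ref{item: Perturbation distance} again) and collapsing the nested approximations by additivity (Definition~\ref{Definition: Vocabulary with approximations}~\ref{item: Approximation}~\ref{item: Error in approximation is additive}) would yield $\B\models\appr(\varphi,\varepsilon_i)(\hat b_{j_\bullet})$, because $(\varepsilon_i-\varepsilon)/3+\varepsilon_i'+(\varepsilon_i-\varepsilon)/3=\varepsilon_i$; this contradicts the shadow violation. Hence $\B\not\models\appr(\psi,\varepsilon_i')(b_{j_\bullet})$ and the real play is a win for $\playerone$, so $\sigma'$ is winning with range in $\dense\A\cup\dense\B$.

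The step I expect to be the real obstacle is precisely this transfer at the \emph{pre-played} rounds $i<n$: there both $\varepsilon_i'=\varepsilon_i$ and $k_i$ are frozen by the starting position, so the slack $2(\varepsilon_i-\varepsilon)/3$ between the oracle's precision $\varepsilon_i$ and the announced $\varepsilon_i'$ — which drives the whole computation — is no longer available, and the enlargement from $k_i$ to $\tilde k_i$ is likewise disallowed. Note that the starting coordinates themselves are shared by the two plays and never perturbed, so a violation whose witness-indices all lie below $n$ transfers verbatim. For a violation whose least witness-index $p$ is at least $n$, I would \emph{relocate} it to round $p$: since all its indices are then $\ge p\ge n$, and $\playerone$ there announced a precision close to $\varepsilon$ and a large $\tilde k_p$, the same perturbation-and-additivity computation applies, the controlling inequality (the two perturbation debts together with $\varepsilon_p'$ staying below $\varepsilon_i$) being securable because every $\varepsilon_i$ exceeds $\varepsilon$ and there are only finitely many pre-played rounds to dominate. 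The genuinely delicate residual case is a violation anchored at a pre-played coordinate (index $<n$) that nonetheless also involves a freshly played one; controlling it requires a single simultaneous-perturbation estimate that plays the finitely many fixed slacks $\varepsilon_i-\varepsilon$ ($i<n$) against the radii $\delta(\cdot,\cdot)$ chosen at the later rounds, and it is here that the bookkeeping has to be done most carefully.
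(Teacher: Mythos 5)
Your overall construction is exactly the intended dualisation of the paper's proof of Lemma~\ref{Lemma: II can play in a dense set} (the paper's own proof of the present lemma is literally ``similar to'' that one): run a shadow play of a winning strategy $\sigma$ on the full spaces, replace $\playerone$'s oracle moves by dense approximants within the perturbation radii supplied by Lemma~\ref{Lemma: Perturbation distance function}, and compensate by announcing the tightened precision $\varepsilon+(\varepsilon_m-\varepsilon)/3$ and the enlarged index $\ceil{k_me^{\varepsilon_m}}$ in the real play. Your transfer computation for a violation occurring at an actually played round $i\ge n$ is correct: $\appr(\varphi,(\varepsilon_i-\varepsilon)/3)\in\good(\tilde k_i)$, the budget $(\varepsilon_i-\varepsilon)/3+\varepsilon_i'+(\varepsilon_i-\varepsilon)/3=\varepsilon_i$ closes, and the intermediate formulae stay inside $\good(\tilde k_i)$ so the chosen perturbation distances apply. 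For $n=0$ this already finishes the proof.

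The gap is the case you yourself isolate at the end and do not close: a shadow violation at a pre-played round $i<n$ whose witness indices include both an index $<n$ and an index $\ge n$. Relocation is unavailable there, because moving the violation to a round $p$ requires all witnesses to be $\ge p$, and every round up to the smallest witness index has its parameters $(k_i,\varepsilon_i)$ frozen by the starting position. At such a frozen round the transfer cannot work even in principle within this scheme: perturbing $\A\models\varphi(\hat a_{\bar\jmath})$ to the real coordinates produces $\appr(\varphi,\eta)$, which by Definition~\ref{Definition: Good vocabulary (isomorphism)}~\ref{item: An approximating of a k-good formula is still k-good for a little big bigger k} is only guaranteed to lie in $\good(\ceil{k_ie^{\eta}})$, a strictly larger class than the $\good(k_i)$ that the real round-$i$ condition tests, and the precision budget $2\eta+\varepsilon_i$ strictly exceeds the available $\varepsilon_i$ for every $\eta>0$. (One sees the failure concretely in $\MS$ with $\varphi=\Dr_r$: the shadow violation only yields a strict inequality with an uncontrolled margin, which an arbitrarily small perturbation of $\playerone$'s new points can consume.) So the ``single simultaneous-perturbation estimate'' you appeal to does not exist; this case needs a different idea, not more careful bookkeeping. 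To be fair, the paper's own proof of Lemma~\ref{Lemma: II can play in a dense set} quietly restricts attention to violations at played rounds and thereby skips the symmetric difficulty; but since you explicitly flag the case and then leave it open, your argument as written is incomplete for $n>0$.
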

    \begin{proof}
        Similar to the proof of Lemma~\ref{Lemma: II can play in a dense set}.
    \end{proof}

    \begin{corollary}
        \label{Corollary: I wins in the dense sets if he wins in the models}
        If $\playerone$ wins $\EFD{\alpha,\varepsilon,\bar{\varepsilon},\bar{k}}{\A,\B}{(\A,a_0,\dots,a_{n-1}),(\B,b_0,\dots,b_{n-1})}$, then whenever $\dense{\A}\subseteq\A$ and $\dense{\B}\subseteq\B$ are dense subsets, $\playerone$ also wins the game $\EFD{\alpha,\varepsilon,\bar{\varepsilon},\bar{k}}{\A,\B}{(\dense{\A},a_0,\dots,a_{n-1}),(\dense{\B},b_0,\dots,b_{n-1})}$.
    \end{corollary}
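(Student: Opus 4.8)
The plan is to mirror the proof of Lemma~\ref{Lemma: II can play in a dense set}, exchanging the roles of the players and reversing the direction in which the precisions are adjusted. Fix a winning strategy $\sigma$ for $\playerone$ in the game played in the full spaces, and, as there, use Lemma~\ref{Lemma: Perturbation distance function} to fix a function $\delta(\varepsilon',k)$ which for each $\varepsilon'>0$ and $k<\omega$ is a perturbation distance for $k$ and $\varepsilon'$ in \emph{both} $\A$ and $\B$, increasing in $\varepsilon'$ and decreasing in $k$. I will produce a winning strategy $\sigma'$ for $\playerone$ with range inside $\dense{\A}\cup\dense{\B}$ by letting $\playerone$ secretly maintain a \emph{shadow play} in the full spaces, governed by $\sigma$, alongside the actual play in the dense sets.

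Inductively, suppose the real play has reached round $m$ and that a $\sigma$-consistent shadow play has been kept in which $\playertwo$'s shadow moves are literally her real answers $y_i$, $i<m$. Feeding the shadow history to $\sigma$ yields a move $(\hat{x}_m,\hat{\varepsilon}_m,\hat{k}_m,\hat{\alpha}_m)$. In the real game $\sigma'$ then plays an element $x_m\in\dense{\A}\cup\dense{\B}$ lying in the same space as $\hat{x}_m$ (possible by density) with $d(x_m,\hat{x}_m)<\min_{i\leq m}\delta((\hat{\varepsilon}_i-\varepsilon)/3,\tilde{k}_i)$, where $\tilde{k}_i=\hat{k}_i\ceil{e^{\hat{\varepsilon}_i}}$, and declares precision $\varepsilon_m=\varepsilon+(\hat{\varepsilon}_m-\varepsilon)/3$, number $k_m=\tilde{k}_m$ and ordinal $\hat{\alpha}_m$. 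Player $\playertwo$'s real answer $y_m$ is then copied into the shadow play, where it is legal since $x_m$ and $\hat{x}_m$ lie in the same space. As $\varepsilon_m>\varepsilon$ and the two plays use the same ordinals, $\sigma'$ is legal and the two plays end simultaneously.

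Because $\sigma$ wins the shadow play, $\playerone$ wins there: there are a round $i$, indices $j_0,\dots,j_{p-1}\geq i$ and $\varphi\in\good(\hat{k}_i)$ with $\A\models\varphi(\hat{a}_{j_0},\dots,\hat{a}_{j_{p-1}})$ and $\B\not\models\appr(\varphi,\hat{\varepsilon}_i)(\hat{b}_{j_0},\dots,\hat{b}_{j_{p-1}})$, writing $\hat{a}_l,\hat{b}_l$ for the shadow data. Real and shadow data differ only in the $\playerone$-coordinate, and there by less than $\delta((\hat{\varepsilon}_i-\varepsilon)/3,\tilde{k}_i)$ at each index $j_l\geq i$. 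Setting $\psi=\appr(\varphi,(\hat{\varepsilon}_i-\varepsilon)/3)$, Definition~\ref{Definition: Good vocabulary (isomorphism)}~\ref{item: Perturbation distance} applied in $\A$ gives $\A\models\psi(a_{j_0},\dots,a_{j_{p-1}})$, and $\psi\in\good(k_i)$ by Definition~\ref{Definition: Good vocabulary (isomorphism)}~\ref{item: An approximating of a k-good formula is still k-good for a little big bigger k} and~\ref{item: k-good formulae increase when k increases}. If one had $\B\models\appr(\psi,\varepsilon_i)(b_{j_0},\dots,b_{j_{p-1}})$, then the same perturbation item in $\B$ together with Definition~\ref{Definition: Vocabulary with approximations}~\ref{item: Approximation}~\ref{item: Error in approximation is additive}, using $(\hat{\varepsilon}_i-\varepsilon)/3+\varepsilon_i+(\hat{\varepsilon}_i-\varepsilon)/3=\hat{\varepsilon}_i$, would give $\B\models\appr(\varphi,\hat{\varepsilon}_i)(\hat{b}_{j_0},\dots,\hat{b}_{j_{p-1}})$, contradicting the shadow witness. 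Hence $(\psi,i,(j_0,\dots,j_{p-1}))$ witnesses a win for $\playerone$ in the real play, so $\sigma'$ is winning.

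The delicate point, exactly as in the cited lemma, is the degree bookkeeping: the reduced precision $\varepsilon_m=\varepsilon+(\hat{\varepsilon}_m-\varepsilon)/3$ and the inflated count $\tilde{k}_m$ must be chosen so that the two unavoidable perturbations (one in $\A$, one in $\B$) together with the additivity of approximations reassemble precisely the shadow precision $\hat{\varepsilon}_i$, while keeping every intermediate formula $k_i$-good; and since the winning round $i$ is unknown when the move at round $m\geq i$ is made, the perturbation at round $m$ must beat $\delta((\hat{\varepsilon}_i-\varepsilon)/3,\tilde{k}_i)$ simultaneously for all $i\leq m$, whence the minimum above. The initial rounds are handled as in Lemma~\ref{Lemma: II can play in a dense set}, and the remaining legality and monotonicity checks are routine.
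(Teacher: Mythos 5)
Your proposal is correct and follows essentially the same route as the paper: the corollary there rests on Lemma~\ref{Lemma: I can play in a dense set}, whose proof is declared ``similar to'' the shadow-play/perturbation argument of Lemma~\ref{Lemma: II can play in a dense set}, and your construction is exactly that role-reversed argument, with the real-game precision lowered to $\varepsilon+(\hat{\varepsilon}_m-\varepsilon)/3$ and the count inflated to $\hat{k}_m\ceil{e^{\hat{\varepsilon}_m}}$ so that the two perturbations and the declared precision reassemble $\hat{\varepsilon}_i$ exactly. The one point you leave implicit --- transferring a winning witness attached to one of the $n$ starting rounds, whose parameters $\varepsilon_i,k_i$ are fixed by the position and cannot be adjusted --- is glossed over at the same level in the paper's own treatment, so this does not put your argument below the paper's standard of detail.
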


    \begin{corollary}
        \label{Corollary: II wins in the dense sets iff she wins in the models}
        $\playertwo\wins\EFD{\alpha,\varepsilon,\bar{\varepsilon},\bar{k}}{\A,\B}{(\A,a_0,\dots,a_{n-1}),(\B,b_0,\dots,b_{n-1})}$ if and only if $\playertwo\wins\EFD{\alpha,\varepsilon,\bar{\varepsilon},\bar{k}}{\A,\B}{(\dense{\A},a_0,\dots,a_{n-1}),(\dense{\B},b_0,\dots,b_{n-1})}$, whenever $\dense{\A}\subseteq\A$ and $\dense{\B}\subseteq\B$ are dense.
    \end{corollary}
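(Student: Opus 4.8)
The plan is to prove the two implications separately. The implication from the full game to the dense game is exactly the content of Corollary~\ref{Corollary: II wins in the dense sets if she wins in the models}, so no new work is needed there; everything hinges on the converse.

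For the converse, I would argue by contraposition, using determinacy together with the $\playerone$-version of the transfer result. Concretely, suppose $\playerone\wins\EFD{\alpha,\varepsilon,\bar{\varepsilon},\bar{k}}{\A,\B}{(\A,a_0,\dots,a_{n-1}),(\B,b_0,\dots,b_{n-1})}$. By Corollary~\ref{Corollary: I wins in the dense sets if he wins in the models}, $\playerone$ then also wins the dense game $\EFD{\alpha,\varepsilon,\bar{\varepsilon},\bar{k}}{\A,\B}{(\dense{\A},a_0,\dots,a_{n-1}),(\dense{\B},b_0,\dots,b_{n-1})}$. Since the two players cannot both possess winning strategies in the same game, $\playertwo$ cannot win the dense game. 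Taking the contrapositive, if $\playertwo$ wins the dense game then $\playerone$ does not win the full game; and since the full game is determined, $\playertwo$ must win it. This closes the loop.

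The point worth stressing is why one should \emph{not} attempt the converse directly by adapting a winning strategy of $\playertwo$ from the dense game to the full game. In the full game $\playerone$ is free to play arbitrary elements of $\A\cup\B$, whereas a dense-game strategy only prescribes responses to moves in $\dense{\A}\cup\dense{\B}$; bridging this gap would force one to approximate $\playerone$'s moves by dense elements and then perturb $\playertwo$'s responses, i.e. to run an argument like Lemma~\ref{Lemma: II can play in a dense set} but in the unfavourable direction. Routing the proof through determinacy and the already-established $\playerone$-version sidesteps this difficulty entirely, so the only genuinely new ingredient beyond the two preceding corollaries is the appeal to determinacy.
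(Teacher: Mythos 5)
Your proof is correct and is essentially the paper's own argument: the paper's proof simply says ``putting together Corollaries~\ref{Corollary: II wins in the dense sets if she wins in the models} and~\ref{Corollary: I wins in the dense sets if he wins in the models} yields this result,'' and the way those two corollaries combine is exactly your route --- one direction directly, the other by contraposition of the $\playerone$-version together with the determinacy recorded earlier for the dynamic games.
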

    \begin{proof}
        Putting together Corollaries~\ref{Corollary: II wins in the dense sets if she wins in the models} and~\ref{Corollary: I wins in the dense sets if he wins in the models} yields this result.
    \end{proof}

    The next theorem connects the infinite game between two spaces to the spaces being $\varepsilon$-isomorphic.

    \begin{theorem}
        \label{Theorem: If II wins the long game then there exists an epsilon-isomorphism}
        For separable $\A$ and $\B$, $\playertwo\wins\EF{\omega,\varepsilon}{\A,\B}{\A,\B}$ if and only if there exists an $\varepsilon$-isomorphism $\A\to\B$.
    \end{theorem}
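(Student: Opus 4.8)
The plan is to prove both implications, the forward one being routine and the converse carrying the real content.

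For the easy direction, suppose $f\colon\A\to\B$ is an $\varepsilon$-isomorphism. I would let $\playertwo$ follow the strategy that always maintains $b_i=f(a_i)$: when $\playerone$ plays some $x_{k+n}\in\A$, $\playertwo$ answers with $f(x_{k+n})\in\B$, and when $\playerone$ plays $y_{k+n}\in\B$, $\playertwo$ answers with $f^{-1}(y_{k+n})\in\A$, which is legitimate since $f^{-1}\in\isom_\varepsilon$ by Definition~\ref{Definition: Isomorphism notion}. Then $b_i=f(a_i)$ for every $i$, so by the lemma immediately following Definition~\ref{Definition: Isomorphism notion} we have $\A\models\varphi(\bar a)\implies\B\models\appr(\varphi,\varepsilon)(\bar b)$ for every atomic $\varphi$. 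Since each played $\varepsilon_k>\varepsilon$, monotonicity of approximations (Definition~\ref{Definition: Vocabulary with approximations}~\ref{item: Better approximation of a formula implies worse approximation of the formula} together with~\ref{item: Error in approximation is additive}) upgrades this to $\B\models\appr(\varphi,\varepsilon_k)(\bar b)$, which is exactly $\playertwo$'s winning condition.

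For the converse, fix a winning strategy $\tau$ for $\playertwo$ and, using separability, fix countable dense sets $\dense{\A}=\{a^*_i:i<\omega\}\subseteq\A$ and $\dense{\B}=\{b^*_i:i<\omega\}\subseteq\B$. I would then run an auxiliary play of $\EF{\omega,\varepsilon}{\A,\B}{\A,\B}$ in which $\playerone$ alternately feeds in all elements of $\dense{\A}$ on the $\A$-side and all elements of $\dense{\B}$ on the $\B$-side, so that the resulting sequences $(a_i)_{i<\omega}$ and $(b_i)_{i<\omega}$ contain $\dense{\A}$ and $\dense{\B}$ and are therefore dense in $\A$ and $\B$, while also playing $\varepsilon_k$ strictly decreasing to $\varepsilon$. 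Since $\tau$ is winning, the play satisfies exactly the hypothesis of Lemma~\ref{Lemma: Limits satisfy k-good formulae}. I would then define $f\colon\A\to\B$ using completeness of the spaces: for $a\in\A$ choose an increasing index sequence $(i_n)$ with $a_{i_n}\to a$; by Lemma~\ref{Lemma: Limits satisfy k-good formulae} the sequence $(b_{i_n})$ converges, and I set $f(a)=\lim_n b_{i_n}$. Well-definedness is the first delicate point: given $a_{i_n},a_{i'_n}\to a$, merge $\{i_n\}$ and $\{i'_n\}$ into a single increasing index sequence, which still indexes a subsequence converging to $a$; its $b$-sequence converges by the same lemma and has both $(b_{i_n})$ and $(b_{i'_n})$ as subsequences, forcing equal limits. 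The preservation clause $\A\models\varphi(\bar a)\implies\B\models\appr(\varphi,\varepsilon)(f(\bar a))$ for atomic $\varphi$ is then read off directly from Lemma~\ref{Lemma: Limits satisfy k-good formulae} by approximating each coordinate by an increasing index sequence. Surjectivity follows symmetrically: for $b\in\B$ pick $b_{j_n}\to b$ from $\dense{\B}$; the matching $(a_{j_n})$ is Cauchy, converges to some $a$, and well-definedness gives $f(a)=\lim_n b_{j_n}=b$. Finally $f$ is a surjection preserving atomic formulae up to $\varepsilon$, so $f\in\isom_\varepsilon$ by Definition~\ref{Definition: Isomorphism notion}~\ref{item: Preserving atomic formulae suffices for epsilon-isomorphism}; note this also yields injectivity for free, since elements of $\isom_\varepsilon$ are bijections, so it is precisely why only surjectivity and preservation need checking.

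I expect the main obstacle to be the bookkeeping that makes the construction of $f$ coherent: arranging $\playerone$'s enumeration so that the hypotheses of Lemma~\ref{Lemma: Limits satisfy k-good formulae} hold for \emph{all} increasing index sequences simultaneously, and carrying out the merging argument for well-definedness uniformly for index sequences coming from either player's moves. The latter uniformity is exactly what lets surjectivity go through, since the preimage $a$ of a given $b$ is approached by $\playertwo$'s responses rather than by elements of $\dense{\A}$. The endpoint $\varepsilon=0$ needs slight extra care, since Lemma~\ref{Lemma: Limits satisfy k-good formulae} is stated for $\varepsilon>0$; there one argues directly that the limit map preserves atomic formulae exactly, invoking Definition~\ref{Definition: Vocabulary with approximations}~\ref{item: An atomic formula is satisfied iff each of its approximations is satisfied}.
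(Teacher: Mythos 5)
Your proposal is correct and follows essentially the same route as the paper's proof: the forward direction plays along $f$ and $f^{-1}$, and the converse runs a single play in which $\playerone$ enumerates countable dense sets while $\varepsilon_i\downarrow\varepsilon$, then uses the simultaneous-Cauchy condition and Lemma~\ref{Lemma: Limits satisfy k-good formulae} to define the limit map $f$ and verifies membership in $\isom_\varepsilon$ via Definition~\ref{Definition: Isomorphism notion}~\ref{item: Preserving atomic formulae suffices for epsilon-isomorphism}. The one bookkeeping detail to make explicit (which the paper does) is that $\playerone$ should play each element of $\dense{\A}\cup\dense{\B}$ \emph{infinitely often}, so that every point of either space, including isolated points, is the limit of a subsequence indexed by a strictly increasing sequence.
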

    \begin{proof}
        If there exists an $\varepsilon$-isomorphism $f$, then by Definition~\ref{Definition: Good vocabulary (isomorphism)}~\ref{item: Preserving atomic formulae suffices for epsilon-isomorphism} $\playertwo$ wins the game by playing moves $f(a_n)$ when $\playerone$ chooses $a_n\in\A$ and $f^{-1}(b_n)$ when he chooses $b_n\in\B$.

        For the converse, let $\dense{\A}\subseteq\A$ and $\dense{\B}\subseteq\B$ be countable and dense. Let $(\varepsilon_i)_{i<\omega}$ be a sequence converging down to $\varepsilon$ and let $(x_i)_{i<\omega}$ be a sequence such that each element of $\dense{\A}\cup\dense{\B}$ occurs in the sequence infinitely many times. Let $\playerone$ play $(x_i,\varepsilon_i)$ on round $i$ in the game $\EF{\omega,\varepsilon}{\A,\B}{\A,\B}$, and let $y_i$ be the choices of $\playertwo$ following a winning strategy. As $\playertwo$ wins the play, we have for all $k<\omega$ and $\varphi\in\good(k)$
        \[
            \A\models\varphi(a_{i_0},\dots,a_{i_{m-1}}) \implies \B\models\appr(\varphi,{\varepsilon_k})(b_{i_0},\dots,b_{i_{m-1}})
        \]
        for any $i_0,\dots,i_{m-1}\geq k$.

        Now by Definition~\ref{Definition: Good vocabulary (isomorphism)}~\ref{item: Simultaneous Cauchy}, $(a_{i_n})_{n<\omega}$ is Cauchy if and only if $(b_{i_n})_{n<\omega}$ is Cauchy. From this it follows that whenever $(i_n)_{n<\omega}$ and $(j_n)_{n<\omega}$ are increasing sequences of indices, $\lim_{n\to\infty}a_{i_n}=\lim_{n\to\infty}a_{j_n}$ if and only if  $\lim_{n\to\infty}b_{i_n}=\lim_{n\to\infty}b_{j_n}$. Thus we can define a partial injection $\vartheta\colon\A\to\B$ by setting
        \[
            \vartheta(\lim_{n\to\infty}a_{i_n})=\lim_{n\to\infty}{b_{i_n}}.
        \]
        In fact, $\vartheta$ is a total bijection because it maps limit points of $(a_i)_{i<\omega}$ to limit points of $(b_i)_{i<\omega}$, and as these sequences visit every neighbourhood of every point infinitely often, their limit points cover the whole spaces.

        Last we note that $\vartheta$ is an $\varepsilon$-isomorphism. Let $k<\omega$, $\varphi(\bar{v})\in\good(k)$ and $\bar{c}\in\A^{\abs{\bar{v}}}$. Let $(i^j_n)_{n<\omega}$ be such that $a_{i^j_n}\to c_j$. Then by the definition of $\vartheta$,
        \[
            \vartheta(\bar{c})=(\lim_{n\to\infty}b_{i^0_n},\dots,\lim_{n\to\infty}b_{i^{\abs{\bar{v}}}_n}).
        \]
        By Lemma~\ref{Lemma: Limits satisfy k-good formulae} we then have
        \[
            \A\models\varphi(\bar{c}) \implies \B\models\appr(\varphi,\varepsilon)(\vartheta(\bar{c})).
        \]
        By Definition~\ref{Definition: Good vocabulary (isomorphism)}~\ref{item: Preserving atomic formulae suffices for epsilon-isomorphism}, this implies that $\vartheta\in\isom_\varepsilon$.
    \end{proof}

    \begin{theorem}
        \label{Theorem: If II wins the long game she wins all dynamic games}
        If $\playertwo\wins\EF{\omega,\varepsilon}{\A,\B}{\A,\B}$, then $\playertwo\wins\EFD{\alpha,\varepsilon}{\A,\B}{\A,\B}$ for all $\alpha\in\On$.
    \end{theorem}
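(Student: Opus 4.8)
The plan is to use the nontrivial direction of the hypothesis: fix a winning strategy $\tau$ for $\playertwo$ in $\EF{\omega,\varepsilon}{\A,\B}{\A,\B}$ and manufacture from it a winning strategy for $\playertwo$ in $\EFD{\alpha,\varepsilon}{\A,\B}{\A,\B}$. Player $\playertwo$ will ignore the ordinal moves $\alpha_i$ entirely (these only constrain $\playerone$ and guarantee that every play is finite) and will instead run, in her head, a play of the infinite game in which she answers with $\tau$, reading off her dynamic moves from $\tau$'s responses. The only genuine difficulty is that the two games pair up the ``precision index'' and the ``position index'' differently: in the infinite game the winning condition ties the complexity class $\good(k)$ to the threshold $k$, whereas in the dynamic game $\playerone$ may demand a complexity $\good(k_i)$ unrelated to (and in particular far larger than) the round number $i$. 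I would resolve this by a reindexing that places the $i$-th dynamic move at a simulated position $p_i \ge k_i$.

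Concretely, I maintain a strictly increasing sequence of ``active'' positions $p_0 < p_1 < \cdots$ in the simulated infinite game, together with filler moves in between. When, at dynamic round $i$, $\playerone$ plays $(x_i,\varepsilon_i,k_i,\alpha_i)$, set $p_i = \max\{p_{i-1}+1,\,k_i\}$ (with $p_{-1}=-1$); this keeps $(p_i)$ strictly increasing and ensures $p_i \ge k_i$. In the simulated game I let the simulated $\playerone$ play, at each round $r$ with $p_{i-1} < r < p_i$, an arbitrary filler element of $\A\cup\B$ together with the precision $\varepsilon+1>\varepsilon$, and at round $p_i$ the actual element $x_i$ together with the precision $\varepsilon_i>\varepsilon$; the simulated $\playertwo$ answers all of these by $\tau$. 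Since the infinite game also forces answers onto the opposite side, $\tau$'s answer $y_i$ to $x_i$ at round $p_i$ is a legal dynamic reply, and I let $\playertwo$ play this $y_i$. All filler computation is internal and produces no dynamic move.

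To see this is winning, recall that every play of $\EFD{\alpha,\varepsilon}{\A,\B}{\A,\B}$ is finite, say of length $\ell$, so the simulation only fixes the initial segment of length $p_{\ell-1}+1$ of an infinite play in which $\playertwo$ follows $\tau$. Extending this segment by any continuation yields a full play won by $\tau$; hence every conjunct of the infinite winning condition holds, and in particular each conjunct involving only positions $\le p_{\ell-1}$ holds, since it depends only on the fixed initial segment. Now fix a dynamic round $i<\ell$, a formula $\varphi\in\good(k_i)$, and dynamic indices $j_0,\dots,j_{m-1}\ge i$ with $\A\models\varphi(a_{j_0},\dots,a_{j_{m-1}})$. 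Because $p_i\ge k_i$, monotonicity of $\good$ (Definition~\ref{Definition: Good vocabulary (isomorphism)}~\ref{item: k-good formulae increase when k increases}) gives $\varphi\in\good(p_i)$, while $p_{j_0},\dots,p_{j_{m-1}}\ge p_i$ because $(p_i)$ is increasing. Applying the infinite winning condition at level $p_i$, whose precision is exactly $\varepsilon_{p_i}=\varepsilon_i$, to the positions $p_{j_0},\dots,p_{j_{m-1}}$ yields $\B\models\appr(\varphi,\varepsilon_i)(b_{j_0},\dots,b_{j_{m-1}})$, which is precisely what $\playertwo$ must achieve at round $i$. As $i$, $\varphi$ and the indices were arbitrary, $\playertwo$ wins.

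I expect the reindexing step to be the crux: a naive strategy in which $\playertwo$ simply copies $\tau$ round-for-round fails exactly when $k_i>i$, since then the infinite game's level-$i$ guarantee controls only $\good(i)\subsetneq\good(k_i)$, while its level-$k_i$ guarantee controls the wrong positions (those $\ge k_i$ rather than those $\ge i$). Padding with fillers so that the $i$-th dynamic element sits at a position $p_i\ge k_i$ is what aligns the complexity demand with an available threshold. A secondary point worth checking carefully is the bookkeeping of precisions: since in the simulation $\playertwo$ also controls the moves of the simulated $\playerone$, she is free to set the precision at the active round $p_i$ equal to $\varepsilon_i$ (any value in $\open{\varepsilon,\varepsilon_i}$ together with $\varepsilon_i$ itself would do), which is what makes the error of the extracted conjunct match the $\varepsilon_i$ demanded by the dynamic winning condition. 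The clock $\alpha$ enters only through the finiteness of plays and otherwise requires no attention.
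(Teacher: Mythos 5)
Your proposal is correct and is essentially the paper's own argument: the paper likewise has $\playertwo$ simulate the infinite game with $\playerone$'s $i$-th move delayed to a simulated round at least $k_i$ (padding the intermediate rounds with dummy moves) so that the round index of the infinite game's winning condition catches up with the demanded complexity $\good(k_i)$, and then reads off the dynamic winning condition from the level-$p_i$ conjunct exactly as you do. The only difference is cosmetic: the paper assumes without loss of generality that the $k_i$ are increasing and places move $i$ at position $k_i$, whereas your $p_i=\max\{p_{i-1}+1,k_i\}$ handles arbitrary $k_i$ directly.
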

    \begin{proof}
        The idea of $\playertwo$ in the game $\EFD{\alpha,\varepsilon}{\A,\B}{\A,\B}$ is to pretend that she is playing $\EF{\omega,\varepsilon}{\A,\B}{\A,\B}$ against a player that plays $\playerone$'s moves but in a rate slow enough so that the round number $k$ catches up with the $k_i$'s $\playerone$ plays in the $\alpha$-game. To be more precise, let $\tau$ be a winning strategy for $\playertwo$ in the long game. Define a strategy $\tau'$ for $\playertwo$ in the $\alpha$-game by setting $\tau'((x_0,\varepsilon_0,k_0,\alpha_0),\dots, (x_n,\varepsilon_n,k_n,\alpha_n))=\tau((\tilde{x}_i,\tilde{\varepsilon}_i)_{i\leq m})$,\footnote{Here we, for simplicity of notation, assume that $k_0<\dots<k_n$ and $\varepsilon_n\leq\dots\leq\varepsilon_0$.} where
        \begin{enumerate}
            \item $m=k_n$,
            \item $\tilde{x}_{k_i}=x_i$ and $\tilde{\varepsilon}_{k_i}=\varepsilon_i$ for $i\leq n$ and
            \item $\tilde{x}_j = \tilde{x}_{k_i}$ and $\tilde{\varepsilon}_j = \tilde{\varepsilon}_{k_i}$ for $k_i<j<k_{i+1}$.
        \end{enumerate}

        We now show that $\playertwo$ wins any play using this strategy. Suppose for a contradiction that $((x_i,\varepsilon_i,k_i,\alpha_i),y_i)_{i<l}$ is a play where $\playertwo$ has lost following the strategy. This means that there is $i<l$, $\varphi(\bar{v})\in\good(k_i)$ and some $j_0,\dots,j_{k_i-1}\geq i$ such that
        \[
            \A\models\varphi(a_{j_0},\dots,a_{j_{k_i-1}}) \quad\text{but}\quad \B\not\models\appr(\varphi,{\varepsilon_i})(b_{j_0},\dots,b_{j_{k_i-1}}).
        \]
        However, $y_n$ have been chosen according to $\tau$ in the game $\EF{\omega,\varepsilon}{\A,\B}{\A,\B}$, where $\playerone$ has played $(x_n,\varepsilon_n)$ at round $k_n$. Hence, as $\varphi\in\good(k_i)$, for any $n_0,\dots,n_{k_i-1}\geq k_i$
        \[
            \A\models\varphi(\tilde{a}_{n_0},\dots,\tilde{a}_{n_{k_i-1}}) \implies \B\models\appr(\varphi,{\tilde{\varepsilon}_{k_i}})(\tilde{b}_{n_0},\dots,\tilde{b}_{n_{k_i-1}}).
        \]
        Considering that $\tilde{\varepsilon}_{k_i}=\varepsilon_i$ and that each $a_{j_n}$ is some $\tilde{a}_m$ for $m\geq k_i$ such that $\A\models\varphi(a_{j_0},\dots,a_{j_{k_i-1}})$, we get that $\B\models\appr(\varphi,{\varepsilon_i})(b_{j_0},\dots,b_{j_{k_i-1}})$, a contradiction.
    \end{proof}

    \begin{theorem}
        \label{Theorem: If II wins many enough dynamic games then she wins the long game}
        If $\playertwo\wins\EFD{\alpha,\varepsilon}{\A,\B}{\A,\B}$ for all $\alpha<(\density(\A)+\density(\B))^+$, then $\playertwo\wins\EF{\omega,\varepsilon}{\A,\B}{\A,\B}$.
    \end{theorem}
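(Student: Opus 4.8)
The plan is to build a winning strategy for $\playertwo$ in $\EF{\omega,\varepsilon}{\A,\B}{\A,\B}$ directly, keeping her moves inside fixed dense sets and maintaining an ordinal-indexed invariant. Fix dense subsets $\dense{\A}\subseteq\A$ and $\dense{\B}\subseteq\B$ with $\abs{\dense{\A}}=\density(\A)$ and $\abs{\dense{\B}}=\density(\B)$, and put $\kappa=\density(\A)+\density(\B)$. To a position reached after $n$ rounds of the $\omega$-game --- consisting of tuples $\bar a\in\A^n$, $\bar b\in\B^n$ and precisions $\bar\varepsilon=(\varepsilon_0,\dots,\varepsilon_{n-1})$ --- I attach the clock vector $\bar k=(0,1,\dots,n-1)$ and call the position \emph{good} if $\playertwo\wins\EFD{\gamma,\varepsilon,\bar\varepsilon,\bar k}{\A,\B}{(\A,\bar a),(\B,\bar b)}$ for every $\gamma<\kappa^+$. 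The empty position is good precisely by the hypothesis. Moreover, since the winning condition of the dynamic game constrains every round --- including those already played --- a good position can never already witness a violation of the $\omega$-game condition: if for some $\varphi\in\good(k)$ and indices $i_0,\dots,i_{m-1}\ge k$ we had $\A\models\varphi(a_{i_0},\dots,a_{i_{m-1}})$ but $\B\not\models\appr(\varphi,\varepsilon_k)(b_{i_0},\dots,b_{i_{m-1}})$, then, because $k_k=k$, $\playertwo$ would lose every dynamic game from that position. As any loss in the $\omega$-game is witnessed at a finite stage $N=\max_j i_j$, it follows that any infinite play all of whose finite restrictions are good is won by $\playertwo$.

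It therefore suffices to show that goodness is preserved under a single round. Suppose $p$ is good and $\playerone$ plays some $x\in\A\cup\B$ and a precision $\varepsilon_n>\varepsilon$; I must find a reply $y$ on the opposite side, lying in $\dense{\A}\cup\dense{\B}$, for which the extended position $p'$ is again good. For each $\gamma<\kappa^+$, let $\playerone$ make, in the game $\EFD{\gamma+1,\varepsilon,\bar\varepsilon,\bar k}{\A,\B}{(\A,\bar a),(\B,\bar b)}$, the move consisting of $x$, the precision $\varepsilon_n$, the clock $k_n=n$ and the ordinal $\gamma$. Since $p$ is good, $\playertwo$ wins this game, and by Lemma~\ref{Lemma: II can play in a dense set} she has a winning reply $y\in\dense{\A}\cup\dense{\B}$; after it the remaining game is $\EFD{\gamma,\dots}{\A,\B}{p'}$, so $p'$ is then $\gamma$-good. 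Let $Y_\gamma\subseteq\dense{\A}\cup\dense{\B}$ collect all such replies, so $Y_\gamma\ne\emptyset$. By the monotonicity of the dynamic games in the clock, a reply good for clock $\gamma$ is good for every smaller clock, so $(Y_\gamma)_{\gamma<\kappa^+}$ is $\subseteq$-decreasing. A $\subseteq$-decreasing family of nonempty subsets of a set of size $\kappa$ can strictly decrease at most $\kappa$ times, and since $\kappa^+$ is regular it is constant from some $\gamma_0<\kappa^+$ onwards; hence $\bigcap_{\gamma<\kappa^+}Y_\gamma=Y_{\gamma_0}\ne\emptyset$. Any $y$ in this intersection makes $p'$ $\gamma$-good for all $\gamma<\kappa^+$, i.e.\ good.

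Iterating the preservation step from the (good) empty position yields an infinite play of $\EF{\omega,\varepsilon}{\A,\B}{\A,\B}$ all of whose finite restrictions are good, which by the first paragraph $\playertwo$ wins; this is the desired strategy. The step I expect to be the main obstacle is the passage from a reply that is good for each fixed clock $\gamma$ to a single reply good for all $\gamma<\kappa^+$ at once: this is exactly where the bound $\kappa^+$ is used, and it works only because $\playertwo$'s replies are confined to the $\kappa$-sized dense sets, for which I rely on Lemma~\ref{Lemma: II can play in a dense set} together with Corollary~\ref{Corollary: II wins in the dense sets iff she wins in the models}. The accompanying bookkeeping --- that the choice $k_i=i$ makes the per-round clause of the dynamic game coincide verbatim with the clause for index $k$ of the $\omega$-game, and that the dynamic winning condition indeed constrains the already-played rounds --- is routine but must be checked carefully.
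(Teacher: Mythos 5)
Your proof is correct and follows essentially the same route as the paper's: maintain the invariant that $\playertwo$ wins every dynamic game with clock below $\kappa^+$ from the current position with clock vector $(0,\dots,n-1)$, confine her replies to the $\kappa$-sized dense sets via Lemma~\ref{Lemma: II can play in a dense set}, and use regularity of $\kappa^+$ to extract a single reply that is good for all clocks simultaneously. The paper phrases that last step as a contradiction (taking $\sup_{y}(\alpha_{y}+1)$ over the failure ordinals of the candidate replies), whereas you phrase it as stabilization of the decreasing family $(Y_\gamma)_{\gamma<\kappa^+}$, but these are the same argument.
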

    \begin{proof}
        Let $\dense{\A}\subseteq\A$ and $\dense{\B}\subseteq\B$ be dense sets of minimal cardinality and let $\kappa=\abs{\dense{\A}}+\abs{\dense{\B}}$. The strategy for $\playertwo$ in $\EF{\omega,\varepsilon}{\A,\B}{\A,\B}$ is to play such moves that the following condition holds at every round:
        \begin{itemize}
            \item[$(*)$] If the position in $\EF{\omega,\varepsilon}{\A,\B}{\A,\B}$ is $((x_0,\varepsilon_0),y_0,\dots,(x_{n-1},\varepsilon_{n-1}),y_{n-1})$, then for all $\alpha<\kappa^+$,
            \[
                \playertwo\wins\EFD{\alpha,\varepsilon,(\varepsilon_0,\dots,\varepsilon_{n-1}),(0,\dots,n-1)}{\A,\B}{(\A,a_0,\dots,a_{n-1}),(\B,b_0,\dots,b_{n-1})}.
            \]
        \end{itemize}

        Clearly such a strategy is winning if it exists. We show by induction on the round $n$ that $\playertwo$ can always play such moves that $(*)$ holds. If $n=0$, then no moves have been played yet and by initial assumption, $(*)$ is satisfied. Suppose that $(*)$ is true until round $n$ and, on round $n+1$, let $\playerone$ play the element $x_n$ and $\varepsilon_n$. Now $\playertwo$ will have to respond with some $y_n$ so that $(*)$ will remain true for the next position. By Lemma~\ref{Lemma: II can play in a dense set}, if $\playertwo$ has any chance of survival, she can pick her move from the dense set $\dense{\A}\cup\dense{\B}$. Let $Y$ be the set of all possible moves of $\playertwo$ in the dense set and assume that no $y_n\in Y$ will make $(*)$ true. This means that for every $y_n\in Y$ there is some ordinal $\alpha_{y_n}<\kappa^+$ such that
        \[
            \playertwo\doesnotwin\EFD{\alpha_{y_n},\varepsilon,(\varepsilon_0,\dots,\varepsilon_n),(0,\dots,n)}{\A,\B}{(\dense{\A},a_0,\dots,a_n),(\dense{\B},b_0,\dots,b_n)},
        \]
        where $a_n$ and $b_n$ are these $x_n$ and $y_n$.
        Let $\alpha=\sup_{y_n\in Y}(\alpha_{y_n}+1)$. As $\abs{Y}\leq\abs{\dense{\A}\cup\dense{\B}}=\kappa$, we get $\alpha<\kappa^+$. By the induction hypothesis,
        \[
            \playertwo\wins\EFD{\alpha+1,\varepsilon,(\varepsilon_0,\dots,\varepsilon_{n-1}),(0,\dots,n-1)}{\A,\B}{(\dense{\A},a_0,\dots,a_{n-1}),(\dense{\B},b_0,\dots,b_{n-1})}.
        \]
        Let, in this game, $\playerone$ play $x_n$, $\varepsilon_n$, $n$ and $\alpha$. Then the winning strategy gives us some $y_n\in Y$. But now
        \[
            \playertwo\wins\EFD{\alpha,\varepsilon,(\varepsilon_0,\dots,\varepsilon_n),(0,\dots,n)}{\A,\B}{(\dense{\A},a_0,\dots,a_n),(\dense{\B},b_0,\dots,b_n)},
        \]
        which, as $\alpha_{y_n}\leq\alpha$, implies
        \[
            \playertwo\wins\EFD{\alpha_{y_n},\varepsilon,(\varepsilon_0,\dots,\varepsilon_n),(0,\dots,n)}{\A,\B}{(\dense{\A},a_0,\dots,a_n),(\dense{\B},b_0,\dots,b_n)},
        \]
        a contradiction.
    \end{proof}

    \begin{corollary}
        \label{Corollary: I wins the long game between the dense sets if he wins it between the models.}
        If $\playerone\wins\EF{\omega,\varepsilon,\bar{\varepsilon}}{\A,\B}{(\A,a_0,\dots,a_{n-1}),(\B,b_0,\dots,b_{n-1})}$ and $\dense{\A}\subseteq\A$ and $\dense{\B}\subseteq\B$ are dense, then $\playerone\wins\EF{\omega,\varepsilon,\bar{\varepsilon}}{\A,\B}{(\dense{\A},a_0,\dots,a_{n-1}),(\dense{\B},b_0,\dots,b_{n-1})}$.
    \end{corollary}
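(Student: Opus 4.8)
The plan is to deduce this from the analogous facts for the dynamic games by using determinacy to move back and forth between the infinite game and the dynamic games. Since both games are determined (the lemma just before Lemma~\ref{Lemma: II can play in a dense set}), $\playerone$ wins a game precisely when $\playertwo$ does not; hence it is enough to prove the contrapositive statement for $\playertwo$, namely that if
\[
\playertwo\wins\EF{\omega,\varepsilon,\bar{\varepsilon}}{\A,\B}{(\dense{\A},a_0,\dots,a_{n-1}),(\dense{\B},b_0,\dots,b_{n-1})}
\]
then $\playertwo$ also wins the same game played between the full spaces $\A$ and $\B$. Granting this implication, if $\playerone$ wins the infinite game between $\A$ and $\B$, then $\playertwo$ does not win it between $\A$ and $\B$, so $\playertwo$ does not win it between $\dense{\A}$ and $\dense{\B}$ either, and determinacy hands the win to $\playerone$ there.

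To establish the displayed implication I would chain through the dynamic games in three steps. First, from a winning strategy for $\playertwo$ in the infinite game between $\dense{\A}$ and $\dense{\B}$ I would manufacture, by the slow-simulation construction in the proof of Theorem~\ref{Theorem: If II wins the long game she wins all dynamic games}, winning strategies for $\playertwo$ in every dynamic game of clock $\alpha$ between $\dense{\A}$ and $\dense{\B}$ (with the starting data induced by the position); that argument only reschedules the moves and precisions and never uses that the play-set is complete, so it applies verbatim with dense play-sets. Second, Corollary~\ref{Corollary: II wins in the dense sets iff she wins in the models} transfers each of these wins to the full spaces, yielding a win for $\playertwo$ in the dynamic game of clock $\alpha$ between $\A$ and $\B$ for every $\alpha$, in particular for every $\alpha<(\density(\A)+\density(\B))^+$. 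Third, Theorem~\ref{Theorem: If II wins many enough dynamic games then she wins the long game} assembles these wins into a win for $\playertwo$ in the infinite game between $\A$ and $\B$, which is exactly what the implication asserts.

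I expect the main obstacle to be bookkeeping rather than a new idea: Theorems~\ref{Theorem: If II wins the long game she wins all dynamic games} and~\ref{Theorem: If II wins many enough dynamic games then she wins the long game} are stated for the empty starting position ($n=0$) and with the full spaces as play-sets, whereas here I need them for the starting position $((a_0,\dots,a_{n-1}),(b_0,\dots,b_{n-1}),\bar{\varepsilon})$ and, in the first step, with $\dense{\A}$ and $\dense{\B}$ as play-sets. Both generalizations should follow from the existing proofs: the fixed initial segment is carried along as a common prefix of every play, the cardinality induction of Theorem~\ref{Theorem: If II wins many enough dynamic games then she wins the long game} is unchanged since it refers to minimal dense sets of $\A$ and $\B$ of size $\density(\A)+\density(\B)$ (and so is insensitive to how large the given $\dense{\A},\dense{\B}$ are), and the translation in Theorem~\ref{Theorem: If II wins the long game she wins all dynamic games} is indifferent to the play-set because satisfaction is always evaluated in $\A$ and $\B$. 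The one delicate point, handled exactly as in those proofs, is to keep the level parameters $k_i$ of the dynamic games synchronised with the round numbers that control $\good(k)$ in the infinite game.
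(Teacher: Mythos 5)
Your proposal is correct and follows essentially the same route as the paper: the paper's proof also chains through the dynamic games (player $\playerone$ wins the long game between the models $\Rightarrow$ he wins some dynamic game between the models $\Rightarrow$ he wins that dynamic game between the dense sets $\Rightarrow$ he wins the long game between the dense sets), using Theorems~\ref{Theorem: If II wins many enough dynamic games then she wins the long game} and~\ref{Theorem: If II wins the long game she wins all dynamic games} together with the dense-set transfer corollaries; your contrapositive phrasing via $\playertwo$ and determinacy is the same argument run backwards. Your extra remarks about carrying the starting position and the play-sets through those theorems are sensible bookkeeping that the paper leaves implicit.
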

    \begin{proof}
        If $\playerone$ wins the long game between the models, he wins, by Theorem~\ref{Theorem: If II wins many enough dynamic games then she wins the long game}, some dynamic game between the models, so by Corollary~\ref{Corollary: II wins in the dense sets if she wins in the models} he wins the same dynamic game between the dense sets, so by Theorem~\ref{Theorem: If II wins the long game she wins all dynamic games} he wins the long game between the dense sets.
    \end{proof}

    The games are neither symmetric nor transitive, but we have weak forms of symmetry and transitivity available to us.

    \begin{lemma}[Weak Symmetry]
        \label{Lemma: Symmetry}
        Let $\varepsilon>0$, $\bar{\varepsilon}\in(\varepsilon,\infty)^n$ and $\bar{k}\in\omega^n$. Then
        \[
            \playertwo\wins\EFD{\alpha,\varepsilon,\bar{\varepsilon},\bar{k}}{\A,\B}{(\dense\A,\bar{a}),(\dense\B,\bar{b})} \implies \playertwo\wins\EFD{\alpha,\varepsilon^+,\bar{\varepsilon}^+,\bar{k}^-}{\B,\A}{(\dense\B,\bar{b}),(\dense\A,\bar{a})},
        \]
        for any $\varepsilon^+>0$, $\bar{\varepsilon}^+\in(\varepsilon^+,\infty)^n$ and $\bar{k}^-\in\omega^n$ such that
        \begin{enumerate}
            \item $\varepsilon^+\geq\varepsilon$,
            \item $\varepsilon^+_i>\varepsilon_i$ for $i<n$ and
            \item $k^-_i\leq e^{-\varepsilon^+_i}k_i$ for $i<n$.
        \end{enumerate}
    \end{lemma}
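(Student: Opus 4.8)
The plan is to let $\tau$ be a winning strategy for $\playertwo$ in the original game $\EFD{\alpha,\varepsilon,\bar{\varepsilon},\bar{k}}{\A,\B}{(\dense\A,\bar{a}),(\dense\B,\bar{b})}$ and have her win the reversed game by simulating a play of the original one. When, in round $i$ of the reversed game, $\playerone$ plays $(x_i,\varepsilon^+_i,k^-_i,\alpha_i)$, $\playertwo$ feeds into the original game the move $(x_i,\hat{\varepsilon}_i,\hat{k}_i,\alpha_i)$, where she fixes some $\hat{\varepsilon}_i\in D$ with $\varepsilon<\hat{\varepsilon}_i<\varepsilon^+_i$ (possible since $\varepsilon^+_i>\varepsilon^+\geq\varepsilon$ by hypothesis~(i) and $D$ is dense) together with an integer $\hat{k}_i\geq e^{\varepsilon^+_i}k^-_i$, and then copies $\tau$'s answer $y_i$. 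For the initial rounds $i<n$ the pair $(\hat{\varepsilon}_i,\hat{k}_i)$ is taken to be the given $(\varepsilon_i,k_i)$, and hypotheses~(ii) and~(iii) guarantee $\hat{\varepsilon}_i<\varepsilon^+_i$ and $k^-_i\leq e^{-\varepsilon^+_i}\hat{k}_i$, exactly as in the later rounds. Since membership in $\dense\A$ or $\dense\B$ does not depend on which structure is listed first, the simulated original play and the real reversed play generate the very same sequences $(a_j)_j$ in $\A$ and $(b_j)_j$ in $\B$; the ordinals are copied verbatim, so the two plays have the same length. Only the winning conditions differ.

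Because $\tau$ wins, the simulated play meets the original winning condition: for every round $i$ and every $\varphi\in\good(\hat{k}_i)$,
\[
    \A\models\varphi(a_{j_0},\dots,a_{j_{m-1}}) \implies \B\models\appr(\varphi,\hat{\varepsilon}_i)(b_{j_0},\dots,b_{j_{m-1}})
\]
whenever $j_0,\dots,j_{m-1}\geq i$. It therefore suffices to establish, round by round, the purely formula-theoretic claim: if the displayed implication holds for all $\varphi\in\good(\hat{k}_i)$, then for every $\psi\in\good(k^-_i)$ and all $j_0,\dots,j_{m-1}\geq i$,
\[
    \B\models\psi(b_{j_0},\dots,b_{j_{m-1}}) \implies \A\models\appr(\psi,\varepsilon^+_i)(a_{j_0},\dots,a_{j_{m-1}}),
\]
which is precisely the winning condition of the reversed game.

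I would prove the claim by contraposition using the weak-negation axioms. Fix $j_0,\dots,j_{m-1}\geq i$, write $\bar{a},\bar{b}$ for the corresponding tuples, and suppose $\A\not\models\appr(\psi,\varepsilon^+_i)(\bar{a})$. Choose $u\in D$ with $\hat{\varepsilon}_i<u<\varepsilon^+_i$ (possible since $\hat{\varepsilon}_i<\varepsilon^+_i$ and $D$ is dense). As $u<\varepsilon^+_i$, Definition~\ref{Definition: Vocabulary with approximations}~\ref{item: Approximation}~\ref{item: Error in approximation is additive} and~\ref{item: Better approximation of a formula implies worse approximation of the formula} give $\appr(\psi,u)\models\appr(\psi,\varepsilon^+_i)$, so $\A\not\models\appr(\psi,u)(\bar{a})$; then by~\ref{item: Weak negation}~\ref{item: Weak negation property} there is $\varepsilon'>0$ with $\A\models\weakneg(\appr(\psi,u+\varepsilon'))(\bar{a})$, and applying~\ref{item: Weak negation}~\ref{item: Approximation of weak negation} (valid as $u+\varepsilon'\geq\varepsilon'>0$) together with~\ref{item: Approximation}~\ref{item: Better approximation of a formula implies worse approximation of the formula} yields $\A\models\weakneg(\appr(\psi,u))(\bar{a})$. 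Now $\psi\in\good(k^-_i)$ gives $\weakneg(\appr(\psi,u))\in\good(\ceil{k^-_i e^{u}})$ by Definition~\ref{Definition: Good vocabulary (isomorphism)}~\ref{item: An approximating of a k-good formula is still k-good for a little big bigger k}, and since $u<\varepsilon^+_i$ forces $k^-_i e^{u}<k^-_i e^{\varepsilon^+_i}\leq\hat{k}_i$, monotonicity~\ref{item: k-good formulae increase when k increases} places $\weakneg(\appr(\psi,u))$ in $\good(\hat{k}_i)$. Applying the original implication to $\varphi=\weakneg(\appr(\psi,u))$ gives $\B\models\appr(\weakneg(\appr(\psi,u)),\hat{\varepsilon}_i)(\bar{b})$, which by~\ref{item: Weak negation}~\ref{item: Approximation of weak negation} (applicable because $u>\hat{\varepsilon}_i>0$) equals $\B\models\weakneg(\appr(\psi,u-\hat{\varepsilon}_i))(\bar{b})$. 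Since $u-\hat{\varepsilon}_i>0$, the weak negation property~\ref{item: Weak negation}~\ref{item: Weak negation property} now yields $\B\not\models\psi(\bar{b})$, which is the contrapositive of the claim.

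The step to get right is the simultaneous choice of $u$: it must satisfy $u<\varepsilon^+_i$ so that $\weakneg(\appr(\psi,u))$ remains in $\good(\hat{k}_i)$, and $u>\hat{\varepsilon}_i$ so that the final approximation amount $u-\hat{\varepsilon}_i$ is strictly positive and the weak negation property can be invoked in $\B$. This is exactly where the hypotheses enter: hypothesis~(ii), the strict inequality $\varepsilon^+_i>\varepsilon_i$, makes the interval $(\hat{\varepsilon}_i,\varepsilon^+_i)$ nonempty (the strictness is essential, as the whole argument collapses when it degenerates to a point), while hypothesis~(iii), $k^-_i\leq e^{-\varepsilon^+_i}k_i$, supplies the $\good$-class room $k^-_i e^{\varepsilon^+_i}\leq\hat{k}_i$; hypothesis~(i) is what allows $\hat{\varepsilon}_i$ to be chosen above $\varepsilon$ so that the simulated moves are legal. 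Everything else—the descent of the copied ordinals and the legality of $\hat{\varepsilon}_i>\varepsilon$ and $\hat{k}_i<\omega$—is routine bookkeeping.
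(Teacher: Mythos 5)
Your proposal is correct and follows essentially the same route as the paper's proof: simulate the original game with slightly smaller precisions and inflated good-class indices, then transfer the winning condition by contraposition via weak negation, using hypothesis (iii) to keep $\weakneg(\appr(\psi,\cdot))$ inside $\good(\hat{k}_i)$ and the strict gap from hypothesis (ii) to leave a positive approximation amount in $\B$. The only cosmetic difference is your auxiliary level $u$ strictly below $\varepsilon^+_i$; the paper works directly at $\varepsilon^+_i$ and absorbs the slack into the gap $\varepsilon^+_i-\varepsilon_i$, which amounts to the same argument.
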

    \begin{proof}
        Let $\tau$ be a winning strategy of $\playertwo$ in $\EFD{\alpha,\varepsilon,\bar{\varepsilon},\bar{k}}{\A,\B}{(\dense\A,\bar{a}),(\dense\B,\bar{b})}$. Then define a strategy $\tau'$ for $\playertwo$ in $\EFD{\alpha,\varepsilon^+,\bar{\varepsilon}^+,\bar{k}^-}{\B,\A}{(\dense\B,\bar{b}),(\dense\A,\bar{a})}$ by setting
        \[
            \tau'((x_j,\varepsilon^+_j,k^-_j)_{j\leq i}) = \tau((x_j,\varepsilon_j,k_j)_{j\leq i}),
        \]
        where $\varepsilon_j = \varepsilon^+ + (\varepsilon^+_j-\varepsilon^+)/2$ and $k_j=e^{\varepsilon^+_j}k^-_j$ for $n\leq j\leq i$. We show that $\tau'$ is winning. Let $((x_i,\varepsilon^+_i,k^-_i),y_i)_{i<l}$ be an arbitrary play where $\playertwo$ uses $\tau'$. Let $i<l$, $\varphi(v_0,\dots,v_{p-1})\in\good(k^-_i)$ and $j_0,\dots,j_{p-1}\geq k^-_i$, and suppose that
        \[
            \A\not\models\appr(\varphi,{\varepsilon^+_i})(a_{j_0},\dots,a_{j_{p-1}}).
        \]
        We need to show that
        \[
            \B\not\models\varphi(b_{j_0},\dots,b_{j_{p-1}}).
        \]
        As $\A\not\models\appr(\varphi,{\varepsilon^+_i})(a_{j_0},\dots,a_{j_{p-1}})$, we have $\A\models\weakneg(\appr(\varphi,{\varepsilon^+_i}))(a_{j_0},\dots,a_{j_{p-1}})$ by Definition~\ref{Definition: Vocabulary with approximations}~\ref{item: Weak negation}~\ref{item: Weak negation property}. Now, as $e^{\varepsilon^+_i}k^-_i\leq k_i$, we have $\weakneg(\appr(\varphi,{\varepsilon^+_i}))\in\good(k_i)$, so as $\tau$ is a winning strategy in the other game and
        \[
            \A\models\weakneg(\appr(\varphi,{\varepsilon^+_i}))(a_{j_0},\dots,a_{j_{p-1}}),
        \]
        we have
        \[
            \B\models\appr(\weakneg(\appr(\varphi,{\varepsilon^+_i})),{\varepsilon_i})(b_{j_0},\dots,b_{j_{p-1}}).
        \]
        Denote $\delta=\varepsilon^+_i-\varepsilon_i$. Now $\delta>0$ and $\appr(\weakneg(\appr(\varphi,{\varepsilon^+_i})),{\varepsilon_i}) = \weakneg(\appr(\varphi,{\delta}))$, so since $\B\models\weakneg(\appr(\varphi,{\delta}))(b_{j_0},\dots,b_{j_{p-1}})$, we have $\B\not\models\varphi(b_{j_0},\dots,b_{j_{p-1}})$ by Definition~\ref{Definition: Vocabulary with approximations}~\ref{item: Weak negation}~\ref{item: Weak negation property}.
    \end{proof}

    \begin{lemma}[Weak Transitivity]
        \label{Lemma: Transitivity}
        Let $\alpha\in\On$, $\varepsilon,\delta,\zeta>0$, $\bar{\varepsilon}\in(\varepsilon,\infty)^n$, $\bar{\delta}\in(\delta,\infty)^n$, $\bar{\zeta}\in(\zeta,\infty)^n$ and $\bar{k},\bar{K}\in\omega^n$, and suppose that
        \[
            \playertwo\wins\EFD{\alpha,\varepsilon,\bar{\varepsilon},\bar{k}}{\A,\B}{(\dense\A,\bar{a}),(\dense\B,\bar{b})} \quad\text{and}\quad \playertwo\wins\EFD{\alpha,\delta,\bar{\delta},\bar{K}}{\B,\model C}{(\dense\B,\bar{b}),(\dense{\model C},\bar{c})}.
        \]
        and
        \begin{enumerate}
            \item $\zeta\geq\varepsilon+\delta$, \label{item: transitivity-first}
            \item $\zeta_i\geq\varepsilon_i+\delta_i$, and
            \item $e^{\varepsilon_i}k_i\leq K_i$ \label{item: transitivity-last}
        \end{enumerate}
        for all $i<n$. Then
        \[
            \playertwo\wins\EFD{\alpha,\zeta,\bar{\zeta},\bar{k}}{\A,\model C}{(\dense\A,\bar{a}),(\dense{\model C},\bar{c})}.
        \]
    \end{lemma}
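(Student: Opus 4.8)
The plan is to let $\playertwo$ win $\EFD{\alpha,\zeta,\bar{\zeta},\bar{k}}{\A,\model C}{(\dense\A,\bar{a}),(\dense{\model C},\bar{c})}$ by playing the two hypothesised games in parallel and routing every move through the middle space $\B$. Let $\tau_1,\tau_2$ be the winning strategies $\playertwo$ has in $\EFD{\alpha,\varepsilon,\bar{\varepsilon},\bar{k}}{\A,\B}{(\dense\A,\bar{a}),(\dense\B,\bar{b})}$ and $\EFD{\alpha,\delta,\bar{\delta},\bar{K}}{\B,\model C}{(\dense\B,\bar{b}),(\dense{\model C},\bar{c})}$. When $\playerone$ plays $x_i\in\dense\A$ in the combined game, $\playertwo$ feeds it as $\playerone$'s move into the first game, reads off $\tau_1$'s response $b_i\in\dense\B$, feeds $b_i$ as $\playerone$'s move into the second game, and returns $\tau_2$'s response $c_i\in\dense{\model C}$. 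When $x_i\in\dense{\model C}$ she reverses the route, using $\tau_2$ to obtain an intermediate $b_i\in\dense\B$ and then $\tau_1$ to obtain her answer in $\dense\A$. Either way a single sequence $(b_i)$ in $\dense\B$ is generated, and the play projects to a $\tau_1$-play of the first game and a $\tau_2$-play of the second.

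I would fix the auxiliary parameters as follows. The ordinals $\alpha_i$ that $\playerone$ plays are passed unchanged to both sub-games; since all three games run on the clock $\alpha$, this keeps the ordinal moves legal and makes all three plays stop simultaneously. For the precisions and indices, on a round $i\geq n$ where $\playerone$ plays $\zeta_i>\zeta$ and $k_i$, I would split $\zeta_i$ by choosing $\varepsilon_i\in(\varepsilon,\infty)$ and $\delta_i\in(\delta,\infty)$ with $\varepsilon_i+\delta_i\leq\zeta_i$ (possible since $\zeta_i>\zeta\geq\varepsilon+\delta$), pass $k_i$ as the index into the first game, and let $\playertwo$ play the index $K_i=\ceil{k_ie^{\varepsilon_i}}$ into the second game. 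On the initial segment $i<n$ the parameters $\varepsilon_i,\delta_i,k_i,K_i$ are already given and satisfy $\varepsilon_i+\delta_i\leq\zeta_i$ and $e^{\varepsilon_i}k_i\leq K_i$ by hypotheses~\ref{item: transitivity-first}--\ref{item: transitivity-last}.

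The verification that this strategy wins would proceed by contradiction: suppose at some round $i$ there are $\varphi(v_0,\dots,v_{m-1})\in\good(k_i)$ and $j_0,\dots,j_{m-1}\geq i$ with $\A\models\varphi(a_{j_0},\dots,a_{j_{m-1}})$ but $\model C\not\models\appr(\varphi,\zeta_i)(c_{j_0},\dots,c_{j_{m-1}})$. Since $\tau_1$ wins the first game and $\varphi\in\good(k_i)$, the first projected play gives $\B\models\appr(\varphi,\varepsilon_i)(b_{j_0},\dots,b_{j_{m-1}})$. By Definition~\ref{Definition: Good vocabulary (isomorphism)}~\ref{item: An approximating of a k-good formula is still k-good for a little big bigger k} and~\ref{item: k-good formulae increase when k increases} we have $\appr(\varphi,\varepsilon_i)\in\good(\ceil{k_ie^{\varepsilon_i}})\subseteq\good(K_i)$, so since $\tau_2$ wins the second game the second projected play gives $\model C\models\appr(\appr(\varphi,\varepsilon_i),\delta_i)(c_{j_0},\dots,c_{j_{m-1}})$. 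By Definition~\ref{Definition: Vocabulary with approximations}~\ref{item: Approximation}~\ref{item: Error in approximation is additive} this formula is $\appr(\varphi,\varepsilon_i+\delta_i)$, and as $\varepsilon_i+\delta_i\leq\zeta_i$, Definition~\ref{Definition: Vocabulary with approximations}~\ref{item: Approximation}~\ref{item: Better approximation of a formula implies worse approximation of the formula} (with additivity) yields $\appr(\varphi,\varepsilon_i+\delta_i)\models\appr(\varphi,\zeta_i)$, so $\model C\models\appr(\varphi,\zeta_i)(c_{j_0},\dots,c_{j_{m-1}})$, contradicting the choice of $i$ and $\varphi$.

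The step I expect to be the crux is the index bookkeeping in the middle: after the first game degrades $\varphi\in\good(k_i)$ to $\appr(\varphi,\varepsilon_i)$, this approximation must still be $K_i$-good for the second game's winning condition to apply to it. This is exactly guaranteed by item~\ref{item: An approximating of a k-good formula is still k-good for a little big bigger k} together with the inequality $e^{\varepsilon_i}k_i\leq K_i$ — built into the hypotheses for $i<n$ and into the choice $K_i=\ceil{k_ie^{\varepsilon_i}}$ for $i\geq n$ — and making these inequalities line up across both the fixed and the dynamically chosen parameters is the only genuinely delicate point.
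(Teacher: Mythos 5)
Your proposal is correct and follows essentially the same route as the paper: compose the two winning strategies through $\B$, split each played precision $\zeta_i$ into $\varepsilon_i+\delta_i\leq\zeta_i$ with $\varepsilon_i>\varepsilon$, $\delta_i>\delta$, and feed the index $\ceil{e^{\varepsilon_i}k_i}$ into the second game so that $\appr(\varphi,\varepsilon_i)$ remains good there. The paper merely makes one concrete choice of split, namely $\varepsilon_i=\frac{\varepsilon}{\zeta}\zeta_i$ and $\delta_i=\frac{\delta}{\zeta}\zeta_i$; otherwise the arguments coincide.
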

    \begin{proof}
        Denote
        \begin{align*}
            G_1 &= \EFD{\alpha,\varepsilon,\bar{\varepsilon},\bar{k}}{\A,\B}{(\dense\A,\bar{a}),(\dense\B,\bar{b})}, \\
            G_2 &= \EFD{\alpha,\delta,\bar{\delta},\bar{K}}{\B,\model C}{(\dense\B,\bar{b}),(\dense{\model C},\bar{c})} \text{ and} \\
            G_3 &= \EFD{\alpha,\zeta,\bar{\zeta},\bar{m}}{\A,\model C}{(\dense\A,\bar{a}),(\dense{\model C},\bar{c})}.
        \end{align*}
        Let $\tau_1$ be the strategy of $\playertwo$ in $G_1$ and $\tau_2$ her strategy in $G_2$. Then define a strategy $\tau_3$ in $G_3$ by setting
        \[
            \tau_3((x_j,\zeta_j,k_j)_{j\leq i}) \coloneqq \tau_2((z_j,\delta_j,K_j)_{j\leq i}),
        \]
        where
        \begin{itemize}
            \item $z_j = \tau_1((x_p,\varepsilon_p,k_p)_{p\leq j})$ for $j\leq i$,
            \item $\varepsilon_j = \frac{\varepsilon}{\zeta}\zeta_j$ and $\delta_j = \frac{\delta}{\zeta}\zeta_j$ for $n\leq j\leq i$, and
            \item $K_j = \ceil{e^{\varepsilon_j}k_j}$ for $n\leq j\leq i$.
        \end{itemize}
        Note that in a play of $G_3$ where $\playertwo$ uses $\tau_3$,~\ref{item: transitivity-first}--\ref{item: transitivity-last} hold for all $i$ (and not just for $i<n$).

        We show that $\tau_3$ is winning. Let $((x_i,\zeta_i,k_i),y_i)_{i < l}$ be an arbitrary play such that $y_i=\tau_3((x_j,\zeta_j,k_j)_{j\leq i})$ for all $i<l$. Let $i<l$, $\varphi(v_0,\dots,v_{p-1})\in\good(k_i)$ and $j_0,\dots,j_{p-1}\geq i$, and suppose that
        \[
            \A\models\varphi(a_{j_0},\dots,a_{j_{p-1}}).
        \]
        We need to show that
        \[
            \model{C}\models\appr(\varphi,{\zeta_i})(c_{j_0},\dots,c_{j_{p-1}}).
        \]
        Now, as $\tau_1$ is a winning strategy for $\playertwo$ in $G_1$ and $a_{j_0},\dots,a_{j_{p-1}}\in\{x_q, z_q \mid q<l \}$, we have
        \[
            \A\models\varphi(a_{j_0},\dots,a_{j_{p-1}}) \implies \B\models\appr(\varphi,{\varepsilon_i})(b_{j_0},\dots,b_{j_{p-1}}),
        \]
        whence $\B\models\appr(\varphi,{\varepsilon_i})(b_{j_0},\dots,b_{j_{p-1}})$. As $e^{\varepsilon_i}k_i\leq K_i$, we have $\appr(\varphi,{\varepsilon_i})\in\good(K_i)$. Now, as $\tau_2$ is a winning strategy for $\playertwo$ in $G_2$ and $b_{j_0},\dots,b_{j_{p-1}}\in\{z_q, y_q \mid q<l \}$, we have
        \[
            \B\models\appr(\varphi,{\varepsilon_i})(b_{j_0},\dots,b_{j_{p-1}}) \implies \model{C}\models\appr(\varphi,{\varepsilon_i+\delta_i})(c_{j_0},\dots,c_{j_{p-1}}),
        \]
        whence $\model{C}\models\appr(\varphi,{\varepsilon_i+\delta_i})(c_{j_0},\dots,c_{j_{p-1}})$. Notice that as
        \[
            \varepsilon_i+\delta_i = \zeta_i(\varepsilon+\delta)/\zeta \leq \zeta_i\zeta/\zeta = \zeta_i,
        \]
        we have $\appr(\varphi,{\varepsilon_i+\delta_i})\models\appr(\varphi,{\zeta_i})$, whence $\model{C}\models\appr(\varphi,{\zeta_i})(c_{j_0},\dots,c_{j_{p-1}})$.
    \end{proof}

    \subsection{Relation Games}

    Let $\corr=(\corr_\varepsilon)_{\varepsilon\geq 0}$ be a correspondence notion of $\class$. We define two more variants of the EF-game that are used to capture the concept of being in $\varepsilon$-correspondence.

    \begin{definition}
        Let $\A,\B\in\class$, and let $\dense\A$ and $\dense\B$ be a dense subset of the respective spaces. For $\varepsilon\geq 0$ and tuples $(a_0,\dots,a_{n-1})\in\A^n$ and $(b_0,\dots,b_{n-1})\in\B^n$, we denote by
        \[
            \EFR{\omega,\varepsilon}{\A,\B}{(\dense\A,a_0,\dots,a_{n-1}),(\dense\B,b_0,\dots,b_{n-1})}
        \]
        the game of length $\omega$ that is played between the sets $\dense\A$ and $\dense\B$ like the classic ordinary EF-game but with the following winning condition: $\playertwo$ wins if for all $i_0,\dots,i_{n-1}<\omega$ and all atomic $\varphi(v_0,\dots,v_{n-1})$ we have
        \[
            \A\models\varphi(a_{i_0},\dots,a_{i_{n-1}}) \implies \B\models\appr(\varphi,\varepsilon)(b_{i_0},\dots,b_{i_{n-1}}).
        \]

        The game
        \[
            \EFDR{\omega,\varepsilon,\alpha}{\A,\B}{(\dense\A,a_0,\dots,a_{n-1})),(\dense\B,b_0,\dots,b_{n-1})}
        \]
        is defined similarly but on each round $i$, $\playerone$ also chooses an ordinal $\alpha_i<\alpha$ so that $\alpha_{i+1}<\alpha_{i}$ for all $i$, and the game ends when $\playerone$ chooses $0$.
    \end{definition}

    \begin{theorem}
        For separable $\A,\B\in\class$, countable dense sets $\dense\A$ and $\dense\B$ of $\A$ and $\B$ respectively, and $\varepsilon>0$, the following are equivalent.
        \begin{enumerate}
            \item There is $\varepsilon'\in(0,\varepsilon)$ such that $\playertwo\wins\EFR{\omega,\varepsilon'}{\A,\B}{\dense\A,\dense\B}$.
            \item There is $\varepsilon'\in(0,\varepsilon)$ and an $\varepsilon'$-correspondence between $\A$ and $\B$ such that $\dom(R)=\dense\A$ and $\ran(R)=\dense\B$.
        \end{enumerate}
    \end{theorem}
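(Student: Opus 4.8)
The plan is to route both conditions through the intermediate statement that there is $\varepsilon'\in(0,\varepsilon)$ and a set correspondence $R$ between $\dense\A$ and $\dense\B$ such that $\A\models\varphi(a_{i_0},\dots,a_{i_{m-1}})$ implies $\B\models\appr(\varphi,\varepsilon')(b_{i_0},\dots,b_{i_{m-1}})$ for every atomic $\varphi(v_0,\dots,v_{m-1})$ and all pairs $(a_{i_j},b_{i_j})\in R$; call this $(\ast)$. This is verbatim the second of the two equivalent conditions in Definition~\ref{Definition: Correspondence notion}~\ref{item: Preserving atomic formulae suffices for epsilon-correspondence}, whose first condition is verbatim statement (ii). Thus the equivalence of $(\ast)$ and (ii) is nothing but that axiom, and it suffices to prove that $(\ast)$ and (i) are equivalent.

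For the direction $(\ast)\Rightarrow$(i), I would fix such an $R$ and $\varepsilon'$ and let $\playertwo$ play $\EFR{\omega,\varepsilon'}{\A,\B}{\dense\A,\dense\B}$ by keeping every played pair inside $R$: when $\playerone$ names $x\in\dense\A$ she answers with any $y\in\dense\B$ such that $(x,y)\in R$, which exists because $\dom(R)=\dense\A$, and symmetrically she uses $\ran(R)=\dense\B$ when $\playerone$ names a point of $\dense\B$. After $\omega$ rounds every pair $(a_i,b_i)$ lies in $R$, so the defining implication of $R$ in $(\ast)$ is exactly the winning condition of the game. Hence $\playertwo$ wins with precision $\varepsilon'<\varepsilon$, which is (i).

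For the direction (i)$\Rightarrow(\ast)$, I would fix a winning strategy for $\playertwo$ in $\EFR{\omega,\varepsilon'}{\A,\B}{\dense\A,\dense\B}$ and feed it a prescribed sequence of $\playerone$-moves. Since $\dense\A$ and $\dense\B$ are countable, I can choose these moves to enumerate $\dense\A\cup\dense\B$, so that every point of $\dense\A$ and every point of $\dense\B$ is named by $\playerone$ at some round. Running the strategy against this sequence yields a play $(a_i,b_i)_{i<\omega}$, and I set $R=\{(a_i,b_i):i<\omega\}$. Because $\playertwo$ wins, the required implication holds for all tuples from $R$ with this same $\varepsilon'$, so it only remains to check that $R$ is a set correspondence between $\dense\A$ and $\dense\B$, i.e. that $\dom(R)=\dense\A$ and $\ran(R)=\dense\B$, and then $(\ast)$ follows.

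That last check is the only point requiring care, and it is exactly why the freedom to prescribe $\playerone$'s moves is used. The inclusions $\dom(R)\subseteq\dense\A$ and $\ran(R)\subseteq\dense\B$ are forced by the rules of the game, while the reverse inclusions come from having $\playerone$ name every element of each dense set: each $a\in\dense\A$ appears as some $a_i$ and each $b\in\dense\B$ as some $b_i$. I expect no genuine obstacle beyond this bookkeeping, since both halves are direct unwindings of the winning condition together with the axiom in Definition~\ref{Definition: Correspondence notion}~\ref{item: Preserving atomic formulae suffices for epsilon-correspondence}.
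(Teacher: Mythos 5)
Your proposal is correct and follows essentially the same route as the paper: both directions are reduced, via Definition~\ref{Definition: Correspondence notion}~\ref{item: Preserving atomic formulae suffices for epsilon-correspondence}, to the intermediate condition of an atomic-formula-preserving set correspondence, with $\playertwo$ either playing along a given correspondence or generating one by running her winning strategy against an enumeration of $\dense\A\cup\dense\B$. The only difference is presentational (you factor the equivalence explicitly through $(\ast)$, while the paper invokes the axiom at the end of each direction), so nothing further is needed.
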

    \begin{proof}
        Suppose that $\playertwo$ has a winning strategy in $\EFR{\omega,\varepsilon'}{\A,\B}{\dense\A,\dense\B}$. Let $R = \{ (a_i,b_i) \mid i<\omega \}$ be the relation generated by a play where $\playerone$ plays every element of $\dense\A\cup\dense\B$. But then $R$ is a set correspondence between $\dense\A$ and $\dense\B$ such that
        \[
            \A\models\varphi(a_{i_0},\dots,a_{i_{n-1}}) \implies \B\models\appr(\varphi,\varepsilon')(b_{i_0},\dots,b_{i_{n-1}})
        \]
        for all $(a_{i_0},b_{i_0}),\dots,(a_{i_{n-1}},b_{i_{n-1}})\in R$ and atomic $\varphi(v_0,\dots,v_{n-1})$, so the existence of an $\varepsilon''$-correspondence $R'$ with $\dom(R')=\dense\A$ and $\ran(R')=\dense\B$ for some $\varepsilon''\in(0,\varepsilon)$ follows by Definition~\ref{Definition: Correspondence notion}~\ref{item: Preserving atomic formulae suffices for epsilon-correspondence}.

        Then suppose that $R$ is an $\varepsilon'$-correspondence with $\dom(R)=\dense\A$ and $\ran(R)=\dense\B$. Then, by Definition~\ref{Definition: Correspondence notion}~\ref{item: Preserving atomic formulae suffices for epsilon-correspondence}, we can find another set correspondence $R'$ between $\dense\A$ and $\dense\B$ such that for some $\varepsilon''\in(0,\varepsilon)$ we have
        \[
            \A\models\varphi(\bar{a}) \implies \B\models\appr(\varphi,\varepsilon')(\bar{b})
        \]
        for all $(a_0,b_0),\dots,(a_{n-1},b_{n-1})\in R$ and atomic $\varphi(v_0,\dots,v_{n-1})$. But then
        $\playertwo$ wins the game $\EFR{\omega,\varepsilon''}{\A,\B}{\dense\A,\dense\B}$ by playing always moves that correspond to moves of $\playerone$ according to $R'$.
    \end{proof}

    The proofs of all the following lemmata are proved like in the function game setting (but are easier to prove).

    \begin{lemma}
        \label{Lemma: If II wins the long game she wins the dynamic games (relation games)}
        If $\playertwo\wins\EFR{\omega,\varepsilon}{\A,\B}{\dense\A,\dense\B}$, then $\playertwo\wins\EFR{\omega,\varepsilon,\alpha}{\A,\B}{\dense\A,\dense\B}$ for all $\alpha\in\On$.
    \end{lemma}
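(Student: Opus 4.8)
The plan is to mimic the proof of Theorem~\ref{Theorem: If II wins the long game she wins all dynamic games}, except that here the argument collapses to something almost trivial. The reason is that the winning condition of the relation games carries no dependence on the round: in $\EFR{\omega,\varepsilon}{\A,\B}{\dense\A,\dense\B}$ player $\playertwo$ wins precisely when
\[
\A\models\varphi(a_{i_0},\dots,a_{i_{n-1}}) \implies \B\models\appr(\varphi,\varepsilon)(b_{i_0},\dots,b_{i_{n-1}})
\]
holds for \emph{every} atomic $\varphi(v_0,\dots,v_{n-1})$ and \emph{all} indices $i_0,\dots,i_{n-1}<\omega$, with no threshold of the form ``$\geq k$'' and no grading by $\good(k)$, and at the single fixed precision $\varepsilon$. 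Hence none of the ``slowing down'' of $\playerone$ that was needed in the function-game setting is required, and $\playertwo$ can simply reuse her long-game strategy verbatim.

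Concretely, I would first fix a winning strategy $\tau$ for $\playertwo$ in $\EFR{\omega,\varepsilon}{\A,\B}{\dense\A,\dense\B}$. In both games the only moves of $\playerone$ that matter to $\playertwo$ are the chosen elements $x_i\in\dense\A\cup\dense\B$; the dynamic game differs solely in that $\playerone$ additionally announces strictly decreasing ordinals $\alpha_i$. I would then define a strategy $\tau'$ for $\playertwo$ in $\EFDR{\omega,\varepsilon,\alpha}{\A,\B}{\dense\A,\dense\B}$ that discards the ordinal data, setting
\[
\tau'((x_j,\alpha_j)_{j\leq i}) = \tau((x_j)_{j\leq i}).
\]

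To see that $\tau'$ wins, I would take an arbitrary play $((x_i,\alpha_i),y_i)_{i<l}$ in which $\playertwo$ follows $\tau'$; it terminates after finitely many rounds because the ordinal clock decreases strictly. By construction the element-moves $(x_i,y_i)_{i<l}$ form exactly the position reached after $l$ rounds of a play of $\EFR{\omega,\varepsilon}{\A,\B}{\dense\A,\dense\B}$ in which $\playerone$ plays $x_0,\dots,x_{l-1}$ and $\playertwo$ answers by $\tau$. Extending this to a full play of length $\omega$ (let $\playerone$ continue with arbitrary legal element-moves and $\playertwo$ keep following $\tau$) yields a play won by $\playertwo$, so the displayed preservation condition holds for all atomic $\varphi$ and all indices below $\omega$; restricting to the indices $i_0,\dots,i_{n-1}<l$ actually occurring in the dynamic play gives exactly the winning condition there. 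The one point deserving a word of care is that this extension is legitimate --- i.e.\ that the terminated dynamic play really is an initial segment of a $\tau$-play --- which is immediate, since $\tau'$ responds to $\playerone$'s element-moves in precisely the same manner as $\tau$.
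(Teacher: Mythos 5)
Your proposal is correct and matches the paper's intent: the paper gives no separate proof, remarking only that the relation-game lemmata are ``proved like in the function game setting (but are easier to prove)'', and your observation that the round-independent winning condition makes the slowdown construction of Theorem~\ref{Theorem: If II wins the long game she wins all dynamic games} unnecessary --- so that $\playertwo$ can reuse her long-game strategy verbatim, discarding the ordinal clock --- is precisely that simplification. The extension of the terminated dynamic play to an infinite $\tau$-play correctly justifies the winning condition at the finite position.
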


    \begin{lemma}
        \label{Lemma: If II wins all dynamic games she wins the long game (relation games)}
        If $\playertwo\wins\EFR{\omega,\varepsilon,\alpha}{\A,\B}{\dense\A,\dense\B}$ for all $\alpha<(\dense\A\cup\dense\B)^+$, then $\playertwo\wins\EFR{\omega,\varepsilon}{\A,\B}{\dense\A,\dense\B}$.
    \end{lemma}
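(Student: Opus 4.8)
The plan is to follow the proof of Theorem~\ref{Theorem: If II wins many enough dynamic games then she wins the long game} almost verbatim, the correspondence setting being a simplification of the function setting. Write $\kappa=\abs{\dense\A\cup\dense\B}$, so that the hypothesis reads $\playertwo\wins\EFDR{\omega,\varepsilon,\alpha}{\A,\B}{\dense\A,\dense\B}$ for every $\alpha<\kappa^+$. I would let $\playertwo$ play the long game $\EFR{\omega,\varepsilon}{\A,\B}{\dense\A,\dense\B}$ so as to maintain, at every round $n$, the invariant that if the pairs played so far are $(a_0,b_0),\dots,(a_{n-1},b_{n-1})$, then
\[
    \playertwo\wins\EFDR{\omega,\varepsilon,\alpha}{\A,\B}{(\dense\A,a_0,\dots,a_{n-1}),(\dense\B,b_0,\dots,b_{n-1})}
\]
for all $\alpha<\kappa^+$. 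Such a strategy is automatically winning: if $\playertwo$ lost a resulting play, the failure of the long-game winning condition would involve only finitely many pairs, say those with indices below some $N$, and then the invariant at round $N$ would furnish a winning strategy for $\playertwo$ in a dynamic game whose winning condition already subsumes preservation of atomic formulae for exactly those pairs, a contradiction.

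The invariant is established by induction on $n$, the base case $n=0$ being precisely the hypothesis. For the inductive step, assume the invariant holds after round $n$ and let $\playerone$ play $x_n\in\dense\A\cup\dense\B$; say $x_n\in\dense\A$, the symmetric case being identical. Let $Y\subseteq\dense\B$ be the set of legal responses and suppose, toward a contradiction, that no $y_n\in Y$ preserves the invariant. Then to each $y_n\in Y$ we may attach an ordinal $\alpha_{y_n}<\kappa^+$ with
\[
    \playertwo\doesnotwin\EFDR{\omega,\varepsilon,\alpha_{y_n}}{\A,\B}{(\dense\A,a_0,\dots,a_n),(\dense\B,b_0,\dots,b_n)}.
\]
Setting $\alpha=\sup_{y_n\in Y}(\alpha_{y_n}+1)$ and using $\abs{Y}\leq\abs{\dense\B}\leq\kappa$ together with the regularity of $\kappa^+$, we get $\alpha<\kappa^+$. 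The induction hypothesis yields $\playertwo\wins\EFDR{\omega,\varepsilon,\alpha+1}{\A,\B}{(\dense\A,a_0,\dots,a_{n-1}),(\dense\B,b_0,\dots,b_{n-1})}$; letting $\playerone$ play $x_n$ and the clock value $\alpha$ in this game, $\playertwo$'s winning strategy returns a response $y_n\in Y$ for which
\[
    \playertwo\wins\EFDR{\omega,\varepsilon,\alpha}{\A,\B}{(\dense\A,a_0,\dots,a_n),(\dense\B,b_0,\dots,b_n)}.
\]
Since $\alpha_{y_n}\leq\alpha$ and winning is monotone in the clock (a smaller clock only shortens the play and thins the generated relation, so a winning strategy for the larger clock remains winning), $\playertwo$ also wins the game with clock $\alpha_{y_n}$, contradicting the choice of $\alpha_{y_n}$.

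I expect the cardinality step to be the only real obstacle: one must ensure the supremum $\alpha$ stays below $\kappa^+$, which rests on bounding the number of legal responses by $\kappa$ and on the regularity of the successor cardinal. Everything else is strictly easier than in the function-game proof. Because both $\EFR{\omega,\varepsilon}{\A,\B}{\dense\A,\dense\B}$ and the dynamic games $\EFDR{\omega,\varepsilon,\alpha}{\A,\B}{\dense\A,\dense\B}$ are already played inside the fixed dense sets, no analogue of the machinery letting a player confine her moves to a dense set is needed (contrast Lemma~\ref{Lemma: II can play in a dense set} and Corollary~\ref{Corollary: II wins in the dense sets iff she wins in the models}); and since the correspondence games carry a single precision $\varepsilon$ with no auxiliary sequences $\bar\varepsilon$, $\bar k$ and no grading by $\good(k)$, the approximation bookkeeping that burdened the function-game argument vanishes entirely.
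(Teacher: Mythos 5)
Your proposal is correct and is exactly what the paper intends: the paper gives no separate argument for this lemma, stating only that it is proved like Theorem~\ref{Theorem: If II wins many enough dynamic games then she wins the long game} but more easily, and your adaptation (maintaining the invariant that $\playertwo$ wins all dynamic games with clock below $\kappa^+$ from the current position, plus the supremum-over-responses contradiction) is that proof. Your added justifications — why the invariant-maintaining strategy is automatically winning, and why winning is monotone in the clock — are both sound and fill in steps the original proof of the function-game theorem also left implicit.
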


    \begin{lemma}[Symmetry]
        \label{Lemma: Weak symmetry (relation games)}
        Let $\varepsilon>0$. Then
        \[
            \playertwo\wins\EFDR{\alpha,\varepsilon}{\A,\B}{(\dense\A,\bar{a}),(\dense\B,\bar{b})} \implies \playertwo\wins\EFDR{\alpha,\varepsilon}{\B,\A}{(\dense\B,\bar{b}),(\dense\A,\bar{a})}.
        \]
    \end{lemma}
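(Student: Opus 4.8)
The plan is to reuse the given winning strategy verbatim. Let $\tau$ be a winning strategy for $\playertwo$ in $\EFDR{\alpha,\varepsilon}{\A,\B}{(\dense\A,\bar{a}),(\dense\B,\bar{b})}$. In both games a round consists of $\playerone$ naming an element of $\dense\A\cup\dense\B$ (together with a strictly decreasing ordinal below $\alpha$) and $\playertwo$ answering with an element of the \emph{opposite} dense set, so the move structure is literally identical; only the bookkeeping of which coordinate is the ``$\A$-side'' and which the ``$\B$-side'' is swapped. I would therefore define $\tau'$ on a history of $\playerone$'s moves in the reversed game simply by feeding the same history to $\tau$. A play against $\tau'$ then generates exactly the same pairs $(a_i,b_i)$, with $a_i\in\dense\A$ and $b_i\in\dense\B$, as the corresponding play against $\tau$, merely recorded in the flipped order $(b_i,a_i)$.

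It remains to verify the reversed winning condition: for every atomic $\varphi$ and all relevant indices,
\[
    \B\models\varphi(\bar{b}) \implies \A\models\appr(\varphi,\varepsilon)(\bar{a}).
\]
I would argue by contraposition. Assume $\A\not\models\appr(\varphi,\varepsilon)(\bar{a})$. Applying Definition~\ref{Definition: Vocabulary with approximations}~\ref{item: Weak negation}~\ref{item: Weak negation property} to the atomic formula $\appr(\varphi,\varepsilon)$ gives some $\zeta>0$ with $\A\models\weakneg(\appr(\appr(\varphi,\varepsilon),\zeta))(\bar{a})$, and by additivity of approximations (Definition~\ref{Definition: Vocabulary with approximations}~\ref{item: Approximation}~\ref{item: Error in approximation is additive}) this reads $\A\models\weakneg(\appr(\varphi,\varepsilon+\zeta))(\bar{a})$.

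Since $\weakneg(\appr(\varphi,\varepsilon+\zeta))$ is itself atomic, the fact that $\tau$ is winning in the original game yields
\[
    \B\models\appr(\weakneg(\appr(\varphi,\varepsilon+\zeta)),\varepsilon)(\bar{b}).
\]
As $\varepsilon+\zeta\geq\varepsilon>0$, Definition~\ref{Definition: Vocabulary with approximations}~\ref{item: Weak negation}~\ref{item: Approximation of weak negation} rewrites the displayed formula as $\weakneg(\appr(\varphi,\zeta))$, so $\B\models\weakneg(\appr(\varphi,\zeta))(\bar{b})$ with $\zeta>0$; invoking the ``for some $\varepsilon>0$'' direction of the weak-negation property once more gives $\B\not\models\varphi(\bar{b})$, as required.

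I expect no serious obstacle here, and this is precisely why the relation games are ``easier'' than the function games: there is no $\good(k)$ grading to track, no perturbation distances, and no need to shrink $\alpha$ or enlarge $\varepsilon$, so one recovers \emph{exact} symmetry rather than only the weak form of Lemma~\ref{Lemma: Symmetry}. The only point demanding care is the chain of identities for $\weakneg$ and $\appr$, in particular checking that the parameters meet the hypothesis $\varepsilon+\zeta\geq\varepsilon>0$ of Definition~\ref{Definition: Vocabulary with approximations}~\ref{item: Weak negation}~\ref{item: Approximation of weak negation} and applying the existential quantifier of the weak-negation property in the correct direction at each end of the argument.
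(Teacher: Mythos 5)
Your proof is correct and is essentially the argument the paper intends: the paper gives no explicit proof, stating only that the relation-game lemmata ``are proved like in the function game setting (but are easier to prove),'' and your contrapositive via $\weakneg$ and the identity $\appr(\weakneg(\appr(\varphi,\varepsilon+\zeta)),\varepsilon)\equiv\weakneg(\appr(\varphi,\zeta))$ is exactly the adaptation of the proof of Lemma~\ref{Lemma: Symmetry}, with the $\good(k)$ and $\bar\varepsilon$ bookkeeping stripped away. Your handling of the quantifier in the weak-negation property (extracting an explicit $\zeta>0$ before applying \ref{item: Approximation of weak negation}) is, if anything, slightly more careful than the corresponding step in the paper's function-game proof.
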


    \begin{lemma}[Weak Transitivity]
        \label{Lemma: Weak transitivity (relation games)}
        Let $\varepsilon,\delta>0$ and suppose that
        \[
            \playertwo\wins\EFDR{\alpha,\varepsilon}{\A,\B}{(\dense\A,\bar{a}),(\dense\B,\bar{b})} \quad\text{and}\quad \playertwo\wins\EFDR{\alpha,\delta}{\B,\model C}{(\dense\B,\bar{b}),(\dense{\model C},\bar{c})}.
        \]
        Then
        \[
            \playertwo\wins\EFDR{\alpha,\varepsilon+\delta}{\A,\model C}{(\dense\A,\bar{a}),(\dense{\model C},\bar{c})}.
        \]
    \end{lemma}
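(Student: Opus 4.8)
The plan is to mimic the proof of the function-game Weak Transitivity (Lemma~\ref{Lemma: Transitivity}) by composing the two given strategies through the intermediate space $\model C$, but the absence of a goodness parameter and of a per-round precision sequence makes the argument much shorter. Write
\begin{align*}
    G_1 &= \EFDR{\alpha,\varepsilon}{\A,\B}{(\dense\A,\bar{a}),(\dense\B,\bar{b})}, \\
    G_2 &= \EFDR{\alpha,\delta}{\B,\model C}{(\dense\B,\bar{b}),(\dense{\model C},\bar{c})} \quad\text{and} \\
    G_3 &= \EFDR{\alpha,\varepsilon+\delta}{\A,\model C}{(\dense\A,\bar{a}),(\dense{\model C},\bar{c})},
\end{align*}
and let $\tau_1,\tau_2$ be winning strategies of $\playertwo$ in $G_1,G_2$, respectively.

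I would define a strategy $\tau_3$ for $\playertwo$ in $G_3$ that routes each move through $\B$, passing the \emph{same} clock value to both auxiliary games. When $\playerone$ in $G_3$ plays an element $x_i$ together with an ordinal $\alpha_i<\alpha$, $\playertwo$ proceeds as follows. If $x_i\in\dense\A$, she treats it as a move of $\playerone$ on the $\A$-side of $G_1$, lets $\tau_1$ answer with some $z_i\in\dense\B$, then treats $z_i$ as a move of $\playerone$ on the $\B$-side of $G_2$, and lets $\tau_2$ answer with some $y_i\in\dense{\model C}$, which she plays in $G_3$. If instead $x_i\in\dense{\model C}$, she runs the same routing in the opposite direction: $\tau_2$ produces $z_i\in\dense\B$ from $x_i$ on the $\model C$-side of $G_2$, and then $\tau_1$ produces $y_i\in\dense\A$ from $z_i$ on the $\B$-side of $G_1$. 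In either case one obtains a single triple $(a_i,b_i,c_i)$ with $a_i\in\dense\A$, $b_i\in\dense\B$, $c_i\in\dense{\model C}$, the intermediate coordinate $b_i=z_i$ being shared between the two auxiliary plays.

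Next I would check that this induces legal plays and that $\tau_3$ wins. Since the ordinals fed to $G_1$ and $G_2$ are exactly the strictly descending $\alpha_i$ chosen in $G_3$, and since $\playerone$ is free to move on either side of a relation game, the induced sequences $(a_i),(b_i)$ and $(b_i),(c_i)$ are legal plays of $G_1$ and $G_2$ in which $\playertwo$ followed $\tau_1$ and $\tau_2$; hence both winning conditions hold, namely $\A\models\varphi(\bar a)\implies\B\models\appr(\varphi,\varepsilon)(\bar b)$ and $\B\models\psi(\bar b)\implies\model C\models\appr(\psi,\delta)(\bar c)$ for all atomic $\varphi,\psi$ and all index tuples. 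To verify the winning condition of $G_3$, suppose $\A\models\varphi(a_{i_0},\dots,a_{i_{m-1}})$ for some atomic $\varphi$. The first condition gives $\B\models\appr(\varphi,\varepsilon)(b_{i_0},\dots,b_{i_{m-1}})$; as $\appr(\varphi,\varepsilon)$ is again atomic, applying the second condition to it yields $\model C\models\appr(\appr(\varphi,\varepsilon),\delta)(c_{i_0},\dots,c_{i_{m-1}})$, and by additivity of approximations (Definition~\ref{Definition: Vocabulary with approximations}~\ref{item: Approximation}~\ref{item: Error in approximation is additive}) this equals $\model C\models\appr(\varphi,\varepsilon+\delta)(c_{i_0},\dots,c_{i_{m-1}})$, which is precisely what is required.

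The only point requiring care is the two-directional routing: one must make sure that the shared $\B$-coordinate $b_i$ is well defined regardless of which side $\playerone$ chooses, and that the starting segments agree because $G_1$ and $G_2$ share the same initial tuple $\bar b$. Since there is no precision rescaling and no goodness-index bookkeeping to manage here (the winning condition quantifies over \emph{all} atomic formulae at a single fixed precision), this coherence of the composition is essentially the whole content of the proof.
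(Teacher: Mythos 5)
Your proof is correct and is exactly the argument the paper intends: the paper omits the proof, remarking only that the relation-game lemmata ``are proved like in the function game setting (but are easier to prove),'' and your composition of the two strategies through $\B$, with the observation that $\appr(\varphi,\varepsilon)$ is again atomic and that $\appr(\appr(\varphi,\varepsilon),\delta)\equiv\appr(\varphi,\varepsilon+\delta)$, is precisely the function-game proof of Lemma~\ref{Lemma: Transitivity} stripped of the precision- and goodness-index bookkeeping.
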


    \section{Scott Rank}\label{Section: Scott Rank}
    
    \subsection{Scott rank for function games}

    \begin{definition}
        Let $\varepsilon\geq 0$, $\bar{\varepsilon}\in\open{\varepsilon,\infty}^n$ and $\bar{k}=(k_0,\dots,k_{n-1})\in\omega^n$, with $k_0<\dots<k_{n-1}$. For $\A,\B\in\class$ and tuples $\bar{a}\in\A^n$ and $\bar{b}\in\B^n$, we define the $\varepsilon$-Scott watershed $\watershed{\varepsilon}{\bar{\varepsilon},\bar{k}}{(\A,\bar{a}),(\B,\bar{b})}$ of the pair $((\A,\bar{a}),(\B,\bar{b}))$ with parameters $\bar{\varepsilon}$ and $\bar{k}$ to be the least ordinal $\alpha$ such that $\playerone\wins\EFD{\alpha,\varepsilon,\bar{\varepsilon},\bar{k}}{\A,\B}{(\A,\bar{a}),(\B,\bar{b})}$ if such ordinal exists and $0$ otherwise.
    \end{definition}

    \begin{remark}
        Since the dynamic EF-game is determined, the watershed always exists unless $\playertwo$ wins the game for all $\alpha$. Note that it is also a successor ordinal, as $\playertwo$ having a winning strategy for every ordinal below some limit implies $\playertwo$ having a winning strategy at the limit.
    \end{remark}

    \begin{definition}
        The $\varepsilon$-Scott rank $\scottrank_\varepsilon(\A)$ of $\A\in\class$ is the supremum of all ordinals
        \[
            \sup_{n<\omega}\sup_{\bar{\varepsilon}\in\open{\varepsilon,\infty}^n}\sup_{\bar{k}\in\omega^n}\sup_{\bar{a}\in\A^n}\sup_{\bar{b}\in\B^n}\watershed{\varepsilon}{\bar{\varepsilon},\bar{k}}{(\A,\bar{a}),(\B,\bar{b})}
        \]
        over all $\B\in\class$ such that $\density(\B)\leq\density(\A)$.
    \end{definition}

    \begin{lemma}
        \label{Lemma: Upper bound for Scott rank}
        Let $\kappa$ be an infinite cardinal with $\density(\A)\leq\kappa$. Then $\scottrank_\varepsilon(\A)\leq\kappa^+$.
    \end{lemma}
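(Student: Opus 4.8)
The plan is to bound each individual Scott watershed occurring in the definition of $\scottrank_\varepsilon(\A)$ by a value strictly below $\kappa^+$; since $\scottrank_\varepsilon(\A)$ is then a supremum of ordinals all $<\kappa^+$, it is at most $\kappa^+$, and this simultaneously shows the supremum is a well-defined ordinal. So fix $\B\in\class$ with $\density(\B)\leq\density(\A)\leq\kappa$, a number $n$, parameters $\bar\varepsilon,\bar k$ and tuples $\bar a\in\A^n$, $\bar b\in\B^n$, and write $w$ for the watershed $\watershed{\varepsilon}{\bar\varepsilon,\bar k}{(\A,\bar a),(\B,\bar b)}$. It suffices to show $w<\kappa^+$.

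The first step is to replace the ambient spaces by small dense sets. Choose $\dense\A\subseteq\A$ and $\dense\B\subseteq\B$ dense of minimal cardinality, so that $\abs{\dense\A\cup\dense\B}\leq\kappa$. By Corollary~\ref{Corollary: II wins in the dense sets iff she wins in the models} together with determinacy of the dynamic game, $\playerone$ wins $\EFD{\alpha,\varepsilon,\bar\varepsilon,\bar k}{\A,\B}{(\A,\bar a),(\B,\bar b)}$ if and only if $\playerone$ wins $\EFD{\alpha,\varepsilon,\bar\varepsilon,\bar k}{\A,\B}{(\dense\A,\bar a),(\dense\B,\bar b)}$, for every ordinal $\alpha$. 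Hence $w$ is equally the least clock for which $\playerone$ wins the game played between the dense sets, and from now on I would work with that game.

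Next I would analyse the watershed as an ordinal rank of the dynamic game between $\dense\A$ and $\dense\B$. For a position $p$ (a finite play extending $(\bar a,\bar b)$ together with the precisions and the $k_i$'s played so far) let $w(p)$ denote the least clock from which $\playerone$ wins the continuation; this is well-defined because the clock strictly decreases each round, so the relation ``$\playerone$ wins the continuation from here'' is well-founded. Unwinding the rules, $\playerone$ wins from $p$ with clock $\alpha$ exactly when he has a single move $(x,\varepsilon',k')$ and an ordinal $\alpha'<\alpha$ such that for every legal response $y$ of $\playertwo$ the resulting position $p'$ either violates the winning condition immediately or satisfies $w(p')\leq\alpha'$. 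Taking $\alpha$ minimal yields the recursion
\[
    w(p) = \min_{(x,\varepsilon',k')}\ \sup_{y}\ \bigl(w(p') + 1\bigr),
\]
where $y$ ranges over the non-violating responses of $\playertwo$. The crucial point is that in the game between the dense sets these responses all lie in $\dense\A\cup\dense\B$, so for each fixed move of $\playerone$ the supremum is taken over at most $\kappa$ ordinals; the uncountably many choices of the real precision $\varepsilon'$ are harmless, since $\playerone$ only minimises over his moves.

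Finally I would show that no position $p$ satisfies $w(p)\geq\kappa^+$. If some did, choose one with $w(p)$ minimal; for the move of $\playerone$ witnessing $w(p)$ we have $w(p')<w(p)$ for every non-violating response $y$, so by minimality each $w(p')<\kappa^+$. Since $\kappa^+$ is regular, the supremum of these at most $\kappa$ ordinals below $\kappa^+$ is again below $\kappa^+$, and adding $1$ keeps it there, whence $w(p)<\kappa^+$, a contradiction. Thus $w<\kappa^+$, and taking the supremum over all $\B$, $n$, $\bar\varepsilon$, $\bar k$, $\bar a$ and $\bar b$ gives $\scottrank_\varepsilon(\A)\leq\kappa^+$. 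The one point demanding care is the reduction to dense sets: without it $\playertwo$ could have up to $\abs{\A}$ responses and the argument would only yield the weaker bound $(2^{\density(\A)})^+$, so the genuine content is precisely the invariance of the watershed under passing to dense subsets supplied by Corollary~\ref{Corollary: II wins in the dense sets iff she wins in the models}.
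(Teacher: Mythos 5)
Your argument is correct, but it takes a genuinely different route from the paper's. The paper gets the bound in two lines from Theorem~\ref{Theorem: If II wins many enough dynamic games then she wins the long game}: if $\playerone$ wins some dynamic game against a $\B$ with $\density(\B)\leq\density(\A)$, then by Theorem~\ref{Theorem: If II wins the long game she wins all dynamic games} and determinacy $\playertwo$ fails to win the length-$\omega$ game, so by the contrapositive of the former theorem $\playerone$ already wins a clocked game with clock below $(\density(\A)+\density(\B))^+\leq\kappa^+$; hence every watershed is $<\kappa^+$. You bypass the length-$\omega$ game entirely and instead run a direct ordinal-rank analysis of the clocked game tree between dense sets, closing with a minimal-counterexample argument via regularity of $\kappa^+$. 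The combinatorial core is the same in both --- reducing $\playertwo$'s branching to $\leq\kappa$ via Corollary~\ref{Corollary: II wins in the dense sets iff she wins in the models} (resting on Lemma~\ref{Lemma: II can play in a dense set}) and then invoking regularity --- but in the paper this core sits inside the proof of Theorem~\ref{Theorem: If II wins many enough dynamic games then she wins the long game}, where $\playertwo$ maintains the invariant of winning all games with clock $<\kappa^+$. Your version is more self-contained and makes the counting explicit; the paper's is shorter given machinery it needs anyway. Two small repairs, neither affecting the bound: the recursion should read $w(p)=\min_m\bigl(\sup_y w(p')\bigr)+1$ rather than $\min_m\sup_y\bigl(w(p')+1\bigr)$, since $\playerone$ must announce a single ordinal $\alpha'$ dominating \emph{all} of $\playertwo$'s responses (this matters when the supremum is an unattained limit, and is exactly why watersheds are successor ordinals, as the paper remarks); and $w(p)$ is only defined on positions from which $\playerone$ wins with \emph{some} clock, so the recursion and the minimal-counterexample step should be phrased on that class of positions.
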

    \begin{proof}
        By Theorem~\ref{Theorem: If II wins many enough dynamic games then she wins the long game}, if $\playerone$ wins any dynamic game between $\A$ and some other space $\B$ with $\density(\B)\leq\density(\A)$, he will win one with the clocking ordinal below $\kappa^+$. Thus $\scottrank_\varepsilon(\A)$ is a supremum of ordinals $<\kappa^+$ and hence $\scottrank_\varepsilon(\A)\leq\kappa^+$.
    \end{proof}

    \begin{lemma}
        \label{Lemma: Properties of Scott rank}
        $\scottrank_\varepsilon(\A)$ is the least ordinal $\alpha$ such that for any other $\B\in\class$ such that $\density(\B)\leq\density(\A)$ and for all $n<\omega$, $\bar{a}\in\A^n$, $\bar{b}\in\B^n$, $\bar{\varepsilon}\in\open{\varepsilon,\infty}^n$ and $\bar{k}\in\omega^n$, if
        \[
            \playertwo\wins\EFD{\alpha,\varepsilon,\bar{\varepsilon},\bar{k}}{\A,\B}{(\A,\bar{a}),(\B,\bar{b})},
        \]
        then
        \[
            \playertwo\wins\EFD{\alpha+1,\varepsilon,\bar{\varepsilon},\bar{k}}{\A,\B}{(\A,\bar{a}),(\B,\bar{b})}.
        \]
    \end{lemma}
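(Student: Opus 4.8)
The plan is to phrase everything through the game-equivalence relations attached to the dynamic games. For a fixed $\B$ with $\density(\B)\le\density(\A)$ and a fixed choice of $n,\bar a,\bar b,\bar\varepsilon,\bar k$, write $\equiv_\alpha$ for the relation that holds of the position $P=((\A,\bar a),(\B,\bar b))$ precisely when $\playertwo\wins\EFD{\alpha,\varepsilon,\bar\varepsilon,\bar k}{\A,\B}{(\A,\bar a),(\B,\bar b)}$. Two facts come straight from the observations lemma preceding Lemma~\ref{Lemma: II can play in a dense set}: by clock-monotonicity (its part (2)) together with determinacy, for a fixed position $\playertwo$ wins exactly for the clocks strictly below the watershed $\watershed{\varepsilon}{\bar\varepsilon,\bar k}{(\A,\bar a),(\B,\bar b)}$ and $\playerone$ wins from the watershed onwards; and by its part (3), at a limit $\lambda$ one has ${\equiv_\lambda}=\bigcap_{\beta<\lambda}{\equiv_\beta}$. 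In this language the displayed stabilization property at $\alpha$ says exactly that ${\equiv_\alpha}$ and ${\equiv_{\alpha+1}}$ coincide as relations (ranging over all admissible $\B$ and all tuples), and by the watershed description it fails at $\alpha$ precisely when some position has watershed $\alpha+1$.

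The engine of the argument is the one-round, back-and-forth reading of the game: for every position $P$ of length $n$, $\playertwo$ wins the clock-$(\alpha+1)$ game from $P$ if and only if for every first move $(x_n,\varepsilon_n,k_n)$ of $\playerone$ there is a response $y_n$ of $\playertwo$ with which $\playertwo$ wins the clock-$\alpha$ game from the one-step extension $P'$. I would justify this directly from the definition of the dynamic game: clock-monotonicity reduces the ``only if'' to $\playerone$ choosing the clock value $\alpha$, determinacy turns ``$\playertwo$ has no good response'' into ``$\playerone$ wins'', and the point that must be checked is that the global, all-rounds winning condition of the sub-game from $P'$ already encodes the round-$n$ approximation requirement for $(\varepsilon_n,k_n)$, so that composing $\playertwo$'s first answer with a sub-game winning strategy wins the whole play. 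The extension $P'$ leaves $\B$ fixed, hence stays admissible.

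First I would prove the propagation lemma: if ${\equiv_\gamma}={\equiv_{\gamma+1}}$, then ${\equiv_\gamma}={\equiv_\beta}$ for all $\beta\ge\gamma$. The convenient induction loading is to show, by transfinite induction on $\beta\ge\gamma$, that both ${\equiv_\beta}={\equiv_\gamma}$ and ${\equiv_\beta}={\equiv_{\beta+1}}$ hold. At a successor $\beta=\delta+1$ the equality ${\equiv_\beta}={\equiv_\gamma}$ is inherited from $\delta$, while ${\equiv_{\delta+1}}={\equiv_{\delta+2}}$ follows by running the back-and-forth characterization twice: a response witnessing ${\equiv_\delta}$ at $P'$ is upgraded, via the induction hypothesis ${\equiv_\delta}={\equiv_{\delta+1}}$ applied at $P'$, to a response witnessing ${\equiv_{\delta+1}}$ at $P'$. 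At a limit $\lambda$ I would first obtain ${\equiv_\lambda}={\equiv_\gamma}$ from ${\equiv_\lambda}=\bigcap_{\beta<\lambda}{\equiv_\beta}$, since the tail of this intersection is constantly ${\equiv_\gamma}$ by the induction hypothesis and the earlier terms are coarser; then the same one-round upgrade, now turning a ${\equiv_\gamma}$-witness at $P'$ into a ${\equiv_\lambda}$-witness through the just-proved ${\equiv_\gamma}={\equiv_\lambda}$, yields ${\equiv_\lambda}={\equiv_{\lambda+1}}$.

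With the propagation lemma in hand, writing $\sigma=\scottrank_\varepsilon(\A)$ for the supremum of all watersheds, I finish in two steps. No watershed exceeds $\sigma$, so no position has watershed $\sigma+1$, and hence the stabilization property holds at $\sigma$. If it also held at some $\alpha<\sigma$, that is ${\equiv_\alpha}={\equiv_{\alpha+1}}$, then the propagation lemma (with $\gamma=\alpha$) would give ${\equiv_\alpha}={\equiv_\beta}$ for every $\beta\ge\alpha$; choosing, as $\alpha<\sigma$ permits, a position whose watershed $\beta^\ast$ exceeds $\alpha$, player $\playertwo$ would win the clock-$\alpha$ game and therefore also the clock-$\beta^\ast$ game, contradicting that $\playerone$ wins the clock-$\beta^\ast$ game by definition of the watershed. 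Thus $\sigma$ is the least ordinal with the stabilization property, which is the assertion. I expect the main obstacle to be making the back-and-forth characterization fully rigorous — the bookkeeping of the precision and granularity parameters and the verification that the composed strategy meets the all-rounds winning condition — together with the limit stage of the propagation lemma, where the single-game description of ${\equiv_\lambda}$ must be traded for the intersection description before the upgrade step can be applied.
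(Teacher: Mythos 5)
Your proof is correct and is essentially the argument the paper intends: the paper's own ``proof'' is just the remark that the classical Scott-rank argument adapts, and your propagation lemma plus the one-round back-and-forth characterization (with clock-monotonicity, determinacy, and the successor-watershed observation) is exactly that adaptation. The one place deserving care is the point you already flag — that the all-rounds winning condition of the subgame from the extended position coincides with the original winning condition restricted to plays through that position — which holds because the round-$i$ requirement quantifies over all indices $j_0,\dots,j_{m-1}\geq i$, including those from the starting tuple.
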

    \begin{proof}
        An adaptation of a proof for the corresponding lemma in the classical Scott rank setting works.
    \end{proof}

    \begin{question}
        By Lemma~\ref{Lemma: Upper bound for Scott rank} the $\varepsilon$-Scott rank of a separable model is at most $\omega_1$. Can this be improved or do there exist a separable $\A\in\class$ and $\varepsilon\geq 0$ with $\scottrank_\varepsilon(\A)\geq\omega_1$?
    \end{question}

    For classical (discrete) structures, the Scott rank of a countable model is always countable. This is essentially because the corresponding dynamic game is transitive, which guarantees that it is enough to consider the watershed of pairs
    \[
        ((\A,\bar{a}),(\A,\bar{b}))
    \]
    whereas here we seem to need to go through all ($\varepsilon$-isomorphic) copies $\B$ of $\A$. As our games are not transitive, we do not have an obvious way of utilizing Scott rank defined only inside the space $\A$. However, if we are only interested in situations where $\playertwo$ wins all $\varepsilon$-games for $\varepsilon>0$, we can use weak transitivity (Lemma~\ref{Lemma: Transitivity}) to obtain some similar results to the classical ones.

    \begin{definition}
        We denote by $\innerscottrank(\A)$ the ordinal
        \[
            \sup_{\varepsilon(0,\infty)\cap D}\sup_{n<\omega}\sup_{\bar{\varepsilon}\in\open{\varepsilon,\infty}^n\cap\Q}\sup_{\bar{k}\in\omega^n}\sup_{\bar{a}\in\dense\A^n}\sup_{\bar{b}\in\dense\A^n}\watershed{\varepsilon}{\bar{\varepsilon},\bar{k}}{(\dense\A,\bar{a}),(\dense\A,\bar{b})},
        \]
        where $\dense\A\subseteq\A$ is a dense set of minimal cardinality.
    \end{definition}

    \begin{remark}
        Note that since a player who has a winning strategy in the dynamic game can always choose to play inside any dense sets, the definition of $\innerscottrank(\A)$ does not depend on the set $\dense\A$.
    \end{remark}
    \begin{remark}
        Note that $\innerscottrank(\A)<\density(\A)^+$.
    \end{remark}

    \begin{lemma}
        \label{Lemma: Inner Scott Rank property}
        $\innerscottrank(\A)$ is the least ordinal $\alpha$ such that for all $\varepsilon>0$, $n<\omega$, $\bar{\varepsilon}\in(\varepsilon,\infty)^n$, $\bar{k}\in\omega^n$, $\bar{a}\in\A^n$ and $\bar{b}\in\A^n$ if
        \[
            \playertwo\wins\EFD{\alpha,\varepsilon,\bar{\varepsilon},\bar{k}}{\A,\A}{(\dense\A,\bar{a}),(\dense\A,\bar{b})},
        \]
        then
        \[
            \playertwo\wins\EFD{\alpha+1,\varepsilon,\bar{\varepsilon},\bar{k}}{\A,\A}{(\dense\A,\bar{a}),(\dense\A,\bar{b})}.
        \]
    \end{lemma}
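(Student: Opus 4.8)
The plan is to reproduce, in this self-pair setting, the stabilisation argument behind Lemma~\ref{Lemma: Properties of Scott rank}. Abbreviate by $P(\alpha)$ the property in the statement and let $\alpha^{*}$ be the least ordinal satisfying it; the goal is $\alpha^{*}=\innerscottrank(\A)$, and I write $r=\innerscottrank(\A)$. For a position $p=(\varepsilon,\bar\varepsilon,\bar k,\bar a,\bar b)$ of the self-game, let $W_{\alpha}(p)$ abbreviate $\playertwo\wins\EFD{\alpha,\varepsilon,\bar\varepsilon,\bar k}{\A,\A}{(\dense\A,\bar a),(\dense\A,\bar b)}$. The monotonicity and limit clauses recorded in the observations after the definitions of the dynamic games say that $\alpha\mapsto W_{\alpha}$ is decreasing and that $W_{\lambda}=\bigcap_{\gamma<\lambda}W_{\gamma}$ at limits; unfolding the game by one round gives an operator identity $W_{\mu+1}=F(W_{\mu})$, where $F(X)(p)$ holds iff for every legal first move of $\playerone$ (an element together with some $\varepsilon_{n}>\varepsilon$ and $k_{n}<\omega$) $\playertwo$ has a reply keeping the round-$n$ winning condition satisfied and landing in a position belonging to $X$. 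In this language $P(\alpha)$ is exactly $W_{\alpha}=W_{\alpha+1}$ on self-pairs, since $W_{\alpha+1}\subseteq W_{\alpha}$ holds automatically.

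First I would prove the persistence lemma: if $W_{\alpha}=W_{\alpha+1}$, then $W_{\beta}=W_{\alpha}$ for every $\beta\geq\alpha$, and hence $P(\beta)$ holds for all $\beta\geq\alpha$. This goes by transfinite induction on $\beta$. At a successor one computes $W_{\beta+1}=F(W_{\beta})=F(W_{\alpha})=W_{\alpha+1}=W_{\alpha}=W_{\beta}$ using the induction hypothesis $W_{\beta}=W_{\alpha}$; at a limit $\lambda$ one uses $W_{\lambda}=\bigcap_{\gamma<\lambda}W_{\gamma}$ together with the facts that for $\alpha\leq\gamma<\lambda$ all $W_{\gamma}$ coincide with $W_{\alpha}$ and that the remaining terms are larger, so the intersection is $W_{\alpha}$. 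The decisive point that keeps $F$ within the scope of $P$ is that extending a position of the $\A,\A$-game by one round again produces a self-pair position with an appended error $\varepsilon_{n}>\varepsilon$, so neither the operator step nor the induction ever leaves the family over which $P$ quantifies; this is the one place where playing $\A$ against itself is essential.

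With persistence available, $\alpha^{*}=r$ follows from two inequalities. For $\alpha^{*}\geq r$, take any $\alpha<r$ and, by definition of $r$ as a supremum of watersheds, fix a position $p_{0}$ (with parameters of the kind used to define $\innerscottrank(\A)$) whose watershed exceeds $\alpha$, so $W_{\alpha}(p_{0})$ holds. If $P(\alpha)$ were true, persistence would give $W_{\beta}(p_{0})$ for every $\beta\geq\alpha$, hence $\playertwo$ would win the clock-$\beta$ game from $p_{0}$ for all $\beta$, contradicting that the watershed of $p_{0}$ is the least clock at which $\playerone$ wins. Thus $\neg P(\alpha)$ for all $\alpha<r$. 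For $\alpha^{*}\leq r$ it suffices to verify $P(r)$: every watershed of a position appearing in $P$ is at most $r$ (the reconciliation addressed below), so for each such position either $\playertwo$ already fails to win the clock-$r$ game, making the implication in $P(r)$ vacuous, or she wins every clock and in particular clock $r+1$; either way $P(r)$ holds.

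The step needing the most care — and the one I expect to be the main obstacle — is matching the parameter ranges: $P$ quantifies over arbitrary starting tuples $\bar a,\bar b\in\A^{n}$ and arbitrary errors, whereas $\innerscottrank(\A)$ is defined through tuples in $\dense\A$ and rational errors, so the bound ``every watershed is at most $r$'' used for $\alpha^{*}\leq r$ must be established. I would get this by approximating $\bar a,\bar b$ by nearby tuples in $\dense\A$ and the errors by slightly smaller rational ones, transferring winning strategies across the approximation with the perturbation-distance machinery of Definition~\ref{Definition: Good vocabulary (isomorphism)}~\ref{item: Perturbation distance} (exactly as in the proof of Lemma~\ref{Lemma: II can play in a dense set}), together with monotonicity in the errors and the dense-set equivalence of Corollary~\ref{Corollary: II wins in the dense sets iff she wins in the models}; that the value is independent of the chosen dense set is the content of the remark preceding this lemma. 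The second delicate point is justifying the unfolding identity $W_{\mu+1}=F(W_{\mu})$ uniformly for successor and limit clocks, given that the winning condition at each past round constrains all later elements; this reduces to checking that at a limit clock $\playertwo$ wins iff she wins at every smaller clock, which is precisely the limit clause invoked above.
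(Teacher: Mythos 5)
Your overall architecture is the right one and matches what the paper intends by ``similar to the proof of Lemma~\ref{Lemma: Properties of Scott rank}'': the fixed-point formulation $P(\alpha)\iff W_\alpha=W_{\alpha+1}$, the persistence lemma via the one-step unfolding operator and the limit clause, and the lower bound $\alpha^*\geq\innerscottrank(\A)$ (a position of the defining supremum with watershed $>\alpha$ satisfies $W_\alpha$, and persistence would force its watershed to be $0$). Your observation that the family of positions quantified in $P$ is closed under one-step extension of the self-game is exactly the point that makes persistence legitimate here. The paper itself supplies none of these details, so on this part your write-up is strictly more informative than the source.

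The gap is in the step you yourself flag, and I do not think the fix you sketch closes it. The upper bound $P(\innerscottrank(\A))$ is equivalent (given persistence and the fact that watersheds are successors) to the claim that the supremum of watersheds over \emph{arbitrary} starting tuples in $\A^n$ and \emph{arbitrary} errors $\bar\varepsilon$ equals the supremum over tuples in $\dense\A^n$ and rational errors. The perturbation machinery transfers the two players' winning strategies in \emph{opposite directions} in the error parameter: a win of $\playertwo$ at a position $p$ transfers to nearby dense/rational positions only after \emph{enlarging} the $\varepsilon_i$, while a win of $\playerone$ transfers only after \emph{shrinking} them. Consequently, from a big-family position $p$ with $W_r(p)$ (where $r=\innerscottrank(\A)$) your argument produces a small-family witness $q$ with $W_\beta(q)$ for all $\beta$, and transferring back yields only $\playertwo\wins$ the clock-$(r+1)$ game at $p$ \emph{with each $\varepsilon_i$ increased by an arbitrarily small $\eta>0$} --- not at the original errors, which is what the stated lemma asserts (and what the proof of Theorem~\ref{Theorem: Inner Scott Rank works as intended} literally invokes). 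Since winning the clock-$(r+1)$ game for every enlarged error tuple does not obviously yield a single strategy for the original one (different $\eta$ give different strategies, and the game tree is infinitely branching), you either need an additional compactness/diagonalisation argument here, or you should restate the conclusion of the lemma with a strict increase $\varepsilon^+_i>\varepsilon_i$ (in the style of Theorem~\ref{Theorem: Inner Scott Rank works as intended}, whose extra slack $\delta_i$ would absorb the loss). As it stands, this step is asserted rather than proved, and it is the only step of the lemma with real content beyond the classical argument.
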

    \begin{proof}
        Similar to the proof of Lemma~\ref{Lemma: Properties of Scott rank}.
    \end{proof}

    \begin{theorem}
        \label{Theorem: Inner Scott Rank works as intended}
        Let $\alpha=\innerscottrank(\A)$. Suppose that for each $\varepsilon>0$,
        \[
            \playertwo\wins\EFD{\alpha+\omega,\varepsilon}{\A,\B}{\dense\A,\dense\B}.
        \]
        Then for all $\varepsilon>0$, $n<\omega$, $\bar{\varepsilon}\in(\varepsilon,\infty)^n$, $\bar{k}\in\omega^n$, $\bar{a}\in\A^n$ and $\bar{b}\in\B^n$ if
        \[
            \playertwo\wins\EFD{\alpha,\varepsilon,\bar{\varepsilon},\bar{k}}{\A,\B}{(\dense\A,\bar{a}),(\dense\B,\bar{b})},
        \]
        then
        \[
            \playertwo\wins\EFD{\alpha+1,\varepsilon^+,\bar{\varepsilon}^+,\bar{k}}{\A,\B}{(\dense\A,\bar{a}),(\dense\B,\bar{b})}
        \]
        for any $\varepsilon^+>\varepsilon$ and $\varepsilon^+_i>\varepsilon_i$, $i<n$.
    \end{theorem}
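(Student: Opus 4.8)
The plan is to describe a strategy for \playertwo\ in $\EFD{\alpha+1,\varepsilon^+,\bar\varepsilon^+,\bar k}{\A,\B}{(\dense\A,\bar a),(\dense\B,\bar b)}$ that maintains the invariant ``\playertwo\ wins the $\alpha$-game from the current position (at the inflated precision)''. Since the clock here is $\alpha+1$, everything reduces to \playerone's first move: he plays an element together with a precision, an integer, and an ordinal $\alpha_0<\alpha+1$, i.e. $\alpha_0\le\alpha$. If $\alpha_0<\alpha$, then the residual clock is $<\alpha$ and the move is handled directly from the hypothesis $\playertwo\wins\EFD{\alpha,\varepsilon,\bar\varepsilon,\bar k}{\A,\B}{(\dense\A,\bar a),(\dense\B,\bar b)}$ together with the monotonicity lemma (larger precision, smaller clock). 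The substantive case is $\alpha_0=\alpha$; I treat the subcase where \playerone\ plays a new element $c\in\dense\A$ (the subcase $c\in\dense\B$ being symmetric, see below). Here I must produce $d\in\dense\B$ with $\playertwo\wins\EFD{\alpha,\varepsilon^+,\bar\varepsilon^+{}^\frown\eta,\bar k{}^\frown k_n}{\A,\B}{(\dense\A,\bar a c),(\dense\B,\bar b d)}$, which by monotonicity covers the residual $\alpha_0=\alpha$ continuation.

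First I would manufacture an auxiliary tuple $\bar a^*\in\dense\A$ that ``represents $\bar b$ inside $\A$''. By hypothesis $\playertwo\wins\EFD{\alpha+\omega,\delta}{\A,\B}{\dense\A,\dense\B}$ for every $\delta>0$, so by Weak Symmetry (Lemma~\ref{Lemma: Symmetry}) she also wins $\EFD{\alpha+\omega,\delta'}{\B,\A}{\dense\B,\dense\A}$. Simulating a run in which the imagined \playerone\ plays out $\bar b$ in $n$ rounds, and choosing his ordinals as $\alpha+N>\dots>\alpha+(N-n)$ with $N$ large, leaves residual clock $\ge\alpha+1$; this is exactly where the $+\omega$ budget is spent. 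Thus $\playertwo\wins\EFD{\alpha+1,\delta'}{\B,\A}{(\dense\B,\bar b),(\dense\A,\bar a^*)}$ for the resulting $\bar a^*$, hence by symmetry $\playertwo\wins\EFD{\alpha+1,\delta''}{\A,\B}{(\dense\A,\bar a^*),(\dense\B,\bar b)}$ at tiny precision $\delta''$. Combining this (restricted to clock $\alpha$) with the symmetrized hypothesis $\playertwo\wins\EFD{\alpha,\varepsilon'}{\B,\A}{(\dense\B,\bar b),(\dense\A,\bar a)}$ via Weak Transitivity (Lemma~\ref{Lemma: Transitivity}), through the common tuple $\bar b$, yields an \emph{inner} win $\playertwo\wins\EFD{\alpha,\varepsilon'+\delta''}{\A,\A}{(\dense\A,\bar a^*),(\dense\A,\bar a)}$. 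Because $\alpha=\innerscottrank(\A)$, the Inner Scott Rank property (Lemma~\ref{Lemma: Inner Scott Rank property}) lifts this to clock $\alpha+1$ \emph{with no precision loss}, giving $\playertwo\wins\EFD{\alpha+1,\varepsilon'+\delta''}{\A,\A}{(\dense\A,\bar a),(\dense\A,\bar a^*)}$ after a harmless symmetrization.

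Now I play out \playerone's move. Feeding $c$ (rel.\ $\bar a$) into the inner $(\alpha+1)$-game just obtained, \playertwo's strategy returns some $c^*\in\dense\A$ with $\playertwo\wins\EFD{\alpha,\varepsilon'+\delta''}{\A,\A}{(\dense\A,\bar a c),(\dense\A,\bar a^* c^*)}$. Feeding $c^*$ (rel.\ $\bar a^*$) into the budgeted mixed game $\EFD{\alpha+1,\delta''}{\A,\B}{(\dense\A,\bar a^*),(\dense\B,\bar b)}$ returns some $d\in\dense\B$ with $\playertwo\wins\EFD{\alpha,\delta''}{\A,\B}{(\dense\A,\bar a^* c^*),(\dense\B,\bar b d)}$. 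A final application of Weak Transitivity through $\bar a^* c^*$ glues these into $\playertwo\wins\EFD{\alpha,\varepsilon'+2\delta''}{\A,\B}{(\dense\A,\bar a c),(\dense\B,\bar b d)}$, and this $d$ is the desired response. The subcase $c\in\dense\B$ is completely analogous: there one obtains $c^*\in\dense\A$ from the mixed game and the new $\A$-point from the inner $(\alpha+1)$-game, so Inner Scott Rank \emph{for $\A$} still suffices even though the game is not symmetric — the asymmetry is absorbed by always routing the new point through the pair $(\bar a,\bar a^*)$ inside $\A$.

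The hard part is purely the bookkeeping of the approximation parameters across these finitely many applications of symmetry and transitivity. The crucial observation making it go through is that the hypothesis $\playertwo\wins\EFD{\alpha,\varepsilon,\bar\varepsilon,\bar k}{\A,\B}{\cdots}$ is used \emph{only once}, while all correspondence-derived games can be taken at arbitrarily small precision $\delta,\delta',\delta''$ and at arbitrarily large $k$-parameters; consequently the total precision stays of the form $\varepsilon'+\mathrm{(small)}$ rather than doubling, and the strict gaps $\varepsilon^+>\varepsilon$ and $\varepsilon^+_i>\varepsilon_i$ leave room to absorb the symmetry inflations and the $\delta$'s while keeping each coordinate below $\varepsilon^+$ (and the fresh coordinate below the $\eta$ \playerone\ played for $c$). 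The $k$-parameters, which Weak Symmetry shrinks and Weak Transitivity constrains, are handled by choosing the simulated $k$'s large and then invoking monotonicity to descend to the required $\bar k$; since Inner Scott Rank preserves $(\varepsilon,\bar\varepsilon,\bar k)$ exactly, no loss accrues at that step. I expect verifying these inequalities componentwise, and confirming that residual clock $\ge\alpha+1$ is available after committing the $n$-tuple, to be the only genuinely delicate points.
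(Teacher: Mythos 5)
Your overall architecture matches the paper's: use the $\alpha+\omega$ hypothesis and Weak Symmetry to manufacture a tuple in $\dense\A$ mirroring $\bar b$, pass through an inner game on $\A$ where Lemma~\ref{Lemma: Inner Scott Rank property} supplies the free round, and transfer back to a game between $\A$ and $\B$ by Weak Transitivity; the precision ($\varepsilon$) bookkeeping also closes for the reason you give. The gap is in the $k$-parameters, and it is caused by the order in which you compose the games. Weak Transitivity (Lemma~\ref{Lemma: Transitivity}) hands the composite the $\bar k$ of its \emph{first} factor, Weak Symmetry (Lemma~\ref{Lemma: Symmetry}) strictly \emph{shrinks} the parameters ($k^-_i\leq e^{-\varepsilon^+_i}k_i$), and the monotonicity lemma only lets one pass to \emph{smaller} $k$'s (winning for $\good(K)$ with $K\geq k$ implies winning for $\good(k)$, not conversely). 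You symmetrize the hypothesis into the game from $(\dense\B,\bar b)$ to $(\dense\A,\bar a)$ and then use it as the \emph{second} factor of a transitivity step; already at that point every game in your chain that ends at $\bar a$ or $\bar a c$ carries parameters at most $e^{-\varepsilon'_i}k_i<k_i$, and the further symmetrization of the inner game shrinks them again. Since the final composite inherits the first factor's parameters, your chain delivers the conclusion only for a tuple $\bar k^{--}$ strictly below $\bar k$, whereas the theorem requires the original $\bar k$; your remark that one ``descends to the required $\bar k$ by monotonicity'' fails because after symmetrizing the hypothesis you are already below $\bar k$ and cannot climb back up. This is not a cosmetic loss: Theorem~\ref{Theorem: If II wins the Inner Scott rank + omega game she wins the long game} iterates the present statement with $\bar k=(0,\dots,n-1)$ held fixed round after round, and a multiplicative shrinkage at each application would destroy the winning condition of the long game.

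The repair is exactly the paper's ordering: never symmetrize the hypothesis, and keep it as the first factor throughout. The paper plays $n$ rounds of $\EFD{\alpha+n+1,\delta}{\B,\A}{\dense\B,\dense\A}$ with the simulated $\playerone$ playing $\bar b$ and \emph{large} parameters $\bar K$ (these you control), obtaining $\bar c\in\dense\A^n$; it composes the hypothesis $(\bar a\to\bar b)$, carrying $\bar k$, with the auxiliary $(\bar b\to\bar c)$, so the inner game $(\bar a\to\bar c)$ still carries $\bar k$; applies Lemma~\ref{Lemma: Inner Scott Rank property}; symmetrizes only the auxiliary game to get $(\bar c\to\bar b)$ with parameters $\bar K^-$ that remain $\geq e^{\zeta_i}k_i$ because $\bar K$ was chosen large enough in advance; and composes again, so the final game once more carries $\bar k$. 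All the symmetry losses are absorbed by auxiliary games whose parameters are at your disposal, and none by the hypothesis, whose parameters are not. (That the paper applies the last transitivity at clock $\alpha+1$ while you unroll one round of the $(\alpha+1)$-game by hand is an inessential difference.)
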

    \begin{proof}
        First note that from the assumption that
        \begin{equation}
            \playertwo\wins\EFD{\alpha+\omega,\varepsilon}{\A,\B}{\dense\A,\dense\B} \quad\text{for all $\varepsilon>0$}
            \label{eq: II wins the initial game}
        \end{equation}
        it follows by Lemma~\ref{Lemma: Symmetry} that we also have
        \begin{equation}
            \playertwo\wins\EFD{\alpha+\omega,\varepsilon}{\B,\A}{\dense\B,\dense\A} \quad\text{for all $\varepsilon>0$}.
            \label{eq: II wins the initial reversed game}
        \end{equation}
        Let $\varepsilon>0$, $n<\omega$, $\bar{\varepsilon}\in(\varepsilon,\infty)^n$, $\bar{k}\in\omega^n$, $\bar{a}\in\A^n$ and $\bar{b}\in\B^n$, and suppose that
        \begin{equation}
            \playertwo\wins\EFD{\alpha,\varepsilon,\bar{\varepsilon},\bar{k}}{\A,\B}{(\dense\A,\bar{a}),(\dense\B,\bar{b})}.
            \label{eq: free round assumption}
        \end{equation}
        We need to show that
        \begin{equation}
            \playertwo\wins\EFD{\alpha+1,\varepsilon^+,\bar{\varepsilon}^+,\bar{k}}{\A,\B}{(\dense\A,\bar{a}),(\dense\B,\bar{b})}
            \label{eq: free round conclusion}
        \end{equation}
        holds for all $\varepsilon^+>\varepsilon$ and $\varepsilon^+_i>\varepsilon_i$, $i<n$. So fix $\varepsilon^+$ and $\varepsilon^+_i$, and let
        \begin{align*}
            \delta   &= (\varepsilon^+-\varepsilon)/3, \\
            \delta_i &= (\varepsilon^+_i-\varepsilon_i)/3 \quad\text{and}\\
            K_i     &= \ceil{e^{\varepsilon_i+3\delta_i}k_i + e^{2\delta_i}}.
        \end{align*}
        We now play $n$ rounds of the game $\EFD{\alpha+n+1,\delta}{\B,\A}{\dense\B,\dense\A}$, where $\playertwo$ has a winning strategy by~\eqref{eq: II wins the initial reversed game}. We let $\playerone$ play $(b_0,\delta_0,K_0),\dots,(b_{n-1},\delta_{n-1},K_{n-1})$, and $\playertwo$ responds with some $c_0,\dots,c_{n-1}\in\dense\A$ following her winning strategy, so we get
        \begin{equation}
            \playertwo\wins\EFD{\alpha+1,\delta,\bar{\delta},\bar{K}}{\B,\A}{(\dense\B,b_0,\dots,b_{n-1}),(\dense\A,c_0,\dots,c_{n-1})}.
            \label{eq: Game from b to c}
        \end{equation}
        Then, as $K_i\geq e^{\varepsilon_i}k_i$, by using Lemma~\ref{Lemma: Transitivity} on~\eqref{eq: free round assumption} and~\eqref{eq: Game from b to c} we obtain
        \[
            \playertwo\wins\EFD{\alpha,\zeta,\bar{\zeta},\bar{k}}{\A,\A}{(\dense\A,a_0,\dots,a_{n-1}),(\dense\A,c_0,\dots,c_{n-1})},
        \]
        where $\zeta = \varepsilon+\delta$ and $\zeta_i = \varepsilon_i + \delta_i$. Now by Lemma~\ref{Lemma: Inner Scott Rank property}, we have
        \begin{equation}
            \playertwo\wins\EFD{\alpha+1,\zeta,\bar{\zeta},\bar{k}}{\A,\A}{(\dense\A,a_0,\dots,a_{n-1}),(\dense\A,c_0,\dots,c_{n-1})}.
            \label{eq: Game from a to c}
        \end{equation}
        Now using Lemma~\ref{Lemma: Symmetry} on~\eqref{eq: Game from b to c} we get
        \begin{equation}
            \playertwo\wins\EFD{\alpha+1,2\delta,\bar{2\delta},\bar{K}^-}{\A,\B}{(\dense\A,c_0,\dots,c_{n-1}),(\dense\B,b_0,\dots,b_{n-1})},
            \label{eq: Game from c to b}
        \end{equation}
        where $K^-_i = \floor{e^{-2\delta_i}K_i}$. Now, as $\zeta + 2\delta = \varepsilon^+$ and $\zeta_i+2\delta_i = \varepsilon^+_i$, and $K^-_i\geq e^{\zeta_i}k_i$, we may apply Lemma~\ref{Lemma: Transitivity} on~\eqref{eq: Game from a to c} and~\eqref{eq: Game from c to b} and finally get~\eqref{eq: free round conclusion}.
    \end{proof}

    \begin{theorem}
        \label{Theorem: If II wins the Inner Scott rank + omega game she wins the long game}
        Let $\alpha=\innerscottrank(\A)$. Suppose that for each $\varepsilon>0$,
        \[
            \playertwo\wins\EFD{\alpha+\omega,\varepsilon}{\A,\B}{\dense\A,\dense\B}.
        \]
        Then for each $\varepsilon>0$,
        \[
            \playertwo\wins\EF{\omega,\varepsilon}{\A,\B}{\dense\A,\dense\B}.
        \]
    \end{theorem}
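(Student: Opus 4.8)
The plan is to mimic the proof of Theorem~\ref{Theorem: If II wins many enough dynamic games then she wins the long game}, but in place of the assumption that $\playertwo$ wins dynamic games of \emph{all} clocks, we manufacture the wins we need round by round using the free round supplied by Theorem~\ref{Theorem: Inner Scott Rank works as intended}. Fix $\varepsilon>0$. We build a strategy for $\playertwo$ in $\EF{\omega,\varepsilon}{\A,\B}{\dense\A,\dense\B}$ by maintaining, after $n$ rounds in which $\playerone$ has announced precisions $\varepsilon_0,\dots,\varepsilon_{n-1}$ (each $>\varepsilon$) and the players have produced $\bar a\in\dense\A^n$, $\bar b\in\dense\B^n$, the invariant
\[
\playertwo\wins\EFD{\alpha+1,\theta_n,(\eta_0,\dots,\eta_{n-1}),(0,\dots,n-1)}{\A,\B}{(\dense\A,a_0,\dots,a_{n-1}),(\dense\B,b_0,\dots,b_{n-1})},
\]
where $\alpha=\innerscottrank(\A)$ and the auxiliary precisions $\theta_n$ and $\eta_i$ are governed by the budget described next. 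Note that the level parameters are taken to be $(0,\dots,n-1)$, so that ``round $i$'' of the internal game corresponds to level $i$ of the long game.

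The budget is arranged so that the maintained precisions never reach the thresholds needed for the final win. We fix an increasing sequence $\theta_0<\theta_1<\cdots$ with $\sup_n\theta_n<\varepsilon$ to serve as the overall precisions of the internal games. When $\playerone$ announces $\varepsilon_i$ at round $i$ we start the per-level precision at $\eta_i=(\varepsilon+\varepsilon_i)/2\in(\varepsilon,\varepsilon_i)$, and we arrange that the increments later forced on $\eta_i$ by repeated applications of the free round form a convergent series of total size below $(\varepsilon_i-\varepsilon)/2$, so that $\eta_i<\varepsilon_i$ holds forever while also $\eta_i>\varepsilon>\theta_n$ for every $n$. Since Theorem~\ref{Theorem: Inner Scott Rank works as intended} allows us to increase each precision by an arbitrarily small positive amount, such a geometric budget exists.

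For the base case $n=0$ the invariant reads $\playertwo\wins\EFD{\alpha+1,\theta_0}{\A,\B}{\dense\A,\dense\B}$, which follows from the standing hypothesis by the monotonicity of the dynamic game in the clock and the precision (the clock $\alpha+1\leq\alpha+\omega$ and $\theta_0$ dominate some instance of the assumption). For the inductive step, assume the invariant after $n$ rounds and let $\playerone$ play $x_n\in\dense\A\cup\dense\B$ together with $\varepsilon_n>\varepsilon$. We feed the move $(x_n,\eta_n,n,\alpha)$ into the internal game that $\playertwo$ wins; her winning strategy answers with some $y_n$ on the side opposite to $x_n$ (a legal long-game response, and in $\dense\A\cup\dense\B$ because the internal game is played between the dense sets), and as $\playerone$ has decremented the clock to $\alpha$, $\playertwo$ now wins
\[
\EFD{\alpha,\theta_n,(\eta_0,\dots,\eta_n),(0,\dots,n)}{\A,\B}{(\dense\A,a_0,\dots,a_n),(\dense\B,b_0,\dots,b_n)}.
\]
Because $\alpha=\innerscottrank(\A)$ and the hypothesis of Theorem~\ref{Theorem: Inner Scott Rank works as intended} is precisely our standing assumption, that theorem bumps the clock back to $\alpha+1$ at the price of replacing $\theta_n$ and each $\eta_i$ by slightly larger values; spending the budget on these increments re-establishes the invariant at stage $n+1$.

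Finally, maintaining the invariant wins the long game. The winning condition of the dynamic game is imposed ``for each $i$'', including the initial rounds $i<n$, so a $\playertwo$-win in $\EFD{\alpha+1,\dots}{}$ from an $n$-element position already forces, for every $i<n$, every $\varphi\in\good(i)$ and all indices $\geq i$ among the first $n$ elements, the implication $\A\models\varphi(\dots)\implies\B\models\appr(\varphi,\eta_i)(\dots)$, independently of the continuation. Letting $n\to\infty$ along the resulting play, every finite tuple of indices is eventually captured, and since $\eta_i<\varepsilon_i$ yields $\appr(\varphi,\eta_i)\models\appr(\varphi,\varepsilon_i)$ by Definition~\ref{Definition: Vocabulary with approximations}, the preservation at precision $\varepsilon_i$ demanded by the long game holds at each level. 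Thus the strategy wins $\EF{\omega,\varepsilon}{\A,\B}{\dense\A,\dense\B}$. I expect the main obstacle to be the precision bookkeeping: because Theorem~\ref{Theorem: Inner Scott Rank works as intended} worsens \emph{all} precisions at every round, one must ensure the cumulative degradation over infinitely many rounds stays below $\varepsilon_i$ at each level, which is exactly what the geometric budget secures. A secondary subtlety worth isolating is that a single-clock invariant (clock $\alpha+1$) already encodes the preservation required at the present position, which is what lets us dispense with assuming wins at every clock.
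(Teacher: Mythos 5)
Your proposal is correct and follows essentially the same route as the paper: maintain, as an invariant of the long game, that $\playertwo$ wins a dynamic game of clock $\alpha$ (or $\alpha+1$) from the current position with slightly degraded precisions, use Theorem~\ref{Theorem: Inner Scott Rank works as intended} to regain the spent clock tick at the cost of arbitrarily small precision increments, and control the cumulative degradation by a convergent (geometric) budget so that each level's precision stays below $\varepsilon_i$ and the overall precision below $\varepsilon$. The paper's bookkeeping uses the explicit sums $\delta^n=\sum_{j<n+1}\varepsilon/2^{j+1}$ and $\delta^n_i=\sum_{j<n-i}\varepsilon_i/2^{j+1}$ in place of your $\theta_n$ and $\eta_i$, but the argument is the same.
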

    \begin{proof}
        Let $\varepsilon>0$. We show that $\playertwo$ wins $G=\EF{\omega,\varepsilon}{\A,\B}{\dense\A,\dense\B}$. The strategy of $\playertwo$ is to play such moves $y_n$ that the following condition remains true:
        \begin{itemize}
            \item[$(*)$] if the position in $G$ is $((x_i,\varepsilon_i),y_i)_{i<n}$, then
            \[
                \playertwo\wins\EFD{\alpha,\delta^n,(\delta^n_0,\dots,\delta^n_{n-1}),(0,\dots,n-1)}{\A,\B}{(\dense\A,a_0,\dots,a_{n-1}),(\dense\B,b_0,\dots,b_{n-1})},
            \]
            where $\delta^n = \sum_{j<n+1}\varepsilon/2^{j+1}$ and $\delta^n_i = \sum_{j<n-i}\varepsilon_i/2^{j+1}$ for $i<n$.
        \end{itemize}
        It is easy to see that if $\playertwo$ can play such moves, she wins the game $G$. So we show that such moves can always be played.

        In the beginning of $G$, $(*)$ holds by the assumption that $\playertwo\wins\EFD{\alpha+\omega,\varepsilon/2}{\A,\B}{\dense\A,\dense\B}$. Suppose that we have played $n$ rounds, the current position is $((x_i,\varepsilon_i),y_i)_{i<n}$ and $\playertwo$ has been able to maintain $(*)$, i.e.
        \begin{equation}
            \playertwo\wins\EFD{\alpha,\delta^n,(\delta^n_0,\dots,\delta^n_{n-1}),(0,\dots,n-1)}{\A,\B}{(\dense\A,a_0,\dots,a_{n-1}),(\dense\B,b_0,\dots,b_{n-1})}
            \label{eq: induction hypothesis}
        \end{equation}
        holds. Suppose that $\playerone$ now plays $x_n$ and $\varepsilon_n$. As $\delta^n<\delta^{n+1}$ and $\delta^n_i<\delta^{n+1}_i$ for $i<n$, we may apply Theorem~\ref{Theorem: Inner Scott Rank works as intended} on~\eqref{eq: induction hypothesis} and obtain
        \[
            \playertwo\wins\EFD{\alpha+1,\delta^{n+1},(\delta^{n+1}_0,\dots,\delta^{n+1}_{n-1}),(0,\dots,n-1)}{\A,\B}{(\dense\A,a_0,\dots,a_{n-1}),(\dense\B,b_0,\dots,b_{n-1})}.
        \]
        Then we play the above game and let $\playerone$ play $(x_n,\delta^{n+1}_n,n)$. Then we let $y_n$ be the response of $\playertwo$, according to her winning strategy in this game. But then $y_n$ is such that
        \[
            \playertwo\wins\EFD{\alpha,\delta^{n+1},(\delta^{n+1}_0,\dots,\delta^{n+1}_{n}),(0,\dots,n)}{\A,\B}{(\dense\A,a_0,\dots,a_{n}),(\dense\B,b_0,\dots,b_{n})},
        \]
        so it suffices as the response of $\playertwo$ in the game $G$.
    \end{proof}

    \subsection{Scott rank for relation games}

    \begin{definition}
        Let $\varepsilon>0$ and $\A,\B\in\class$. For dense sets $\dense\A\subseteq\A$ and $\dense\B\subseteq\B$ and tuples $\bar{a}\in\A^n$ and $\bar{b}\in\B^n$, we define the $\varepsilon$-Scott watershed $\watershedr{\varepsilon}{(\dense\A,\bar{a}),(\dense\B,\bar{b})}$ of the pair $((\dense\A,\bar{a}),(\dense\B,\bar{b}))$ to be the least ordinal $\alpha$ such that $\playerone\wins\EFDR{\alpha,\varepsilon}{\A,\B}{(\dense\A,\bar{a}),(\dense\B,\bar{b})}$ if such an ordinal exists and $0$ otherwise.
    \end{definition}

    \begin{definition}
        The $\varepsilon$-Scott rank $\scottrankr_\varepsilon(\A)$ of $\A\in\class$ is the supremum of the ordinals
        \[
            \sup_{n<\omega}\sup_{\bar{a}\in\A^n}\sup_{\bar{b}\in\B^n}\watershedr{\varepsilon}{(\dense\A,\bar{a}),(\B,\bar{b})}
        \]
        over all $\B\in\class$ such that $\density(\B)\leq\density(\A)$.
    \end{definition}

    \begin{definition}
        We denote by $\innerscottrankr(\A)$ the ordinal
        \[
            \sup_{\varepsilon\in(0,\infty)\cap D}\sup_{n<\omega}\sup_{\bar{a}\in\dense\A^n}\sup_{\bar{b}\in\dense\A^n}\watershedr{\varepsilon}{(\dense\A,\bar{a}),(\dense\A,\bar{b})},
        \]
        where $\dense\A\subseteq\A$ is a dense set of minimal cardinality.
    \end{definition}
    
    \begin{remark}
        Note that $\innerscottrankr(\A)<\density(\A)^+$.
    \end{remark}

    The following are proved in a fashion similar to their function game counterparts.

    \begin{lemma}
        $\scottrankr_\varepsilon(\A)$ is the least ordinal $\alpha$ such that for any other $\B\in\class$ such that $\density(\B)\leq\density(\A)$ and for all $n<\omega$, $\bar{a}\in\A^n$ and $\bar{b}\in\B^n$, if
        \[
            \playertwo\wins\EFDR{\alpha,\varepsilon}{\A,\B}{(\dense\A,\bar{a}),(\B,\bar{b})},
        \]
        then
        \[
            \playertwo\wins\EFDR{\alpha+1,\varepsilon}{\A,\B}{(\dense\A,\bar{a}),(\B,\bar{b})},
        \]
    \end{lemma}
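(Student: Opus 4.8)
The plan is to mirror the proof of Lemma~\ref{Lemma: Properties of Scott rank}, recast entirely in terms of the family of relations induced by the dynamic relation game. For an ordinal $\beta$ and a pair $((\A,\bar a),(\B,\bar b))$ with $\density(\B)\le\density(\A)$, call the pair \emph{$\beta$-good} if $\playertwo\wins\EFDR{\beta,\varepsilon}{\A,\B}{(\dense\A,\bar a),(\B,\bar b)}$. First I would record three facts, each the relation-game analogue of a statement already available for the function games (and, as the paper notes, proved in the same fashion): (i) the dynamic relation game is determined; (ii) $\beta$-goodness implies $\gamma$-goodness for every $\gamma\le\beta$ (monotonicity in the clock); and (iii) if a pair is $\gamma$-good for all $\gamma<\lambda$ with $\lambda$ limit, then it is $\lambda$-good. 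From (i)--(iii) it follows that the watershed $\watershedr{\varepsilon}{(\dense\A,\bar a),(\B,\bar b)}$ of each pair is either $0$ (the pair is $\beta$-good for every $\beta$) or a successor $w$, and that in the latter case the pair is $\beta$-good if and only if $\beta<w$.

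Next I would reformulate the stabilization condition of the statement. Call it $P(\alpha)$: for every eligible $\B$ and all tuples, $\alpha$-goodness implies $(\alpha+1)$-goodness. By monotonicity the reverse implication is automatic, so $P(\alpha)$ says exactly that ``$\alpha$-good'' and ``$(\alpha+1)$-good'' are the same relation. The heart of the argument is the \emph{propagation claim}: if $P(\alpha)$ holds, then every $\alpha$-good pair is $\beta$-good for all $\beta\ge\alpha$. To prove it I would first extract from $P(\alpha)$ that ``$\alpha$-good'' is a back-and-forth family: if $((\A,\bar a),(\B,\bar b))$ is $\alpha$-good, it is $(\alpha+1)$-good, so letting $\playerone$ open the $(\alpha+1)$-clock game with the ordinal $\alpha$ and an arbitrary element, $\playertwo$'s winning strategy returns, for every $a'\in\dense\A$, some $b'\in\B$ with $((\A,\bar a a'),(\B,\bar b b'))$ again $\alpha$-good, and symmetrically for every $b'\in\B$ a matching $a'\in\dense\A$. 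Since $0$-goodness is precisely the atomic $\varepsilon$-preservation condition, monotonicity gives that every $\alpha$-good pair is $0$-good. Then $\playertwo$ wins $\EFDR{\beta,\varepsilon}{\A,\B}{(\dense\A,\bar a),(\B,\bar b)}$ for arbitrary $\beta$ by maintaining the invariant that the currently played pair is $\alpha$-good: the back-and-forth property lets her answer every move of $\playerone$ while preserving the invariant, and $\alpha$-goodness (hence $0$-goodness) at the terminal position supplies the required atomic preservation. This is the step I expect to be the main obstacle, since it is where genuine game analysis, rather than bookkeeping, occurs.

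Finally I would assemble the pieces to identify $\scottrankr_\varepsilon(\A)$ with $\beta_0$, the least $\alpha$ satisfying $P(\alpha)$. Because $\scottrankr_\varepsilon(\A)$ is the supremum of all watersheds, no watershed exceeds it; hence for $\alpha\ge\scottrankr_\varepsilon(\A)$ no pair can have watershed exactly $\alpha+1$, which is precisely the situation in which $P(\alpha)$ would fail, so $P(\alpha)$ holds and $\beta_0\le\scottrankr_\varepsilon(\A)$. Conversely, $P(\beta_0)$ together with the propagation claim shows that every $\beta_0$-good pair is $\beta$-good for all $\beta$; a pair of watershed $w>\beta_0$ would then be $\beta_0$-good yet not $w$-good, contradicting $w$-goodness, so no watershed exceeds $\beta_0$ and therefore $\scottrankr_\varepsilon(\A)\le\beta_0$. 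Combining the two inequalities gives $\scottrankr_\varepsilon(\A)=\beta_0$, as claimed. The only separate case is the degenerate one in which every pair is $\beta$-good for all $\beta$ (all watersheds $0$): there $P(0)$ holds trivially and both quantities equal $0$.
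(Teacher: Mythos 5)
Your proof is correct and follows exactly the route the paper intends: the paper establishes this lemma by deferring to its function-game counterpart, which in turn is the classical Scott-rank argument, and that is precisely the determinacy/monotonicity/limit-closure bookkeeping plus the back-and-forth propagation claim that you spell out and then assemble into the two inequalities $\beta_0\leq\scottrankr_\varepsilon(\A)$ and $\scottrankr_\varepsilon(\A)\leq\beta_0$. The only cosmetic slip is that a watershed can also equal $0$ because $\playerone$ wins the clock-$0$ game (the starting tuples already violate the atomic preservation condition), not only because $\playertwo$ wins every game; but such a pair is never $\alpha$-good and so never witnesses a failure of $P(\alpha)$, leaving your argument intact.
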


    \begin{theorem}
        Let $\alpha = \innerscottrankr(\A)$. Suppose that for each $\varepsilon>0$,
        \[
            \playertwo\wins\EFDR{\alpha+\omega,\varepsilon}{\A,\B}{\dense\A,\dense\B}.
        \]
        Then for all $\varepsilon>0$, $n<\omega$, $\bar{a}\in\A^n$ and $\bar{b}\in\B^n$, if
        \[
            \playertwo\wins\EFDR{\alpha,\varepsilon}{\A,\B}{(\dense\A,\bar{a}),(\dense\B,\bar{b})},
        \]
        then
        \[
            \playertwo\wins\EFDR{\alpha+1,\varepsilon^+}{\A,\B}{(\dense\A,\bar{a}),(\dense\B,\bar{b})},
        \]
        for any $\varepsilon^+>\varepsilon$.
    \end{theorem}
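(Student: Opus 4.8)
The plan is to transcribe the proof of its function-game counterpart, Theorem~\ref{Theorem: Inner Scott Rank works as intended}, taking advantage of the fact that in the relation-game setting the structural lemmas are much cleaner. Symmetry (Lemma~\ref{Lemma: Weak symmetry (relation games)}) changes neither the clock nor the precision, and transitivity (Lemma~\ref{Lemma: Weak transitivity (relation games)}) keeps the clock fixed and merely adds the precisions; moreover there is no $\bar{k}$-bookkeeping and no multiplicative $e^{\varepsilon_i}k_i$ factors to track. Consequently the only slack we must manufacture is in the precision, and a single split of $\varepsilon^+-\varepsilon$ into two equal halves will replace the three-part split of the function-game proof.

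First I would use Lemma~\ref{Lemma: Weak symmetry (relation games)} to turn the hypothesis $\playertwo\wins\EFDR{\alpha+\omega,\delta}{\A,\B}{\dense\A,\dense\B}$ (for all $\delta>0$) into its reversed form $\playertwo\wins\EFDR{\alpha+\omega,\delta}{\B,\A}{\dense\B,\dense\A}$ for all $\delta>0$. Then fix $\varepsilon>0$, $n<\omega$, $\bar{a}\in\A^n$, $\bar{b}\in\B^n$ with $\playertwo\wins\EFDR{\alpha,\varepsilon}{\A,\B}{(\dense\A,\bar{a}),(\dense\B,\bar{b})}$, and fix $\varepsilon^+>\varepsilon$. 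Since $D$ is dense I can pick $\delta\in D$ with $0<\delta\leq(\varepsilon^+-\varepsilon)/2$; then $\varepsilon+\delta,\varepsilon+2\delta\in D$ and $\varepsilon+2\delta\leq\varepsilon^+$.

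The bridge tuple $\bar{c}$ is obtained by playing $n$ rounds inside the reversed game: in $\EFDR{\alpha+\omega,\delta}{\B,\A}{\dense\B,\dense\A}$ let $\playerone$ play $b_0,\dots,b_{n-1}$ together with descending clock ordinals $\alpha+n>\dots>\alpha+1$, and let $\bar{c}\in\dense\A^n$ be $\playertwo$'s replies following her winning strategy. The resulting position is
\[
    \playertwo\wins\EFDR{\alpha+1,\delta}{\B,\A}{(\dense\B,\bar{b}),(\dense\A,\bar{c})}.
\]
I then chain four applications. Transitivity (Lemma~\ref{Lemma: Weak transitivity (relation games)}) applied to the hypothesis game from $\bar{a}$ to $\bar{b}$ at clock $\alpha$ and to this game from $\bar{b}$ to $\bar{c}$ (dropped to clock $\alpha$ by monotonicity) gives $\playertwo\wins\EFDR{\alpha,\varepsilon+\delta}{\A,\A}{(\dense\A,\bar{a}),(\dense\A,\bar{c})}$. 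As $\alpha=\innerscottrankr(\A)$, the relation-game analogue of Lemma~\ref{Lemma: Inner Scott Rank property} raises the clock to $\playertwo\wins\EFDR{\alpha+1,\varepsilon+\delta}{\A,\A}{(\dense\A,\bar{a}),(\dense\A,\bar{c})}$. Symmetry on the clock-$(\alpha+1)$ game from $\bar{b}$ to $\bar{c}$ yields $\playertwo\wins\EFDR{\alpha+1,\delta}{\A,\B}{(\dense\A,\bar{c}),(\dense\B,\bar{b})}$, and a final use of transitivity on the two clock-$(\alpha+1)$ games from $\bar{a}$ to $\bar{c}$ and from $\bar{c}$ to $\bar{b}$ produces $\playertwo\wins\EFDR{\alpha+1,\varepsilon+2\delta}{\A,\B}{(\dense\A,\bar{a}),(\dense\B,\bar{b})}$; since $\varepsilon+2\delta\leq\varepsilon^+$, monotonicity in the precision delivers the conclusion.

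The one delicate point — and the main obstacle — is the clock arithmetic producing $\bar{c}$: I must verify that having $\playerone$ spend the ordinals $\alpha+n,\dots,\alpha+1$ over the first $n$ rounds leaves the play in a position equivalent to a fresh game at clock exactly $\alpha+1$, so that the inner Scott rank climbs from $\alpha$ to $\alpha+1$ and the closing transitivity lands at $\alpha+1$ rather than at $\alpha$. Everything else is routine, and because relation-game symmetry and transitivity leave the precision additive and the clock fixed (unlike their function-game versions), the two-part split $\varepsilon^+=\varepsilon+2\delta$ suffices. I also rely on the evident relation-game monotonicity — that a win at a larger clock or smaller precision persists at a smaller clock or larger precision — which is proved exactly as the corresponding function-game observation.
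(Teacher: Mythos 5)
Your proposal is correct and is essentially the proof the paper intends: the paper gives no separate argument for the relation-game version, deferring to the proof of Theorem~\ref{Theorem: Inner Scott Rank works as intended}, and your transcription of that argument (reverse the hypothesis by symmetry, extract a bridge tuple $\bar{c}$ by playing $n$ rounds with ordinals $\alpha+n,\dots,\alpha+1$, then chain transitivity, the inner-Scott-rank step, symmetry, and transitivity again) is exactly that adaptation. Your observation that exact symmetry and purely additive transitivity in the relation setting let the three-way split of $\varepsilon^+-\varepsilon$ collapse to a two-way split, with no $\bar{k}$ bookkeeping, is accurate, and the clock-arithmetic point you flag is handled by the paper's remark identifying a game with starting position of length $n$ and clock $\beta$ with a round-$n$ position of the clock-$(\beta+n)$ game.
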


    \section{Scott Sentences}\label{Section: Scott sentences}

    \subsection{Scott sentences for isomorphism notions}

    \begin{definition}
        Let $\A\in\class$, $\varepsilon\geq 0$ and $\dense{\A}\subseteq\A$ a dense set of cardinality $<\kappa$. Given $n<\omega$, $\bar{\varepsilon}\in\open{\varepsilon,\infty}^n$ $\bar{k}\in\omega^n$ and $\bar{a}\in\A^n$, define the formulae $\gameformula{\varepsilon}{\alpha,\bar{\varepsilon},\bar{k}}{\A,\bar{a}}(v_0,\dots,v_{n-1})\in\Logic\kappa$, $\alpha\in\On$, by induction on $\alpha$ as follows:
        \begin{enumerate}
            \item For $\alpha=0$,
            \begin{align*}
                \gameformula{\varepsilon}{\alpha,\bar{\varepsilon},\bar{k}}{\A,\bar{a}}(v_0,\dots,v_{n-1}) = \bigwedge\{&\appr(\varphi,{\varepsilon_i})(v_{j_0},\dots,v_{j_{k_i-1}}) \mid  i<n,\ \varphi\in\good(k_i)\\
                &j_0,\dots,j_{k_i-1}\geq i,\ \A\models\varphi(a_{j_0},\dots,a_{j_{k_i-1}}) \},
            \end{align*}
            \item for $\alpha=\beta+1$,
            \begin{align*}
                \gameformula{\varepsilon}{\alpha,\bar{\varepsilon},\bar{k}}{\A,\bar{a}}(v_0,\dots,v_{n-1}) =& \bigwedge_{\varepsilon_n\in\open{\varepsilon,\infty}\cap D}\bigwedge_{k_n<\omega}\left( \bigwedge_{a_n\in\dense{\A}}\exists v_n \gameformula{\varepsilon}{\beta,\concat{\bar{\varepsilon}}{\varepsilon_n},\concat{\bar{k}}{k_n}}{\A,\concat{\bar{a}}{a_n}}(v_0,\dots,v_n)\right. \\
                &\left.{}\land\forall v_n\bigvee_{a_n\in\dense{\A}}\gameformula{\varepsilon}{\beta,\concat{\bar{\varepsilon}}{\varepsilon_n},\concat{\bar{k}}{k_n}}{\A,\concat{\bar{a}}{a_n}}(v_0,\dots,v_n) \right),
            \end{align*}
            and
            \item for $\alpha$ limit,
            \[
                \gameformula{\varepsilon}{\alpha,\bar{\varepsilon},\bar{k}}{\A,\bar{a}}(v_0,\dots,v_{n-1})=\bigwedge_{\beta<\alpha}\gameformula{\varepsilon}{\beta,\bar{\varepsilon},\bar{k}}{\A,\bar{a}}(v_0,\dots,v_{n-1}).
            \]
        \end{enumerate}
        Whenever $n=0$, we leave out $\bar{\varepsilon}$, $\bar{k}$ and $\bar{a}$ (who all are the empty tuple) from the notation.
    \end{definition}

    \begin{lemma}
        \label{formulae vs games}
        For $\A,\B\in\class$, and $\bar{a}\in\A^n$ and $\bar{b}\in\B^n$, the following are equivalent:
        \begin{enumerate}
            \item $\B\models\gameformula{\varepsilon}{\alpha,\bar{\varepsilon},\bar{k}}{\A,\bar{a}}(\bar{b})$,
            \item $\playertwo\wins\EFD{\alpha,\varepsilon,\bar{\varepsilon},\bar{k}}{\A,\B}{(\dense{\A},\bar{a}),(\B,\bar{b})}$.
        \end{enumerate}
    \end{lemma}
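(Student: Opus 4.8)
The plan is to prove the equivalence by transfinite induction on $\alpha$, unwinding the recursive definition of $\gameformula{\varepsilon}{\alpha,\bar\varepsilon,\bar k}{\A,\bar a}$ in parallel with the recursive structure of the game $\EFD{\alpha,\varepsilon,\bar\varepsilon,\bar k}{\A,\B}{(\dense\A,\bar a),(\B,\bar b)}$. Throughout I will use the padding convention so that every formula in $\good(k_i)$ has exactly $k_i$ variables, matching the indexing $v_{j_0},\dots,v_{j_{k_i-1}}$ that appears in the $\alpha=0$ clause.

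For the base case $\alpha=0$, player $\playerone$ can make no move (there is no ordinal below $0$), so the play ends at the starting position and $\playertwo$ wins precisely when, for every $i<n$, every $\varphi\in\good(k_i)$ and all indices $j_0,\dots,j_{k_i-1}\geq i$, we have $\A\models\varphi(a_{j_0},\dots,a_{j_{k_i-1}})\implies\B\models\appr(\varphi,\varepsilon_i)(b_{j_0},\dots,b_{j_{k_i-1}})$. This is verbatim the statement that $\bar b$ satisfies each conjunct of $\gameformula{\varepsilon}{0,\bar\varepsilon,\bar k}{\A,\bar a}$, so the two conditions coincide.

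The successor case $\alpha=\beta+1$ is the heart of the argument. First I would observe how one round of the game corresponds to the two halves of the successor formula: a move $a_n\in\dense\A$ by $\playerone$, to which $\playertwo$ answers with some $b_n\in\B$, is captured by the existential conjunct $\bigwedge_{a_n\in\dense\A}\exists v_n\,\gameformula{\varepsilon}{\beta,\concat{\bar\varepsilon}{\varepsilon_n},\concat{\bar k}{k_n}}{\A,\concat{\bar a}{a_n}}$, whereas a move $b_n\in\B$ by $\playerone$, answered by some $a_n\in\dense\A$, is captured by the universal conjunct $\forall v_n\bigvee_{a_n\in\dense\A}\gameformula{\varepsilon}{\beta,\dots}{\A,\concat{\bar a}{a_n}}$; the outer $\bigwedge_{\varepsilon_n}\bigwedge_{k_n}$ ranges over $\playerone$'s auxiliary choices of $\varepsilon_n$ and $k_n$. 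Applying the induction hypothesis to each subformula $\gameformula{\varepsilon}{\beta,\dots}{\A,\concat{\bar a}{a_n}}(\concat{\bar b}{b_n})$ converts satisfaction into $\playertwo\wins\EFD{\beta,\dots}{(\dense\A,\concat{\bar a}{a_n}),(\B,\concat{\bar b}{b_n})}$. The one remaining gap is that in the game $\playerone$ also selects an ordinal $\alpha_0<\beta+1$, i.e. any $\alpha_0\leq\beta$, so the subgame has clock $\alpha_0$ rather than $\beta$. I close this gap with the monotonicity properties recorded just after the definition of the dynamic game: a winning strategy for $\playertwo$ with clock $\beta$ yields one for every smaller clock, so $\playerone$ gains nothing by playing $\alpha_0<\beta$ and it suffices to track the clock $\beta$. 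Hence $\B\models\gameformula{\varepsilon}{\beta+1,\dots}{\A,\bar a}(\bar b)$ is equivalent to saying that for every $\varepsilon_n,k_n$ player $\playertwo$ can answer every first-round move of $\playerone$ (taking $\alpha_0=\beta$) so as to reach a position she wins with clock $\beta$, which is exactly $\playertwo\wins\EFD{\beta+1,\dots}{(\dense\A,\bar a),(\B,\bar b)}$.

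For the limit case, $\B\models\gameformula{\varepsilon}{\alpha,\bar\varepsilon,\bar k}{\A,\bar a}(\bar b)$ means $\B\models\gameformula{\varepsilon}{\beta,\bar\varepsilon,\bar k}{\A,\bar a}(\bar b)$ for every $\beta<\alpha$, which by the induction hypothesis is $\playertwo\wins\EFD{\beta,\varepsilon,\bar\varepsilon,\bar k}{\A,\B}{(\dense\A,\bar a),(\B,\bar b)}$ for all $\beta<\alpha$; the limit clause of the monotonicity properties upgrades this to a win at clock $\alpha$, and the converse is immediate because $\beta\leq\alpha$. I expect the main obstacle to be the successor step, and specifically the bookkeeping that matches $\playerone$'s freedom to pick $\alpha_0\leq\beta$, $\varepsilon_n$ and $k_n$ to the exact shape of the successor formula; once the monotonicity reduction to $\alpha_0=\beta$ is in place, the quantifier alternation $\exists/\forall$ transcribes directly into $\playerone$ choosing which side to play on and $\playertwo$ responding.
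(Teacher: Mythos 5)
Your proposal is correct and is exactly the induction on $\alpha$ that the paper intends (its own proof is just ``follows from the definitions by induction on $\alpha$''), with the base, successor and limit cases matched to the three clauses of the formula and the clock handled by the monotonicity lemma. The only point you gloss over is that the conjunction $\bigwedge_{\varepsilon_n\in(\varepsilon,\infty)\cap D}$ ranges over $D$ while $\playerone$ may play an arbitrary real $\varepsilon_n>\varepsilon$; this is repaired by having $\playertwo$ respond as if a slightly smaller element of $D$ had been played, as the paper itself notes in a footnote to Theorem~\ref{Theorem: Scott sentence guarantees win for II in the long game}.
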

    \begin{proof}
        Follows from the definitions by induction on $\alpha$.
    \end{proof}

    \begin{remark}
        By Corollary~\ref{Corollary: II wins in the dense sets iff she wins in the models}, if $\dense{\A}$ and $\dense{\A}'$ are two different dense sets in $\A$, we get
        \begin{align*}
            \playertwo\wins\EFD{\alpha,\varepsilon,\bar{\varepsilon},\bar{k}}{\A,\B}{(\dense{\A},\bar{a}),(\B,\bar{b})} &\iff \playertwo\wins\EFD{\alpha,\varepsilon,\bar{\varepsilon},\bar{k}}{\A,\B}{(\A,\bar{a}),(\B,\bar{b})} \\
            &\iff \playertwo\wins\EFD{\alpha,\varepsilon,\bar{\varepsilon},\bar{k}}{\A,\B}{(\dense{\A}',\bar{a}),(\B,\bar{b})},
        \end{align*}
        so constructing $\gameformula{\varepsilon}{\alpha,\bar{\varepsilon},\bar{k}}{\A,\bar{a}}(\bar{x})$ using two different dense sets of cardinality $\kappa$ gives logically equivalent formulae.
    \end{remark}

    \begin{definition}
        The $\varepsilon$-Scott sentence $\sigma^\A_\varepsilon$ of $\A\in\class$ is the sentence
        \[
            \gameformula{\varepsilon}{\scottrank_\varepsilon(\A)}{\A} \land \bigwedge_{n<\omega}\bigwedge_{\bar{\varepsilon}\in(\open{\varepsilon,\infty}\cap\Q)^n}\bigwedge_{\bar{k}\in\omega^n}\bigwedge_{\bar{a}\in\dense{\A}^n}\forall v_0\dots\forall v_{n-1}\left( \gameformula{\varepsilon}{\scottrank_\varepsilon(\A),\bar{\varepsilon},\bar{k}}{\A,\bar{a}} \to \gameformula{\varepsilon}{\scottrank_\varepsilon(\A)+1,\bar{\varepsilon},\bar{k}}{\A,\bar{a}} \right).
        \]
    \end{definition}

    \begin{theorem}
        \label{Theorem: Scott sentence guarantees win for II in the long game}
        For $\A,\B\in\class$, the following are equivalent:
        \begin{enumerate}
            \item $\B\models\sigma^\A_\varepsilon$,
            \item $\playertwo\wins\EF{\omega,\varepsilon}{\A,\B}{\A,\B}$.
        \end{enumerate}
    \end{theorem}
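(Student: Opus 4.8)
Write $\gamma=\scottrank_\varepsilon(\A)$. The whole proof runs through Lemma~\ref{formulae vs games}, which lets me pass freely between satisfaction of the game formulae $\gameformula{\varepsilon}{\delta,\bar{\varepsilon},\bar{k}}{\A,\bar{a}}$ and winning positions of $\playertwo$ in the dynamic game. The plan is to convert the two conjuncts of $\sigma^\A_\varepsilon$ into (i) a win for $\playertwo$ at the single clock $\gamma$ and (ii) a \emph{stabilisation} property that lets a clock-$\gamma$ win be promoted to a clock-$\delta$ win for every $\delta$; the two long-game theorems then bridge ``$\playertwo$ wins every finite dynamic game'' and ``$\playertwo$ wins the length-$\omega$ game''.

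For $(2)\Rightarrow(1)$, assume $\playertwo\wins\EF{\omega,\varepsilon}{\A,\B}{\A,\B}$. By Theorem~\ref{Theorem: If II wins the long game she wins all dynamic games}, $\playertwo$ wins $\EFD{\delta,\varepsilon}{\A,\B}{\A,\B}$ for every ordinal $\delta$, and via Corollary~\ref{Corollary: II wins in the dense sets iff she wins in the models} and Lemma~\ref{formulae vs games} at $n=0$ this gives $\B\models\gameformula{\varepsilon}{\delta}{\A}$ for all $\delta$, in particular the first conjunct $\B\models\gameformula{\varepsilon}{\gamma}{\A}$. The second conjunct says that at every tuple the step from clock $\gamma$ to clock $\gamma+1$ is free; by Lemma~\ref{formulae vs games} this is precisely the stabilisation that Lemma~\ref{Lemma: Properties of Scott rank} pins to $\gamma=\scottrank_\varepsilon(\A)$, so it holds whenever $\density(\B)\leq\density(\A)$. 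For $\B$ of larger density I would recover it by a Löwenheim--Skolem reduction: a failure of stabilisation at some $(\bar{a},\bar{b})$ is witnessed by a win of $\playerone$ in the $(\gamma+1)$-game, and closing a dense subset of $\B$ containing $\bar{b}$ under $\playerone$'s responses yields a comparison space of density $\leq\density(\A)$ exhibiting the same failure, contradicting Lemma~\ref{Lemma: Properties of Scott rank}.

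For $(1)\Rightarrow(2)$ — the real work — I aim to deduce $\B\models\gameformula{\varepsilon}{\delta}{\A}$ for every $\delta$ and then apply Theorem~\ref{Theorem: If II wins many enough dynamic games then she wins the long game} (it is enough to have the dynamic wins for all $\delta<(\density(\A)+\density(\B))^+$). The engine is a transfinite induction on $\delta\geq\gamma$ proving, for all $n$, all $\bar{\varepsilon}$, $\bar{k}\in\omega^n$, $\bar{a}\in\dense{\A}^n$ and $\bar{b}\in\B^n$,
\[
\B\models\gameformula{\varepsilon}{\gamma,\bar{\varepsilon},\bar{k}}{\A,\bar{a}}(\bar{b}) \implies \B\models\gameformula{\varepsilon}{\delta,\bar{\varepsilon},\bar{k}}{\A,\bar{a}}(\bar{b}).
\]
Taken at $n=0$ with the first conjunct this yields $\B\models\gameformula{\varepsilon}{\delta}{\A}$, as needed. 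In the limit step I split $\beta<\delta$ into $\beta<\gamma$, where $\B\models\gameformula{\varepsilon}{\gamma,\bar{\varepsilon},\bar{k}}{\A,\bar{a}}(\bar{b})$ already entails $\B\models\gameformula{\varepsilon}{\beta,\bar{\varepsilon},\bar{k}}{\A,\bar{a}}(\bar{b})$ by monotonicity of $\playertwo$'s wins in the clock, and $\gamma\leq\beta<\delta$, handled by the inductive hypothesis. In the successor step $\delta=\beta+1$ I first use the second conjunct to move from $\gamma$ to $\gamma+1$, then unfold both $\gameformula{\varepsilon}{\gamma+1}{}$ and $\gameformula{\varepsilon}{\beta+1}{}$ and apply the hypothesis coordinatewise: to the witness $b_n$ extracted from each $\exists v_n$ in the $\bigwedge_{a_n}$-part, and to the chosen $a_n$ from the $\bigvee$ under each $\forall v_n$, always at the one-longer tuples.

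The main obstacle is exactly the $\gamma\to\gamma+1$ passage in the successor step: the second conjunct of $\sigma^\A_\varepsilon$ supplies it only for rational tolerance vectors $\bar{\varepsilon}\in(\open{\varepsilon,\infty}\cap\Q)^n$, whereas unfolding $\gameformula{\varepsilon}{\beta+1}{}$ introduces tolerances running through $D$. Monotonicity cannot bridge this, since shrinking a tolerance makes $\playertwo$'s task \emph{harder}; instead I must show that $\playertwo$'s winning region is closed from above in $\bar{\varepsilon}$, i.e.\ that she wins the clock-$\gamma$ game at $\bar{\varepsilon}$ once she wins it at every strictly larger tolerance. At the level of a single play this closure is Definition~\ref{Definition: Vocabulary with approximations}\ref{item: Approximation}\ref{item: An atomic formula is satisfied iff each of its approximations is satisfied} (a failure of the winning condition at $\varepsilon_i$ already appears at some $\varepsilon_i+\zeta$), and I expect to lift it to strategies using determinacy, the density of $\Q$ in $D$, and the perturbation and Cauchy clauses of Definition~\ref{Definition: Good vocabulary (isomorphism)} to keep $\playertwo$'s responses uniformly controllable along a dense move set. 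Discharging this closure — together with the analogous density reduction flagged in $(2)\Rightarrow(1)$ — is where the genuine difficulty lies; the rest is bookkeeping.
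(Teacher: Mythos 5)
Your $(2)\Rightarrow(1)$ direction is essentially the paper's argument (the paper simply applies Lemma~\ref{Lemma: Properties of Scott rank} to get the second conjunct, without your density digression), and your overall plan for $(1)\Rightarrow(2)$ --- promote the clock-$\gamma$ win to all clocks by transfinite induction and then invoke Theorem~\ref{Theorem: If II wins many enough dynamic games then she wins the long game} --- is a legitimate alternative decomposition. But the obstacle you flag at the end is a genuine gap, not bookkeeping, and it is created by your choice of route. Your induction must supply the $\gamma\to\gamma+1$ passage at tolerance vectors whose entries range over all of $D$ (they are introduced by the conjunction over $\varepsilon_n\in\open{\varepsilon,\infty}\cap D$ when you unfold $\gameformula{\varepsilon}{\gamma+1,\bar{\varepsilon},\bar{k}}{\A,\bar{a}}$, and $D$ is strictly larger than $\Q$), while the second conjunct of $\sigma^\A_\varepsilon$ only speaks about rational $\bar{\varepsilon}$. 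The closure property you would need --- that $\playertwo$ wins the clock-$(\gamma+1)$ game at $\bar{\varepsilon}$ once she wins it at every strictly larger rational tolerance --- is not an instance of the monotonicity lemma (which goes the other way), is nowhere established in the paper, and you explicitly leave it undischarged; so the proof as written is incomplete.

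The paper avoids this entirely by not doing a global induction on clocks. It has $\playertwo$ play the long game $\EF{\omega,\varepsilon}{\A,\B}{\dense{\A},\B}$ directly, maintaining the single invariant that $\B\models\gameformula{\varepsilon}{\scottrank_\varepsilon(\A),(\varepsilon_0,\dots,\varepsilon_{n-1}),(0,\dots,n-1)}{\A,(a_0,\dots,a_{n-1})}(b_0,\dots,b_{n-1})$ at every position; since a win at clock $\scottrank_\varepsilon(\A)$ already entails the stage-$n$ finite winning condition, this invariant alone wins the long game by the argument of Theorem~\ref{Theorem: If II wins many enough dynamic games then she wins the long game}, and no promotion to higher clocks is ever needed. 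The rational-versus-$D$ mismatch then evaporates, because the tolerances appearing in the invariant are under $\playertwo$'s control: when $\playerone$ plays $\varepsilon_n$ she records a slightly smaller rational value and applies the Scott sentence at the rational vector, and the winning condition at the stricter tolerance implies it at the played one by Definition~\ref{Definition: Vocabulary with approximations}. To salvage your route you must either prove the closure-from-above property or restrict the induction to rational tolerance vectors and check that these suffice for Theorem~\ref{Theorem: If II wins many enough dynamic games then she wins the long game}; the paper's direct strategy construction is the cleaner repair.
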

    \begin{proof}
        First assume $\B\models\sigma^\A_\varepsilon$. By Corollary~\ref{Corollary: I wins the long game between the dense sets if he wins it between the models.}, it is enough for $\playertwo$ to win the game $\EF{\omega,\varepsilon}{\A,\B}{\dense{\A},\B}$. The strategy of $\playertwo$ in this game is to play such moves $y_i$ that at all times the following holds:
        \begin{itemize}
            \item[$(*)$] If the position in $\EF{\omega,\varepsilon}{\A,\B}{\dense{\A},\B}$ is $((x_0,\varepsilon_0),y_0,\dots,(x_{n-1},\varepsilon_{n-1}),y_{n-1})$, then $\B\models\gameformula{\varepsilon}{\scottrank_\varepsilon(\A),(\varepsilon_0,\dots,\varepsilon_{n-1}),(0,\dots,n-1)}{\A,(a_0,\dots,a_{n-1})}(b_0,\dots,b_{n-1})$.
        \end{itemize}
        This strategy guarantees the win in the long game, as the same argument that we used in the proof of Theorem~\ref{Theorem: If II wins many enough dynamic games then she wins the long game} works also here (after the observations that is Lemma~\ref{formulae vs games}).

        So we show that there is such a strategy. In the beginning when no moves have been made, we know that $\B\models\gameformula{\varepsilon}{\scottrank_\varepsilon(\A)}{\A}$ by the definition of the Scott sentence. So suppose we are at round $n$ and the position is $((x_0,\varepsilon_0),y_0,\dots,(x_{n-1},\varepsilon_{n-1}),y_{n-1})$. We let $\playerone$ play $x_n$ and $\varepsilon_n$. Assume that $x_n\in\B$ (the other case is similar). By the induction hypothesis,
        \[
            \B\models\gameformula{\varepsilon}{\scottrank_\varepsilon(\A),(\varepsilon_0,\dots,\varepsilon_{n-1}),(0,\dots,n-1)}{\A,(a_0,\dots,a_{n-1})}(b_0,\dots,b_{n-1}).
        \]
        But then from the Scott sentence it follows that
        \[
            \B\models\gameformula{\varepsilon}{\scottrank_\varepsilon(\A)+1,(\varepsilon_0,\dots,\varepsilon_{n-1}),(0,\dots,n-1)}{\A,(a_0,\dots,a_{n-1})}(b_0,\dots,b_{n-1}).
        \]
        Thus
        \[
            \B\models\bigwedge_{\varepsilon_n}\bigwedge_{k_n}\forall v_n\bigvee_{a_n}\gameformula{\varepsilon}{\scottrank_\varepsilon(\A),(\varepsilon_0,\dots,\varepsilon_{n}),(0,\dots,n)}{\A,(a_0,\dots,a_{n})}(b_0,\dots,b_{n-1},v_n),
        \]
        so we can find some $y_n\in\dense{\A}$ such that $\B\models\gameformula{\varepsilon}{\scottrank_\varepsilon(\A),(\varepsilon_0,\dots,\varepsilon_{n}),(0,\dots,n)}{\A,(a_0,\dots,a_{n})}(b_0,\dots,b_{n})$.\footnote{Notice that while $\playerone$ plays any $\varepsilon_n$, the conjunction is taken over rational $\varepsilon_n$. This, however, does not matter as one can pick a slightly smaller rational number and choose $y_n$ according to this stricter $\varepsilon_n$ and this will not be a worse choice for $\playertwo$.} This completes this direction of the proof.

        Then assume that $\playertwo\wins\EF{\omega,\varepsilon}{\A,\B}{\A,\B}$. By Lemma~\ref{Theorem: If II wins the long game she wins all dynamic games}, $\playertwo\wins\EFD{\alpha,\varepsilon}{\A,\B}{\A,\B}$ for all $\alpha\in\On$. In particular, she wins the game $\EFD{\scottrank_\varepsilon(\A),\varepsilon}{\A,\B}{\A,\B}$. By Lemma~\ref{formulae vs games}, $\B\models\gameformula{\varepsilon}{\scottrank_\varepsilon(\A)}{\A}$. Left is to show that
        \[
            \B\models\bigwedge_{n<\omega}\bigwedge_{\bar{\varepsilon}\in(\open{\varepsilon,\infty}\cap\Q)^n}\bigwedge_{\bar{k}\in\omega^n}\bigwedge_{\bar{a}\in\dense{\A}^n}\forall v_0\dots\forall v_{n-1}\left( \gameformula{\varepsilon}{\scottrank_\varepsilon(\A),\bar{\varepsilon},\bar{k}}{\A,\bar{a}} \to \gameformula{\varepsilon}{\scottrank_\varepsilon(\A)+1,\bar{\varepsilon},\bar{k}}{\A,\bar{a}} \right).
        \]
        So let $n$, $\bar{\varepsilon}$, $\bar{k}$, and $\bar{a}\in\dense\A^n$ and $\bar{b}\in\B^n$ be arbitrary, and suppose that $\B\models\gameformula{\varepsilon}{\scottrank_\varepsilon(\A),\bar{\varepsilon},\bar{k}}{\A,\bar{a}}(\bar{b})$. Again by Lemma~\ref{formulae vs games}, this means that
        \[
            \playertwo\wins\EFD{\scottrank_\varepsilon(\A),\varepsilon,\bar{\varepsilon},\bar{k}}{\A,\B}{(\dense{\A},\bar{a}),(\B,\bar{b})}.
        \]
        By Lemma~\ref{Lemma: Properties of Scott rank}, we get that
        \[
            \playertwo\wins\EFD{\scottrank_\varepsilon(\A)+1,\varepsilon,\bar{\varepsilon},\bar{k}}{\A,\B}{(\dense{\A},\bar{a}),(\B,\bar{b})}.
        \]
        Again, by Lemma~\ref{formulae vs games}, we get that $\B\models\gameformula{\varepsilon}{\scottrank_\varepsilon(\A)+1,\bar{\varepsilon},\bar{k}}{\A,\bar{a}}(\bar{b})$, showing that
        \[
            \B\models\gameformula{\varepsilon}{\scottrank_\varepsilon(\A),\bar{\varepsilon},\bar{k}}{\A,\bar{a}}(\bar{b})\to\gameformula{\varepsilon}{\scottrank_\varepsilon(\A)+1,\bar{\varepsilon},\bar{k}}{\A,\bar{a}}(\bar{b}).
        \]
        As $\bar{b}$ was arbitrary, we get that
        \[
            \B\models\forall v_0\dots\forall v_{n-1}\left( \gameformula{\varepsilon}{\scottrank_\varepsilon(\A),\bar{\varepsilon},\bar{k}}{\A,\bar{a}} \to \gameformula{\varepsilon}{\scottrank_\varepsilon(\A)+1,\bar{\varepsilon},\bar{k}}{\A,\bar{a}} \right),
        \]
        and since all the other parameters were also arbitrary, we can conclude that
        \[
            \B\models\bigwedge_{n<\omega}\bigwedge_{\bar{\varepsilon}\in(\open{\varepsilon,\infty}\cap\Q)^n}\bigwedge_{\bar{k}\in\omega^n}\bigwedge_{\bar{a}\in\dense{\A}^n}\forall v_0\dots\forall v_{n-1}\left( \gameformula{\varepsilon}{\scottrank_\varepsilon(\A),\bar{\varepsilon},\bar{k}}{\A,\bar{a}} \to \gameformula{\varepsilon}{\scottrank_\varepsilon(\A)+1,\bar{\varepsilon},\bar{k}}{\A,\bar{a}} \right),
        \]
        finishing the proof.
    \end{proof}

    \begin{corollary}
        For separable $\A,\B\in\class$, $\B\models\sigma^\A_\varepsilon$ if and only if there exists an $\varepsilon$-isomorphism $\A\to\B$.
    \end{corollary}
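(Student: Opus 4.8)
The plan is to obtain this corollary by simply chaining together the two equivalences that have already been established, with no new combinatorics required. First I would invoke Theorem~\ref{Theorem: Scott sentence guarantees win for II in the long game}, which asserts for arbitrary $\A,\B\in\class$ that $\B\models\sigma^\A_\varepsilon$ holds if and only if $\playertwo\wins\EF{\omega,\varepsilon}{\A,\B}{\A,\B}$. This converts the semantic condition on the left-hand side of the corollary into a purely game-theoretic one, and at this stage no separability hypothesis is needed.

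Next I would apply Theorem~\ref{Theorem: If II wins the long game then there exists an epsilon-isomorphism}, which states that for \emph{separable} $\A$ and $\B$ the condition $\playertwo\wins\EF{\omega,\varepsilon}{\A,\B}{\A,\B}$ is equivalent to the existence of an $\varepsilon$-isomorphism $\A\to\B$. Composing these two equivalences yields exactly $\B\models\sigma^\A_\varepsilon$ if and only if there is an $\varepsilon$-isomorphism $\A\to\B$, which is the claim.

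There is no genuine obstacle here, as both constituent theorems have already been proved; the only point worth recording is where the separability assumption enters. The passage through the Scott sentence in Theorem~\ref{Theorem: Scott sentence guarantees win for II in the long game} is valid for all members of $\class$, whereas the extraction of an honest $\varepsilon$-isomorphism from a winning strategy in the infinite game (Theorem~\ref{Theorem: If II wins the long game then there exists an epsilon-isomorphism}) relies on choosing countable dense subsets of $\A$ and $\B$ and building the limit bijection $\vartheta$ whose domain and range exhaust the whole spaces. Thus separability is precisely the hypothesis needed to upgrade $\playertwo$'s winning strategy into an actual $\varepsilon$-isomorphism, and the corollary follows immediately.
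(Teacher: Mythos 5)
Your proposal is correct and matches the paper's own proof, which likewise just combines Theorem~\ref{Theorem: Scott sentence guarantees win for II in the long game} with Theorem~\ref{Theorem: If II wins the long game then there exists an epsilon-isomorphism}. Your observation that separability enters only in the second step is accurate.
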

    \begin{proof}
        Combine Theorems~\ref{Theorem: If II wins the long game then there exists an epsilon-isomorphism} and~\ref{Theorem: Scott sentence guarantees win for II in the long game}.
    \end{proof}

    \begin{remark}
        Let $\class$ be a class of metric structures and $\isom = (\isom_{\varepsilon})_{\varepsilon\geq 0}$ an isomorphism notion of $\class$. Then, if $L$ is a good vocabulary for $(\class,(\isom_{\varepsilon})_{\varepsilon\geq 0})$, we have
        \[
            d_{\isom}(\A,\B) = \inf\{\varepsilon\geq 0 \mid \B\models\sigma^\A_\varepsilon\}.
        \]
    \end{remark}

    \begin{definition}
        The $0^+$-Scott sentence $\sigma^\A_{0^+}$ of $\A$ is the sentence
        \begin{gather*}
            \bigwedge_{\varepsilon>0}\left( \gameformula{\varepsilon}{\innerscottrank(\A)}{\A} \land\vphantom{\bigwedge_{n<\omega}\bigwedge_{\bar{\varepsilon}\in\open{\varepsilon,\infty}^n}\bigwedge_{\bar{k}\in\omega^n}\bigwedge_{\varepsilon^+>\varepsilon}\bigwedge_{\bar{\varepsilon}^+\in\open{\varepsilon^+,\infty}^n}\bigwedge_{\bar{a}\in\dense{\A}^n}\forall v_0\dots\forall v_{n-1}\left( \gameformula{\varepsilon}{\innerscottrank(\A),\bar{\varepsilon},\bar{k}}{\A,\bar{a}} \to \gameformula{\varepsilon}{\innerscottrank(\A)+1,\bar{\varepsilon},\bar{k}}{\A,\bar{a}} \right)} \right. \\
            \left. \bigwedge_{n<\omega}\bigwedge_{\bar{\varepsilon}\in\open{\varepsilon,\infty}^n}\bigwedge_{\bar{k}\in\omega^n}\bigwedge_{\varepsilon^+>\varepsilon}\bigwedge_{\bar{\varepsilon}^+\in\open{\varepsilon^+,\infty}^n}\bigwedge_{\bar{a}\in\dense{\A}^n}\forall v_0\dots\forall v_{n-1}\left( \gameformula{\varepsilon}{\innerscottrank(\A),\bar{\varepsilon},\bar{k}}{\A,\bar{a}} \to \gameformula{\varepsilon^+}{\innerscottrank(\A)+1,\bar{\varepsilon}^+,\bar{k}}{\A,\bar{a}} \right) \right).
        \end{gather*}
    \end{definition}

    \begin{remark}
        As $\innerscottrank(\A)<\density(\A)^+$, for separable $\A$ we have $\sigma^\A_{0^+}\in\Logic{\omega_1}$.
    \end{remark}

    \begin{theorem}
        \label{Inner Scott sentence guarantees win for II in the long game}
        For $\A,\B\in\class$, the following are equivalent:
        \begin{enumerate}
            \item $\B\models\sigma^\A_{0^+}$,
            \item $\playertwo\wins\EF{\omega,\varepsilon}{\A,\B}{\A,\B}$ for all $\varepsilon>0$.
        \end{enumerate}
    \end{theorem}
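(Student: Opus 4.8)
The plan is to prove both implications, following the pattern of the proof of Theorem~\ref{Theorem: Scott sentence guarantees win for II in the long game} but with the Scott rank replaced by $\alpha \coloneqq \innerscottrank(\A)$ and with the inner--Scott-rank results doing the work. The organizing observation is that, via Lemma~\ref{formulae vs games}, satisfaction of $\sigma^\A_{0^+}$ by $\B$ unpacks, for each $\varepsilon>0$, into two game-theoretic assertions. The first, coming from the conjunct $\gameformula{\varepsilon}{\alpha}{\A}$, is a \emph{base clause}: $\playertwo\wins\EFD{\alpha,\varepsilon}{\A,\B}{\dense{\A},\B}$. The second, coming from the implication conjuncts, is a \emph{free-round property}: for all $n$, $\bar{\varepsilon}\in\open{\varepsilon,\infty}^n$, $\bar{k}$, $\varepsilon^+>\varepsilon$, $\bar{\varepsilon}^+\in\open{\varepsilon^+,\infty}^n$, $\bar{a}\in\dense{\A}^n$ and $\bar{b}\in\B^n$, a win for $\playertwo$ in $\EFD{\alpha,\varepsilon,\bar{\varepsilon},\bar{k}}{\A,\B}{(\dense{\A},\bar{a}),(\B,\bar{b})}$ yields a win in $\EFD{\alpha+1,\varepsilon^+,\bar{\varepsilon}^+,\bar{k}}{\A,\B}{(\dense{\A},\bar{a}),(\B,\bar{b})}$. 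Throughout I would move freely between games played in the dense sets and in the whole spaces using Corollary~\ref{Corollary: II wins in the dense sets iff she wins in the models}.

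For the implication (2)$\implies$(1) I would verify these two clauses for every $\varepsilon>0$. The base clause is direct: from $\playertwo\wins\EF{\omega,\varepsilon}{\A,\B}{\A,\B}$, Theorem~\ref{Theorem: If II wins the long game she wins all dynamic games} gives $\playertwo\wins\EFD{\alpha,\varepsilon}{\A,\B}{\A,\B}$, and Corollary~\ref{Corollary: II wins in the dense sets iff she wins in the models} transfers this to the play sets $\dense{\A},\B$, so $\B\models\gameformula{\varepsilon}{\alpha}{\A}$ by Lemma~\ref{formulae vs games}. For the free-round property, note that hypothesis (2) is precisely the hypothesis of Theorem~\ref{Theorem: Inner Scott Rank works as intended}: applying Theorem~\ref{Theorem: If II wins the long game she wins all dynamic games} to $\playertwo\wins\EF{\omega,\delta}{\A,\B}{\A,\B}$ and then Corollary~\ref{Corollary: II wins in the dense sets iff she wins in the models} gives $\playertwo\wins\EFD{\alpha+\omega,\delta}{\A,\B}{\dense{\A},\dense{\B}}$ for every $\delta>0$. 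Theorem~\ref{Theorem: Inner Scott Rank works as intended} then yields exactly the free-round property, which Lemma~\ref{formulae vs games} converts back into the implication conjuncts. Hence $\B\models\sigma^\A_{0^+}$.

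For the implication (1)$\implies$(2) I would read the base clause and the free-round property off $\B\models\sigma^\A_{0^+}$ directly through Lemma~\ref{formulae vs games}. These are exactly the two facts about $\B$ that the proof of Theorem~\ref{Theorem: If II wins the Inner Scott rank + omega game she wins the long game} actually consumes: the base clause (at clock exactly $\alpha$) establishes the invariant $(*)$ at the start of the game $\EF{\omega,\varepsilon}{\A,\B}{\dense{\A},\dense{\B}}$, and the free-round property powers the inductive step, playing the role there taken by Theorem~\ref{Theorem: Inner Scott Rank works as intended}. Re-running that strategy gives $\playertwo\wins\EF{\omega,\varepsilon}{\A,\B}{\dense{\A},\dense{\B}}$ for each $\varepsilon>0$. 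To finish, I would upgrade to the game between the whole spaces: the length-$\omega$ game is determined, so a win for $\playertwo$ in the dense game rules out a winning strategy for $\playerone$ there, and hence, by the contrapositive of Corollary~\ref{Corollary: I wins the long game between the dense sets if he wins it between the models.}, rules out a winning strategy for $\playerone$ in $\EF{\omega,\varepsilon}{\A,\B}{\A,\B}$ as well; determinacy then gives $\playertwo\wins\EF{\omega,\varepsilon}{\A,\B}{\A,\B}$.

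I expect the forward direction to be the delicate part. One cannot simply cite Theorem~\ref{Theorem: If II wins the Inner Scott rank + omega game she wins the long game}, since verifying its hypothesis ($\playertwo\wins\EFD{\alpha+\omega,\delta}{\A,\B}{\dense{\A},\dense{\B}}$ for all $\delta>0$) from $\sigma^\A_{0^+}$ would be circular: the free-round property advances the clock by a single step while enlarging the precision, so attaining clock $\alpha+\omega$ at a fixed precision is exactly what the $\omega$-length argument with its geometrically shrinking precision gaps achieves. The careful point is therefore to re-enter the proof of Theorem~\ref{Theorem: If II wins the Inner Scott rank + omega game she wins the long game} at the level of its two ingredients rather than its statement, and to check that the strict precision gaps $\varepsilon^+>\varepsilon$ and $\bar{\varepsilon}^+>\bar{\varepsilon}$ (componentwise) demanded by the implication conjuncts of $\sigma^\A_{0^+}$ always match the gaps produced by that invariant, so that the free-round property is genuinely applicable at each round. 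The remaining dense-set/whole-space translations are routine via Corollaries~\ref{Corollary: II wins in the dense sets iff she wins in the models} and~\ref{Corollary: I wins the long game between the dense sets if he wins it between the models.}.
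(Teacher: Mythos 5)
Your proposal is correct and follows essentially the same route as the paper: the backward direction reads the base clause off Theorem~\ref{Theorem: If II wins the long game she wins all dynamic games} and the free-round implication off Theorem~\ref{Theorem: Inner Scott Rank works as intended} (whose hypothesis, as you note, is supplied by (2)), while the forward direction maintains exactly the invariant $(*)$ with the geometrically shrinking precisions $\delta^n$, $\delta^n_i$ from the proof of Theorem~\ref{Theorem: If II wins the Inner Scott rank + omega game she wins the long game}, powered by the implication conjuncts of $\sigma^\A_{0^+}$ rather than by that theorem's statement. Your remark that one cannot simply cite Theorem~\ref{Theorem: If II wins the Inner Scott rank + omega game she wins the long game} is exactly the point the paper handles by re-running its argument with the Scott-sentence conjuncts in place of Theorem~\ref{Theorem: Inner Scott Rank works as intended}.
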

    \begin{proof}
        Assume $\B\models\sigma^\A_{0^+}$. Denote $\alpha=\innerscottrank(\A)$ and let $\varepsilon>0$. We show that
        \[
            \playertwo\wins\EF{\omega,\varepsilon}{\A,\B}{\A,\B}.
        \]
        The strategy of $\playertwo$ is to play such moves $y_i$ that at all times the following holds:
        \begin{itemize}
            \item[$(*)$] If the position in $\EF{\omega,\varepsilon}{\A,\B}{\dense{\A},\B}$ is $((x_i,\varepsilon_i),y_i)_{i<n}$, then
            \[
                \B\models\gameformula{\delta^n}{\alpha,(\delta^n_0,\dots,\delta^n_{n-1}),(0,\dots,n-1)}{\A,(a_0,\dots,a_{n-1})}(b_0,\dots,b_{n-1}),
            \]
            where $\delta^n=\sum_{j<n+1}\varepsilon/2^{j+1}$ and $\delta^n_i = \sum_{j<n-i}\varepsilon_i/2^{j+1}$ for $i<n$.
        \end{itemize}
        Then the proof continues just like in Theorem~\ref{Theorem: Scott sentence guarantees win for II in the long game} but using the same idea as the proof of Theorem~\ref{Theorem: If II wins the Inner Scott rank + omega game she wins the long game}.

        Then assume that for all $\varepsilon>0$, $\playertwo\wins\EF{\omega,\varepsilon}{\A,\B}{\A,\B}$. Let $\varepsilon>0$. By Lemma~\ref{Theorem: If II wins the long game she wins all dynamic games} $\playertwo\wins\EFD{\innerscottrank(\A),\varepsilon}{\A,\B}{\A,\B}$. This means that $\B\models\gameformula{\varepsilon}{\innerscottrank(\A)}{\A}$. Then let $n$, $\bar{\varepsilon}$, $\bar{k}$, $\varepsilon^+$, $\bar{\varepsilon}^+$ and $\bar{a}\in\A^n$ and $\bar{b}\in\B^n$ be arbitrary, and suppose that $\B\models\gameformula{\varepsilon}{\scottrank_\varepsilon(\A),\bar{\varepsilon},\bar{k}}{\A,\bar{a}}(\bar{b})$. This means that
        \[
            \playertwo\wins\EFD{\innerscottrank(\A),\varepsilon,\bar{\varepsilon},\bar{k}}{\A,\B}{(\dense{\A},\bar{a}),(\B,\bar{b})}.
        \]
        By Theorem~\ref{Theorem: Inner Scott Rank works as intended}, we get that
        \[
            \playertwo\wins\EFD{\innerscottrank(\A)+1,\varepsilon^+,\bar{\varepsilon}^+,\bar{k}}{\A,\B}{(\dense{\A},\bar{a}),(\B,\bar{b})}.
        \]
        This means that $\B\models\gameformula{\varepsilon^+}{\innerscottrank(\A)+1,\bar{\varepsilon}^+,\bar{k}}{\A,\bar{a}}(\bar{b})$, showing that
        \[
            \B\models\gameformula{\varepsilon}{\scottrank_\varepsilon(\A),\bar{\varepsilon},\bar{k}}{\A,\bar{a}}(\bar{b})\to\gameformula{\varepsilon^+}{\innerscottrank(\A)+1,\bar{\varepsilon}^+,\bar{k}}{\A,\bar{a}}(\bar{b}).
        \]
        As $\bar{b}$ was arbitrary, we get that
        \[
            \B\models\forall v_0\dots\forall v_{n-1}\left( \gameformula{\varepsilon}{\scottrank_\varepsilon(\A),\bar{\varepsilon},\bar{k}}{\A,\bar{a}}\to\gameformula{\varepsilon^+}{\innerscottrank(\A)+1,\bar{\varepsilon}^+,\bar{k}}{\A,\bar{a}} \right),
        \]
        and since all the other parameters were also arbitrary, we can conclude that $\B$ satisfies the huge conjunction over all parameters. As $\varepsilon$ was also arbitrary, we finally get $\B\models\sigma^\A_{0^+}$.
    \end{proof}

    \begin{corollary}
        For separable $\A,\B\in\class$, $\B\models\sigma^\A_{0^+}$ if and only if there exists an $\varepsilon$-isomorphism $\A\to\B$ for all $\varepsilon>0$.
    \end{corollary}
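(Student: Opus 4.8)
The plan is to read this corollary off directly from the two equivalences already established, in exact analogy with the corollary following Theorem~\ref{Theorem: Scott sentence guarantees win for II in the long game} for the fixed-$\varepsilon$ Scott sentence. The only logical content beyond those theorems is the bookkeeping of the universal quantifier over $\varepsilon>0$, which distributes trivially.

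First I would invoke Theorem~\ref{Inner Scott sentence guarantees win for II in the long game}, which gives the equivalence of $\B\models\sigma^\A_{0^+}$ with the statement that $\playertwo\wins\EF{\omega,\varepsilon}{\A,\B}{\A,\B}$ for every $\varepsilon>0$. This replaces the semantic condition on the $0^+$-Scott sentence by a purely game-theoretic one and reduces the claim to showing that $\playertwo$ wins all these long games exactly when for every $\varepsilon>0$ there is an $\varepsilon$-isomorphism $\A\to\B$.

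Next, for each fixed $\varepsilon>0$ I would apply Theorem~\ref{Theorem: If II wins the long game then there exists an epsilon-isomorphism}, whose separability hypothesis is precisely what we are given: it equates $\playertwo\wins\EF{\omega,\varepsilon}{\A,\B}{\A,\B}$ with the existence of an $\varepsilon$-isomorphism $\A\to\B$. Because this equivalence holds for each individual $\varepsilon$ with the same $\varepsilon$ appearing on both sides, quantifying universally over $\varepsilon>0$ preserves it: player $\playertwo$ wins the long game for all $\varepsilon>0$ if and only if an $\varepsilon$-isomorphism exists for all $\varepsilon>0$. Chaining this with the equivalence from the previous paragraph yields the corollary.

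There is no genuine obstacle here; all the work is carried by the two theorems, and the argument is simply the $0^+$-analogue of the fixed-$\varepsilon$ corollary. The one point worth keeping in mind is that the ``for all $\varepsilon>0$'' quantifier must be left outside the per-$\varepsilon$ equivalence rather than absorbed into a single game, since the games for different values of $\varepsilon$ need not be won by one uniform strategy; but as Theorem~\ref{Theorem: If II wins the long game then there exists an epsilon-isomorphism} is applied separately at each $\varepsilon$, this causes no difficulty.
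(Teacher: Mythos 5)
Your proposal is correct and matches the paper's proof exactly: the paper also simply combines Theorem~\ref{Theorem: If II wins the long game then there exists an epsilon-isomorphism} with Theorem~\ref{Inner Scott sentence guarantees win for II in the long game}, applying the former pointwise in $\varepsilon$. Nothing further is needed.
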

    \begin{proof}
        Combine Theorems~\ref{Theorem: If II wins the long game then there exists an epsilon-isomorphism} and~\ref{Inner Scott sentence guarantees win for II in the long game}.
    \end{proof}

    \begin{remark}
        Let $\class$ be a class of metric structures and $\isom = (\isom_{\varepsilon})_{\varepsilon\geq 0}$ an isomorphism notion of $\class$. Then, if $L$ is a good vocabulary for $(\class,(\isom_{\varepsilon})_{\varepsilon\geq 0})$, we have
        \[
            \B\models\sigma^\A_{0^+} \iff d_{\isom}(\A,\B) = 0.
        \]
        Note that $d_{\isom}(\A,\B) = 0$ does not mean that $\A$ and $\B$ are isometric (but that they are \emph{almost isometric}).
    \end{remark}

    \subsection{Scott sentences for correspondence notions}

    \begin{definition}
        Let $\A\in\class$, $\varepsilon\geq 0$ and $\dense\A\subseteq\A$ a dense set of cardinality $<\kappa$. Given $n<\omega$ and $\bar{a}\in\A^n$, define the formulae $\gameformular{\varepsilon}{\alpha}{\A,\bar{a}}(v_0,\dots,v_{n-1})\in\Logic{\kappa}$, $\alpha\in\On$, by induction on $\alpha$ as follows:
        \begin{enumerate}
            \item for $\alpha = 0$,
            \begin{align*}
                \gameformular{\varepsilon}{\alpha}{\A,\bar{a}}(v_0,\dots,v_{n-1}) = \bigwedge\{& \appr(\varphi,\varepsilon)(v_{i_0}, \dots,v_{i_{k-1}}) \mid i_0,\dots,i_{k-1}<n, \\
                &\text{$\varphi$ atomic, $\A\models\varphi(a_{i_0},\dots,a_{i_{k-1}})$} \},
            \end{align*}
            \item for $\alpha=\beta+1$,
             \begin{align*}
                 \gameformular{\varepsilon}{\alpha}{\A,\bar{a}}(v_0,\dots,v_{n-1}) =& \left( \bigwedge_{a_n\in\dense\A}\exists v_n \gameformular{\varepsilon}{\beta}{\A,\concat{\bar{a}}{a_n}}(v_0,\dots,v_n) \right) \\
                 &{}\land\left( \forall v_n\bigvee_{a_n\in\dense\A}\gameformular{\varepsilon}{\beta}{\A,\concat{\bar{a}}{a_n}}(v_0,\dots,v_n) \right),
             \end{align*}
             and
            \item for $\alpha$ limit,
            \[
                \gameformular{\varepsilon}{\alpha}{\A,\bar{a}}(v_0,\dots,v_{n-1}) = \bigwedge_{\beta<\alpha}\gameformular{\varepsilon}{\beta}{\A,\bar{a}}(v_0,\dots,v_{n-1}).
            \]
        \end{enumerate}
    \end{definition}

    \begin{definition}
        The $\varepsilon$-Scott sentence $\varsigma_\varepsilon^\A$ of $\A\in\class$ is the sentence
        \[
            \gameformular{\varepsilon}{\scottrankr_\varepsilon(\A)}{\A} \land \bigwedge_{n<\omega}\bigwedge_{\bar{a}\in\dense{\A}^n}\forall v_0\dots\forall v_{n-1}\left( \gameformular{\varepsilon}{\scottrankr_\varepsilon(\A)}{\A,\bar{a}} \to \gameformular{\varepsilon}{\scottrankr_\varepsilon(\A)+1}{\A,\bar{a}} \right).
        \]
    \end{definition}

    \begin{definition}
        The $0^+$-Scott sentence $\varsigma_{0^+}^\A$ of $\A$ is the sentence
        \[
            \bigwedge_{\varepsilon>0}\left( \gameformular{\varepsilon}{\innerscottrankr(\A)}{\A} \land \bigwedge_{n<\omega}\bigwedge_{\varepsilon^+>\varepsilon}\bigwedge_{\bar{a}\in\dense\A^n}\forall v_0\dots\forall v_{n-1}\left( \gameformular{\varepsilon}{\innerscottrankr(\A)}{\A,\bar{a}} \to \gameformular{\varepsilon^+}{\innerscottrankr(\A)+1}{\A,\bar{a}} \right) \right).
        \]
    \end{definition}
    \begin{remark}
        As $\innerscottrankr(\A)<\density(\A)^+$, for separable $\A$ we have $\varsigma^\A_{0^+}\in\Logic{\omega_1}$.
    \end{remark}

    The following are proved in a fashion similar to their isomorphism notion counterparts.

    \begin{lemma}
        For $\A,\B\in\class$, and $\bar{a}\in\A^n$ and $\bar{b}\in\B^n$, the following are equivalent.
        \begin{enumerate}
            \item $\B\models\gameformular{\varepsilon}{\alpha}{\A,\bar{a}}(\bar{b})$.
            \item $\playertwo\wins\EFDR{\alpha,\varepsilon}{\A,\B}{(\dense\A,\bar{a}),(\B,\bar{b})}$.
        \end{enumerate}
    \end{lemma}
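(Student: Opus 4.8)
The plan is to prove the equivalence by transfinite induction on $\alpha$, following the same pattern as Lemma~\ref{formulae vs games} but in the simpler correspondence setting, where there are no auxiliary parameters $\bar{\varepsilon}$ and $\bar{k}$ and a single precision $\varepsilon$ is fixed throughout the play.

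For the base case $\alpha=0$, the game $\EFDR{0,\varepsilon}{\A,\B}{(\dense\A,\bar{a}),(\B,\bar{b})}$ admits no further moves, so $\playertwo$ wins exactly when the winning condition holds for the initial tuples, i.e. $\A\models\varphi(a_{i_0},\dots,a_{i_{k-1}})$ implies $\B\models\appr(\varphi,\varepsilon)(b_{i_0},\dots,b_{i_{k-1}})$ for all atomic $\varphi$ and all $i_0,\dots,i_{k-1}<n$. This is precisely the assertion that $\B\models\gameformular{\varepsilon}{0}{\A,\bar{a}}(\bar{b})$, since the latter formula is the conjunction of the $\appr(\varphi,\varepsilon)(v_{i_0},\dots,v_{i_{k-1}})$ ranging over exactly those atomic $\varphi$ with $\A\models\varphi(a_{i_0},\dots,a_{i_{k-1}})$.

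For the successor step $\alpha=\beta+1$, I would match the two conjuncts of $\gameformular{\varepsilon}{\beta+1}{\A,\bar{a}}$ to the two possible types of first move by $\playerone$. The first conjunct $\bigwedge_{a_n\in\dense\A}\exists v_n\,\gameformular{\varepsilon}{\beta}{\A,\concat{\bar{a}}{a_n}}$ says that for every element $a_n\in\dense\A$ that $\playerone$ might pick on the $\A$-side, $\playertwo$ has a response $b_n\in\B$ (the witness for $\exists v_n$) for which, by the induction hypothesis, $\playertwo\wins\EFDR{\beta,\varepsilon}{\A,\B}{(\dense\A,\concat{\bar{a}}{a_n}),(\B,\concat{\bar{b}}{b_n})}$; the second conjunct $\forall v_n\bigvee_{a_n\in\dense\A}\gameformular{\varepsilon}{\beta}{\A,\concat{\bar{a}}{a_n}}$ handles the symmetric case where $\playerone$ picks $b_n\in\B$ and $\playertwo$ answers with some $a_n\in\dense\A$. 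Since $\playerone$ must also lower the clock to some $\alpha_0<\beta+1$, and since (by the easily verified monotonicity of the dynamic relation game in the clock, analogous to the function-game case) it suffices for $\playertwo$ to survive the hardest choice $\alpha_0=\beta$, the conjunction of the two conjuncts is equivalent to $\playertwo$ having a good response to every first move of $\playerone$ that leaves her winning the residual $\beta$-game; this is exactly $\playertwo\wins\EFDR{\beta+1,\varepsilon}{\A,\B}{(\dense\A,\bar{a}),(\B,\bar{b})}$.

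For the limit step, $\gameformular{\varepsilon}{\alpha}{\A,\bar{a}}$ is the conjunction over $\beta<\alpha$ of $\gameformular{\varepsilon}{\beta}{\A,\bar{a}}$, so $\B$ satisfies it iff, by the induction hypothesis, $\playertwo$ wins every $\beta$-game with $\beta<\alpha$; by the limit-continuity of the game (if $\playertwo$ wins all games with clock $\beta<\alpha$ for limit $\alpha$, she wins the $\alpha$-game), this is equivalent to $\playertwo\wins\EFDR{\alpha,\varepsilon}{\A,\B}{(\dense\A,\bar{a}),(\B,\bar{b})}$. The only step requiring genuine care is the successor case, specifically the bookkeeping that the existential quantifier over $v_n$ paired with the conjunction over $a_n\in\dense\A$, and dually the universal quantifier over $v_n$ paired with the disjunction over $a_n\in\dense\A$, correctly encode $\playerone$ choosing the side (between $\dense\A$ and $\B$) and $\playertwo$ responding on the opposite side; every other point is a direct unwinding of the definitions.
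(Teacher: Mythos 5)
Your proof is correct and follows exactly the route the paper intends: the paper disposes of this lemma by noting it is proved like its function-game counterpart (Lemma~\ref{formulae vs games}), whose proof is a transfinite induction on $\alpha$ unwinding the definitions, which is precisely what you carry out. Your explicit treatment of the base, successor (including the clock-monotonicity reduction to $\alpha_0=\beta$), and limit cases is a faithful, if more detailed, rendering of that same argument.
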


    \begin{theorem}
        For $\A,\B\in\class$, the following are equivalent.
        \begin{enumerate}
            \item $\B\models\varsigma_\varepsilon^\A$.
            \item $\playertwo\wins\EFR{\omega,\varepsilon}{\A,\B}{\dense\A,\B}$.
        \end{enumerate}
    \end{theorem}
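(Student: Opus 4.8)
The plan is to follow the proof of Theorem~\ref{Theorem: Scott sentence guarantees win for II in the long game} almost verbatim, substituting the relation games $\EFR{}{}{}$ and $\EFDR{}{}{}$ and the formulae $\gameformular{\varepsilon}{\alpha}{\A,\bar a}$ for their function-game counterparts, and invoking the relation-game analogues of the auxiliary results: the lemma immediately preceding this theorem, which equates $\B\models\gameformular{\varepsilon}{\alpha}{\A,\bar a}(\bar b)$ with $\playertwo\wins\EFDR{\alpha,\varepsilon}{\A,\B}{(\dense\A,\bar a),(\B,\bar b)}$; Lemma~\ref{Lemma: If II wins the long game she wins the dynamic games (relation games)}; and the relation-game characterization of $\scottrankr_\varepsilon(\A)$ as the least ordinal for which a win of $\playertwo$ at clock $\scottrankr_\varepsilon(\A)$ forces a win at clock $\scottrankr_\varepsilon(\A)+1$. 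Throughout write $\rho=\scottrankr_\varepsilon(\A)$.

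For $(1)\Rightarrow(2)$, I would have $\playertwo$ play so as to preserve the invariant $(*)$: after a position producing tuples $\bar a\in\dense\A^{\,n}$ and $\bar b\in\B^{\,n}$, one has $\B\models\gameformular{\varepsilon}{\rho}{\A,\bar a}(\bar b)$. The first conjunct of $\varsigma_\varepsilon^\A$ gives $(*)$ at the empty position. Assuming $(*)$ at round $n$, the second conjunct upgrades $\B\models\gameformular{\varepsilon}{\rho}{\A,\bar a}(\bar b)$ to $\B\models\gameformular{\varepsilon}{\rho+1}{\A,\bar a}(\bar b)$. Unfolding the successor clause: if $\playerone$ plays $x_n=a_n\in\dense\A$, the existential conjunct $\bigwedge_{a_n\in\dense\A}\exists v_n\,\gameformular{\varepsilon}{\rho}{\A,\concat{\bar a}{a_n}}(\bar v,v_n)$ yields a witness $b_n\in\B$ with $\B\models\gameformular{\varepsilon}{\rho}{\A,\concat{\bar a}{a_n}}(\concat{\bar b}{b_n})$; if $\playerone$ plays $x_n\in\B$, the universal conjunct instantiated at $v_n=x_n$ yields $a_n\in\dense\A$ with the same. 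Either way $\playertwo$ plays the newly produced element, $(*)$ is restored, and all responses land on the correct side. Finally, $\gameformular{\varepsilon}{\rho}{\A,\bar a}$ entails the rank-$0$ formula $\gameformular{\varepsilon}{0}{\A,\bar a}$ (a winning strategy for clock $\rho$ restricts to one for clock $0$, so the entailment follows via the preceding lemma), and $\B\models\gameformular{\varepsilon}{0}{\A,\bar a}(\bar b)$ is by definition exactly the assertion that $\A\models\varphi(\bar a)$ implies $\B\models\appr(\varphi,\varepsilon)(\bar b)$ for every atomic $\varphi$ and every index subtuple. Since any finite tuple of indices appears at some finite round, preserving $(*)$ at every round secures the winning condition of the infinite play.

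For $(2)\Rightarrow(1)$, I would reverse this through the same dictionary. From $\playertwo\wins\EFR{\omega,\varepsilon}{\A,\B}{\dense\A,\B}$, Lemma~\ref{Lemma: If II wins the long game she wins the dynamic games (relation games)} gives $\playertwo\wins\EFDR{\rho,\varepsilon}{\A,\B}{\dense\A,\B}$ with empty tuples, so the preceding lemma yields $\B\models\gameformular{\varepsilon}{\rho}{\A}$, the first conjunct of $\varsigma_\varepsilon^\A$. For the second conjunct, fix $n$, $\bar a\in\dense\A^{\,n}$, $\bar b\in\B^{\,n}$ with $\B\models\gameformular{\varepsilon}{\rho}{\A,\bar a}(\bar b)$; the preceding lemma converts this into $\playertwo\wins\EFDR{\rho,\varepsilon}{\A,\B}{(\dense\A,\bar a),(\B,\bar b)}$, the characterization of $\rho$ upgrades it to $\playertwo\wins\EFDR{\rho+1,\varepsilon}{\A,\B}{(\dense\A,\bar a),(\B,\bar b)}$, and the preceding lemma converts back to $\B\models\gameformular{\varepsilon}{\rho+1}{\A,\bar a}(\bar b)$. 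As $\bar b$ and all other parameters are arbitrary, the large conjunction in $\varsigma_\varepsilon^\A$ holds, whence $\B\models\varsigma_\varepsilon^\A$.

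The one genuine subtlety, inherited directly from Theorem~\ref{Theorem: Scott sentence guarantees win for II in the long game}, is that the characterization of $\scottrankr_\varepsilon(\A)$ is stated for $\B$ with $\density(\B)\leq\density(\A)$, whereas here $\B$ is arbitrary; I expect to dispatch this exactly as in the function-game counterpart, observing that once the upgrade property holds at clock $\rho$ it transfers to arbitrary $\B$. By contrast, the limit and convergence bookkeeping that made the function-game argument delicate disappears entirely here, since the relation-game winning condition is captured literally by the rank-$0$ formula; this makes the present proof the easier of the two, as the preamble to these results already signals.
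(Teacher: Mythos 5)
Your proposal is correct and matches the paper's intent exactly: the paper gives no separate argument for this theorem, stating only that it is ``proved in a fashion similar to their isomorphism notion counterparts,'' and your adaptation of the proof of Theorem~\ref{Theorem: Scott sentence guarantees win for II in the long game} via the relation-game dictionary (the formula--game equivalence lemma, Lemma~\ref{Lemma: If II wins the long game she wins the dynamic games (relation games)}, and the characterization of $\scottrankr_\varepsilon(\A)$) is precisely the intended route. The density subtlety you flag is likewise inherited from, and handled no differently than in, the function-game version.
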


    \begin{corollary}
        For separable $\A,\B\in\class$ and $\varepsilon>0$, the following are equivalent.
        \begin{enumerate}
            \item There is $\varepsilon'\in(0,\varepsilon)$ such that $\B\models\varsigma_{\varepsilon'}^\A$.
            \item There is $\varepsilon'\in(0,\varepsilon)$ and an $\varepsilon'$-correspondence between $\A$ and $\B$.
        \end{enumerate}
    \end{corollary}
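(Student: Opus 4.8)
The plan is to read both conditions (1) and (2) as assertions of the shape ``there is $\varepsilon'\in(0,\varepsilon)$ with $\playertwo\wins G_{\varepsilon'}$'' for a relation EF-game $G_{\varepsilon'}$, and to chain the two results already at hand: the theorem immediately preceding this corollary, which for each fixed $\varepsilon'$ gives $\B\models\varsigma_{\varepsilon'}^\A \iff \playertwo\wins\EFR{\omega,\varepsilon'}{\A,\B}{\dense{\A},\B}$, and the first theorem of the Relation Games subsection, which for fixed countable dense $\dense{\A},\dense{\B}$ gives the existence of $\varepsilon'\in(0,\varepsilon)$ with $\playertwo\wins\EFR{\omega,\varepsilon'}{\A,\B}{\dense{\A},\dense{\B}}$ iff the existence of an $\varepsilon'$-correspondence ($\varepsilon'<\varepsilon$) with $\dom(R)=\dense{\A}$ and $\ran(R)=\dense{\B}$. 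The only discrepancy is that the first uses a game between $\dense{\A}$ and the full space $\B$, while the second uses a game between two dense sets; the heart of the proof is reconciling these.

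For (1)$\Rightarrow$(2) no reconciliation is needed. Fixing $\varepsilon'\in(0,\varepsilon)$ with $\B\models\varsigma_{\varepsilon'}^\A$, the preceding theorem gives $\playertwo\wins\EFR{\omega,\varepsilon'}{\A,\B}{\dense{\A},\B}$. I let $\playerone$ enumerate every element of $\dense{\A}$ together with every element of some fixed countable dense $\dense{\B}\subseteq\B$ (legal, since $\playerone$ may play in $\dense{\A}\cup\B$), and let $\playertwo$ answer by a winning strategy. The resulting set $R=\{(a_i,b_i)\}$ is a set correspondence between the dense sets $\dom(R)\supseteq\dense{\A}$ and $\ran(R)\supseteq\dense{\B}$ with $\A\models\varphi(\bar a)\implies\B\models\appr(\varphi,\varepsilon')(\bar b)$ for all atomic $\varphi$ and tuples from $R$. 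Applying Definition~\ref{Definition: Correspondence notion}~\ref{item: Preserving atomic formulae suffices for epsilon-correspondence} with the dense sets $\dom(R)$ and $\ran(R)$ then yields some $\varepsilon''\in(0,\varepsilon)$ and $R''\in\corr_{\varepsilon''}$ with the same (dense) domain and range, i.e. an $\varepsilon''$-correspondence between $\A$ and $\B$, which is (2). No perturbation enters here.

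For (2)$\Rightarrow$(1) I start from an $\varepsilon'$-correspondence $R$ ($\varepsilon'<\varepsilon$) and apply Definition~\ref{Definition: Correspondence notion}~\ref{item: Preserving atomic formulae suffices for epsilon-correspondence} with the dense sets $\dom(R)$ and $\ran(R)$ to obtain $\varepsilon''\in(0,\varepsilon)$ and an atomic-preserving set correspondence $R'$ between $\dom(R)$ and $\ran(R)$; since $\playerone$'s moves in $\EFR{\omega,\varepsilon''}{\A,\B}{\dom(R),\ran(R)}$ all lie in $\dom(R)\cup\ran(R)$, $\playertwo$ wins this game outright by always answering with the $R'$-partner. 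It remains to transport this win to $\EFR{\omega,\varepsilon'''}{\A,\B}{\dense{\A},\B}$, with $\A$'s distinguished dense set $\dense{\A}$ and the full space $\B$, after which the preceding theorem delivers $\B\models\varsigma_{\varepsilon'''}^\A$ and hence (1). This transport is the main obstacle: it is the relation-game counterpart of Corollary~\ref{Corollary: II wins in the dense sets iff she wins in the models} and Lemma~\ref{Lemma: II can play in a dense set}, and it forces $\playertwo$ both to switch from $\dom(R)$ to $\dense{\A}$ and, more delicately, to field $\playerone$'s moves drawn from all of $\B$ rather than from $\ran(R)$. In each case she approximates the offending point by a nearby point of the set she controls, and the resulting loss of precision is swallowed by the strict inequality $\varepsilon'''<\varepsilon$, exploiting the density of $D$ and the openness of the window $(0,\varepsilon)$. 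This runs exactly as in the function-game setting (indeed more simply, since $\playertwo$ may play correct answers at once), which is precisely why the paper records that the relation-game lemmas are proved as before.
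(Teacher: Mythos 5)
Your overall route is the one the paper intends: the corollary is obtained by chaining the theorem identifying $\B\models\varsigma_{\varepsilon'}^\A$ with $\playertwo\wins\EFR{\omega,\varepsilon'}{\A,\B}{\dense{\A},\B}$ against the first theorem of the relation-games subsection, exactly as the isomorphism-notion counterpart is proved by ``combining'' its two theorems. Your (1)$\Rightarrow$(2) direction is clean and complete, and you have correctly spotted something the paper passes over in silence: the two theorems do not literally compose, since one concerns the game between $\dense{\A}$ and the full space $\B$ while the other concerns the game between two countable dense sets.

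The one step to push back on is the claim that the transport needed for (2)$\Rightarrow$(1) ``runs exactly as in the function-game setting''. The function-game transport (Lemma~\ref{Lemma: II can play in a dense set}) is powered by the perturbation-distance axiom, Definition~\ref{Definition: Good vocabulary (isomorphism)}~\ref{item: Perturbation distance}, which is part of the hypothesis that $L$ is good for $(\class,\isom)$ --- a hypothesis about the \emph{isomorphism} notion. The axioms of a correspondence notion (Definition~\ref{Definition: Correspondence notion}) contain no analogue: nothing in the abstract framework says that moving a tuple a small metric distance perturbs the satisfaction of atomic formulae by a controlled amount. So your bridge tacitly adds a uniform-continuity assumption on atomic formulae. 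It does hold in both of the paper's correspondence examples (the additive approximations for $\LM$ and $\LBb$ absorb a $\delta$-perturbation of the arguments into a $2\delta$-, respectively $\delta$-, worse approximation), and with it your argument closes, the loss being swallowed by the open window $(0,\varepsilon)$ exactly as you say; but as written it is not a consequence of the stated axioms. In fairness, the paper's own implicit ``combine the theorems'' proof has the identical gap, so you have made the argument more honest rather than less correct.
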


    \begin{theorem}
        For $\A,\B\in\class$, the following are equivalent:
        \begin{enumerate}
            \item $\B\models\varsigma_{0^+}^\A$,
            \item $\playertwo\wins\EFR{\omega,\varepsilon}{\A,\B}{\dense\A,\B}$ for all $\varepsilon>0$.
        \end{enumerate}
    \end{theorem}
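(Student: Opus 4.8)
The plan is to adapt the proof of Theorem~\ref{Inner Scott sentence guarantees win for II in the long game} to the correspondence setting; this is in fact cleaner, since relation games carry no per-round data $\bar\varepsilon,\bar k$ and no Cauchy/limit clause in the payoff. Throughout I would write $\alpha=\innerscottrankr(\A)$ and rely on three already-available correspondence analogues: the formula-versus-game lemma, asserting that $\B\models\gameformular{\varepsilon}{\beta}{\A,\bar a}(\bar b)$ holds iff $\playertwo\wins\EFDR{\beta,\varepsilon}{\A,\B}{(\dense\A,\bar a),(\B,\bar b)}$; the passage from the long game to all dynamic games (Lemma~\ref{Lemma: If II wins the long game she wins the dynamic games (relation games)}); and the correspondence free-round theorem, which under the hypothesis that $\playertwo\wins\EFDR{\alpha+\omega,\varepsilon}{\A,\B}{\dense\A,\dense\B}$ for all $\varepsilon>0$ upgrades a win of $\playertwo$ in $\EFDR{\alpha,\varepsilon}{\A,\B}{(\dense\A,\bar a),(\dense\B,\bar b)}$ to a win in $\EFDR{\alpha+1,\varepsilon^+}{\A,\B}{(\dense\A,\bar a),(\dense\B,\bar b)}$ for any $\varepsilon^+>\varepsilon$.

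For the direction $(1)\Rightarrow(2)$ I would fix $\varepsilon>0$, set $\delta^n=\sum_{j<n+1}\varepsilon/2^{j+1}$ (a strictly increasing sequence in $(0,\varepsilon)$), and have $\playertwo$ play in $\EFR{\omega,\varepsilon}{\A,\B}{\dense\A,\B}$ so as to maintain the invariant that after $n$ rounds, with $\bar a\in\dense\A^n$ and $\bar b\in\B^n$ the played elements, $\B\models\gameformular{\delta^n}{\alpha}{\A,\bar a}(\bar b)$. The base case $n=0$ is the conjunct $\gameformular{\delta^0}{\alpha}{\A}$ of $\varsigma_{0^+}^\A$. For the step, the implication $\gameformular{\delta^n}{\alpha}{\A,\bar a}\to\gameformular{\delta^{n+1}}{\alpha+1}{\A,\bar a}$, which is a conjunct of $\varsigma_{0^+}^\A$ since $\delta^{n+1}>\delta^n$, gives $\B\models\gameformular{\delta^{n+1}}{\alpha+1}{\A,\bar a}(\bar b)$; unfolding the successor formula, its first conjunct $\bigwedge_{a_n\in\dense\A}\exists v_n(\dots)$ answers a move $x_n\in\dense\A$ of $\playerone$ and its second conjunct $\forall v_n\bigvee_{a_n\in\dense\A}(\dots)$ answers a move $x_n\in\B$, in either case producing $\playertwo$'s reply and restoring the invariant at level $\delta^{n+1}$. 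To see the strategy wins, I would fix an atomic $\varphi$ and indices $i_0,\dots,i_{m-1}$, choose $n$ past all of them, and note that the invariant plus the formula-versus-game lemma give $\playertwo\wins\EFDR{\alpha,\delta^n}{\A,\B}{(\dense\A,\bar a),(\B,\bar b)}$, which forces the already-played position to satisfy atomic preservation at precision $\delta^n$ (else $\playerone$ could set the clock to $0$ and win). Since $\delta^n<\varepsilon$, the monotonicity and additivity of approximation (Definition~\ref{Definition: Vocabulary with approximations}~\ref{item: Approximation}~\ref{item: Better approximation of a formula implies worse approximation of the formula} and~\ref{item: Error in approximation is additive}) give $\appr(\varphi,\delta^n)\models\appr(\varphi,\varepsilon)$, upgrading the implication to precision $\varepsilon$; as $\varphi$ and the indices were arbitrary, $\playertwo$ wins the long game.

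For $(2)\Rightarrow(1)$ I would assume $\playertwo\wins\EFR{\omega,\varepsilon}{\A,\B}{\dense\A,\B}$ for every $\varepsilon>0$; then by Lemma~\ref{Lemma: If II wins the long game she wins the dynamic games (relation games)} she also wins $\EFDR{\beta,\varepsilon}{\A,\B}{\dense\A,\B}$ for every ordinal $\beta$, so in particular the free-round theorem's hypothesis is met. Taking $\beta=\alpha$ and applying the formula-versus-game lemma yields $\B\models\gameformular{\varepsilon}{\alpha}{\A}$ for each $\varepsilon>0$, which is the first conjunct of $\varsigma_{0^+}^\A$. For the implicational conjuncts I would fix $\varepsilon>0$, $n$, $\varepsilon^+>\varepsilon$, $\bar a\in\dense\A^n$ and $\bar b\in\B^n$ with $\B\models\gameformular{\varepsilon}{\alpha}{\A,\bar a}(\bar b)$, i.e.\ $\playertwo\wins\EFDR{\alpha,\varepsilon}{\A,\B}{(\dense\A,\bar a),(\B,\bar b)}$; the free-round theorem then gives $\playertwo\wins\EFDR{\alpha+1,\varepsilon^+}{\A,\B}{(\dense\A,\bar a),(\B,\bar b)}$, that is $\B\models\gameformular{\varepsilon^+}{\alpha+1}{\A,\bar a}(\bar b)$. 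Since $\bar b$, and then all remaining parameters, are arbitrary, collecting the conjuncts gives $\B\models\varsigma_{0^+}^\A$.

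The one delicate point I anticipate is the interchange between games played against all of $\B$ and games played against a dense $\dense\B$: the formula-versus-game lemma and the target conclusion are phrased with the full space $\B$ in the second coordinate, whereas the free-round theorem is phrased between dense sets. This is bridged by the correspondence analogue of Corollary~\ref{Corollary: II wins in the dense sets iff she wins in the models}, to the effect that $\playertwo$ wins a dynamic relation game between $\dense\A$ and $\B$ iff she wins it between $\dense\A$ and $\dense\B$; with this in hand both directions go through essentially verbatim. I expect this dense-set bookkeeping, rather than any genuinely new idea, to be the only real obstacle, the substantive content being the invariant-maintaining strategy and the free-round upgrade already isolated above.
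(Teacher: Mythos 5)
Your proposal is correct and takes essentially the same route as the paper: the paper gives no explicit proof of this theorem, stating only that it ``is proved in a fashion similar to'' Theorem~\ref{Inner Scott sentence guarantees win for II in the long game}, and your adaptation --- the invariant strategy at precisions $\delta^n$ increasing to $\varepsilon$ for one direction, and the passage from the long game to the dynamic games plus the relation-game free-round theorem for the other --- is exactly that intended transplant. The dense-set bookkeeping you flag is real but is handled (indeed, equally glossed over) in the same way in the paper's isomorphism-notion proof via the analogue of Corollary~\ref{Corollary: II wins in the dense sets iff she wins in the models}, so it does not constitute a deviation or a gap relative to the paper's own argument.
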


    \begin{corollary}
        For separable $\A,\B\in\class$, $\B\models\varsigma_{0^+}^\A$ if and only if there exists an $\varepsilon$-correspondence between $\A$ and $\B$ for all $\varepsilon>0$.
    \end{corollary}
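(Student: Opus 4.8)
The plan is to read this off from the two results that bracket the subsection, exactly as the isomorphism-notion corollary was read off from its two neighbouring theorems. The preceding theorem identifies $\B\models\varsigma_{0^+}^\A$ with the assertion that $\playertwo\wins\EFR{\omega,\varepsilon}{\A,\B}{\dense\A,\B}$ for \emph{every} $\varepsilon>0$, while the first theorem of this subsection relates, for each fixed $\varepsilon>0$ and fixed countable dense sets, such a game win to the existence of an $\varepsilon'$-correspondence with $\varepsilon'\in(0,\varepsilon)$. Since $\A$ and $\B$ are separable, I fix countable dense $\dense\A\subseteq\A$ and $\dense\B\subseteq\B$; by the preceding theorem it then suffices to prove that $\playertwo$ winning $\EFR{\omega,\varepsilon}{\A,\B}{\dense\A,\B}$ for all $\varepsilon>0$ is equivalent to the existence of an $\varepsilon$-correspondence between $\A$ and $\B$ for all $\varepsilon>0$.

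For the direction from games to correspondences, I fix $\varepsilon>0$, choose $\varepsilon_0\in(0,\varepsilon)$, and use the winning strategy witnessing $\playertwo\wins\EFR{\omega,\varepsilon_0}{\A,\B}{\dense\A,\B}$ in the play in which $\playerone$ enumerates every element of $\dense\A$ and of $\dense\B$ (each infinitely often). The relation $R=\{(a_i,b_i)\mid i<\omega\}$ generated by this play is a set correspondence whose domain is $\dense\A$ and whose range is a countable dense subset of $\B$ containing $\dense\B$, and the winning condition gives $\A\models\varphi(\bar a)\implies\B\models\appr(\varphi,\varepsilon_0)(\bar b)$ for all atomic $\varphi$ and all tuples drawn from $R$. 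As $\varepsilon_0<\varepsilon$, Definition~\ref{Definition: Correspondence notion}~\ref{item: Preserving atomic formulae suffices for epsilon-correspondence} turns this into an $\varepsilon'$-correspondence between $\A$ and $\B$ for some $\varepsilon'\in(0,\varepsilon)$; letting $\varepsilon$ range over all positive reals yields the correspondences on the right-hand side.

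For the converse, I fix $\varepsilon>0$ and an $\varepsilon_0$-correspondence $R$ with $\varepsilon_0<\varepsilon$; after passing to countable dense subsets of its domain and range I may assume, via Definition~\ref{Definition: Correspondence notion}~\ref{item: Preserving atomic formulae suffices for epsilon-correspondence}, that it witnesses condition~(2) of the first theorem of the subsection for the fixed pair $\dense\A,\dense\B$ (adjusting $\dense\B$ if necessary). That theorem then gives $\playertwo\wins\EFR{\omega,\varepsilon'}{\A,\B}{\dense\A,\dense\B}$ for some $\varepsilon'\in(0,\varepsilon)$, and since a smaller precision is a harder winning condition, also $\playertwo\wins\EFR{\omega,\varepsilon}{\A,\B}{\dense\A,\dense\B}$. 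It remains to move from the game between the two dense sets to the game against the full space $\B$ demanded by the preceding theorem; this is supplied by the relation-game analogues of Lemma~\ref{Lemma: II can play in a dense set} and Corollary~\ref{Corollary: II wins in the dense sets iff she wins in the models}, which the text asserts are proved exactly as in the function-game setting, and which let $\playertwo$ answer an arbitrary move of $\playerone$ in $\B$ by a nearby point of $\dense\B$.

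The enumeration argument and the two invocations of Definition~\ref{Definition: Correspondence notion}~\ref{item: Preserving atomic formulae suffices for epsilon-correspondence} are routine. The point needing genuine care --- the main obstacle --- is the bookkeeping between games played against the full space $\B$, as in the preceding theorem, and games played between two fixed countable dense sets, as in the correspondence characterisation: aligning the two requires separability together with the relation-game versions of the ``play in a dense set'' lemmas. The ubiquitous $(0,\varepsilon)$ slack is harmless, since every step is carried out inside the outer universal quantifier over $\varepsilon>0$, which absorbs it.
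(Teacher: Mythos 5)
Your proposal matches the paper's (implicit) proof: the corollary is obtained by combining the theorem characterising $\B\models\varsigma_{0^+}^\A$ via the games $\EFR{\omega,\varepsilon}{\A,\B}{\dense\A,\B}$ for all $\varepsilon>0$ with the theorem relating such game wins to the existence of $\varepsilon'$-correspondences, exactly as the isomorphism-notion analogue is proved by combining its two bracketing theorems. Your additional care over the passage between the game played against all of $\B$ and the game between two fixed countable dense sets is a detail the paper leaves implicit, but it does not change the route.
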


    \begin{remark}
        Let $\class$ be a class of metric structures and $\corr = (\corr_{\varepsilon})_{\varepsilon\geq 0}$ a correspondence notion of $\class$. Then we have
        \[
            \B\models\varsigma^\A_{0^+} \iff d_{\corr}(\A,\B) = 0.
        \]
    \end{remark}

\end{document}